\theoremstyle{plain} 
 \newtheorem{thm}{Theorem}[section]
 \newtheorem{lem}[thm]{Lemma}
 \newtheorem{cor}[thm]{Corollary}
 \newtheorem{prop}[thm]{Proposition}
 \newtheorem{claim}[thm]{Claim}
\theoremstyle{definition}
  \newtheorem{defn}[thm]{Definition}
\theoremstyle{remark}
  \newtheorem{rem}[thm]{Remark}
\newcommand{\comm}{{\rm Comm}}
\renewcommand{\mod}{{\rm Mod}}
\renewcommand{\pmod}{{\rm PMod}}
\newcommand{\cal}{\mathcal}
\newcommand{\ci}[2]{\cite[#1]{#2}}
\newcommand{\cali}{\mathcal{I}}
\newcommand{\calk}{\mathcal{K}}
\newcommand{\calc}{\mathcal{C}}
\newcommand{\calt}{\mathcal{T}}
\newcommand{\lk}{{\rm Lk}}
\begin{document}

\title[Injections of the complex of separating curves]{Injections of the complex of separating curves into the Torelli complex}
\author{Yoshikata Kida}
\address{Department of Mathematics, Kyoto University, 606-8502 Kyoto, Japan}
\email{kida@math.kyoto-u.ac.jp}
\date{November 13, 2009, revised on February 6, 2011.}
\subjclass[2010]{20E36, 20F38.}
\keywords{The complex of separating curves, the Torelli complex, the Johnson kernel, the Torelli group}

\begin{abstract}
We show that for all but finitely many compact orientable surfaces, any superinjective map from the complex of separating curves into the Torelli complex is induced by an element of the extended mapping class group.
As an application, we prove that any injective homomorphism from a finite index subgroup of the Johnson kernel into the Torelli group for such a surface is induced by an element of the extended mapping class group.
\end{abstract}

\maketitle


\section{Introduction}

Let $S=S_{g, p}$ denote a connected, compact and orientable surface of genus $g$ with $p$ boundary components.
Throughout the paper, we assume that a surface satisfies these conditions unless otherwise stated.
Let $\chi(S)=-2g-p+2$ denote the Euler characteristic of $S$.
We define $\mod^*(S)$ as the {\it extended mapping class group} for $S$, i.e., the group of isotopy classes of homeomorphisms from $S$ onto itself, where isotopy may move points of the boundary of $S$.
The complex of curves for $S$, denoted by $\calc(S)$, plays an important role in the study of various aspects of $\mod^*(S)$.
In fact, a description of any automorphism of $\calc(S)$ leads to a description of any isomorphism between finite index subgroups of $\mod^*(S)$, as discussed by Ivanov \cite{iva-aut} and Korkmaz \cite{kork-aut} (see also \cite{luo} for automorphisms of $\calc(S)$).
More generally, Irmak \cite{irmak1} introduces superinjectivity of a simplicial map from $\calc(S)$ into itself, which is stronger than injectivity, to describe any injective homomorphism from a finite index subgroup of $\mod^*(S)$ into $\mod^*(S)$.
We refer to \cite{be-m}, \cite{bm-ar}, \cite{irmak1}, \cite{irmak2}, \cite{irmak-ns} and \cite{sha} for results of this direction.

The {\it Jonson kernel} $\calk(S)$ for $S$ is defined as the subgroup of $\mod^*(S)$ generated by all Dehn twists about separating curves in $S$.
The {\it Torelli group} $\cali(S)$ for $S$ is defined as the subgroup of $\mod^*(S)$ generated by all Dehn twists about separating curves in $S$ and all elements of the form $t_at_b^{-1}$ with $\{ a, b\}$ a bounding pair in $S$, where $t_c$ denotes the Dehn twist about a simple closed curve $c$ in $S$ (see Section \ref{sec-pre} for a precise definition of these terms).
These groups are normal subgroups of $\mod^*(S)$ and attract much attention in the study of mapping class groups. 
The complex of separating curves for $S$, denoted by $\calc_s(S)$, and the Torelli complex $\calt(S)$ for $S$ are introduced to prove algebraic results for $\calk(S)$ and $\cali(S)$ similar to those for $\mod^*(S)$ mentioned above.
We refer to \cite{bm}, \cite{bm-add}, \cite{farb-ivanov}, \cite{kida} and \cite{ky-tor} for automorphisms of $\calc_s(S)$ and $\calt(S)$.
In these works, for a certain surface $S$, any isomorphism between finite index subgroups of $\cali(S)$ (resp.\ $\calk(S)$) is shown to be the conjugation by an element of $\mod^*(S)$.

The aim of this paper is to show that any injective homomorphism from a finite index subgroup of $\calk(S)$ into $\cali(S)$ is the conjugation by an element of $\mod^*(S)$.
This result is obtained by describing any superinjective map from $\calc_s(S)$ into $\calt(S)$.
We note that $\calc_s(S)$ is a subcomplex of $\calt(S)$ and that $\mod^*(S)$ naturally acts on $\calc_s(S)$ and on $\calt(S)$ (see Section \ref{sec-pre} for a definition of these complexes).
The present paper is devoted to proving the following:

\begin{thm}\label{thm-inj}
Let $S=S_{g, p}$ be a surface with $g\geq 1$ and $|\chi(S)|\geq 4$. 
If $\phi \colon \calc_s(S)\rightarrow \calt(S)$ is a superinjective map, then the inclusion $\phi(\calc_s(S))\subset \calc_s(S)$ holds.
\end{thm}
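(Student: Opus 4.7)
The plan is to argue by contradiction: suppose there exists a separating curve $c \in \calc_s(S)$ such that $\phi(c)$ is a bounding pair $\{a,b\}$ in $\calt(S)$, and extract an obstruction from superinjectivity by comparing the combinatorial structure around $c$ in $\calc_s(S)$ with the structure around $\{a,b\}$ in $\calt(S)$.

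The first step is to set up a topological dictionary. A separating curve $c$ decomposes $S$ into two pieces with total genus $g$ and total boundary $p+2$, whereas a bounding pair $\{a,b\}$ decomposes $S$ into two pieces with total genus $g-1$ and total boundary $p+4$, each piece having both $a$ and $b$ on its boundary. In particular, every simplex of $\calt(S)$ in the star of $c$ (resp.\ of $\{a,b\}$) has vertices supported on one of the two sides of $c$ (resp.\ of $\{a,b\}$). Since $\phi$ is a superinjective simplicial map it preserves and reflects disjointness, so it embeds the star of $c$ in $\calc_s(S)$ into the star of $\{a,b\}$ in $\calt(S)$ in a way that respects the partition into ``sides''.

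The second step is to select a configuration of separating curves around $c$ that detects this discrepancy. I would take a maximal simplex $\sigma$ of $\calc_s(S)$ containing $c$ and study how $\phi$ distributes $\sigma \setminus \{c\}$ over the two sides of $\{a,b\}$. A useful refinement is to choose inside each side of $c$ a pair of disjoint separating curves whose complementary pieces realize a prescribed topological type (e.g.\ cutting off an $S_{1,1}$ or an $S_{0,3}$); superinjectivity forces the images of these curves to be disjoint simplices of $\calt(S)$ on one side of $\{a,b\}$, and thus to fit into a subsurface of strictly smaller complexity than the one they came from. I would then use Euler characteristic / genus counting, together with the link condition in $\calt(S)$, to show that at least one such subconfiguration cannot be accommodated on the corresponding side of $\{a,b\}$, contradicting the existence of $\phi$.

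The main obstacle, I expect, is twofold. First, the topological accounting is delicate because bounding pairs create two extra boundary components on each side, which a priori gives more room, not less, for certain sub-configurations; the contradiction must come not from raw counts but from combining counts with the strong constraint that a full set of disjoint separating curves around $c$ cannot all land on vertices of $\calt(S)$ that are consistently supported on one side of a bounding pair with reduced genus. Second, the condition $g \geq 1$ and $|\chi(S)|\geq 4$ strongly suggests that the generic argument degenerates for a short list of small surfaces, which will need separate, ad hoc treatment using explicit exhibitions of separating curves and bounding pairs; identifying and correctly handling these exceptional surfaces is likely where most technical effort will concentrate.
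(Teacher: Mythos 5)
Your Step 1 (Euler characteristic bookkeeping via maximal simplices and links) does match the first stage of the paper's proof: Section \ref{sec-chi} shows that $\phi$ is $\chi$-preserving, and for closed surfaces this already finishes the argument since no p-curves or p-BPs exist. The genuine gap is in your proposed mechanism for the contradiction in the bounded case. An hp-curve and a p-BP both cut off a piece of Euler characteristic $-1$, so $\chi$-counting cannot distinguish them; worse, disjointness patterns cannot either. If $a$ is an h-curve in $S_{g,1}$ and $\phi(a)$ is a p-BP, the paper's Lemmas \ref{lem-sep-bp-eq} and \ref{lem-sep-pre-top} show that every disjoint configuration around $a$ maps \emph{consistently} to a configuration of mutually BP-equivalent BPs around $\phi(a)$ --- a maximal simplex through $a$ lands on a perfectly legitimate weakly rooted simplex of $\calt(S)$, sides and all. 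So the claim that ``at least one such subconfiguration cannot be accommodated on the corresponding side'' is false for configurations of disjoint curves, and no contradiction can be extracted from the star of $c$ alone.

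The missing idea is that the obstruction lives in \emph{intersecting} configurations, detected through sharing pairs and the non-existence of injective simplicial maps between auxiliary graphs. The relevant fact is that $S_{2,1}$ and $S_{1,3}$ have equal Euler characteristic but the former carries strictly richer curve combinatorics: the paper builds a graph $\cal{B}$ of h-curves in $S_{2,1}$ sharing with $a$ (whose adjacency is ``forming a sharing pair,'' recognized by $\phi$ via the Brendle--Margalit disjointness characterization) and a graph $\cal{G}$ of p-curves in $S_{1,3}$, shows $\phi$ would induce an injection $\cal{B}\hookrightarrow\cal{G}$, and rules this out because $\cal{G}$ is a bi-infinite line while $\cal{B}$ properly contains one (Lemma \ref{lem-b-g}); for $S_{1,4}$ the analogous obstructions are that a graph containing triangles cannot inject into the tree $\cal{E}$ built from the Farey graph, and a graph with infinitely many bi-infinite lines in each vertex link cannot inject into one with exactly one. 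Also note that the case split is the reverse of what you anticipate: the ``generic'' argument covers only closed surfaces, and every surface with boundary requires the hard machinery, with the base cases $S_{g,1}$, $S_{2,2}$, $S_{1,4}$ of an induction on $p$ carrying essentially all of the technical weight.
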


Superinjective maps from $\calc_s(S)$ into itself are fully studied in \cite{kida-cohop}. Combining a result in \cite{kida-cohop}, stated in Theorem \ref{thm-cs} below, we obtain the following:

\begin{cor}\label{cor-inj}
Let $S$ be the surface in Theorem \ref{thm-inj}. 
For any superinjective map $\phi \colon \calc_s(S)\rightarrow \calt(S)$, there exists an element $\gamma$ of $\mod^*(S)$ with the equality $\phi(a)=\gamma a$ for any vertex $a$ of $\calc_s(S)$.
\end{cor}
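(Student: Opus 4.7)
The proof is essentially a two-step reduction once Theorem \ref{thm-inj} and the self-map classification of \cite{kida-cohop} (cited here as Theorem \ref{thm-cs}) are both in hand. First I would apply Theorem \ref{thm-inj}: under the hypotheses on $S$, the image $\phi(\calc_s(S))$ is contained in $\calc_s(S)$, so $\phi$ may be regarded as a simplicial map from $\calc_s(S)$ to itself.

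The only point requiring care is that the map, considered with this smaller codomain, remains superinjective. This is immediate from the observation that $\calc_s(S)$ sits inside $\calt(S)$ as a full subcomplex on the separating-curve vertices: two separating curves are joined by an edge in $\calc_s(S)$ precisely when they are disjoint, which is exactly the edge criterion used in $\calt(S)$. Hence the disjointness relation restricted to separating curves is the same whether detected in $\calc_s(S)$ or in $\calt(S)$, and the defining property of superinjectivity---that two vertices are joined by an edge iff their images are---passes unchanged from $\phi \colon \calc_s(S) \to \calt(S)$ to its corestriction $\phi \colon \calc_s(S) \to \calc_s(S)$. Theorem \ref{thm-cs} then supplies the desired $\gamma \in \mod^*(S)$ with $\phi(a) = \gamma a$ for every vertex $a$ of $\calc_s(S)$.

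There is effectively no obstacle in the corollary itself; the substantive work lives entirely in Theorem \ref{thm-inj}, whose difficulty is to exclude the possibility that a superinjective $\phi$ sends any separating curve to a bounding-pair vertex of $\calt(S)$. Once that containment is established, the passage from the simplicial statement to the group element $\gamma$ is mechanical via the quoted classification.
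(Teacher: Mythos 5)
Your proposal is correct and matches the paper's (implicit) argument exactly: the paper simply combines Theorem \ref{thm-inj} with Theorem \ref{thm-cs}, and your observation that superinjectivity survives the corestriction because $\calc_s(S)$ is a full subcomplex of $\calt(S)$ with the same disjointness edge relation is the only point that needs checking. One could also note that the surface hypotheses of Theorem \ref{thm-inj} are contained in those of Theorem \ref{thm-cs}, so the latter indeed applies.
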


If $S$ is a surface of genus zero, then both $\calc_s(S)$ and $\calt(S)$ are equal to $\calc(S)$. 
The same conclusion as Corollary \ref{cor-inj} is thus proved in \cite{bm-ar}.
It remains open whether the same conclusions as Theorem \ref{thm-inj} and Corollary \ref{cor-inj} are true for $S=S_{1, 3}$. 
We refer to \cite{kida}, \cite{kida-cohop} and \cite{ky-tor} for known facts on the complexes $\calc_s(S)$ and $\calt(S)$ for a surface $S$ with $|\chi(S)|=3$ (see also Remark \ref{rem-13}).

An idea to prove Theorem \ref{thm-inj} for closed surfaces has already appeared in \cite{bm}. 
The proof for surfaces with non-empty boundary is much harder because of the existence of bounding pairs cutting off a pair of pants. 
A sketch of the proof is given in Section \ref{sec-str}.

In the rest of this section, we state results on the groups $\calk(S)$ and $\cali(S)$ obtained as a consequence of Corollary \ref{cor-inj}.

\begin{thm}\label{thm-inner}
Let $S$ be the surface in Theorem \ref{thm-inj}. 
Let $\Gamma$ be a finite index subgroup of $\calk(S)$ and $f\colon \Gamma \rightarrow \cali(S)$ an injective homomorphism. 
Then there exists a unique element $\gamma_0$ of $\mod^*(S)$ with the equality $f(\gamma)= \gamma_0\gamma \gamma_0^{-1}$ for any $\gamma \in \Gamma$.
\end{thm}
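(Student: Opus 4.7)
The plan is to reduce Theorem \ref{thm-inner} to Corollary \ref{cor-inj} by producing, from the injective homomorphism $f$, a superinjective map $\phi \colon \calc_s(S) \to \calt(S)$. For each separating curve $a$ in $S$, fix an integer $n=n(a)\geq 1$ with $t_a^n \in \Gamma$, and examine $f(t_a^n) \in \cali(S)$. I expect to show, by a centralizer analysis inside $\cali(S)$, that $f(t_a^n)$ is a nonzero power of a single ``Torelli twist'' --- either of a Dehn twist about a separating curve or of a bounding pair map $t_b t_c^{-1}$ --- which canonically determines a vertex $\phi(a) \in \calt(S)$. The key inputs are that the centralizer of $t_a^n$ in $\calk(S)$ contains an abelian subgroup of large rank coming from twists about separating curves disjoint from $a$, and that the classification of such maximal abelian subgroups of $\cali(S)$ forces their generators to be Torelli twists supported on a single multicurve.

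Once $\phi$ is defined on vertices, simpliciality and superinjectivity follow by translating geometric properties into algebraic ones. Disjointness of $a_1,a_2$ gives $[t_{a_1}^{n_1},t_{a_2}^{n_2}]=1$, hence $[f(t_{a_1}^{n_1}),f(t_{a_2}^{n_2})]=1$, hence $\phi(a_1)$ and $\phi(a_2)$ are disjoint in $\calt(S)$; conversely, essential intersection produces a rank-two free subgroup generated by high Dehn twist powers, a property preserved by the injective $f$, which rules out disjointness of $\phi(a_1),\phi(a_2)$. Corollary \ref{cor-inj} then provides $\gamma_0 \in \mod^*(S)$ with $\phi(a)=\gamma_0 a$ for every separating curve $a$.

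After replacing $f$ by $\gamma \mapsto \gamma_0^{-1} f(\gamma) \gamma_0$, one may assume $\phi$ is the identity on vertices of $\calc_s(S)$. A further centralizer comparison, using the action on curves intersecting $a$ nontrivially, rules out the bounding-pair alternative and yields $f(t_a^{n(a)}) = t_a^{n(a)}$ for every separating curve $a$. The elements $t_a^{n(a)}$ generate a finite index subgroup $\Gamma''$ of $\calk(S)$, so $f$ agrees with the identity on $\Gamma \cap \Gamma''$. To propagate this to all of $\Gamma$, apply $f$ to a conjugate $\gamma \delta \gamma^{-1}$ with $\delta,\gamma\delta\gamma^{-1} \in \Gamma \cap \Gamma''$; the identity $f(\gamma)\delta f(\gamma)^{-1} = \gamma\delta\gamma^{-1}$ then shows $\gamma^{-1}f(\gamma)$ centralizes a finite index subgroup of $\calk(S)$ in $\mod^*(S)$, which is known to be trivial for $S$ as in Theorem \ref{thm-inj}. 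The same triviality gives uniqueness of the original $\gamma_0$.

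The main obstacle is the opening step: starting from $f(t_a^n) \in \cali(S)$, isolating a \emph{single vertex} $\phi(a)$ of $\calt(S)$ rather than a higher-dimensional simplex. Because $\cali(S)$ contains bounding pair maps alongside separating twists, the algebraic characterization that extracts one vertex must distinguish between these cases, and this is where the ambient work on superinjective maps into $\calt(S)$ is essential. This is also the point where small surfaces such as $S_{1,3}$ cause genuine trouble: bounding pairs cutting off a pair of pants yield algebraic coincidences that the rank and centralizer arguments cannot resolve, which is consistent with the hypothesis $|\chi(S)|\geq 4$ in Theorem \ref{thm-inj}.
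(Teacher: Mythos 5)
Your outline follows essentially the same route as the paper, whose proof of this theorem is a one-line reduction: Proposition 6.9 of \cite{kida} supplies exactly the algebraic characterization you sketch in your first step (that $f(t_a^n)$ is a nontrivial power of a single separating twist or bounding-pair map, giving a superinjective $\phi\colon \calc_s(S)\rightarrow \calt(S)$), after which Corollary \ref{cor-inj} and the conjugation/uniqueness argument from the proof of Theorem 1.4 of \cite{kida} finish exactly as in your last two paragraphs. One small caution: you should not claim that the subgroup $\Gamma''$ generated by the $t_a^{n(a)}$ has finite index in $\calk(S)$ (this is far from clear), but it is also unnecessary, since an element of $\mod^*(S)$ centralizing all the $t_a^{n(a)}$ already fixes every separating curve and is therefore trivial; likewise the exponent identity $f(t_a^{n(a)})=t_a^{n(a)}$ need not be established beforehand, as it falls out of the final conjugation identity.
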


This theorem is deduced by combining Proposition 6.9 in \cite{kida} with Corollary \ref{cor-inj} and following the proof of Theorem 1.4 in \cite{kida}.

To state consequences of Theorem \ref{thm-inner}, let us introduce terminology.
For a group $G$ and its subgroup $\Gamma$, the {\it relative commensurator} of $\Gamma$ in $G$, denoted by $\comm_G(\Gamma)$, is defined as the subgroup of $G$ consisting of all elements $\gamma \in G$ such that $\Gamma \cap \gamma \Gamma \gamma^{-1}$ is of finite index in both $\Gamma$ and $\gamma \Gamma \gamma^{-1}$.

For a group $\Gamma$, we define $F(\Gamma)$ to be the set of isomorphisms between finite index subgroups of $\Gamma$. 
We say that two elements $f$, $h$ of $F(\Gamma)$ are equivalent if there exists a finite index subgroup of $\Gamma$ on which $f$ and $h$ are equal. 
The composition of two elements $f\colon \Gamma_1\rightarrow \Gamma_2$, $h\colon \Lambda_1\rightarrow \Lambda_2$ of $F(\Gamma)$ given by $f\circ h\colon h^{-1}(\Gamma_1\cap \Lambda_2)\rightarrow f(\Lambda_2\cap \Gamma_1)$ induces the product operation on the quotient set of $F(\Gamma)$ by this equivalence relation. 
This makes it into the group called the {\it abstract commensurator} of $\Gamma$ and denoted by $\comm(\Gamma)$. 
If $\Gamma$ is a subgroup of a group $G$, then we have the natural homomorphism from $\comm_G(\Gamma)$ into $\comm(\Gamma)$.

\begin{thm}\label{thm-comm}
Let $S$ be the surface in Theorem \ref{thm-inj} and put $G=\mod^*(S)$.
Let $\Gamma$ be a subgroup of $G$ with $[\calk(S): \Gamma \cap \calk(S)]<\infty$ and $[\Gamma :\Gamma \cap \cali(S)]<\infty$.
Then the natural homomorphism from $\comm_G(\Gamma)$ into $\comm(\Gamma)$ is an isomorphism.
\end{thm}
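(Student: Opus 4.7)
The plan is to verify injectivity and surjectivity of the natural homomorphism $\comm_G(\Gamma)\to\comm(\Gamma)$ separately, with Theorem \ref{thm-inner} driving the surjectivity step. Throughout I will appeal to the auxiliary fact that for $S$ as in Theorem \ref{thm-inj}, the centralizer in $\mod^*(S)$ of any finite-index subgroup of $\calk(S)$ is trivial; this is standard for such $S$ (since any such subgroup contains enough powers of Dehn twists about separating curves whose fixed-curve data pin down the identity) and should already be available in \cite{kida} or the references therein.

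For injectivity, suppose $\gamma\in\comm_G(\Gamma)$ maps to the identity of $\comm(\Gamma)$, so that conjugation by $\gamma$ fixes every element of some finite-index subgroup $\Lambda$ of $\Gamma$. The hypothesis $[\calk(S):\Gamma\cap\calk(S)]<\infty$ forces $\Lambda\cap\calk(S)$ to have finite index in $\calk(S)$, and $\gamma$ centralizes it, so $\gamma=1$ by the auxiliary fact.

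For surjectivity, represent $[f]\in\comm(\Gamma)$ by an isomorphism $f\colon\Gamma_1\to\Gamma_2$ between finite-index subgroups of $\Gamma$. The two index hypotheses on $\Gamma$ together guarantee that
\[
\Lambda \;:=\; \Gamma_1\cap\calk(S)\cap f^{-1}(\cali(S))
\]
has finite index in $\calk(S)$, and by construction $f(\Lambda)\subset\cali(S)$. Theorem \ref{thm-inner} applied to $f|_\Lambda$ produces $\gamma_0\in\mod^*(S)$ with $f(\delta)=\gamma_0\delta\gamma_0^{-1}$ for every $\delta\in\Lambda$. To extend this equality to all of $\Gamma_1$, pick $\eta\in\Gamma_1$; normality of $\calk(S)$ in $\mod^*(S)$ makes $\Lambda\cap\eta^{-1}\Lambda\eta$ a finite-index subgroup of $\calk(S)$, and computing $f(\eta\delta\eta^{-1})$ in two ways for $\delta$ in this intersection shows that $\gamma_0^{-1}f(\eta)^{-1}\gamma_0\eta$ centralizes it, so it is trivial by the auxiliary fact. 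Hence $f(\eta)=\gamma_0\eta\gamma_0^{-1}$ on all of $\Gamma_1$, which gives $\gamma_0\Gamma_1\gamma_0^{-1}=\Gamma_2\subset\Gamma$, and in particular $\gamma_0\in\comm_G(\Gamma)$ with image $[f]$.

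The main obstacle is the initial reduction that allows Theorem \ref{thm-inner} to be invoked: it is precisely there that both hypotheses $[\calk(S):\Gamma\cap\calk(S)]<\infty$ and $[\Gamma:\Gamma\cap\cali(S)]<\infty$ are needed in tandem, in order to carve out a finite-index subgroup of $\calk(S)$ whose $f$-image still sits inside $\cali(S)$. The subsequent bootstrap from $\Lambda$ up to $\Gamma_1$, as well as the injectivity argument, is then a formal consequence of the centralizer triviality.
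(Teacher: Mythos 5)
Your proposal is correct and follows essentially the same route as the paper: injectivity and the bootstrap both reduce to the triviality of the centralizer in $G$ of a finite-index subgroup of $\calk(S)$, and surjectivity is driven by Theorem \ref{thm-inner} applied to a finite-index subgroup of $\calk(S)$ whose image lands in $\cali(S)$. The only differences are organizational: the paper proves your ``auxiliary fact'' inline rather than citing it (an element centralizing such a subgroup commutes with powers $t_a^n$ for every separating $a$, hence fixes every separating curve, hence every non-separating curve since two disjoint separating curves fill its complement, hence is trivial), it runs the bootstrap from $\Lambda$ to $\Gamma_1$ by comparing the cyclic Dehn-twist subgroups $\langle t_{\gamma a}\rangle$ rather than by your cleaner conjugation computation on $\Lambda\cap\eta^{-1}\Lambda\eta$, and instead of intersecting with $f^{-1}(\cali(S))$ it simply reduces at the outset to the case $\Gamma\subset\cali(S)$.
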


Recall that a group $\Gamma$ is said to be {\it co-Hopfian} if any injective homomorphism from $\Gamma$ into itself is surjective.

\begin{thm}\label{thm-coh}
Let $S=S_{g, p}$ be a surface and assume either $g\geq 3$ and $p\leq 1$ or $g=1$ and $p\geq 4$.
Then any subgroup $\Gamma$ of $\mod^*(S)$ with $[\calk(S): \Gamma \cap \calk(S)]<\infty$ and $[\Gamma :\Gamma \cap \cali(S)]<\infty$ is co-Hopfian.
In particular, any intermediate subgroup between $\calk(S)$ and $\cali(S)$ is co-Hopfian.
\end{thm}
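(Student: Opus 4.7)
The plan is to reduce any injective homomorphism $f\colon\Gamma\to\Gamma$ to conjugation by an element $\gamma_{0}\in\mod^{*}(S)$ via Theorem \ref{thm-inner}, and then to show that $\gamma_{0}$ normalizes $\Gamma$, which forces $f$ to be surjective.

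To produce $\gamma_{0}$, I set $\Gamma_{0}=(\Gamma\cap\calk(S))\cap f^{-1}(\cali(S))$. Using the two finite-index hypotheses on $\Gamma$ together with the injectivity of $f$, one checks $\Gamma_{0}$ has finite index in $\calk(S)$. Theorem \ref{thm-inner} then yields $\gamma_{0}\in\mod^{*}(S)$ with $f(\gamma)=\gamma_{0}\gamma\gamma_{0}^{-1}$ for all $\gamma\in\Gamma_{0}$. To extend this identity to all of $\Gamma$, for each $\eta\in\Gamma$ I pick a finite-index subgroup $\Gamma_{0}''\leq\Gamma_{0}$ with $\eta\Gamma_{0}''\eta^{-1}\subset\Gamma_{0}$, which is possible because $\Gamma_{0}$ has finite index in the normal subgroup $\calk(S)$. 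Comparing $f(\eta h\eta^{-1})$ with $f(\eta)f(h)f(\eta)^{-1}$ for $h\in\Gamma_{0}''$ shows that $\eta^{-1}\gamma_{0}^{-1}f(\eta)\gamma_{0}$ centralizes $\Gamma_{0}''$. Since the centralizer in $\mod^{*}(S)$ of any finite-index subgroup of $\calk(S)$ is trivial for the surfaces in question (the hyperelliptic involution is excluded by taking $g\ne2$), this forces $f(\eta)=\gamma_{0}\eta\gamma_{0}^{-1}$ throughout $\Gamma$.

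It remains to prove $\gamma_{0}\Gamma\gamma_{0}^{-1}=\Gamma$. Set $K=\Gamma\cap\calk(S)$ and $\Lambda=\Gamma\cap\cali(S)$. Normality of $\calk(S)$ and $\cali(S)$ in $\mod^{*}(S)$, together with $\gamma_{0}\Gamma\gamma_{0}^{-1}\subset\Gamma$, gives $\gamma_{0}K\gamma_{0}^{-1}\subset K$ and $\gamma_{0}\Lambda\gamma_{0}^{-1}\subset\Lambda$. As conjugation by $\gamma_{0}$ is an automorphism of $\calk(S)$, it preserves the finite index $[\calk(S):K]$, forcing $\gamma_{0}K\gamma_{0}^{-1}=K$. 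Projecting to $\cali(S)/\calk(S)$, which is a finitely generated free abelian group for the surfaces at hand, conjugation by $\gamma_{0}$ induces an element $\phi\in\mathrm{GL}(n,\mathbb{Z})$, and the subgroup $A:=\Lambda/K$ (identified with its injective image in $\cali(S)/\calk(S)$ via $\Lambda\cap\calk(S)=K$) satisfies $\phi(A)\subset A$. I expect this step to be the main obstacle: I would handle it by a saturation argument — $\phi$ sends the saturation $A^{\mathrm{sat}}$ of $A$ in $\cali(S)/\calk(S)$ into itself, and since $\phi(A^{\mathrm{sat}})$ is also saturated of the same rank it must equal $A^{\mathrm{sat}}$; thus $\phi$ restricts to an automorphism of $A^{\mathrm{sat}}$ of determinant $\pm1$, which preserves the index $[A^{\mathrm{sat}}:A]$, so $\phi(A)=A$.

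Combining $\phi(A)=A$ with $\gamma_{0}K\gamma_{0}^{-1}=K$ and the injectivity of $\Lambda/K\hookrightarrow\cali(S)/\calk(S)$ yields $\gamma_{0}\Lambda\gamma_{0}^{-1}=\Lambda$. Finally, conjugation by $\gamma_{0}$ descends to an injection of the finite group $\Gamma/\Lambda$, hence a bijection, so $\Gamma=(\gamma_{0}\Gamma\gamma_{0}^{-1})\cdot\Lambda$; together with $\Lambda=\gamma_{0}\Lambda\gamma_{0}^{-1}\subset\gamma_{0}\Gamma\gamma_{0}^{-1}$ this forces $\Gamma\subset\gamma_{0}\Gamma\gamma_{0}^{-1}$ and hence equality, proving co-Hopfianness. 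The "in particular" statement about intermediate subgroups between $\calk(S)$ and $\cali(S)$ follows immediately, since any such subgroup visibly satisfies the finite-index hypotheses.
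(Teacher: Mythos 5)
Your overall strategy coincides with the paper's: produce $\gamma_{0}$ from Theorem \ref{thm-inner} applied to a finite-index subgroup of $\calk(S)$, extend the conjugation identity to all of $\Gamma$ (your route via triviality of the centralizer of a finite-index subgroup of $\calk(S)$ in $\mod^*(S)$ is a legitimate variant of the Dehn-twist argument the paper uses at the end of the proof of Theorem \ref{thm-comm}), then upgrade $\gamma_{0}\Gamma\gamma_{0}^{-1}\subset\Gamma$ to equality by working through the layers $K=\Gamma\cap\calk(S)$, $\Lambda=\Gamma\cap\cali(S)$, $\Gamma/\Lambda$. The index argument forcing $\gamma_{0}K\gamma_{0}^{-1}=K$ and the finite-quotient argument at the top are both fine, and you do not actually need the paper's preliminary reduction to $\gamma_{0}\in\pmod(S)$.

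There is, however, a genuine gap at the middle layer: the assertion that $\cali(S)/\calk(S)$ is a finitely generated free abelian group ``for the surfaces at hand.'' This is Johnson's theorem when $g\geq 3$ and $p\leq 1$ (the quotient is $\wedge^{3}H/H$, resp.\ $\wedge^{3}H$), and in that case your saturation argument works --- though it is more elaborate than necessary, since it suffices to note that the induced surjection of the finitely generated abelian group $M/A$ onto itself is injective by Hopfianness, whence $\phi(A)=A$. But for $g=1$ and $p\geq 4$ the quotient $\cali(S)/\calk(S)$ is not abelian in general: it is assembled from point-pushing subgroups via iterated Birman exact sequences, and only the successive layers of that assembly are abelian. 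This is precisely why the paper proves Lemma \ref{lem-chain}, producing a chain of normal subgroups $\calk(S)=N_{0}<N_{1}<\cdots<N_{p-1}=\cali(S)$ of $\pmod(S)$ with finitely generated abelian quotients $N_{j}/N_{j-1}$, and Lemma \ref{lem-coh}, the general statement that an automorphism preserving such a filtration and carrying a subgroup $H$ into itself must carry $H$ onto itself. Without a substitute for these two lemmas your argument does not cover the genus-one case, which is half of the theorem; everything else in your write-up is sound.
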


The paper is organized as follows.
In Section \ref{sec-pre}, we review a definition of the complexes $\calc(S)$, $\calc_s(S)$ and $\calt(S)$ and the groups $\calk(S)$ and $\cali(S)$. 
We also recall known facts on simplicial maps between those complexes.
In Section \ref{sec-str}, we present an outline of the proof of Theorem \ref{thm-inj}.
In Section \ref{sec-simp}, we introduce several simplicial graphs associated to surfaces, which will be used in subsequent sections.
We provide basic properties of them.
The proof of Theorem \ref{thm-inj} is given throughout Sections \ref{sec-chi}--\ref{sec-other}.
Details of the organization of the proof is explained in Section \ref{sec-str}.
In Section \ref{sec-coh}, we prove Theorems \ref{thm-comm} and \ref{thm-coh}.


\section{Preliminaries}\label{sec-pre}

\subsection{Terminology}

Unless otherwise stated, we assume that a surface is connected, compact and orientable. 
Let $S=S_{g, p}$ denote a surface of genus $g$ with $p$ boundary components. 
A simple closed curve in $S$ is said to be {\it essential} in $S$ if it is neither homotopic to a point of $S$ nor isotopic to a boundary component of $S$. 
When there is no confusion, we mean by a curve in $S$ either an essential simple closed curve in $S$ or the isotopy class of it. 
A curve $a$ in $S$ is said to be {\it separating} in $S$ if $S\setminus a$ is not connected.
Otherwise, $a$ is said to be {\it non-separating} in $S$.
These properties depend only on the isotopy class of $a$.
A pair of non-separating curves in $S$, $\{ a, b \}$, is called a {\it bounding pair (BP)} in $S$ if $a$ and $b$ are disjoint and non-isotopic and if $S\setminus (a \cup b)$ is not connected.
These conditions depend only on the isotopy classes of $a$ and $b$.
We say that two non-separating curves in $S$ are {\it BP-equivalent} in $S$ if they either are isotopic or are disjoint and form a BP in $S$.

A surface homeomorphic to $S_{1, 1}$ is called a {\it handle}.
A surface homeomorphic to $S_{0, 3}$ is called a {\it pair of pants}.
Let $a$ be a separating curve in $S$. 
If $a$ cuts off a handle from $S$, then $a$ is called an {\it h-curve} in $S$. 
If $a$ cuts off a pair of pants from $S$, then $a$ is called a {\it p-curve} in $S$. 
When $S$ is not homeomorphic to $S_{0, 4}$, for a p-curve $a$ in $S$ and a component $\partial$ of $\partial S$, we say that $a$ {\it cuts off} $\partial$ if the pair of pants cut off by $a$ from $S$ contains $\partial$. 
A curve in $S$ which is either an h-curve in $S$ or a p-curve in $S$ is called an {\it hp-curve} in $S$. 
If a BP $b$ in $S$ cuts off a pair of pants from $S$, then $b$ is called a {\it p-BP} in $S$. 
When $S$ is not homeomorphic to $S_{1, 2}$, for a p-BP $b$ in $S$ and a component $\partial$ of $\partial S$, we say that $b$ {\it cuts off} $\partial$ if the pair of pants cut off by $b$ from $S$ contains $\partial$.

We denote by $V(S)$ the set of isotopy classes of essential simple closed curves in $S$.
For two elements $\alpha, \beta\in V(S)$, we define $i(\alpha, \beta)$ the {\it geometric intersection number}, i.e., the minimal cardinality of $A\cap B$ among representatives $A$ and $B$ of $\alpha$ and $\beta$, respectively.
Let $\Sigma(S)$ denote the set of non-empty finite subsets $\sigma$ of $V(S)$ with $i(\alpha, \beta)=0$ for any $\alpha, \beta \in \sigma$.
We mean by a {\it representative} of an element $\sigma$ of $\Sigma(S)$ the union of mutually disjoint representatives of elements in $\sigma$.

We extend the function $i$ to the symmetric function on $(V(S)\sqcup \Sigma(S))^2$ so that we have $i(\alpha, \sigma)=\sum_{\beta \in \sigma}i(\alpha, \beta)$ and $i(\sigma, \tau)=\sum_{\beta \in \sigma, \gamma \in \tau}i(\beta, \gamma)$ for any $\alpha \in V(S)$ and $\sigma, \tau \in \Sigma(S)$.
Let $\alpha$ and $\beta$ be elements of $V(S)\sqcup \Sigma(S)$.
We say that $\alpha$ and $\beta$ are {\it disjoint} if $i(\alpha, \beta)=0$, and otherwise they {\it intersect}.
For representatives $A$, $B$ of $\alpha$, $\beta$, respectively, we say that $A$ and $B$ {\it intersect minimally} if $|A\cap B|=i(\alpha, \beta)$.

For an element $\alpha$ of $V(S)$ (or its representative), we denote by $S_{\alpha}$ the surface obtained by cutting $S$ along $\alpha$. 
Similarly, for an element  $\sigma$ of $\Sigma(S)$ (or its representative), we denote by $S_{\sigma}$ the surface obtained by cutting $S$ along all curves in $\sigma$.
Each component of $S_{\sigma}$ is often identified with a complementary component of a tubular neighborhood of a one-dimensional submanifold representing $\sigma$ in $S$.
For each component $Q$ of $S_{\sigma}$, the set $V(Q)$ is then identified with a subset of $V(S)$.

Suppose that the boundary $\partial S$ of $S$ is non-empty.
We say that a simple arc $l$ in $S$ is {\it essential} in $S$ if
\begin{itemize}
\item $\partial l$ consists of two distinct points of $\partial S$;
\item $l$ meets $\partial S$ only at its end points; and
\item $l$ is not isotopic relative to $\partial l$ to an arc in $\partial S$.
\end{itemize}
Unless otherwise stated, isotopy of essential simple arcs in $S$ may move their end points, keeping them staying in $\partial S$.
An essential simple arc $l$ in $S$ is said to be {\it separating} in $S$ if $S\setminus l$ is not connected.
Otherwise, $l$ is said to be {\it non-separating} in $S$.
These properties depend only on the isotopy class of $l$.

Let $\partial_1$ and $\partial_2$ be distinct components of $\partial S$.
We say that an essential simple arc $l$ in $S$ {\it connects $\partial_1$ and $\partial_2$} if one of the end point of $l$ lies in $\partial_1$ and another in $\partial_2$.
In this case, we obtain the p-curve $a$ in $S$ that is the boundary component of a regular neighborhood of $l\cup \partial_1\cup \partial_2$ in $S$ other than $\partial_1$ and $\partial_2$.
The isotopy class of $a$ depends only on the isotopy class of $l$.
The curve $a$ is then called the curve in $S$ {\it defined by} $l$.
Conversely, if $b$ is a p-curve in $S$ cutting off $\partial_1$ and $\partial_2$, then up to isotopy, there exists a unique essential simple arc in $S$ connecting $\partial_1$ and $\partial_2$ and disjoint from a curve in $S$ isotopic to $b$.
Such an arc is called a {\it defining arc} of (the isotopy class of) $b$.


\subsection{Simplicial complexes associated to surfaces}\label{subsec-comp}

We fix a surface $S$.
We recall three simplicial complexes associated to $S$.
The complex of curves is originally introduced by Harvey \cite{harvey}.
The complex of separating curves appears in \cite{bm}, \cite{bm-add}, \cite{farb-ivanov} and \cite{mv}.
The Torelli complex (with a certain marking and for a closed surface) is originally introduced by Farb-Ivanov \cite{farb-ivanov}.

\medskip

\noindent {\bf The complex of curves.} We define $\calc(S)$ as the abstract simplicial complex such that the sets of vertices and simplices of $\calc(S)$ are $V(S)$ and $\Sigma(S)$, respectively.
The complex $\calc(S)$ is called the {\it complex of curves} for $S$.

\medskip

\noindent {\bf The complex of separating curves.} Let $V_s(S)$ denote the set of all elements of $V(S)$ whose representatives are separating in $S$.
We define $\calc_s(S)$ as the full subcomplex of $\calc(S)$ spanned by $V_s(S)$ and call it the {\it complex of separating curves} for $S$. 

\medskip

\noindent {\bf The Torelli complex.} Let $V_{bp}(S)$ denote the set of all elements of $\Sigma(S)$ whose representatives are BPs in $S$.
The {\it Torelli complex} for $S$, denoted by $\calt(S)$, is defined as the abstract simplicial complex such that the set of vertices of $\calt(S)$ is the disjoint union $V_s(S)\sqcup V_{bp}(S)$, and a non-empty finite subset $\sigma$ of $V_s(S)\sqcup V_{bp}(S)$ is a simplex of $\calt(S)$ if and only if any two elements of $\sigma$ are disjoint.
Let $V_t(S)$ denote the set of vertices of $\calt(S)$.

\medskip

For a simplex $\sigma$ of $\calt(S)$, we denote by $S_{\sigma}$ the surface obtained by cutting $S$ along all separating curves in $\sigma$ and all non-separating curves in BPs in $\sigma$, where isotopic curves are identified. 
We say that two elements $b_1$, $b_2$ of $\sigma \cap V_{bp}(S)$ are {\it BP-equivalent} if any two distinct curves of $b_1\cup b_2$ form a BP in $S$. 
A {\it BP-equivalence class} of $\sigma$ is an equivalence class of $\sigma \cap V_{bp}(S)$ with respect to the BP-equivalence relation. 
The following lemma is a basic observation on BPs, which will be used many times throughout the paper.

\begin{lem}[\ci{Lemma 3.1}{kida}]\label{lem-bp}
Let $a$ be a BP in $S$, and let $b$ be either a separating curve in $S$ with $i(a, b)=0$ or a BP in $S$ which satisfies $i(a, b)=0$, but is not BP-equivalent to $a$. 
Then the two curves in $a$ are in a single component of $S_b$.
\end{lem}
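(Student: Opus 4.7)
The plan is to argue by contradiction: assume $a_1$ and $a_2$ lie in different components of $S_b$ and derive a contradiction. The key structural fact I will use is that, since $\{a_1,a_2\}$ is a BP, the cut surface $S_a$ has exactly two components $P$ and $Q$, and each of $P, Q$ contains exactly one copy of $a_1$ and one copy of $a_2$ in its boundary—otherwise one of the $a_i$ alone would already disconnect $S$, contradicting that it is non-separating.

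First I would handle the case that $b$ is a single separating curve. Since $b$ is disjoint from $a_1\cup a_2$, it lies entirely in $P$ or in $Q$; say $b\subset P$. A short check on whether $b$ separates $P$, and on how the two boundary copies of $a_1, a_2$ in $\partial P$ distribute relative to $b$, forces either $S_b$ to be connected (contradicting that $b$ separates $S$) or both copies to lie on the same side of $b$ in $P$—in which case $a_1$ and $a_2$ both lie in the component of $S_b$ containing $Q$, contradicting the assumption.

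The harder case is $b=\{b_1,b_2\}$ a BP not BP-equivalent to $a$. If both $b_1, b_2$ lie in the same component of $S_a$, say $P$, then $Q$ is uncut and connects the pieces of $P_b$ through both $a_1$ and $a_2$, so $a_1$ and $a_2$ end up in the same component of $S_b$: contradiction. So one is reduced to the sub-case $b_1\subset P$, $b_2\subset Q$. Using the count $|\pi_0(S_b)|=2$ together with the fact that each $b_i$ is non-separating in $S$—which rules out any component of $P_{b_1}$ or $Q_{b_2}$ that does not touch $a_1\cup a_2$, since such a component would be cut off in $S$ by the single curve $b_i$—I would narrow down to the symmetric configuration in which $b_1$ separates $P$ into $P^1\sqcup P^2$ with $a_1\subset\partial P^1$, $a_2\subset\partial P^2$, and analogously $b_2$ separates $Q$ into $Q^1\sqcup Q^2$ with $a_1\subset\partial Q^1$, $a_2\subset\partial Q^2$.

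In that symmetric configuration I would verify directly that each of the four pairs $\{a_i,b_j\}$ is a BP in $S$: for each pair one reglues the four pieces $P^1,P^2,Q^1,Q^2$ along the two curves that are not cut and counts exactly two components. This shows that $a$ and $b$ are BP-equivalent, contradicting the hypothesis. The main obstacle I anticipate is the bookkeeping in this last sub-case; the non-separating condition on each $b_i$ and the BP property of $\{b_1,b_2\}$ must be used carefully to eliminate the asymmetric configurations before the four-pair verification becomes routine.
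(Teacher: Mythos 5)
The paper does not actually prove this lemma; it is imported as \cite[Lemma 3.1]{kida}, so there is no in-text argument to compare yours against. Judged on its own, your cut-and-paste argument is correct and essentially complete: the structural fact that each component of $S_a$ carries exactly one boundary copy of $a_1$ and one of $a_2$ is right (and your justification via non-separation of each $a_i$ is the right one), the case of a separating $b$ is handled correctly, and in the BP case the reduction to the symmetric configuration (each $b_i$ must separate its component of $S_a$, and each piece must meet $a_1\cup a_2$, again because each $b_i$ alone does not separate $S$) followed by the check that all four pairs $\{a_i,b_j\}$ separate $S$ does establish BP-equivalence of $a$ and $b$, giving the contradiction. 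The one point you pass over silently is the possibility that $b$ shares an isotopy class with $a$, say $b_1$ isotopic to $a_1$: then $b_1$ is not an essential curve of $P$ or $Q$, and indeed the conclusion of the lemma itself becomes ambiguous since $a_1$ would appear as a boundary circle of both components of $S_b$. You should note that this is excluded by the hypotheses: if $b_1=a_1$, then because $\{a_1,a_2\}$ and $\{a_1,b_2\}$ are both BPs one has $[a_2]=[a_1]=[b_2]$ in $H_1(S,\partial S;\mathbb{Z}/2)$, hence $a_2\cup b_2$ separates $S$ and $\{a_2,b_2\}$ is a BP (they are disjoint and non-isotopic since $b\neq a$), so $a$ and $b$ would be BP-equivalent. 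With that sentence added before you treat $b_1$, $b_2$ as essential curves in components of $S_a$, the argument is complete.
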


\noindent {\bf Superinjective maps.} Let $X$ and $Y$ be any of the simplicial complexes $\calc(S)$, $\calc_s(S)$ and $\calt(S)$. 
We denote by $V(X)$ and $V(Y)$ the sets of vertices of $X$ and $Y$, respectively. Note that a map $\phi \colon V(X)\rightarrow V(Y)$ defines a simplicial map from $X$ into $Y$ if and only if $i(\phi(a), \phi(b))=0$ for any two vertices $a, b\in V(X)$ with $i(a, b)=0$. 
We mean by a {\it superinjective map} $\phi \colon X\rightarrow Y$ a simplicial map $\phi \colon X\rightarrow Y$ satisfying $i(\phi(a), \phi(b))\neq 0$ for any two vertices $a, b\in V(X)$ with $i(a, b)\neq 0$. 
One can check that any superinjective map is injective (see Section 2.2 of \cite{kida}).

Let $g$ denote the genus of $S$ and $p$ the number of boundary components of $S$.
We assume $3g+p-4>0$. 
As proved in \cite{iva-aut}, \cite{kork-aut} and \cite{luo}, any automorphism of $\calc(S)$ is generally induced by an element of $\mod^*(S)$. 
In \cite{be-m}, \cite{bm-ar}, \cite{irmak1}, \cite{irmak2} and \cite{irmak-ns}, it is shown that any superinjective map from $\calc(S)$ into itself is surjective. 
More generally, in \cite{sha}, any injective simplicial map from $\calc(S)$ into itself is shown to be surjective. 
As for the complex of separating curves, we know the following:

\begin{thm}[\cite{kida-cohop}]\label{thm-cs}
Let $S=S_{g, p}$ be a surface and assume one of the following three conditions: $g\geq 3$ and $p\geq 0$; $g=2$ and $p\geq 2$; or $g=1$ and $p\geq 3$. 
Then any superinjective map from $\calc_s(S)$ into itself is induced by an element of $\mod^*(S)$.
\end{thm}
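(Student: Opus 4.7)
The plan is to promote a superinjective self-map $\phi\colon \calc_s(S)\to \calc_s(S)$ to a superinjective self-map of the full curve complex $\calc(S)$ and then invoke the known rigidity of $\calc(S)$. The overall strategy follows the Ivanov-Korkmaz-Irmak paradigm: characterize topological invariants of separating curves purely in terms of the combinatorics of $\calc_s(S)$, show $\phi$ preserves them, and use these invariants to extend $\phi$ across the rest of $\calc(S)$.

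First I would establish combinatorial characterizations of the topological type of a separating curve. A separating curve $a$ cuts $S$ into two components, and the unordered pair of their homeomorphism types is the \emph{type} of $a$. The link $\lk(a)$ in $\calc_s(S)$ decomposes essentially as the join of the separating-curve complexes of the two sides, which allows the type of $a$ to be recovered combinatorially; in particular one can distinguish h-curves from p-curves, and among p-curves distinguish those cutting off different boundary components. Since $\phi$ is superinjective it both preserves and reflects disjointness, and hence must preserve the type of every separating curve. A refined version of this analysis treats pairs of separating curves that cobound a prescribed subsurface, yielding a combinatorial notion of ``neighboring'' curves that is likewise preserved by $\phi$.

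Next I would encode non-separating curves in terms of separating ones. The key observation is that a non-separating curve $c$ can be singled out by a sufficiently rich finite family of separating curves around it---for instance, by prescribing an h-curve bounding a handle that contains $c$ together with further separating curves that rigidify the embedding of that handle into $S$---and a similar combinatorial recipe identifies BPs. Using type-preservation and the neighboring structure from the previous step, I would verify that $\phi$ sends such configurations to configurations of the same shape, so that it induces a well-defined injection on $V(S)\setminus V_s(S)$ which respects disjointness. Together with $\phi|_{V_s(S)}$, these assemble into a superinjective simplicial map $\widetilde\phi\colon \calc(S)\to \calc(S)$.

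Finally, by the superinjective rigidity of $\calc(S)$ established in \cite{be-m}, \cite{bm-ar}, \cite{irmak1}, \cite{irmak2}, and \cite{irmak-ns}, the map $\widetilde\phi$ is surjective and hence an automorphism, and by \cite{iva-aut}, \cite{kork-aut}, and \cite{luo} it is induced by a unique element $\gamma\in\mod^*(S)$; restricting back to $V_s(S)$ recovers $\phi$. The hypotheses on $(g,p)$ enter precisely to guarantee that $\calc_s(S)$ is rich enough for the encoding step: in excluded cases such as $S_{2,1}$ or $S_{1,2}$, separating curves are too sparse to pin down all non-separating ones. The main obstacle is this encoding step: constructing the injection from non-separating curves into configurations of separating curves, and verifying that it is compatible with $\phi$ in the way needed to yield a simplicial extension, requires delicate case-by-case geometric arguments, particularly for the low-genus surfaces $S_{2,p}$ and $S_{1,p}$ where available separating configurations are most constrained.
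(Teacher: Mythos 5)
You should first note that this paper does not actually prove Theorem \ref{thm-cs}: it is imported verbatim from \cite{kida-cohop}, so there is no in-paper proof to compare against. Measured against the strategy of the cited work (which goes back to Brendle--Margalit), your outline does follow the right paradigm: show a superinjective self-map of $\calc_s(S)$ preserves topological types, encode non-separating curves by configurations of separating curves, extend to a superinjective self-map of $\calc(S)$, and invoke the known rigidity of $\calc(S)$.

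As a proof, however, there is a genuine gap: everything you label as ``the main obstacle'' is the entire content of the theorem, and the steps you do sketch are too quick. First, the claim that the link of a separating curve decomposes as a join and therefore determines the type of the curve only yields, at best, the Euler characteristics of the two sides (via dimensions of maximal simplices); distinguishing genus from boundary components on each side, e.g.\ telling an h-curve from a p-curve in $S_{2,2}$ or $S_{1,p}$, and ruling out that $\phi$ scrambles these, requires substantial work --- in the present paper the analogous $\chi$-preservation statement for maps into $\calt(S)$ occupies all of Section \ref{sec-chi}, and type preservation occupies Sections \ref{sec-p1}--\ref{sec-other}. Second, your encoding of a non-separating curve $c$ by ``an h-curve bounding a handle containing $c$ together with further separating curves'' is not a construction: the standard device is a sharing pair of h-curves (Definition \ref{defn-share}) whose handles meet in an annulus with core $c$, together with a purely combinatorial characterization of sharing pairs by disjointness and non-disjointness with auxiliary curves (this is exactly what the configurations of Figure \ref{fig-s-pair-cha} are for). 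You must prove that $\phi$ preserves that characterization, that the induced map on non-separating curves is well defined (independent of the sharing pair representing $c$), and that it preserves and reflects disjointness against both separating and non-separating curves; without these verifications $\widetilde\phi$ is neither well defined nor superinjective. Third, for $g=1$, $p\geq 3$ (which your statement must cover, including $S_{1,3}$) there are no sharing pairs of h-curves at all, since Definition \ref{defn-share} requires $g\geq 2$; the encoding has to be replaced by a different mechanism (for instance sharing pairs of p-curves as in Definition \ref{defn-share-p}), and this low-genus regime is precisely where the argument of \cite{kida-cohop} is most delicate. So the proposal is a correct plan in outline, but the decisive lemmas are asserted rather than proved.
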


It is also known that if $S$ is the surface in Theorem \ref{thm-cs}, then any superinjective map from $\calt(S)$ into itself is induced by an element of $\mod^*(S)$ (see \cite{kida-cohop}).


\subsection{The Johnson kernel and the Torelli group}\label{subsec-mcg}

Let $S$ be a surface. 
We define $\mod(S)$ as the {\it mapping class group} for $S$, i.e., the subgroup of $\mod^*(S)$ consisting of all isotopy classes of orientation-preserving homeomorphisms from $S$ onto itself.
We define $\pmod(S)$ as the {\it pure mapping class group} for $S$, i.e., the subgroup of $\mod^*(S)$ consisting of all isotopy classes of homeomorphisms from $S$ onto itself that preserve an orientation of $S$ and each boundary component of $S$ as a set.
Both $\mod(S)$ and $\pmod(S)$ are normal subgroups of $\mod^*(S)$ of finite index.

For each $a\in V(S)$, we denote by $t_a\in \pmod(S)$ the {\it (left) Dehn twist} about $a$. 
The {\it Johnson kernel} $\calk(S)$ is the subgroup of $\pmod(S)$ generated by all $t_a$ with $a\in V_s(S)$. 
The {\it Torelli group} $\cali(S)$ for $S$ is the subgroup of $\pmod(S)$ generated by all $t_a$ with $a\in V_s(S)$ and all $t_b t_c^{-1}$ with $\{ b, c\} \in V_{bp}(S)$.
Both $\calk(S)$ and $\cali(S)$ are normal subgroups of $\mod^*(S)$ because the equality $\gamma t_a\gamma^{-1}=t_{\gamma a}^{\varepsilon}$ holds for any $a\in V(S)$ and $\gamma \in \mod^*(S)$, where $\varepsilon =1$ if $\gamma \in \mod(S)$, and $\varepsilon =-1$ otherwise.


\section{Strategy}\label{sec-str}

Let $S=S_{g, p}$ be a surface with $g\geq 1$ and $|\chi(S)|\geq 4$.
This is equivalent to the condition that we have either $g\geq 3$ and $p\geq 0$; $g=2$ and $p\geq 2$; or $g=1$ and $p\geq 4$.
Let $\phi \colon \calc_s(S)\rightarrow \calt(S)$ be a superinjective map.
In what follows, we present an outline to prove the inclusion $\phi(\calc_s(S))\subset \calc_s(S)$.

\medskip

\noindent {\bf Step 1.} In Section \ref{sec-chi}, we prove that $\phi$ is $\chi$-preserving. 
Namely, for each $\alpha \in V_s(S)$, if we denote by $Q_1$ and $Q_2$ the two components of $S_{\alpha}$ and denote by $R_1$ and $R_2$ the two components of $S_{\phi(\alpha)}$, then the equality $\chi(Q_j)=\chi(R_j)$ holds for each $j=1, 2$ after exchanging the indices if necessary. 
In particular, $\phi$ sends each hp-curve in $S$ to either an hp-curve in $S$ or a p-BP in $S$.

When $p=0$, $\phi$ preserves h-curves in $S$ because there exists neither p-curve in $S$ nor p-BP in $S$.
The inclusion $\phi(\calc_s(S))\subset \calc_s(S)$ then readily follows (see Section \ref{subsec-p0}).
When $p\geq 1$, the inclusion is proved by induction on $p$ in Steps 2 and 3.

\medskip

\noindent {\bf Step 2.} This is the first step of the induction on $p$.
We deal with $S_{g, 1}$ with $g\geq 3$ and $S_{2, 2}$ in Section \ref{sec-p1} and deal with $S_{1, 4}$ in Section \ref{sec-14}.

When either $S=S_{g, 1}$ with $g\geq 3$ or $S=S_{2, 2}$, most of the proof of the inclusion $\phi(\calc_s(S))\subset \calc_s(S)$ is devoted to deducing a contradiction on the assumption that we have an h-curve $a$ in $S$ with $\phi(a)$ a p-BP in $S$.
A central idea to deduce a contradiction is based on the implicit fact that there are curves in $S_{2, 1}$ more than in $S_{1, 3}$ in spite of the equality $\chi(S_{2, 1})=\chi(S_{1, 3})$.
Indeed, $S_{1, 3}$ is embedded in $S_{2, 1}$ as the complement of a tubular neighborhood of a non-separating curve in $S_{2, 1}$.
We fix a separating curve $z$ in $S$ such that $z$ is disjoint from $a$ and non-isotopic to $a$; and the component of $S_z$ containing $a$, denoted by $X$, is homeomorphic to $S_{2, 1}$.
We can then show that $\phi(z)$ is a BP in $S$ containing a curve in $\phi(a)$.
Since $\phi$ is $\chi$-preserving, the component of $S_{\phi(z)}$, denoted by $Y$, containing a curve in $\phi(a)$ as an essential one is homeomorphic to $S_{1, 3}$.
We now pick an h-curve $b$ in $X$ such that $\{ a, b\}$ is a sharing pair in $X$ (see Definition \ref{defn-share}).
The image $\phi(b)$ is understood by using the curves in Figure \ref{fig-s-pair-cha} that are originally introduced by Brendle-Margalit \cite{bm-add} to characterize sharing pairs in terms of disjointness and non-disjointness.
To deduce a contradiction, we introduce the simplicial graph $\cal{B}$ whose vertices are h-curves in $X$ forming a sharing pair in $X$ together with $a$ and satisfying an additional condition.
We also introduce the simplicial graph $\cal{G}$ whose vertices are certain p-curves in $Y$.
Using information on $\phi(b)$, we construct an injective simplicial map from $\cal{B}$ into $\cal{G}$.
On the other hand, in Section \ref{subsec-inj}, it is shown that there exists no injective simplicial map from $\cal{B}$ into $\cal{G}$.
We thus obtain a contradiction.

When $S=S_{1, 4}$, on the assumption that we have an hp-curve $a$ in $S$ with $\phi(a)$ a p-BP in $S$, we deduce a contradiction in an analogous way.
We aim to construct an injective simplicial map from a simplicial graph associated to $S_{1, 2}$ into a simplicial graph associated to $S_{0, 4}$.
The definition of these graphs and that of $\cal{B}$ and $\cal{G}$ are based on a similar idea.
Using results in Section \ref{subsec-inj}, we deduce a contradiction.

\begin{rem}\label{rem-13}
When $S=S_{1, 4}$, the choice of curves in Figure \ref{fig-14} is crucial in the construction of the injective simplicial map mentioned above.
We do not know similar choice of curves in the case $S=S_{1, 3}$. 
This is why $S_{1, 3}$ is not dealt with in Theorem \ref{thm-inj} although it is shown in \cite{kida-cohop} that for $S=S_{1, 3}$, any superinjective map from $\calc_s(S)$ into itself is induced by an element of $\mod^*(S)$.
\end{rem}

\noindent {\bf Step 3.} In Section \ref{sec-other}, we deal with the remainder of surfaces.
The proof consists of straightforward arguments using the hypothesis of the induction on $p$.


\section{Simplicial graphs associated to surfaces}\label{sec-simp}

In this section, we introduce several simplicial graphs associated to $S_{0, 4}$, $S_{1, 2}$, $S_{1, 3}$ and $S_{2, 1}$, and show results on non-existence of injective simplicial maps between those graphs.
The results will be used in Section \ref{sec-p1} and \ref{sec-14}.

\subsection{Simplicial graphs associated to $S_{0, 4}$}\label{subsec-farey}

Throughout this subsection, we put $R=S_{0, 4}$.
The aim of this subsection is to provide basic facts on the graphs $\cal{F}$, $\cal{E}$ and $\cal{H}$ introduced below.

\medskip

\noindent {\bf Graph $\cal{F}$.} Let $R=S_{0, 4}$ be a surface.
We define $\cal{F}=\cal{F}(R)$ as the simplicial graph such that the set of vertices of $\cal{F}$ is $V(R)$, and two vertices $a$, $b$ of $\cal{F}$ are connected by an edge of $\cal{F}$ if and only if we have $i(a, b)=2$.

\medskip

This graph is known to be isomorphic to the Farey graph realized as an ideal triangulation of the Poincar\'e disk (see Section 3.2 in \cite{luo}).

Let $G$ be a simplicial graph.
We mean by a {\it triangle} in $G$ a subgraph of $G$ consisting of exactly three vertices and exactly three edges. 
We say that two vertices $v_1$, $v_2$ of $G$ {\it lie in a diagonal position of two adjacent triangles in $G$} if there exist two triangles $\Delta_1$, $\Delta_2$ in $G$ such that $v_1\in \Delta_1$, $v_2\in \Delta_2$ and $\Delta_1\cap \Delta_2$ is an edge of $G$ containing neither $v_1$ nor $v_2$.

\begin{prop}\label{prop-diag}
Let $R=S_{0, 4}$ be a surface and put $\cal{F}=\cal{F}(R)$.
Let $a$ and $b$ be curves in $R$.
Then the following three conditions are equivalent:
\begin{enumerate}
\item[(a)] The two vertices $a$, $b$ of $\cal{F}$ lie in a diagonal position of two adjacent triangles in $\cal{F}$.
\item[(b)] There exist defining arcs $l$, $r$ of $a$, $b$, respectively, such that the set of the two components of $\partial R$ connected by $l$ and that for $r$ are equal; and $l$ and $r$ are disjoint and non-isotopic.
\item[(c)] We have $i(a, b)=4$.
\end{enumerate}
\end{prop}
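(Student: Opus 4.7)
The plan is to deduce all three equivalences from the classical identification of $\cal{F}(R)$ with the Farey graph (see Section 3.2 of \cite{luo}). Via the hyperelliptic double cover $T^{2}\to R$, essential simple closed curves on $R$ correspond bijectively to primitive slopes $p/q\in\mathbb{Q}\cup\{\infty\}$, the intersection number satisfies $i(a,b)=2|p_{a}q_{b}-p_{b}q_{a}|$, edges of $\cal{F}$ are precisely Farey-adjacent pairs, and $\mathrm{PSL}_{2}(\mathbb{Z})$ acts as automorphisms of $\cal{F}$ transitively on ordered pairs of adjacent Farey triangles.

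For (a)$\Leftrightarrow$(c) the argument is combinatorial. Normalizing the shared edge via $\mathrm{PSL}_{2}(\mathbb{Z})$ to $\{0/1,1/0\}$, the only Farey triangles containing this edge have third vertices $1/1$ and $-1/1$, so (a) implies $\{a,b\}=\{1/1,-1/1\}$ and $i(a,b)=2\,|1\cdot 1-1\cdot(-1)|=4$. Conversely, given $|p_{a}q_{b}-p_{b}q_{a}|=2$, I normalize to $a=0/1$, $b=2/1$; a direct check shows that $1/0$ and $1/1$ are the only common Farey neighbors of $a$ and $b$, and these are themselves Farey-adjacent, so the triangles $\{0/1,1/0,1/1\}$ and $\{2/1,1/0,1/1\}$ witness (a).

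For (b)$\Rightarrow$(c), given disjoint non-isotopic defining arcs $l,r$ sharing endpoints on $\partial_{1}\cup\partial_{2}$, I represent $a,b$ as the interior boundary curves of regular neighborhoods $N_{\epsilon}(l\cup\partial_{1}\cup\partial_{2})$ and $N_{\epsilon'}(r\cup\partial_{1}\cup\partial_{2})$ with thicknesses $\epsilon\neq\epsilon'$. Since $l,r$ are disjoint in the interior of $R$, all transverse crossings of $a$ and $b$ lie in the collars of $\partial_{1}$ and $\partial_{2}$; near each $\partial_{i}$, the two radial strands of one curve (coming from the strip along $l$ or $r$) cross the near-parallel circular arc of the other exactly twice, giving at most $4$ intersections in total. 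On the other hand, $l$ and $r$ sharing endpoints forces $a,b$ to cut off the same pair of boundary components, so their slopes satisfy $(p_{a},q_{a})\equiv(p_{b},q_{b})$ modulo $2$ up to sign, hence $|p_{a}q_{b}-p_{b}q_{a}|$ is a positive even integer; combined with $a\neq b$, this yields $i(a,b)\geq 4$, and equality (c) follows.

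For (c)$\Rightarrow$(b), the identity $|p_{a}q_{b}-p_{b}q_{a}|=2$ again forces $(p_{a},q_{a})\equiv(p_{b},q_{b})$ modulo $2$ up to sign, so $a,b$ cut off the same pair of boundary components, say $\partial_{1}$ and $\partial_{2}$. Normalizing by a mapping class realizing a suitable element of $\mathrm{PSL}_{2}(\mathbb{Z})$ on slopes, I may assume $a=0/1$ and $b=2/1$; the required arcs $l,r$ are then exhibited explicitly on the $T^{2}$-cover as the segments of the lines $y=0$ and $y=2x$ joining the two branch points corresponding to $\partial_{1}$ and $\partial_{2}$, which meet only at those branch points and hence descend to disjoint non-isotopic arcs on $R$ defining $a$ and $b$. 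The main obstacle is precisely this explicit geometric step, which is handled cleanly in the torus cover rather than directly in $R$; the parity/normalization reductions make the concrete case-analysis short and symmetric.
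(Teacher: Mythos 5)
Your proof is correct, but it takes a genuinely different route from the paper's. You work throughout in the slope model of $V(S_{0,4})$ coming from the branched double cover by the torus, using the intersection formula $i(a,b)=2|p_aq_b-p_bq_a|$ and the dictionary between $(p,q)\bmod 2$ and the induced partition of the four boundary components: then (a)$\Leftrightarrow$(c) is a short computation in the Farey tessellation after normalizing by $\mathrm{PSL}_2(\mathbb{Z})$, the lower bound in (b)$\Rightarrow$(c) and the first half of (c)$\Rightarrow$(b) are parity arithmetic, and the remaining geometry is reduced to two straight arcs on the torus. The paper instead argues directly on the surface: (a)$\Rightarrow$(b),(c) via transitivity of $\pmod(R)$ on the relevant configurations; (b)$\Rightarrow$(a) by normalizing $l$ and $r$ with the half twist about $a$ and exhibiting two explicit auxiliary curves $c$, $c'$ completing $a$ and $b$ to adjacent triangles; and (c)$\Rightarrow$(b) by a cut-and-paste analysis of minimal-position representatives --- the author explicitly notes that a classification-based shortcut exists but chooses to avoid it. Your approach buys brevity and a uniform treatment of all three implications, at the price of importing the full slope classification (intersection formula, parity dictionary, realization of $\mathrm{PGL}_2(\mathbb{Z})$ by mapping classes), which is more than the paper's single citation to Luo for the isomorphism type of $\mathcal{F}$; the paper's argument is longer but self-contained at the level of elementary surface topology. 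Two small presentational points, neither a gap: in your (b)$\Rightarrow$(c) upper bound the four crossings are, more precisely, the two strip sides of the thinner regular neighborhood crossing the collar circles of the thicker one near each of the two shared boundary components (the symmetric phrasing ``of one curve \ldots of the other'' blurs this), and in the normalizations you should note that after sending $a$ to $0/1$ the second slope has the form $\pm 2/q$ with $q$ odd, so the parabolic stabilizer of $0/1$ does bring it to $2/1$.
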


\begin{proof}
Let $\partial_1,\ldots, \partial_4$ denote the four components of $\partial R$.
Note that $\mod(R)$ acts transitively on the set of oriented edges of $\cal{F}$.
It follows that $\pmod(R)$ acts transitively on the set of all non-oriented edges of $\cal{F}$ consisting of two curves $c_3$, $c_4$ in $R$ such that for each $j=3, 4$, $c_j$ cuts off a pair of pants containing $\partial_1$ and $\partial_j$ from $R$.
The group $\pmod(R)$ thus acts transitively on the set of all non-ordered pairs of two curves $d_1$, $d_2$ in $R$ such that each of $d_1$ and $d_2$ cuts off a pair of pants containing $\partial_1$ and $\partial_2$ from $R$; and $d_1$ and $d_2$ lie in a diagonal position of two adjacent triangles in $\cal{F}$.
\begin{figure}
\begin{center}
\includegraphics[width=6cm]{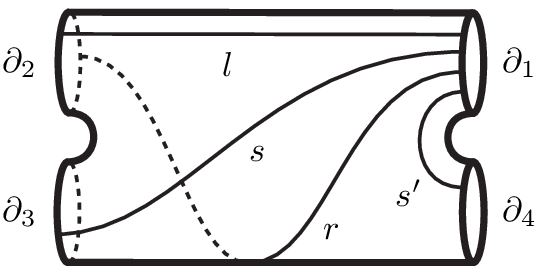}
\caption{}\label{fig-arcs}
\end{center}
\end{figure}
We can thus show that condition (a) implies conditions (b) and (c) by describing four curves $a_1$, $a_2$, $a_3$ and $a_4$ in $R$ such that each of $\{ a_1, a_3, a_4\}$ and $\{ a_2, a_3, a_4\}$ forms a triangle in $\cal{F}$.

We assume condition (b).
We may assume that $l$ and $r$ connect $\partial_1$ and $\partial_2$.
Let $h\in \mod(R)$ be the half twist about $a$ exchanging $\partial_1$ and $\partial_2$ and being the identity on the component of $R_a$ containing $\partial_3$ and $\partial_4$.
If $l$ is described as in Figure \ref{fig-arcs}, then $r$ is described as in the same Figure up to a power of $h$.
Let $s$ and $s'$ be the essential simple arcs in $R$ described in the same Figure.
Let $c$ and $c'$ denote the curves in $R$ defined by $s$ and $s'$, respectively. 
Each of $\{ a, c, c'\}$ and $\{ b, c, c'\}$ forms a triangle in $\cal{F}$. 
We thus obtain condition (a).

Finally, we prove that condition (c) implies condition (b).
Although this can readily be shown by using Lemma 3.2 in \cite{kida-cohop}, we present a direct proof without using it.
We assume $i(a, b)=4$.
We may assume that $a$ cuts off a pair of pants containing $\partial_1$ and $\partial_2$ from $R$.
Let $A$ and $B$ be representatives of $a$ and $b$, respectively, with $|A\cap B|=i(a, b)$.
We denote by $P$ the component of $R_A$ containing $\partial_3$ and $\partial_4$.
We put $P'=R_A\setminus P$.
Since we have $|A\cap B|=4$, the intersection $B\cap P$ consists of two essential simple arcs in $P$, denoted by $s_1$ and $s_2$.
The arcs $s_1$ and $s_2$ are isotopic because $P$ is a pair of pants.
The intersection $B\cap P'$ also consists of two isotopic, essential simple arcs in $P'$, denoted by $s_3$ and $s_4$.

Fix an orientation of $A$.
For each $j=1, 2$, we put $\partial s_j=\{ p_j, q_j\}$ so that $p_1$, $q_1$, $q_2$ and $p_2$ appear along $A$ in this order.
For each $j=3, 4$, the arc $s_j$ connects neither $p_1$ and $q_1$ nor $p_2$ and $q_2$ because otherwise $s_j$ and either $s_1$ or $s_2$ would form a simple closed curve in $R$.
For each $j=3, 4$, the arc $s_j$ connects neither $p_1$ and $q_2$ nor $p_2$ and $q_1$ because $s_j$ is separating in $P'$.
It turns out that $s_3$ and $s_4$ connect either $p_1$ and $p_2$ or $q_1$ and $q_2$.

Let $I$ and $J$ denote the components of $A\setminus \{ p_1, p_2\}$ and $A\setminus \{ q_1, q_2\}$, respectively, that contain no point of $A\cap B$.
We may assume that $I$ and $\partial_1$ (resp.\ $J$ and $\partial_2$) lie in the same component of $P'\setminus B$.
Pick essential simple arcs $u_1$, $u_2$ in $P'$ such that
\begin{itemize}
\item $u_1$ connects a point of $\partial_1$ with a point of $I$, and $u_2$ connects a point of $\partial_2$ with a point of $J$; and
\item both $u_1$ and $u_2$ are disjoint from $B\cap P'$.
\end{itemize}
Since $s_1\cup s_2$ cuts off from $P$ a disk whose boundary is the union of $s_1$, $s_2$, $I$ and $J$, there exists an essential simple arc $u_3$ in $P$ disjoint from $B\cap P$ and connecting the point of $u_1\cap I$ with the point of $u_2\cap J$.
We define $r$ as the union $u_1\cup u_2\cup u_3$, which is an essential simple arc in $R$ connecting $\partial_1$ and $\partial_2$ and disjoint from $B$.
Pick an essential simple arc $l$ in $P'$ connecting $\partial_1$ and $\partial_2$ and disjoint from $u_1$ and $u_2$.
The arc $l$ is disjoint from $A$ and $r$.
Condition (b) is obtained.
\end{proof}

Lemma \ref{lem-pre-h4-p4} motivates us to introduce the following:

\medskip

\noindent {\bf Graph $\cal{E}$.} Let $R=S_{0, 4}$ be a surface.
Fix two distinct components $\partial_1$, $\partial_2$ of $\partial R$. 
We define a simplicial graph $\cal{E}=\cal{E}(R; \partial_1, \partial_2)$ as follows: Vertices of $\cal{E}$ are elements $a$ of $V(R)$ such that $\partial_1$ and $\partial_2$ are contained in the same component of $R_a$. 
Two vertices $a$, $b$ of $\cal{E}$ are connected by an edge of $\cal{E}$ if and only if we have $i(a, b)=4$.

\begin{prop}\label{prop-e-tree}
In the above notation, the graph $\cal{E}$ is a tree.
\end{prop}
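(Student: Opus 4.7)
The plan is to realize $\cal{E}$ as the quotient of the dual tree of the Farey tessellation after collapsing a family of pairwise disjoint connected subtrees, and then to invoke the standard fact that such a quotient of a tree is again a tree.

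First, as recalled above, $\cal{F}$ is isomorphic to the $1$-skeleton of the Farey ideal triangulation of $\mathbb{H}^2$; in particular, the link $\lk(v)$ of every vertex $v$ of $\cal{F}$ is a bi-infinite path, and the dual graph $T$ of $\cal{F}$ is the infinite $3$-regular tree (a tree because $\mathbb{H}^2$ is contractible). Each essential curve in $R$ determines one of the three unordered partitions of $\partial R$ into pairs, so $V(\cal{F}) = V(R)$ carries a natural $3$-coloring. A quick parity check on slopes $p/q$, $r/s$ with $|ps-qr|=1$ shows that any two vertices joined by an edge of $\cal{F}$ lie in distinct $(\mathbb{Z}/2)^2$-classes; hence every triangle of $\cal{F}$ has one vertex of each color. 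After renaming, $V(\cal{E})$ equals the set of color-$1$ vertices, namely those vertices corresponding to the partition $\{\partial_1\partial_2\mid\partial_3\partial_4\}$.

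For each $v \in V(\cal{E})$, let $T_v\subset T$ be the subgraph whose vertices are the triangles of $\cal{F}$ containing $v$ and whose edges are duals of the edges of $\cal{F}$ incident to $v$. Since the triangles containing $v$ are exactly those spanned by $v$ together with two consecutive vertices of $\lk(v)$, $T_v$ is a bi-infinite path in $T$, hence a connected subtree. The rainbow property ensures that the family $\{T_v\}_{v\in V(\cal{E})}$ partitions $V(T)$, because every triangle of $\cal{F}$ has a unique color-$1$ vertex. Contracting each $T_v$ to a single vertex therefore yields a quotient graph $T'$ which is still a tree: it is connected as $T$ is, and any cycle in $T'$ would lift, via connectivity of each $T_v$, to a cycle in $T$, contradicting the tree property of $T$.

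It remains to identify $T'$ with $\cal{E}$. Vertices of $T'$ are the color-$1$ vertices by construction. The edges of $T'$ correspond to the edges of $T$ not contained in any $T_v$, equivalently (by duality) to the edges of $\cal{F}$ with no color-$1$ endpoint, i.e., the $\{2,3\}$-edges of $\cal{F}$. Each such edge is shared by exactly two triangles whose color-$1$ vertices $a,b$ lie in diagonal position of two adjacent triangles, and by Proposition \ref{prop-diag} this is equivalent to $i(a,b)=4$, i.e., to $\{a,b\}$ being an edge of $\cal{E}$. Hence $T'\cong \cal{E}$, so $\cal{E}$ is a tree. The main subtlety lies in the simultaneous contraction of infinitely many subtrees in the preceding paragraph, which is handled by the cycle-lifting argument.
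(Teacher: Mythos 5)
Your proof is correct, but it takes a genuinely different route from the paper's. The paper proves the proposition by constructing an explicit combinatorial deformation retraction of $\cal{E}$ onto a basepoint $v_0$: using the fact that each edge of the Farey graph $\cal{F}$ separates it into two sides, it defines a folding map $f$ that moves every vertex of $\cal{E}$ one step closer to $v_0$ across the appropriate adjacent triangle, and iterates. Your argument instead passes to the dual trivalent tree $T$ of the Farey tessellation, introduces the $3$-coloring of $V(R)$ by partitions of $\partial R$ (verified via parities of slopes), and exhibits $\cal{E}$ as the quotient of $T$ obtained by contracting the pairwise disjoint bi-infinite paths dual to the stars of the color-$1$ vertices. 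Both proofs lean on Proposition \ref{prop-diag} to translate $i(a,b)=4$ into the diagonal-position condition, and both implicitly or explicitly use the rainbow property of Farey triangles (the paper uses it when asserting that the common edge $e$ of the two triangles separating $u$ and $v$ has no vertex in $V(\cal{E})$). What your approach buys is a more structural description -- it identifies $\cal{E}$ outright as a contraction quotient of the trivalent tree (in particular showing each vertex of $\cal{E}$ has countably infinite valence), whereas the paper's folding argument is more elementary and stays entirely inside the combinatorics of $\cal{F}$ without invoking the dual tree or the arithmetic of slopes. The only points worth tightening in your write-up are routine: a cycle in $T'$ lifts only to a closed walk in $T$, and one concludes by noting that this walk traverses some contracted-out edge exactly once, which is impossible since every edge of a tree is a bridge; and one should observe that no edge of $T'$ is a loop, since the two triangles adjacent to a $\{2,3\}$-edge have distinct color-$1$ vertices.
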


\begin{proof}
Let $V(\cal{E})$ denote the set of vertices of $\cal{E}$.
Let $\cal{F}$ denote the graph $\cal{F}(R)$.
By Proposition \ref{prop-diag}, for any two vertices $a$, $b$ of $\cal{E}$, $a$ and $b$ form an edge of $\cal{E}$ if and only if $a$ and $b$ lie in a diagonal position of two adjacent triangles in $\cal{F}$.
A notable property of the graph $\cal{F}$ is that for each edge $e$ of $\cal{F}$, the set $\cal{F}\setminus \bar{e}$ has exactly two connected components, where $\bar{e}$ denotes the closure $e\cup \partial e$ of $e$ in the geometric realization of $\cal{F}$.
Let us call a component of $\cal{F}\setminus \bar{e}$ a {\it side} of $e$. 
For each vertex $v$ of $\cal{F}$ and each edge $e$ of $\cal{F}$ with $v\not\in \partial e$, we denote by $X_{v, e}$ the set of all vertices of $\cal{F}$ contained in the side of $e$ that does not contain $v$. 
For each vertex $v$ of $\cal{F}$, let $E_v$ denote the set of all edges $e$ of $\cal{F}$ such that $v$ and $e$ are contained in a single triangle in $\cal{F}$ with $v\not\in \partial e$.
If $v\in V(\cal{E})$, then $V(\cal{E})\setminus \{ v\}$ is equal to the disjoint union $\bigsqcup_{e\in E_v}(X_{v, e}\cap V(\cal{E}))$.

Fix $v_0\in V(\cal{E})$.
We construct a deformation retraction of $\cal{E}$ into $v_0$.
We define a map $f\colon V(\cal{E})\rightarrow V(\cal{E})$ as follows.
Set $f(v_0)=v_0$.
For each $v\in V(\cal{E})$ with $v\neq v_0$, choose a unique edge $e$ of $E_v$ with $v_0\in X_{v, e}$.
Let $\Delta$ be the triangle in $\cal{F}$ that contains $e$, but does not contain $v$. 
We define $f(v)\in V(\cal{E})$ as the vertex of $\Delta$ that is not a vertex of $e$.

By the definition of $f$, for each $v\in V(\cal{E})$, $f(v)$ and $v$ either are equal or form an edge of $\cal{E}$.
We claim that for each edge $\{ u, v\}$ of $\cal{E}$, either $f(u)=v$ or $f(v)=u$ holds.
In particular, $f$ defines a simplicial map from $\cal{E}$ into itself.
If either $u$ or $v$ is $v_0$, then $f(u)=f(v)=v_0$.
Assume that neither $u$ nor $v$ is $v_0$.
Let $e$ be the edge of $\cal{F}$ such that $u$ and $v$ lie in distinct sides of $e$.
Note that $v_0$ is not a vertex of $e$ because no vertex of $e$ corresponds to a curve in $R$ cutting off a pair of pants containing $\partial_1$ and $\partial_2$ from $R$.
If $v_0$ and $v$ are in the same side of $e$, then $f(u)=v$.
If $v_0$ and $u$ are in the same side of $e$, then $f(v)=u$.
The claim follows.

If $v$ is a vertex of $\cal{E}$ with $v\neq v_0$, then with respect to the path metric on $\cal{F}$, the distance between $v_0$ and $f(v)$ is strictly smaller than that between $v_0$ and $v$ by the definition of $f$.
For each $v\in V(\cal{E})$, there thus exists a positive integer $n$ with $f^n(v)=v_0$.
The iteration of $f$ defines a deformation retraction of $\cal{E}$ into $v_0$.
\end{proof}

Lemma \ref{lem-pre-h2-diag} motivates us to introduce the following:

\medskip

\noindent {\bf Graph $\cal{H}$.} Let $R=S_{0, 4}$ be a surface and put $\cal{F}=\cal{F}(R)$.
We fix two distinct components $\partial_1$, $\partial_2$ of $\partial R$. 
We define the simplicial graph $\cal{H}=\cal{H}(R; \partial_1, \partial_2)$ as follows: Vertices of $\cal{H}$ are elements $a$ of $V(R)$ such that $\partial_1$ and $\partial_2$ lie in distinct components of $R_a$. 
\begin{figure}
\begin{center}
\includegraphics[width=12cm]{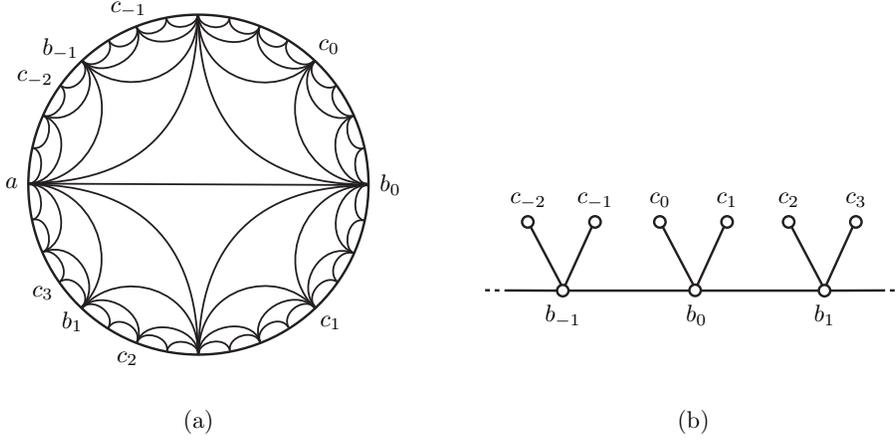}
\caption{(a) The Farey graph; (b) The link of $a$ in $\cal{H}$}\label{fig-hlink}
\end{center}
\end{figure}
Two vertices $a$, $b$ of $\cal{H}$ are connected by an edge of $\cal{H}$ if and only if either we have $i(a, b)=2$ or $a$ and $b$ lie in a diagonal position of two adjacent triangles in $\cal{F}$.

\begin{lem}
In the above notation, for each vertex $a$ of $\cal{H}$, the link of $a$ in $\cal{H}$ is the graph described in Figure \ref{fig-hlink} (b), where the vertices $b_n$ and $c_n$ are located so that for each $n\in \mathbb{Z}$,
\begin{itemize}
\item we have $i(a, b_n)=i(b_n, c_{2n})=i(b_n, c_{2n+1})=2$;
\item $a$ and $c_n$ lie in a diagonal position of two adjacent triangles in $\cal{F}$; and
\item $b_n$ and $b_{n+1}$ lie in a diagonal position of two adjacent triangles in $\cal{F}$.
\end{itemize}
\end{lem}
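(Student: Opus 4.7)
The plan is to use the classical identification of $\cal{F}(R)$ with the Farey graph, under which the vertex set is $\hat{\mathbb{Q}} := \mathbb{Q} \sqcup \{\infty\}$ and $i(p/q, r/s) = 2|ps - qr|$. The three topological types of curves in $R$, corresponding to the three partitions of $\{\partial_1, \partial_2, \partial_3, \partial_4\}$ into two pairs, are exactly the three classes $(p \bmod 2, q \bmod 2) \in \{(1,0),(0,1),(1,1)\}$. Normalize so that $a$ is the slope $\infty$ (type $(1,0)$), separating $\{\partial_1,\partial_3\}$ from $\{\partial_2,\partial_4\}$, and choose the labeling so that type $(0,1)$ separates $\{\partial_1,\partial_2\}$ from $\{\partial_3,\partial_4\}$. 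A vertex then lies in $\cal{H}$ (i.e.\ separates $\partial_1$ from $\partial_2$) exactly when its numerator is odd, namely when its type is $(1,0)$ or $(1,1)$.

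With this parametrization in hand, the neighbors of $a$ in $\cal{H}$ enumerate as follows. Vertices $b$ with $i(a,b)=2$ are integer slopes $r/1$; those in $\cal{H}$ are the odd integers, so they form a bi-infinite sequence $b_n := 2n+1$. By Proposition \ref{prop-diag}, vertices $c$ with $\{a,c\}$ in diagonal position are exactly those with $i(a,c)=4$, i.e.\ reduced slopes $r/2$ with $r$ odd; every such $c$ automatically lies in $\cal{H}$, and I label them $c_n := (2n+1)/2$.

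It remains to read off the edges of $\cal{H}$ among these vertices from the intersection formula. For distinct $b_n, b_m$, $i(b_n,b_m) = 2|(2n+1)-(2m+1)| = 4|n-m|$, which equals $4$ (giving a diagonal edge via Proposition \ref{prop-diag}) iff $m = n \pm 1$ and is at least $8$ otherwise. For $b_n$ and $c_m$, $i(b_n,c_m) = 2|(2n+1)\cdot 2 - (2m+1)| = 2|4n+1-2m|$, which equals $2$ iff $m \in \{2n, 2n+1\}$ and can never equal $4$ because $4n+1-2m$ is odd. For distinct $c_m, c_{m'}$, $i(c_m,c_{m'}) = 8|m-m'| \geq 8$, so there is no edge. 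Comparing with Figure \ref{fig-hlink}(b) yields the lemma.

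The main obstacle is the opening bookkeeping: correctly aligning the Farey slope of a curve with its boundary-partition type. I would handle this either via the hyperelliptic double cover $R = T^2/\{\pm 1\}$ (integer-slope curves on $T^2$ descend to curves in $R$ whose topological type is determined by the slope modulo $2$), or by a direct small-case check together with the fact that $\pmod(R)$ acts on $\hat{\mathbb{Q}}$ via the principal congruence subgroup $\Gamma(2) \leq PSL(2,\mathbb{Z})$, whose three orbits are indexed by $(p \bmod 2, q \bmod 2)$. Once the slope-to-boundary correspondence is secured, the remainder is purely arithmetic.
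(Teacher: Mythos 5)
Your proposal is correct, and the arithmetic checks out: with $a=\infty$, the formula $i(p/q,r/s)=2|ps-qr|$ gives $i(b_n,b_m)=4|n-m|$, $i(b_n,c_m)=2|4n+1-2m|$ (odd multiple of $2$, hence never $4$), and $i(c_m,c_{m'})=8|m-m'|$, which together with Proposition \ref{prop-diag} reproduces exactly the adjacency pattern of Figure \ref{fig-hlink}~(b). The route is genuinely different from the paper's. The paper works group-theoretically: it observes that exactly two vertices of any triangle of $\cal{F}$ lie in $\cal{H}$, lets the half twist $h$ about $a$ act, shows that $\{h^n(b_0)\}$ exhausts the curves with $i(a,\cdot)=2$ and that $h$ acts transitively on the set $C$ of curves diagonal to $a$, and then reads the adjacencies off the standard picture of the Farey graph; no explicit intersection numbers beyond $2$ and $4$ are computed. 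You instead fix coordinates on $\hat{\mathbb{Q}}$ once and for all and reduce everything to the determinant formula, at the cost of having to secure the dictionary between slope parities $(p\bmod 2, q\bmod 2)$ and the three boundary partitions of $S_{0,4}$ --- which you correctly identify as the only real work, and for which both of your proposed justifications (the torus double cover, or the three cusps of $\Gamma(2)$ matched to the three $\pmod(R)$-orbits) are standard and sound. Your version is more self-contained and verifiable line by line; the paper's version avoids coordinates but leans on the figure and on transitivity of the $h$-action, which is essentially the parabolic $z\mapsto z+1/2$ in your coordinates, so the two proofs are ultimately organized around the same stabilizer of $a$.
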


\begin{proof}
We first note that if $a_1$, $a_2$ and $a_3$ are vertices of $\cal{F}$ forming a triangle in $\cal{F}$, then exactly two of them are vertices of $\cal{H}$.
Let $L$ denote the link of $a$ in $\cal{H}$.
Let $P$ denote the component of $R_a$ containing $\partial_1$ as its boundary component.
We define $h\in \mod(R)$ as the half twist about $a$ exchanging the two components of $\partial R$ contained in $P$ and being the identity on $R_a\setminus P$.
If we pick a vertex $b_0$ of $\cal{H}$ with $i(a, b_0)=2$, then the set $\{ h^n(b_0)\}_{n\in \mathbb{Z}}$ is equal to the set of all vertices $b$ of $\cal{F}$ with $i(a, b)=2$.
We set $b_n=h^{2n}(b_0)$ for each $n\in \mathbb{Z}$ and set $B=\{ b_n\}_{n\in \mathbb{Z}}$.
It follows that $B$ is equal to the set of all vertices $b$ in $L$ with $i(a, b)=2$.

We define $C$ as the set of all vertices $c$ of $\cal{F}$ such that $a$ and $c$ lie in a diagonal position of two adjacent triangles in $\cal{F}$.
Any element of $C$ is a vertex of $\cal{H}$ and thus of $L$, and any two elements of $C$ are sent to each other by a power of $h$.
There exists a unique element $c_0$ of $C$ with $i(b_0, c_0)=i(b_0, h(c_0))=2$.
We set $c_n=h^n(c_0)$ for each $n\in \mathbb{Z}$.
The vertices $b_n$ and $c_m$ are located as in Figure \ref{fig-hlink} (a) and satisfy desired conditions.
\end{proof}


\subsection{A simplicial graph associated to $S_{1, 2}$}\label{subsec-d}

Throughout this subsection, we put $Q=S_{1, 2}$.
The following simplicial graph associated to $Q$ is studied in \cite{kida-cohop}.

\medskip

\noindent {\bf Graph $\cal{D}$.} Let $Q=S_{1, 2}$ be a surface.
We define $\cal{D}=\cal{D}(Q)$ as the simplicial graph such that the set of vertices of $\cal{D}$ is $V_s(Q)$, and two vertices of $a$, $b$ of $\cal{D}$ are connected by an edge of $\cal{D}$ if and only if we have $i(a, b)=4$.

\medskip

We note that $\pmod(Q)$ acts transitively on the set of edges of $\cal{D}$ (see the paragraph right after the proof of Lemma 3.2 in \cite{kida-cohop}).
It follows that for any edge $\{ a, b\}$ of $\cal{D}$, there exists a unique element of $V(Q)$ disjoint from $a$ and $b$. 
We denote it by $c(a, b)$.
The curve $c(a, b)$ is non-separating in $Q$ (see Figure \ref{fig-tri}).
\begin{figure}
\begin{center}
\includegraphics[width=8cm]{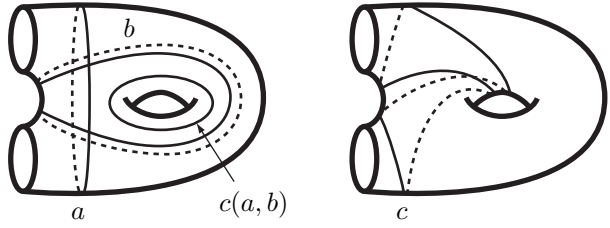}
\caption{The three curves $a$, $b$ and $c$ form a triangle in $\cal{D}$.}\label{fig-tri}
\end{center}
\end{figure}

For each vertex $a$ of $\cal{D}$, we denote by $\lk_{\cal{D}}(a)$ the link of $a$ in $\cal{D}$, and denote by $V(\lk_{\cal{D}}(a))$ the set of vertices of $\lk_{\cal{D}}(a)$.
We mean by a {\it bi-infinite line} a simplicial graph consisting of vertices $v_j$ indexed by each integer $j$ such that for any $j, k\in \mathbb{Z}$, $v_j$ and $v_k$ are adjacent if and only if $k=j-1$ or $j+1$.
Basic properties of the link of each vertex in $\cal{D}$ are summarized as follows.

\begin{prop}[\ci{Lemma 3.4}{kida-cohop}]\label{prop-dline}
In the notation in the definition of the graph $\cal{D}$, we fix a vertex $a$ of $\cal{D}$.
Let $H$ denote the handle cut off by $a$ from $Q$.
We define $h\in \mod(Q)$ as the half twist about $a$ exchanging the two components of $\partial Q$ and being the identity on $H$.
Pick two elements $u$, $v$ of $V(H)$ with $i(u, v)=1$.
We put
\[W=\{ \, w\in V(\lk_{\cal{D}}(a))\mid c(a, w)=u\, \},\quad Z=\{ \, z\in V(\lk_{\cal{D}}(a))\mid c(a, z)=v\, \}.\]
Then after an appropriate numbering, we have the equalities $W=\{ w_n\}_{n\in \mathbb{Z}}$, $Z =\{ z_m\}_{m\in \mathbb{Z}}$ and
\[h(w_n)=w_{n+1},\quad h(z_m)=z_{m+1}\quad \textrm{for\ any}\ n, m\in \mathbb{Z},\]
and the full subgraph of $\cal{D}$ spanned by all vertices of $W\cup Z$ is the bi-infinite line such that for each $n\in \mathbb{Z}$, $w_n$ is adjacent to $z_n$ and $z_{n+1}$.
\end{prop}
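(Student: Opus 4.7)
The plan is to describe $W$ and $Z$ combinatorially by cutting $Q$ along $u$ and along $v$ and appealing to the Farey-type machinery of Section \ref{subsec-farey}, then to read off the edges of the subgraph of $\cal{D}$ on $W \cup Z$ by a direct intersection computation.

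First, for any $w \in V(\lk_{\cal{D}}(a))$ the non-separating curve $c(a,w)$ is disjoint from $a$ and hence contained in one of the two components of $Q_a$; since the pants side supports no essential simple closed curve, $c(a,w) \in V(H)$. Thus $c(a,\cdot)\colon V(\lk_{\cal{D}}(a))\to V(H)$ is well-defined, and $W, Z$ are its fibers over $u$ and $v$. For $w \in W$, the disjointness $i(u,w)=0$ lets me view $w$ as a curve in $Q_u \cong S_{0,4}$, essential there since $w$ is not isotopic to $u$ (because $w$ is separating in $Q$). The condition that $w$ be separating in $Q$ is equivalent to the two boundary components $u^+,u^-$ of $Q_u$ arising from $u$ lying on the same side of $w$, which is the defining condition for a vertex of $\cal{E}(Q_u;u^+,u^-)$; and by Proposition \ref{prop-diag} the condition $i(a,w)=4$ is equivalent to $a$ and $w$ being in diagonal position in $\cal{F}(Q_u)$, i.e.\ adjacent in $\cal{E}$. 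Hence $W$ is precisely the set of $\cal{E}$-neighbors of $a$, and the symmetric discussion after cutting along $v$ identifies $Z$ with the set of neighbors of $a$ in $\cal{E}(Q_v;v^+,v^-)$.

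Next I parametrize $W = \{w_n\}_{n\in\mathbb{Z}}$ via $h$. Because $h$ is the identity on $H$ it preserves $u$ and descends to the half twist about $a$ on $Q_u$ that swaps $\partial_1, \partial_2$ and fixes the pants cut off by $a$ on the $u^\pm$ side. The $\cal{E}$-neighbors of $a$ are in natural bijection with the triangles of $\cal{F}(Q_u)$ containing $a$ (via the vertex of the adjacent triangle opposite $a$), and these triangles form a bi-infinite chain on which the descended $h$ acts by a shift of one. This yields $W = \{w_n\}_{n\in\mathbb{Z}}$ with $h(w_n) = w_{n+1}$; the same argument after cutting along $v$ yields $Z = \{z_m\}_{m\in\mathbb{Z}}$ with $h(z_m) = z_{m+1}$, and I can align the two indexings by a single choice of base point $w_0$ and $z_0$.

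The final step is to identify the edges of $\cal{D}$ inside $W \cup Z$. By $h$-equivariance it suffices to compute $i(w_0, z_m)$ for all $m$ and to rule out adjacencies between two elements of $W$ (or of $Z$). I choose representatives of $u$ and $v$ meeting once inside $H$, lift explicit Farey-graph representatives of $w_n$ in $Q_u$ and $z_m$ in $Q_v$ back to $Q$, and put them in normal form with respect to $u \cup v$. The intersection number $|w_0 \cap z_m|$ can then be read off from the combinatorics of how the arcs of these curves traverse a regular neighborhood of $u \cup v$; matching this count to the predicted value $4$ exactly for $m \in \{0, 1\}$ and $>4$ otherwise, and ruling out $W$-$W$ and $Z$-$Z$ adjacencies by the analogous calculation inside $Q_u$ or $Q_v$, completes the proof. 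The main obstacle is this intersection calculation: although $W$ and $Z$ are each controlled by a single cut, the cross-adjacency forces one to handle the two cuts $u$ and $v$ simultaneously, so most of the work is in choosing representatives that make $|w_n \cap z_m|$ transparent and match the claimed bi-infinite line structure.
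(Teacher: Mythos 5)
The paper does not actually prove this proposition; it is quoted from Lemma 3.4 of \cite{kida-cohop}, and the nearest model in the present text is the proof of the analogous Proposition \ref{prop-gline}. Your first two steps agree with that template and are sound: identifying $W$ (resp.\ $Z$) with the set of neighbours of $a$ in the tree $\cal{E}(Q_u; u^+, u^-)$ via Proposition \ref{prop-diag}, hence with the set of triangles of $\cal{F}(Q_u)$ containing $a$, on which the induced half twist acts by a unit shift, is exactly how the paper obtains $\Gamma(r)=\{h^n(\gamma_0)\}_{n\in\mathbb{Z}}$ there. Note that your identification already gives the $W$--$W$ and $Z$--$Z$ non-adjacency for free: two neighbours of $a$ in $\cal{E}(Q_u; u^+, u^-)$ cannot be adjacent to each other, since $\cal{E}$ is a tree by Proposition \ref{prop-e-tree}; no separate intersection count is needed for that part.

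The gap is in your third step, which is where the entire content of the proposition lives. The claim that $w_n$ is adjacent to exactly $z_n$ and $z_{n+1}$ amounts to showing $i(w_0, h^m(z_0))=4$ precisely for two consecutive values of $m$, and you do not establish this: you state that after putting representatives in normal form with respect to $u\cup v$ the count ``can be read off'' and then ``matched to the predicted value,'' which restates the goal rather than proving it. The proposed alignment of the two indexings ``by a single choice of base point'' is also circular as written, since the indexings can only be aligned compatibly with the line structure once the adjacency pattern is known. The way the paper closes the corresponding step in Proposition \ref{prop-gline} is to exhibit one explicit picture of $z_0$ and $z_1=h(z_0)$ adjacent to $w_0$ (this is what fixes the alignment) and then to invoke the intersection-number criterion of Expos\'e 3, Proposition 10 of \cite{flp} for powers of the (half) twist, yielding a formula of the shape $i(w_0, h^m(z_0))=4|2m-1|$, which equals $4$ exactly for $m\in\{0,1\}$ and is at least $12$ otherwise. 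Until you either supply that base-case representative together with such a quotable growth formula, or genuinely carry out the normal-form count you describe, the bi-infinite line structure is asserted rather than proved.
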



\subsection{A simplicial graph associated to $S_{1, 3}$}\label{subsec-g}

We define the simplicial graph $\cal{G}$ associated to $S_{1, 3}$ and show that $\cal{G}$ is a bi-infinite line.
To define the graph $\cal{G}$, we introduce sharing pairs of p-curves, motivated by sharing pairs of h-curves originally introduced by Brendle-Margalit \cite{bm} (see Definition \ref{defn-share}).

\begin{defn}\label{defn-share-p}
Let $S=S_{g, p}$ be a surface with $g\geq 1$ and $p\geq 2$.
Pick two distinct components $\partial_1$, $\partial_2$ of $\partial S$.
Let $a$ and $b$ be p-curves in $S$ cutting off $\partial_1$ and $\partial_2$.
Let $l$ and $r$ be defining arcs of $a$ and $b$, respectively, connecting $\partial_1$ and $\partial_2$.
We call the pair $\{ a, b \}$ a {\it sharing pair} in $S$ if $l$ and $r$ can be chosen so that they are disjoint and non-isotopic; and the surface obtained by cutting $S$ along $l\cup r$ is connected.
\end{defn}

If $l$ and $r$ are arcs in $S$ satisfying the conditions in Definition \ref{defn-share-p}, then $r$ is an essential simple arc in the surface obtained by cutting $S$ along $l$.
Moreover, $r$ is non-separating in that surface.
Since $a$ and $b$ are defined by $l$ and $r$, respectively, the group $\pmod(S)$ acts transitively on the set of sharing pairs in $S$ of p-curves cutting off $\partial_1$ and $\partial_2$.

Let $Q=S_{1, 2}$ be a surface.
Pick a component $\partial$ of $Q$.
We define $\cal{A}=\cal{A}(Q, \partial)$ as the set of ordered pairs $(r, r')$ of essential simple arcs in $Q$ such that
\begin{itemize}
\item each of $r$ and $r'$ connects two points of $\partial$ and is non-separating in $Q$;
\item $r$ and $r'$ are disjoint and non-isotopic; and
\item the end points of $r$ and $r'$ appear alternately along $\partial$.
\end{itemize}

\begin{rem}\label{rem-arc}
Let $(r, r')$ be an element of $\cal{A}$.
We denote by $P$ the surface obtained by cutting $Q$ along $r$, which is a pair of pants.
The component $\partial$ of $\partial Q$ is then decomposed into two arcs $s_1$, $s_2$, which are contained in distinct components of $\partial P$.
Since the end points of $r$ and $r'$ appear alternately along $\partial$, $r'$ is an essential simple arc in $P$ connecting a point of $s_1$ with a point of $s_2$.
Such an essential simple arc in $P$ uniquely exists up to a homeomorphism of $Q$ fixing $r$ as a set.
It turns out that the group of homeomorphisms of $Q$ fixing $\partial$ as a set acts on $\cal{A}$ transitively.
\end{rem}

Let $Y=S_{1, 3}$ be a surface.
Fix a p-curve $a$ in $Y$.
We denote by $Q$ the component of $Y_a$ homeomorphic to $S_{1, 2}$, and denote by $\partial$ the component of $\partial Y$ contained in $Q$.
For each essential simple arc $s$ in $Q$ which connects two points of $\partial$ and is non-separating in $Q$, we define $\Gamma(s)=\Gamma(a; s)$ as the set of all elements $\gamma$ of $V(Y)$ such that $\gamma$ is a p-curve in $Y$ cutting off the two components of $\partial Y$ distinct from $\partial$; $a$ and $\gamma$ form a sharing pair in $Y$; and a representative of $\gamma$ is disjoint from $s$.

\medskip

\noindent {\bf Graph $\cal{G}$.} Let $Y=S_{1, 3}$ be a surface.
We fix a p-curve $a$ in $Y$ and define $Q$ and $\partial$ as above.
For each element $(r, r')$ of $\cal{A}(Q, \partial)$, we define $\cal{G}=\cal{G}(Y, a; r, r')$ as the simplicial graph such that the set of vertices of $\cal{G}$ is the union $\Gamma(r)\cup \Gamma(r')$, and two vertices $\gamma$, $\gamma'$ of $\cal{G}$ are connected by an edge of $\cal{G}$ if and only if $\gamma$ and $\gamma'$ form a sharing pair in $Y$.

\begin{prop}\label{prop-gline}
In the notation in the definition of the graph $\cal{G}$, we define $h\in \mod(Y)$ as the half twist about $a$ exchanging the two components of $\partial Y$ contained in $Y_a\setminus Q$ and being the identity on $Q$.
Let $(r, r')$ be an element of $\cal{A}(Q, \partial)$.
Then after an appropriate numbering, we have the equalities $\Gamma(r)=\{ \gamma_n\}_{n\in \mathbb{Z}}$, $\Gamma(r')=\{ \gamma_m'\}_{m\in \mathbb{Z}}$ and
\[h(\gamma_n)=\gamma_{n+1},\quad h(\gamma_m')=\gamma_{m+1}'\quad \textrm{for\ any\ }n, m\in \mathbb{Z},\]
and the graph $\cal{G}$ is the bi-infinite line such that for each $n\in \mathbb{Z}$, $\gamma_n$ is adjacent to $\gamma_n'$ and $\gamma_{n+1}'$.
\end{prop}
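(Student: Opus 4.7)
The plan is to parametrize $\Gamma(r)$ and $\Gamma(r')$ via defining arcs in a suitable cut surface, use the half twist $h$ to index the orbits by $\mathbb{Z}$, and then verify the claimed bi-infinite line structure of $\cal{G}$ by a direct arc computation. The underlying analogy is with Proposition \ref{prop-dline}: in both statements one fixes a separating-type curve $a$ and parametrizes a family of curves sharing a common combinatorial feature with $a$ by a $\mathbb{Z}$-action coming from the half twist $h$.

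First, I would set up the arc model. For each $\gamma \in \Gamma(r)$, the sharing-pair condition produces defining arcs $l$ (of $a$) and $l' = l'(\gamma)$ (of $\gamma$) connecting $\partial_1$ and $\partial_2$, with $l, l'$ disjoint, non-isotopic, and $Y \setminus (l \cup l')$ connected. Since $a$ is a p-curve, $l$ is unique up to isotopy and lies in the pair of pants $P$ cut off by $a$. Cutting $Y$ along $l$ yields a surface $Y_l \cong S_{1,2}$ whose boundary consists of $\partial = \partial_3$ (inherited from $Q$) and a new boundary $\partial_{12}$ formed by $\partial_1$, $\partial_2$ and the two copies of $l$. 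In $Y_l$, the arc $l'$ becomes an essential non-separating simple arc from $\partial_{12}$ to itself. Since $\gamma$ is recovered as the boundary distinct from $\partial_1$ and $\partial_2$ of a regular neighborhood of $\partial_1 \cup l' \cup \partial_2$, and since $r \subset Q$ lies on the side of $\gamma$ opposite to $l'$ whenever $\gamma$ is disjoint from $r$, one obtains a bijection between $\Gamma(r)$ and the set $\cal{L}(r)$ of isotopy classes in $Y$ of such arcs admitting a representative disjoint from $l \cup r$; similarly $\Gamma(r') \leftrightarrow \cal{L}(r')$.

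Next, I would analyse the action of $h$. After isotoping $l$ so that $h(l) = l$ setwise, $h$ descends to a homeomorphism $\bar h$ of $Y_l$ that is the identity on $Q$ (so fixes $r$ and $r'$) and acts on a collar of $\partial_{12}$ as a rotation by $\pi$, whose square is a boundary Dehn twist about $\partial_{12}$. Cutting $Y_l$ further along $r$ produces a pair of pants $Y_{l \cup r}$ with boundary components $\partial_{12}$ and two circles $B_1, B_2$ built from the two arcs of $\partial$ split by $r$ together with the two copies of $r$. Essential arcs in $Y_{l \cup r}$ from $\partial_{12}$ to itself fall into finitely many isotopy classes (separating off $B_1$ or $B_2$); identifications coming from isotopies of $Y$ crossing the no-longer-cut arc $l$, together with the $\bar h$-rotation of endpoints around $\partial_{12}$, collapse these classes onto a single free $\mathbb{Z}$-orbit. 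Hence $\Gamma(r)$ can be written as $\{\gamma_n\}_{n \in \mathbb{Z}}$ with $h(\gamma_n) = \gamma_{n+1}$, and an analogous argument gives $\Gamma(r') = \{\gamma_m'\}_{m \in \mathbb{Z}}$ with $h(\gamma_m') = \gamma_{m+1}'$.

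Finally, I would identify the edges. Within $\Gamma(r)$, defining arcs of any two distinct $\gamma_n, \gamma_{n+k}$ can be arranged in $Y_l$ disjoint from $r$; a computation in $Y_{l \cup r}$ shows their union always has a boundary-parallel component, preventing a sharing pair, and the same holds for $\Gamma(r')$. Between $\Gamma(r)$ and $\Gamma(r')$, the alternation condition from $\cal{A}(Q, \partial)$ forces the defining arcs of elements of $\Gamma(r)$ and $\Gamma(r')$ to interleave on $\partial_{12}$; by $\bar h$-equivariance, it suffices to verify one base case, namely that $\gamma_0$ is adjacent in $\cal{G}$ to precisely $\gamma_0'$ and $\gamma_1'$, which is a direct arc-intersection calculation in the pair of pants $Y_{l \cup r \cup r'}$. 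The main obstacle I expect is the collapse of isotopy classes in the second step: showing that $\cal{L}(r)$ is exactly a single $\bar h$-orbit requires combining the finite classification of arcs in the cut pair of pants with the Dehn-twist freedom introduced by gluing back along $l$, and making the indexing consistent with the orientation of $\bar h$ on $\partial_{12}$ so that the adjacency rule $\gamma_n \sim \gamma_n', \gamma_{n+1}'$ emerges with the stated (rather than shifted) indices.
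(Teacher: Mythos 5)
Your route is genuinely different from the paper's. The paper cuts $Y$ along $r$ to obtain $R=Y_r\cong S_{0,4}$, observes via Proposition \ref{prop-diag} that each $\gamma\in\Gamma(r)$ lies in a diagonal position with $a$ in the Farey graph $\cal{F}(R)$, obtains $\Gamma(r)=\{h^n(\gamma_0)\}_{n\in\mathbb{Z}}$ from the transitivity of $\langle h\rangle$ on the triangles of $\cal{F}(R)$ containing $a$, and then settles all adjacency questions at once with the intersection-number criterion of \cite{flp}: $i(\gamma_0,\gamma_n)=8|n|$ and $i(\gamma_0,\gamma_m')=4|2m-1|$, while adjacency in $\cal{G}$ forces $i=4$. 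You instead cut along the defining arc $l$ of $a$ and then along $r$ and argue in the resulting pair of pants. That is workable in principle, but the two points you defer are exactly the ones the Farey/FLP machinery exists to handle, and your sketches of them have concrete problems.

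First, the transitivity step. In $Y_{l\cup r}\cong S_{0,3}$ there is only \emph{one} free-endpoint isotopy class of essential arc from $\partial_{12}$ to itself; the infinitude of $\Gamma(r)$ is carried entirely by the position of the endpoints relative to the two copies of $l$ inside $\partial_{12}$, because an isotopy in $Y$ may not slide an endpoint across $l$. So ``finitely many classes collapsing onto one $\mathbb{Z}$-orbit'' is backwards: a naive free-endpoint count would give $|\Gamma(r)|=1$, and what must actually be proved is that the restricted-endpoint classes form a single \emph{free} $\langle\bar h\rangle$-orbit -- both transitivity and freeness -- which nothing in the proposal establishes. Second, the edge verification. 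For non-adjacency inside $\Gamma(r)$ you assume the defining arcs of $\gamma_n$ and $\gamma_{n+k}$ can be arranged disjoint and then look for a boundary-parallel complementary piece; in fact they cannot be made disjoint at all (two p-curves cutting off $\partial_1,\partial_2$ with disjoint non-isotopic defining arcs intersect in exactly $4$ points, whereas $i(\gamma_n,\gamma_{n+k})=8|k|$), so the sharing-pair condition fails for a different reason than the one you give. For the base case between $\Gamma(r)$ and $\Gamma(r')$, the defining arcs of elements of $\Gamma(r')$ are only disjoint from $r'$, not from $r$, so they do not descend to $Y_{l\cup r\cup r'}$ (which is in any case an annulus, admitting no essential arc with both endpoints on one boundary component); showing that $\gamma_0$ is adjacent to \emph{no} $\gamma_m'$ with $m\neq 0,1$ must be done by some other computation, e.g.\ the count $i(\gamma_0,\gamma_m')=4|2m-1|$ used in the paper.
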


\begin{proof}
The proof is similar to that of Proposition \ref{prop-dline}.
Put $\Gamma =\Gamma(r)$ and $\Gamma'=\Gamma(r')$.
Let $R$ denote the surface obtained by cutting $Y$ along $r$.
The set $\Gamma$ is then a subset of $V(R)$.
Let $\cal{F}(R)$ denote the graph introduced in Section \ref{subsec-farey}.
By Proposition \ref{prop-diag}, for each element $\gamma$ of $\Gamma$, $a$ and $\gamma$ lie in a diagonal position of two adjacent triangles in $\cal{F}(R)$.
Since the cyclic group generated by $h$ acts transitively on the set of triangles in $\cal{F}(R)$ containing $a$, it also acts transitively on $\Gamma$.
We thus have the equality $\Gamma =\{ h^n(\gamma_0)\}_{n\in \mathbb{Z}}$ for some $\gamma_0\in \Gamma$.
We put $\gamma_n=h^n(\gamma_0)$ for each $n\in \mathbb{Z}$.
By the criterion on geometric intersection numbers in Expos\'e 3, Proposition 10 in \cite{flp}, we have $i(\gamma_0, \gamma_n)=8|n|$ for any $n\in \mathbb{Z}$.
Since we have $i(u, v)=4$ for any two adjacent vertices $u$, $v$ of $\cal{G}$, any two distinct elements of $\Gamma$ are not adjacent in $\cal{G}$.

By Remark \ref{rem-arc}, $r$ and $r'$ are described as in Figure \ref{fig-gline}.
\begin{figure}
\begin{center}
\includegraphics[width=12cm]{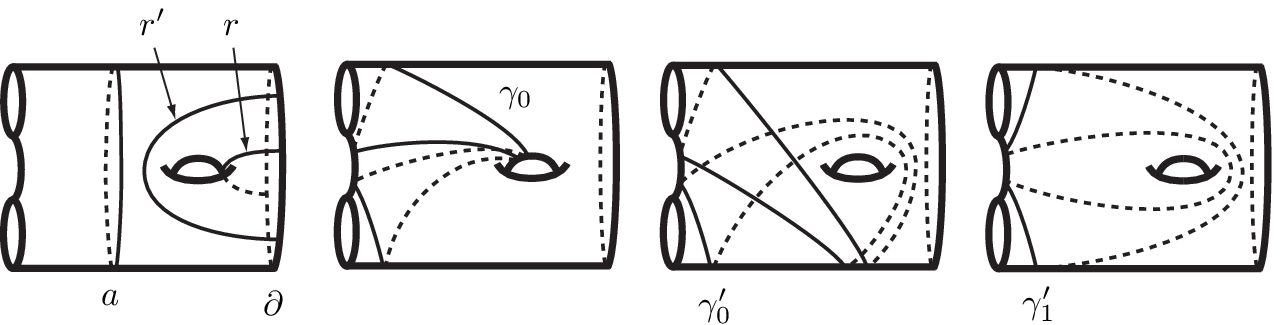}
\caption{}\label{fig-gline}
\end{center}
\end{figure}
We can find two elements $\gamma_0'$, $\gamma_1'=h(\gamma_0')$ of $\Gamma'$ adjacent to $\gamma_0$ in $\cal{G}$ as in the same Figure.
Applying the argument in the last paragraph to $\Gamma'$ and $r'$ in place of $\Gamma$ and $r$, respectively, we obtain the equality $\Gamma'=\{ \gamma_n'\}_{n\in \mathbb{Z}}$, where we put $\gamma_n'=h^n(\gamma_0')$ for each $n\in \mathbb{Z}$.
Moreover, any two distinct elements of $\Gamma'$ are not adjacent in $\cal{G}$.
By the criterion in \cite{flp} applied in the last paragraph, we have $i(\gamma_0, \gamma_n')=4|2n-1|$ for any $n\in \mathbb{Z}$.
Since for each $n\in \mathbb{Z}$, $h^n(\gamma_0)$ is adjacent to $h^n(\gamma_0')$ and $h^n(\gamma_1')$ in $\cal{G}$, the latter assertion in the lemma follows.
\end{proof}


\subsection{A simplicial graph associated to $S_{2, 1}$}\label{subsec-b}

The simplicial graph $\cal{B}$ introduced in this subsection is analogous to the graphs $\cal{D}$ and $\cal{G}$.
For an h-curve $A$ in a surface $S$ with $|\chi(S)|\geq 3$, we denote by $H_A$ the handle cut off by $A$ from $S$, which is naturally identified with a subsurface of $S$.

\begin{defn}\label{defn-share}
Let $S=S_{g, p}$ be a surface with $g\geq 2$ and $|\chi(S)|\geq 3$.
Let $\alpha, \beta \in V_s(S)$ be h-curves in $S$ and $c\in V(S)$ a non-separating curve in $S$. 
We say that $\alpha$ and $\beta$ {\it share} $c$ if there exist representatives $A$, $B$ and $C$ of $\alpha$, $\beta$ and $c$, respectively, such that we have $|A\cap B|=i(\alpha, \beta)$, $H_A\cap H_B$ is an annulus with its core curve $C$, and $S\setminus (H_A\cup H_B)$ is connected. 
In this case, $\{ \alpha, \beta \}$ is called a {\it sharing pair} for $c$ or a {\it sharing pair} in $S$ if $c$ is not specified.
\end{defn}

Sharing pairs are introduced by Brendle-Margalit \cite{bm}.
Let $c$ be a non-separating curve in $S$.
Let $\alpha$ and $\beta$ be h-curves in $S$ disjoint from $c$ and cutting off a handle containing $c$ from $S$.
The two curves $\alpha$ and $\beta$ share $c$ if and only if $\alpha$ and $\beta$ form a sharing pair in $S_c$ in the sense of Definition \ref{defn-share-p}.
It follows that $\pmod(S)$ acts transitively on the set of sharing pairs in $S$.

We put $X=S_{2, 1}$.
Let $a$ be an h-curve in $S$.
We denote by $Q$ the component of $X_a$ homeomorphic to $S_{1, 2}$, and put $\partial =\partial X$.
For each essential simple arc $s$ in $Q$ which connects two points of $\partial$ and is non-separating in $Q$, we define $B(s)=B(a; s)$ as the set of all elements $\beta$ of $V(X)$ such that $\beta$ is an h-curve in $X$; $a$ and $\beta$ form a sharing pair in $X$; and a representative of $\beta$ is disjoint from $s$.

\medskip

\noindent {\bf Graph $\cal{B}$.}
Let $X=S_{2, 1}$ be a surface.
We fix an h-curve $a$ in $X$ and define $Q$ and $\partial$ as above.
For each element $(r, r')$ of $\cal{A}(Q, \partial)$, we define $\cal{B}=\cal{B}(X, a; r, r')$ as the simplicial graph such that the set of vertices of $\cal{B}$ is the union $B(r)\cup B(r')$, and two vertices $\beta$, $\beta'$ of $\cal
{B}$ are connected by an edge of $\cal{B}$ if and only if $\beta$ and $\beta'$ form a sharing pair in $X$.

\begin{lem}\label{lem-b}
In the notation of the definition of the graph $\cal{B}$, there exists a proper subset $V$ of the union $B(r)\cup B(r')$ such that the full subgraph of $\cal{B}$ spanned by $V$ is a bi-infinite line.
\end{lem}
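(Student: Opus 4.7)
The strategy is to reduce Lemma \ref{lem-b} to Proposition \ref{prop-gline} by cutting $X$ along a non-separating curve inside $H_{a}$. Fix a non-separating curve $c$ in $H_{a}$ and set $Y=X_{c}$, which is homeomorphic to $S_{1,3}$. Since $c$ lies in the handle $H_{a}$, the h-curve $a$ descends to a p-curve $a'$ in $Y$ cutting off the pair of pants obtained from $H_{a}$ by cutting along $c$; this pair of pants contains the two boundary components of $Y$ arising from $c$. The other component of $Y_{a'}$ is $Q$ itself, since $c$ is disjoint from $Q$, and $\partial=\partial X$ remains a boundary component of $Y$ lying in $Q$. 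Hence $(r,r')$ is a valid element of $\cal{A}(Q,\partial)$ as input for the graph $\cal{G}(Y,a';r,r')$, which by Proposition \ref{prop-gline} is a bi-infinite line.

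Set $V=V_{c}\subset B(r)\cup B(r')$ to be the set of h-curves $\beta$ with $\{a,\beta\}$ a sharing pair in $X$ sharing $c$. The ``cut along $c$'' operation $\beta\mapsto\gamma$ sends such a $\beta$ to the p-curve $\gamma$ in $Y$ whose cut-off pair of pants is $H_{\beta}$ cut along $c$; its inverse is regluing along $c$. By the equivalence stated in the paragraph following Definition \ref{defn-share} applied to $(a,\beta,c)$, this restricts to a bijection between $V$ and the vertex set $\Gamma(r)\cup\Gamma(r')$ of $\cal{G}(Y,a';r,r')$: the condition that $\gamma$ cuts off the two $c$-boundary components of $Y$ is equivalent to $H_{\beta}$ containing $c$, and disjointness of $\gamma$ from $r$ or $r'$ transfers directly since $r,r'\subset Q$ are disjoint from $c$. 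Applying the same equivalence to pairs $(\beta_{1},\beta_{2},c)$ of h-curves that are both disjoint from $c$ and bound handles containing $c$, any $\cal{G}$-edge between $\gamma_{1}$ and $\gamma_{2}$ lifts to a $\cal{B}$-edge between the corresponding $\beta_{1}$ and $\beta_{2}$.

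For the reverse edge direction, suppose $\beta_{1},\beta_{2}\in V$ form a sharing pair in $X$ with shared core $c''$. Realizing the three sharing-pair annuli $H_{a}\cap H_{\beta_{1}}$, $H_{a}\cap H_{\beta_{2}}$, and $H_{\beta_{1}}\cap H_{\beta_{2}}$ in a common configuration of representatives, we see that $c$ lies in $H_{\beta_{1}}\cap H_{\beta_{2}}$; since this intersection is a connected annulus with core $c''$ and $c$ is essential in $X$, the curve $c$ is isotopic to $c''$, so the pair shares $c$ and corresponds to a $\cal{G}$-edge. Hence the full subgraph of $\cal{B}$ spanned by $V$ is isomorphic to the bi-infinite line $\cal{G}(Y,a';r,r')$. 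To see that $V$ is a proper subset of $B(r)\cup B(r')$, pick a non-separating curve $c_{1}$ in $H_{a}$ not isotopic to $c$; applying the same construction with $c_{1}$ in place of $c$ produces a nonempty set $V_{c_{1}}\subset B(r)\cup B(r')$ disjoint from $V$ by uniqueness of the shared-annulus core. The main obstacle is the reverse edge correspondence, whose simultaneous realization of three sharing-pair annuli in one configuration relies on the rigidity of $H_{a}$ near $c$ and is the most delicate geometric step.
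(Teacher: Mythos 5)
Your proposal is correct and follows essentially the same route as the paper's proof: cut $X$ along a curve $c$ in the handle cut off by $a$, identify the vertex set of $\cal{G}(Y,a;r,r')$ with a subset $V$ of $B(r)\cup B(r')$ via the equivalence noted after Definition \ref{defn-share}, invoke Proposition \ref{prop-gline}, and get properness by varying $c$. The only point where you go beyond the paper (which asserts the edge correspondence without proof) is the "three annuli in a common configuration" argument for the reverse edge direction, and that step is shaky as stated since simultaneous minimal position is not automatic; the claim it targets follows more cleanly from the intersection numbers in Proposition \ref{prop-gline} ($i(\gamma_0,\gamma_n)=8|n|$ and $i(\gamma_0,\gamma_n')=4|2n-1|$, preserved under the essential inclusion $Y\subset X$, so the only pairs in $V$ with intersection number $4$ are exactly the $\cal{G}$-edges).
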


\begin{proof}
Let $c$ be a curve in the handle cut off by $a$ from $X$.
We denote by $Y$ the surface obtained by cutting $X$ along $c$.
We define $V$ as the set of vertices of the graph $\cal{G}=\cal{G}(Y, a; r, r')$.
The set $V$ is identified with a subset of $B(r)\cup B(r')$, and $\cal{G}$ is then the full subgraph of $\cal{B}$ spanned by $V$.
Proposition \ref{prop-gline} shows that $\cal{G}$ is a bi-infinite line.
The set $V$ is proper in $B(r)\cup B(r')$ because there exists an element $b$ of $B(r)$ such that $\{ a, b\}$ is a sharing pair for a curve in $X$ distinct from $c$.
\end{proof}


\subsection{Non-existence of injections}\label{subsec-inj}

We show results on non-existence of injective simplicial maps between graphs introduced in prior subsections.
We note that the isomorphism classes of the graphs $\cal{E}(R; \partial_1, \partial_2)$, $\cal{H}(R; \partial_1, \partial_2)$, $\cal{D}(Q)$, $\cal{G}(Y, a; r, r')$ and $\cal{B}(X, a; r, r')$ do not depend on the choice of objects inside the brackets.
We thus omit those symbols in the argument of this subsection.

\begin{lem}\label{lem-d-e}
There exists no injective simplicial map from $\cal{D}$ into $\cal{E}$. 
\end{lem}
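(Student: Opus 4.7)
The plan is to exploit the structural difference between $\cal{D}$ and $\cal{E}$: the former contains a triangle (three pairwise adjacent vertices), while the latter is a tree by Proposition \ref{prop-e-tree}, hence contains no $3$-cycle. Since any injective simplicial map between simplicial graphs sends three pairwise adjacent vertices of the domain to three pairwise distinct, pairwise adjacent vertices of the target, exhibiting a single triangle in $\cal{D}$ is enough to rule out any injective simplicial map $\cal{D}\to \cal{E}$.

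To produce such a triangle in $\cal{D}$, I would apply Proposition \ref{prop-dline}. Fix any vertex $a$ of $\cal{D}$, let $H$ be the handle cut off by $a$ from $Q$, and pick $u, v\in V(H)$ with $i(u, v)=1$. The proposition describes the full subgraph of $\cal{D}$ spanned by the associated sets $W=\{w_n\}_{n\in \mathbb{Z}}$ and $Z=\{z_m\}_{m\in \mathbb{Z}}$ inside $\lk_{\cal{D}}(a)$ as a bi-infinite line in which, in particular, $w_0$ is adjacent to $z_0$. Since $w_0$ and $z_0$ both lie in $\lk_{\cal{D}}(a)$, each is adjacent to $a$ in $\cal{D}$, so $\{a, w_0, z_0\}$ spans a triangle in $\cal{D}$.

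Combining the two observations yields the contradiction: if $\phi\colon \cal{D}\to \cal{E}$ were an injective simplicial map, then $\phi(a)$, $\phi(w_0)$, $\phi(z_0)$ would be three distinct, pairwise adjacent vertices of $\cal{E}$, forming a triangle in the tree $\cal{E}$, which is impossible. There is no serious obstacle beyond reading off from Proposition \ref{prop-dline} that $w_0$ and $z_0$ are genuinely adjacent in $\cal{D}$ (so that the edge is an edge in $\cal{D}$, not merely one in a link taken abstractly), and this is explicit in the statement of that proposition.
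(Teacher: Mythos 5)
Your proposal is correct and matches the paper's argument exactly: the paper also observes that $\cal{D}$ contains a triangle (exhibited there by the explicit curves in Figure \ref{fig-tri}) and that $\cal{E}$ is a tree by Proposition \ref{prop-e-tree}, hence admits no injective simplicial image of a triangle. Your alternative construction of the triangle $\{a, w_0, z_0\}$ from Proposition \ref{prop-dline} is a valid, if slightly more roundabout, substitute for the figure.
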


\begin{proof}
As described in Figure \ref{fig-tri}, there exists a triangle in $\cal{D}$.
Since $\cal{E}$ is a tree by Proposition \ref{prop-e-tree}, the lemma follows.
\end{proof}

It is a plain fact that any injective simplicial map from a bi-infinite line into a bi-infinite line is surjective.
This property is crucial in the proof of the following two lemmas.

\begin{lem}\label{lem-d-h}
There exists no injective simplicial map from $\cal{D}$ into $\cal{H}$.
\end{lem}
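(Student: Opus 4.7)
The plan is to argue by contradiction. I would suppose $\phi\colon \cal{D}\to\cal{H}$ is an injective simplicial map, fix a vertex $a$ of $\cal{D}$, and set $a'=\phi(a)$; since $\phi$ is simplicial, it restricts to an injective map $\lk_{\cal{D}}(a)\to \lk_{\cal{H}}(a')$ sending edges to edges. The strategy is to compare the abundance of bi-infinite lines passing through a common piece $W_u\subset \lk_{\cal{D}}(a)$ provided by Proposition \ref{prop-dline} with the rigid caterpillar structure of $\lk_{\cal{H}}(a')$ described in Figure \ref{fig-hlink}(b).

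First I would fix $u\in V(H_a)$, where $H_a$ is the handle cut off by $a$ from $Q$. By Proposition \ref{prop-dline}, for every $v\in V(H_a)$ with $i(u,v)=1$ the union $W_u\cup W_v$ is a bi-infinite line in $\lk_{\cal{D}}(a)$ on which $W_u$ and $W_v$ alternate. On the $\cal{H}$-side, the preceding description of $\lk_{\cal{H}}(a')$ shows that each $c_k$ is a leaf whose only neighbour is $b_{\lfloor k/2\rfloor}$, while the $b_n$'s form a bi-infinite spine. Since a bi-infinite line has no endpoints, no degree-one vertex of $\lk_{\cal{H}}(a')$ can appear in the image of an injective simplicial map from a bi-infinite line; hence $\phi(W_u\cup W_v)\subset \{b_n\}_{n\in\mathbb{Z}}$. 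The plain fact that an injective simplicial map between bi-infinite lines is surjective then upgrades this inclusion to an equality.

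Exploiting the alternation, $\phi(W_u)$ must coincide with one of the two subsets $\{b_{2n+\epsilon}\}_{n\in\mathbb{Z}}$ for a single $\epsilon\in\{0,1\}$, and crucially this subset is determined by $\phi|_{W_u}$ alone and does not depend on the auxiliary choice of $v$. Since $H_a\cong S_{1,1}$ admits infinitely many non-separating curves $v$ with $i(u,v)=1$ (for instance $\{t_u^k(v_0)\}_{k\in\mathbb{Z}}$ for any fixed such $v_0$), I would pick two such $v\ne v'$; applying the argument to each yields $\phi(W_v)=\phi(W_{v'})=\{b_{2n+1-\epsilon}\}_{n\in\mathbb{Z}}$. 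However $W_v$ and $W_{v'}$ are disjoint because the function $c(a,\cdot)$ distinguishes them, so injectivity of $\phi$ forces $\phi(W_v)\cap \phi(W_{v'})=\emptyset$, contradicting the non-empty equality just obtained.

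The only step that genuinely needs verification is the exclusion of $c$-vertices from the image of any bi-infinite line, and this is immediate from their leaf status in $\lk_{\cal{H}}(a')$. Everything else is a mechanical application of Proposition \ref{prop-dline} together with the bi-infinite line surjectivity fact highlighted in the excerpt, so I do not expect any further obstacle.
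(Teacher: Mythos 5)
Your proposal is correct and follows essentially the same route as the paper: the paper's proof also compares the link of a vertex $a$ in $\cal{D}$ (which contains infinitely many distinct bi-infinite lines, obtained by varying $u$, $v$ in Proposition \ref{prop-dline}) with the link of a vertex in $\cal{H}$ (which, by the caterpillar structure of Figure \ref{fig-hlink}(b), contains exactly one), and derives the contradiction from injectivity. Your write-up merely makes explicit the steps the paper leaves implicit (leaves cannot lie in the image of a bi-infinite line, and the alternation forces $\phi(W_v)=\phi(W_{v'})$), all of which are correct.
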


\begin{proof}
Let $a$ be a vertex of $\cal{D}$.
There exist pairwise distinct, infinitely many bi-infinite lines in the link of $a$ in $\cal{D}$.
For in Proposition \ref{prop-dline}, if the two elements $u$, $v$ are varied, then we obtain distinct bi-infinite lines in the link of $a$ in $\cal{D}$.
On the other hand, Figure \ref{fig-hlink} (b) shows that for each vertex $a'$ of $\cal{H}$, there exists exactly one bi-infinite line in the link of $a'$ in $\cal{H}$. 
We therefore obtain the lemma.
\end{proof}

Proposition \ref{prop-gline} and Lemma \ref{lem-b} imply the following:

\begin{lem}\label{lem-b-g}
There exists no injective simplicial map from $\cal{B}$ into $\cal{G}$.
\end{lem}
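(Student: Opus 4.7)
The plan is to combine Proposition \ref{prop-gline} and Lemma \ref{lem-b} with the pigeonhole-style observation about bi-infinite lines alluded to just before the lemma. Since $\cal{G}$ is itself a bi-infinite line by Proposition \ref{prop-gline}, and since Lemma \ref{lem-b} provides a \emph{proper} subset $V \subset B(r)\cup B(r')$ whose induced subgraph in $\cal{B}$ is a bi-infinite line, the obstruction will come from the fact that an injection from a bi-infinite line to a bi-infinite line has no room left over.

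More precisely, suppose toward a contradiction that $\phi\colon \cal{B}\to \cal{G}$ is an injective simplicial map. Let $\cal{L}$ denote the full subgraph of $\cal{B}$ spanned by the set $V$ produced in Lemma \ref{lem-b}, so that $\cal{L}$ is a bi-infinite line. The restriction $\phi|_{\cal{L}}\colon \cal{L}\to \cal{G}$ is again an injective simplicial map: injectivity is inherited from $\phi$, and any edge of $\cal{L}$ is an edge of $\cal{B}$ and therefore maps to an edge of $\cal{G}$. I would then appeal to the elementary fact stated before Lemma \ref{lem-d-h}: any injective simplicial map between bi-infinite lines is surjective on vertices. (The short reason is that adjacency in a bi-infinite line forces the map to be distance-preserving up to direction, so the image is again all of $\mathbb{Z}$.)

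Consequently $\phi(V)$ exhausts the vertex set of $\cal{G}$. Since $V$ is a \emph{proper} subset of $B(r)\cup B(r')$ by the last sentence of the proof of Lemma \ref{lem-b}, we may pick a vertex $w$ of $\cal{B}$ with $w\notin V$. Then $\phi(w)$ is a vertex of $\cal{G}$, so $\phi(w)=\phi(v)$ for some $v\in V$, contradicting the injectivity of $\phi$. This contradiction will prove the lemma.

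The argument is essentially bookkeeping once Proposition \ref{prop-gline} and Lemma \ref{lem-b} are in hand; the only step that requires attention is verifying that the restriction of $\phi$ to the induced subgraph on $V$ is still a simplicial map, which is immediate because $\cal{L}$ is a full subgraph of $\cal{B}$. There is no real obstacle here, in contrast to the analogous Lemmas \ref{lem-d-e} and \ref{lem-d-h} where the topological structure of the target (a tree, or a link with a unique bi-infinite line) had to be exploited; for this lemma the proper-subgraph assertion in Lemma \ref{lem-b} does all the work.
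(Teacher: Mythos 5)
Your argument is correct and is exactly the one the paper intends: it deduces the lemma from Proposition \ref{prop-gline}, Lemma \ref{lem-b}, and the stated fact that an injective simplicial map between bi-infinite lines is surjective. You have merely written out the details that the paper leaves implicit.
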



\section{Preservation of the Euler characteristic}\label{sec-chi}

Let $S=S_{g, p}$ be a surface.
A simplex $\sigma$ of $\calt(S)$ is said to be {\it weakly rooted} if for each BP $b$ in $\sigma$, there exists a curve $\beta \in b$ contained in any BP in $\sigma$ that is BP-equivalent to $b$. 
A simplex $\sigma$ of $\calt(S)$ is said to be {\it rooted} if $\sigma$ consists of BPs and there is a non-separating curve $\beta$ in $S$ contained in any BP of $\sigma$. 
In this case, if $|\sigma|\geq 2$, then $\beta$ is uniquely determined and is called the {\it root curve} for $\sigma$.

We note that the maximal number of vertices in a simplex of $\calc_s(S)$ is equal to $|\chi(S)|-1=2g+p-3$ by Lemma 3.3 in \cite{kida}.

\begin{lem}\label{lem-w-rooted}
Let $S=S_{g, p}$ be a surface with $|\chi(S)|\geq 3$.
Then the following assertions hold:
\begin{enumerate}
\item Any weakly rooted simplex of $\calt(S)$ consists of at most $2g+p-3$ vertices.
\item Let $\sigma$ be a simplex of $\calt(S)$ with $|\sigma|=2g+p-3$. 
Suppose that for each $a\in \sigma$, we have a vertex $b$ of $\calt(S)$ with $i(a, b)\neq 0$ and $i(c, b)=0$ for any $c\in \sigma \setminus \{ a\}$. 
Then $\sigma$ is weakly rooted, and each component of $S_{\sigma}$ is either a handle or a pair of pants.
\end{enumerate}
\end{lem}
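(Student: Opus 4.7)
To prove assertion (1), my plan is to count the components of $S_\sigma$ directly. Write $\sigma=\sigma_s\sqcup\sigma_b$, let $s=|\sigma_s|$, and partition $\sigma_b$ into BP-equivalence classes $C_1,\ldots,C_m$; let $k_i=|C_i|$ and let $n_i$ be the number of distinct curves appearing in BPs of $C_i$. The basic combinatorial fact I will use is that cutting the $n_i$ pairwise-BP curves of $C_i$ contributes exactly $n_i-1$ new components: the first cut is non-separating in its component, and each subsequent curve is a BP-partner of a previously cut one, hence separating in its then-current component. Summed together, $c(S_\sigma)=1+s+\sum_i(n_i-1)$. Under the weak rootedness hypothesis each class is a ``star'' in the BP-graph, all of whose edges share a common root curve, so $n_i=k_i+1$ and $c=1+|\sigma|$. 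Because no component of $S_\sigma$ is a disk, annulus, or closed surface (respectively, by essentiality of simplex vertices, by distinctness of vertices of $\sigma$, and by the presence of new boundary), each component satisfies $\chi\le -1$, giving $c\le -\chi(S)=2g+p-2$ and hence $|\sigma|\le 2g+p-3$.

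For assertion (2), my plan is to reduce the structural conclusion to proving weak rootedness. Once the latter is known, the same computation yields $c=|\sigma|+1=2g+p-2=-\chi(S)$. This forces equality in $\sum_j\chi(Q_j)=\chi(S)$ with every $\chi(Q_j)\le -1$, so $\chi(Q_j)=-1$ for every component $Q_j$, meaning each component is a handle or a pair of pants. Thus the whole task is to establish weak rootedness from the maximality $|\sigma|=2g+p-3$ and the replacement hypothesis.

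To that end, define the \emph{defect} $\delta_i:=k_i-(n_i-1)$. The general identity $|\sigma|-(c-1)=\sum_i\delta_i$, combined with $|\sigma|=2g+p-3$ and $c\le 2g+p-2$, gives $\sum_i\delta_i\ge 0$. I would then extract a \emph{leaf property} from the replacement hypothesis: for any BP $a=\{x,y\}\in\sigma$, $b_a$ is disjoint from every BP of $\sigma$ other than $a$, so if $x$ belonged to some other BP $a'\in\sigma$, then $b_a$ would be disjoint from $x$, contradicting $i(a,b_a)\ne 0$ in the case that $b_a$ meets $x$. Hence one of $x,y$ lies in no other BP of $\sigma$, i.e., is a leaf of the BP-graph of $\sigma_b$. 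A graph in which every edge has a leaf endpoint is acyclic, and each of its connected components is a star. Restricting to a class $C_i$ gives a subgraph inherited from this forest of stars; letting $c_i$ denote its number of connected components, one finds $k_i=n_i-c_i$ and hence $\delta_i=1-c_i\le 0$. Combined with $\sum_i\delta_i\ge 0$, every $\delta_i=0$, so $c_i=1$ and each $C_i$ is a single star---that is, $\sigma$ is weakly rooted.

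The step I expect to be the main obstacle is justifying the cut-count formula $c(S_\sigma)=1+s+\sum_i(n_i-1)$ in full generality: one must verify that at each stage of cutting a class's pairwise-BP curves, the next curve really is separating in the component of the current cut surface that contains it (independently of what other classes or separating curves have already been removed), which relies on Lemma~\ref{lem-bp} and a careful analysis of subsurfaces produced by successive BP cuts. A related subtlety is ensuring that the leaf property extracted from the replacement hypothesis applies cleanly within each BP-equivalence class even when distinct classes share curves, and then combining $\delta_i=1-c_i\le 0$ with $\sum_i\delta_i\ge 0$ to conclude $c_i=1$ class by class.
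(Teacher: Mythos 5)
Your argument is correct in substance but follows a genuinely different route from the paper's. The paper proves (i) by cutting $S$ along the root curves and observing that the remaining curves form a simplex of $\calc_s$ of the cut surface, then invokes the known bound (Lemma 3.3 of \cite{kida}) on maximal simplices of the separating-curve complex; for (ii) it uses the replacement hypothesis to select, for each BP, a curve lying in no other BP of its class (your ``leaf'' curves), cuts along the companions $\gamma_{k0}$, and deduces weak rootedness and the handle/pants structure from maximality of the resulting simplex of $\calc_s(R)$. You instead work directly with the component count of $S_\sigma$ against the Euler characteristic and run a graph-theoretic argument (forest of stars, defect $\delta_i=1-c_i$) on the BP-graph. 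The key observation extracted from the replacement hypothesis is the same in both proofs; what your approach buys is self-containedness (no appeal to the structure of maximal simplices of $\calc_s$ of an auxiliary cut surface), at the cost of the bookkeeping you flag.

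One remark on the step you identify as the main obstacle: the exact formula $c(S_\sigma)=1+s+\sum_i(n_i-1)$ is stronger than what you need, and the half of it that is genuinely delicate is not the half you worry about. The direction $c-1\ge s+\sum_i(n_i-1)$ is easy: cut one designated curve of each class first, then every remaining curve of $\tau$ is either separating in $S$ or a BP-partner of an already-cut curve, hence separating in the surface obtained from that earlier cut and therefore in whatever component currently contains it; so it adds a component. The problematic direction is the claim that the \emph{first} curve of each class is non-separating in its component at the moment it is cut (a non-separating curve of $S$ can become separating after other, non-BP-equivalent curves are removed, when its class lies in the span of theirs in $H_1(S;\mathbb{Z}/2)$ modulo boundary). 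Fortunately your proof never needs that direction: in (i) you only need $c\ge |\sigma|+1$; in (ii) the inequality $c-1\ge s+\sum_i(n_i-1)$ together with $c\le -\chi(S)=|\sigma|+1$ already yields $\sum_i\delta_i\ge 0$, which against $\delta_i\le 0$ forces $\delta_i=0$ for all $i$, and then equality propagates backwards to give $c=-\chi(S)$ and hence $\chi=-1$ for every component. So I would restate your ``general identity'' as the inequality $\sum_i\delta_i\ge|\sigma|-(c-1)$ and let exactness fall out at the end rather than assuming it.
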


\begin{proof}
Let $\sigma$ be a simplex of $\calt(S)$.
We put
\[\sigma =\{ \alpha_1,\ldots, \alpha_s, b_{11},\ldots, b_{1u_1}, b_{21},\ldots, b_{t1},\ldots, b_{tu_t}\},\]
where each $\alpha_j$ is a separating curve in $S$ and each $b_{kl}$ is a BP in $S$ such that for each $k=1,\ldots, t$, the family $\{ b_{k1},\ldots, b_{ku_k}\}$ is a BP-equivalence class in $\sigma$.

We first suppose that $\sigma$ is weakly rooted.
For each $k$, there then exists a curve $\beta_{k0}$ in $S$ with $\beta_{k0}\in b_{kl}$ for any $l$.
Let $Q$ be the surface obtained by cutting $S$ along all the curves $\beta_{10}, \beta_{20},\ldots, \beta_{t0}$. 
For each $k=1,\ldots, t$ and each $l=1,\ldots, u_k$, let $\beta_{kl}$ denote the curve in $b_{kl}$ distinct from $\beta_{k0}$.
The family
\[\tau=\{ \alpha_1,\ldots, \alpha_s, \beta_{11},\ldots, \beta_{1u_1}, \beta_{21},\ldots, \beta_{t1},\ldots, \beta_{tu_t}\}\]
is a simplex of $\calc_s(Q)$. 
Since any simplex of $\calc_s(Q)$ consists of at most $2g+p-3$ vertices, we have $|\sigma|=|\tau|\leq 2g+p-3$.
Assertion (i) is proved.

We next suppose that $\sigma$ is a simplex of $\calt(S)$ satisfying $|\sigma|=2g+p-3$ and the assumption in assertion (ii).
For any $k$, $l$, we then have a curve $\gamma_{kl}$ in $b_{kl}$, but not in $b_{kl'}$ for any $l'\in \{ 1,\ldots, u_k\}\setminus \{ l\}$. 
For each $k$, we denote by $\gamma_{k0}$ the curve in $b_{k1}$ distinct from $\gamma_{k1}$. 
Let $R$ be the surface obtained by cutting $S$ along all the curves $\gamma_{10}, \gamma_{20},\ldots, \gamma_{t0}$. 
The family
\[\rho=\{ \alpha_1,\ldots, \alpha_s, \gamma_{11},\ldots, \gamma_{1u_1}, \gamma_{21},\ldots, \gamma_{t1},\ldots, \gamma_{tu_t}\}\]
is a simplex of $\calc_s(R)$. 
The assumption $|\sigma|=2g+p-3$ implies that $\rho$ is a simplex of $\calc_s(R)$ of maximal dimension.
It follows that for any $k$, $l$, the BP $b_{kl}$ consists of two of $\gamma_{k0}, \gamma_{k1},\ldots, \gamma_{ku_k}$. 
Since $b_{kl}$ does not contain $\gamma_{kl'}$ for any $l'\in \{ 1,\ldots u_k\}\setminus \{ l\}$, we have $b_{kl}=\{ \gamma_{k0}, \gamma_{kl}\}$. 
It turns out that $\sigma$ is weakly rooted. 
Since each component of $R_{\rho}$ is either a handle or a pair of pants by Lemma 3.3 in \cite{kida}, so is each component of $S_{\sigma}$.
Assertion (ii) is proved.
\end{proof}

\begin{lem}\label{lem-si-bp}
Let $S=S_{g, p}$ be a surface with $|\chi(S)|\geq 3$, and let $\phi \colon \calc_s(S)\rightarrow \calt(S)$ be a superinjective map.
Then $\phi$ sends each simplex of $\calc_s(S)$ to a weakly rooted simplex of $\calt(S)$.

Moreover, the following assertion holds:
Let $\sigma$ be a simplex of $\calc_s(S)$ and $\alpha$ an element of $\sigma$.
Let $R_1$ and $R_2$ denote the two components of $S_{\phi(\alpha)}$.
Then for each $\beta \in \sigma$, it is impossible that $\phi(\beta)$ is a BP in $S$ with one curve of $\phi(\beta)$ essential in $R_1$ and another essential in $R_2$.
\end{lem}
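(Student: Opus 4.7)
The plan is to prove the two assertions in turn, deducing the moreover statement from the first assertion combined with Lemma \ref{lem-bp}. For the first assertion, my approach is to exploit that being weakly rooted is inherited by sub-simplices (since a BP-equivalence class in a face of $\rho$ is a subset of one in $\rho$), and hence to reduce to a maximal simplex of $\calc_s(S)$, after which Lemma \ref{lem-w-rooted}(ii) becomes applicable.

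Given an arbitrary simplex $\sigma$ of $\calc_s(S)$, I extend it to a maximal simplex $\bar{\sigma}$. By Lemma 3.3 in \cite{kida}, $|\bar{\sigma}|=2g+p-3$ and each component of $S_{\bar{\sigma}}$ is either a handle or a pair of pants; since $\phi$ is injective, $|\phi(\bar{\sigma})|=2g+p-3$. To check the partner condition in Lemma \ref{lem-w-rooted}(ii) for $\phi(\bar{\sigma})$, I will produce, for each $\alpha\in\bar{\sigma}$, a separating curve $\alpha'\in V_s(S)$ with $i(\alpha,\alpha')\neq 0$ and $i(\gamma,\alpha')=0$ for every $\gamma\in\bar{\sigma}\setminus\{\alpha\}$; then $b=\phi(\alpha')\in V_t(S)$ supplies the required partner for $\phi(\alpha)$, where the intersection pattern transfers from $V_s(S)$ to $V_t(S)$ via the simpliciality and superinjectivity of $\phi$. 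The curve $\alpha'$ must lie in the component $Q_\alpha$ of $S_{\bar{\sigma}\setminus\{\alpha\}}$ containing $\alpha$, which is formed by gluing the two sides of $\alpha$ in $S_{\bar{\sigma}}$ along $\alpha$. Since each side is a handle or a pair of pants, $Q_\alpha$ falls into one of three types (handle-handle, handle-pants, pants-pants), identifying it up to the additional boundary from $\partial S$ with $S_2$, $S_{1,2}$, or $S_{0,4}$. In each of these three cases I construct $\alpha'$ directly; Lemma \ref{lem-w-rooted}(ii) then yields that $\phi(\bar{\sigma})$, and hence $\phi(\sigma)$, is weakly rooted.

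For the moreover assertion I argue by contradiction. Suppose $\phi(\beta)$ is a BP with one curve essential in $R_1$ and another essential in $R_2$. Since $\alpha$ and $\beta$ are disjoint in $\sigma$, superinjectivity gives $i(\phi(\alpha),\phi(\beta))=0$. If $\phi(\alpha)$ is either a separating curve or a BP that is not BP-equivalent to $\phi(\beta)$, then Lemma \ref{lem-bp} (applied with the BP $\phi(\beta)$ playing the role of $a$ and $\phi(\alpha)$ the role of $b$) forces both curves of $\phi(\beta)$ into a single component of $S_{\phi(\alpha)}$, contradicting the hypothesis. The only remaining case is that $\phi(\alpha)$ and $\phi(\beta)$ are BP-equivalent BPs; applying the first assertion to $\sigma$, already established, gives that $\phi(\sigma)$ is weakly rooted, so $\phi(\alpha)$ and $\phi(\beta)$ share a common curve $c$. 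But $c$ appears on the boundary of both $R_1$ and $R_2$ and is therefore essential in neither, while the remaining curve of $\phi(\beta)$ can be essential in at most one of $R_1$, $R_2$; this precludes the configuration of the hypothesis.

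The main obstacle is the case analysis producing the partner curve $\alpha'$ in the first assertion, particularly when $Q_\alpha=S_{0,4}$: there the only separating curves of $Q_\alpha$ are p-curves, and one must ensure that at least one p-curve intersecting $\alpha$ remains separating in $S$. This requires tracking how the four boundary circles of $Q_\alpha$ attach to the other components of $S_{\bar{\sigma}}$ so that the two sides of $\alpha'$ are not reconnected through $S\setminus Q_\alpha$; a similar but easier check is needed in the $S_{1,2}$ case.
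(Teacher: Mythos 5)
Your proposal is correct and follows essentially the same route as the paper: extend $\sigma$ to a simplex of maximal dimension, verify the hypothesis of Lemma \ref{lem-w-rooted}(ii) by producing for each $\alpha$ a separating curve $\alpha'$ meeting only $\alpha$, transfer the intersection pattern through $\phi$, and deduce the ``moreover'' clause from weak rootedness together with Lemma \ref{lem-bp}. The one ``obstacle'' you flag at the end is in fact automatic: every boundary component of $Q_\alpha$ is either a component of $\partial S$ or a \emph{separating} curve of $\bar{\sigma}$, so the pieces of $S$ attached to $Q_\alpha$ along its boundary are each glued along a single circle and cannot reconnect the two sides of $\alpha'$, whence any curve separating $Q_\alpha$ is separating in $S$.
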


\begin{proof}
Let $\sigma$ be a simplex of $\calc_s(S)$.
We choose a simplex $\tau$ of $\calc_s(S)$ of maximal dimension containing $\sigma$.
Since $\phi(\tau)$ satisfies the assumption in Lemma \ref{lem-w-rooted} (ii), $\phi(\tau)$ is weakly rooted.
It follows that $\phi(\sigma)$ is also weakly rooted.
The latter assertion of the lemma follows from the former assertion and Lemma \ref{lem-bp}.
\end{proof}

\begin{lem}\label{lem-hp-number}
Let $S=S_{g, p}$ be a surface with $|\chi(S)|\geq 3$, and let $\sigma$ be a weakly rooted simplex of $\calt(S)$ with $|\sigma|=2g+p-3$. 
Then the number of vertices in $\sigma$ corresponding to either an hp-curve in $S$ or a p-BP in $S$ is at most $g+\lfloor p/2\rfloor$.
\end{lem}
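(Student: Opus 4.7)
The plan is to denote by $x$, $y$, $z$ the numbers of h-curves, p-curves, and p-BPs in $\sigma$, and to prove separately that $x = g-t$, $2y+z \leq p$, and $z \leq 2t$, where $t$ is the number of BP-equivalence classes in $\sigma$. Combining these yields $x+y+z \leq (g-t) + (p-z)/2 + z = g + p/2 + z/2 - t \leq g + p/2$, and integrality of the left side gives the desired bound $x+y+z \leq g + \lfloor p/2 \rfloor$.

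To prove $x = g - t$, I first note that by the argument in the proof of Lemma \ref{lem-w-rooted}(i), cutting $S$ along the roots of the BP-equivalence classes of $\sigma$ produces a surface on which $\sigma$ reduces to a maximal-dimensional simplex of the separating curve complex; by Lemma 3.3 of \cite{kida} every component of $S_\sigma$ is therefore homeomorphic to $S_{1,1}$ or $S_{0,3}$. Writing $h$ and $q$ for the numbers of handle- and pants-components, standard Euler characteristic and boundary-circle counts (using that there are $|\sigma|+t$ simple closed curves cut in total, each contributing two boundary circles, plus $p$ from $\partial S$) give $h + q = 2g+p-2$ and $h + 3q = 4g+3p+2t-6$, hence $h = g-t$. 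The main technical work is to identify $h$ with $x$: for each h-curve $\alpha$, no vertex of $\sigma$ other than $\alpha$ has a representative in the handle $H_\alpha$. Separating curves in $\sigma$ are excluded because $S_{1,1}$ has no essential separating curves, and attaching $S\setminus H_\alpha$ along $\alpha$ shows that any non-separating curve in $H_\alpha$ is non-separating in $S$. For a BP $\{\beta,\beta'\} \in \sigma$ with $\beta \subset H_\alpha$, either $\beta' \subset H_\alpha$ too (but disjoint essential curves in $S_{1,1}$ are isotopic, a contradiction) or $\beta' \subset S \setminus H_\alpha$; in the latter case $S \setminus (\beta \cup \beta')$ is obtained by gluing the connected pants $H_\alpha \setminus \beta$ to the connected surface $(S \setminus H_\alpha) \setminus \beta'$ along the circle $\alpha$, hence is connected, contradicting the BP-property. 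Conversely, any handle-component of $S_\sigma$ must be bounded by a single cut curve (the $\partial S$-bounded case is excluded by $|\chi(S)| \geq 3$, which rules out $S = S_{1,1}$), and the same BP-disconnection argument forces this cut curve to be separating, hence an h-curve. So $x = h = g-t$.

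For $2y + z \leq p$, I would observe that the pants cut off by distinct p-curves and p-BPs are pairwise disjoint (no essential simple closed curve lies in a pair of pants, so no hp-curve or p-BP defining curve can lie in another's pants), and each p-curve pants contains $2$ distinct components of $\partial S$ while each p-BP pants contains $1$, all distinct. For $z \leq 2t$, fix a BP-equivalence class with root $\beta_0$ and cut $S$ along $\beta_0$ to obtain a surface $S'$ with two new boundary circles $\beta_0^{+}, \beta_0^{-}$; each p-BP $\{\beta_0, \beta_l\}$ in the class descends to a pair of pants in $S'$ having exactly one of $\beta_0^{\pm}$ on its boundary, and two such pants on the same side of $\beta_0$ would both contain a collar of that boundary circle and hence intersect, contradicting their disjointness. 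So each BP-equivalence class contributes at most $2$ p-BPs.

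The principal obstacle is the correspondence between h-curves of $\sigma$ and handle-components of $S_\sigma$; the connectedness argument used to exclude BP-curves from the interior of $H_\alpha$ is the crucial topological input, since the handle of an h-curve could a priori be further decomposed by a BP curve in $\sigma$ were it not for the BP-property. The remaining inequalities are straightforward disjointness observations once the pants/handle structure of $S_\sigma$ is in hand.
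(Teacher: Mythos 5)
Your proof is correct, but it follows a genuinely different route from the paper's. The paper argues by induction on the number $t$ of BP-equivalence classes in $\sigma$: it cuts $S$ along the root curve of one class to get $R=S_{g-1,p+2}$, observes that the induced simplex $\tau$ of $\calt(R)$ is still weakly rooted of cardinality $2(g-1)+(p+2)-3$, that every hp-curve or p-BP of $\sigma$ descends to an hp-curve or p-BP of $\tau$ (the p-BPs of the chosen class becoming p-curves of $R$), and that the inductive bound $(g-1)+\lfloor (p+2)/2\rfloor$ equals $g+\lfloor p/2\rfloor$; the base case $t=0$ is exactly your observations that disjoint handles bound the h-curve count by $g$ and disjoint pants eat two boundary components each. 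Your version replaces this induction by a single global count on $S_\sigma$: the Euler characteristic and boundary-circle bookkeeping giving $h=g-t$ handle components, the identification of these with the h-curves of $\sigma$, the bound $2y+z\leq p$ from the pants, and $z\leq 2t$ from the two sides of each root curve. All the steps check out: the key topological point, that the handle $H_\alpha$ of an h-curve $\alpha\in\sigma$ contains no curve of any other vertex (in particular that a BP curve $\beta\subset H_\alpha$ with partner $\beta'$ outside would make $S\setminus(\beta\cup\beta')$ connected, since $\beta'$ cannot separate $S\setminus H_\alpha$ without separating $S$), is sound, and the arithmetic $x+y+z\leq g-t+p/2+z/2\leq g+p/2$ plus integrality closes the argument. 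Two cosmetic remarks: you only need the inequality $x\leq h$, not the equality $x=h$ (and for the converse inclusion the cleaner reason is simply that a curve bounding a one-holed subsurface is separating, so it cannot belong to a BP); and the pants cut off by distinct p-curves/p-BPs of $\sigma$ need not be literally disjoint (two p-BPs of one class share their root as a boundary circle) --- what your count actually uses, and what your collar argument does establish, is that their interiors are disjoint, so the $\partial S$-components they contain are pairwise distinct. The trade-off is the usual one: the paper's induction is shorter and hides the combinatorics in the identity $(g-1)+\lfloor(p+2)/2\rfloor=g+\lfloor p/2\rfloor$, while your direct count is longer but makes explicit where each unit of the bound $g+\lfloor p/2\rfloor$ comes from and yields the sharper refinements $x=g-t$ and $z\leq 2t$.
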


\begin{proof}
We prove the lemma by induction on the number of BP-equivalence classes in $\sigma$. 
When there is no BP in $\sigma$, the lemma is obvious. 
Otherwise, we pick a BP-equivalence class $\{ b_1,\ldots, b_n\}$ in $\sigma$ and write $b_j=\{ \beta_0, \beta_j\}$ for each $j$ with $\beta_0$ the root curve.
We put $R=S_{\beta_0}$.
Let $\tau$ be the simplex of $\calt(R)$ induced by $\sigma$, that is, the simplex consisting of $\beta_1,\ldots, \beta_n$ and all elements in $\sigma \cap V_s(S)$ and in $(\sigma \setminus \{ b_1,\ldots, b_n\})\cap V_{bp}(S)$. 
The hypothesis of the induction shows the number of vertices in $\tau$ corresponding to either an hp-curve in $R$ or a p-BP in $R$ is at most $(g-1)+\lfloor (p+2)/2\rfloor =g+\lfloor p/2\rfloor$. 
Let $\alpha$ be an h-curve in $\sigma$.
Since $\alpha$ is disjoint from the BP $\{ \beta_0, \beta_1\}$, the handle cut off by $\alpha$ from $S$ does not contain $\beta_0$.
It follows that $\alpha$ is an h-curve in $R$.
Each p-curve in $\sigma$ is a p-curve in $R$.
Let $b$ be a p-BP in $\sigma$.
If $b$ is equal to $b_j$ for some $j=1,\ldots, n$, then $\beta_j$ is a p-curve in $R$.
Otherwise, $b$ is a p-BP in $R$.
The lemma thus follows.
\end{proof}

To prove the next lemma, let us recall reduction system graphs introduced in \cite{v}.
The {\it reduction system graph} $G(\tau)$ for a simplex $\tau$ of $\calc(S)$ is defined as follows.
Vertices of $G(\tau)$ are components of $S_{\tau}$.
Edges of $G(\tau)$ are curves in $\tau$.
The two ends of the edge corresponding to a curve $c$ in $\tau$ are defined to be vertices corresponding to components of $S_{\tau}$ which lie in the left and right hand sides of $c$ in $S$.
Note that $G(\tau)$ may have a loop.
We refer to \cite{v} for basics of reduction system graphs.

\begin{lem}\label{lem-pants-sep}
Let $S=S_{g, p}$ be a surface with $|\chi(S)|\geq 3$, and let $\sigma$ be a weakly rooted simplex of $\calt(S)$ with $|\sigma|=2g+p-3$. 
Choose a component $P$ of $S_{\sigma}$ such that $P$ is a pair of pants and each component of $\partial P$ corresponds to a curve of $\sigma$. 
Then at least one of the three curves corresponding to components of $\partial P$ is separating in $S$.
\end{lem}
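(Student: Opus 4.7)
The plan is to argue by contradiction, assuming that all three curves corresponding to the boundary components of $P$ are non-separating in $S$. By the argument used to establish Lemma~\ref{lem-w-rooted}(i), the hypotheses on $\sigma$ already force every component of $S_\sigma$ to be a handle or a pair of pants. I pass to the multicurve $\tau\subset V(S)$ obtained from $\sigma$ by splitting each BP vertex into its two curves, with a single copy retained of the common root of each BP-equivalence class, so that $S_\tau=S_\sigma$; let $G(\tau)$ denote the associated reduction system graph. A curve in $\tau$ is separating in $S$ exactly when its edge is a bridge of $G(\tau)$, and each BP-equivalence class of $\sigma$ of size $k$, with root $\beta_0$ and other curves $\beta_1,\ldots,\beta_k$, determines a simple cycle of length $k+1$ in $G(\tau)$: in $S_{\beta_0}$ the curves $\beta_1,\ldots,\beta_k$ are disjoint separating curves each separating the two new boundary components, so they arrange $S_{\beta_0}$ as a linear chain of $k+1$ pieces, which closes into a cycle upon regluing $\beta_0$.

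Let $c_1,c_2,c_3\in\tau$ be the curves corresponding to the three boundary components of $P$, allowing repetition. First suppose two of these correspond to the same curve $c_\ell$, so that $c_\ell$ is a loop at $P$ in $G(\tau)$ and the third corresponds to a distinct curve $c_e$. Then the closure of $P$ in $S$ together with the identification of its two $c_\ell$-boundaries realises a pair of pants with two boundaries glued, which is a handle $S_{1,1}$, whose single remaining boundary is $c_e$. Hence $c_e$ bounds a handle in $S$ and is therefore an h-curve, in particular separating; since $c_\ell$ itself is non-separating (loops are non-bridges), the lemma's conclusion is witnessed by $c_e$, contradicting the contrary assumption.

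In the remaining case $c_1,c_2,c_3$ are three distinct curves in $\tau$. Each is non-separating and so lies in some BP of $\sigma$, placing $c_i$ on at least one BP-cycle of $G(\tau)$. The crucial observation is that a simple cycle passes through each of its vertices using exactly two incident edges. Were the BP-equivalence classes $\Gamma_1,\Gamma_2,\Gamma_3$ of BPs containing $c_1,c_2,c_3$ pairwise distinct, the cycle for $\Gamma_i$ would have only $c_i$ among its edges at $P$, violating the two-edge rule. Hence two of the curves, say $c_1$ and $c_2$, lie in a common BP-equivalence class of $\sigma$; as distinct disjoint non-isotopic curves belonging to BPs of this class, they are BP-equivalent as curves and therefore form a BP $\{c_1,c_2\}$ in $S$ (whether or not this BP is itself a vertex of $\sigma$).

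To conclude, decompose $S\setminus(c_1\cup c_2)=\Sigma\sqcup\Sigma'$ with $\bar P\subset\Sigma$, which is forced because $P$ is connected with interior disjoint from $c_1\cup c_2$. The third boundary $c_3\subset\Sigma$ separates $\Sigma$ into $\bar P$ and a residual subsurface $R=\Sigma\setminus\bar P$; in $S$ the piece $\Sigma'$ is glued to $\bar P$ along $c_1\cup c_2\subset\partial\bar P$, so $S\setminus c_3$ has exactly the two connected components $(\bar P\setminus c_3)\cup\Sigma'$ and $R\setminus c_3$. Thus $c_3$ is separating in $S$, contradicting the standing assumption. I expect the main obstacle to be the BP-cycle description and the explicit verification that BP-equivalent curves drawn from different BPs of $\sigma$ do form BPs in $S$; both facts should follow directly from the definitions in Section~\ref{sec-pre} and from Lemma~\ref{lem-bp}.
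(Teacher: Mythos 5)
Your reduction to the case of three distinct non-separating curves, and your endgame (two of the curves BP-equivalent $\Rightarrow$ $\{c_1,c_2\}$ is a BP $\Rightarrow$ $c_3$ separating) are fine, and interestingly they run the paper's own first reduction in the opposite direction: the paper shows that no two of the three curves can be BP-equivalent (since otherwise the third would be separating) and then derives a contradiction by a spanning-tree edge count in $G(\tau)$, whereas you try to show that two of them \emph{must} be BP-equivalent. But the step on which your whole argument rests --- that the edges of a BP-equivalence class form a \emph{simple cycle} in $G(\tau)$, so that the vertex $P$ must meet an even number of edges of each class --- is not established by what you wrote, and this is exactly the delicate point. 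Your linear-chain argument shows that cutting $S$ along the curves $\beta_0,\ldots,\beta_k$ of one class alone yields a cyclic chain of pieces $P_0,\ldots,P_k$. The vertices of $G(\tau)$, however, are the components of $S_\tau$, which refine this decomposition: each $P_j$ may be further subdivided by the separating curves and the curves of \emph{other} BP-classes in $\tau$. In $G(\tau)$ the class edges therefore form a cycle only after being completed by connecting paths through those other edges, and nothing you have said prevents the component of $(P_j)_\tau$ abutting one class-boundary curve of $P_j$ from differing from the component abutting the other. In particular, the pair of pants $P$ could perfectly well carry exactly one edge of the class $\Gamma_1$ without violating anything you have proved, so the ``two-edge rule'' cannot be invoked at $P$. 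Ruling this out amounts to showing that the curves of $\tau$ interior to $P_j$ never separate its two class-boundary curves inside $P_j$, which is a nontrivial separation statement about \emph{unions} of several vertices of $\sigma$ (Lemma \ref{lem-bp} only controls a single vertex).

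The paper's proof avoids this issue entirely: it only uses that (a) separating curves of $\sigma$ are bridges of $G(\tau)$, (b) any two curves of one BP-class jointly separate $S$, so a spanning tree omits at most one edge per class, and (c) any transversal $\beta_{1l_1},\beta_{2l_2},\beta_{3l_3}$ of the three classes is homologous to $\partial P$ and hence separates $S$, so a spanning tree must contain some class in its entirety; comparing with the edge count $2g+p-3$ of a spanning tree (here is where one needs that $S_\sigma$ has exactly $2g+p-2$ components, i.e.\ that all components are handles or pants) gives the contradiction. To repair your argument you would either need to prove the simple-cycle claim under the standing hypotheses --- which appears to require homological bookkeeping of comparable weight --- or replace the two-edge rule by the paper's counting argument.
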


\begin{proof}
Let $\delta_1$, $\delta_2$ and $\delta_3$ denote the elements of $\sigma$ corresponding to components of $\partial P$. 
We claim that these three elements are mutually distinct as elements of $V(S)$.
For otherwise two of them, say $\delta_2$ and $\delta_3$, would be equal and be non-separating in $S$.
The curve $\delta_1$ then cuts off a handle from $S$, in which $\delta_2=\delta_3$ is contained.
We obtain a contradiction because any non-separating curve in $S$ forming a BP in $S$ with $\delta_2$ intersects $\delta_1$.

Assuming that any of $\delta_1$, $\delta_2$ and $\delta_3$ is a non-separating curve in $S$, we deduce a contradiction.
If two of $\delta_1$, $\delta_2$ and $\delta_3$ were BP-equivalent, then another curve would be separating in $S$. 
It follows that any two of $\delta_1$, $\delta_2$ and $\delta_3$ are not BP-equivalent. 
We put
\[\sigma =\{ \alpha_1,\ldots, \alpha_s, b_{11},\ldots, b_{1u_1}, b_{21},\ldots, b_{t1},\ldots, b_{tu_t}\},\]
where each $\alpha_j$ is a separating curve in $S$ and each $b_{kl}$ is a BP in $S$ such that for each $k=1,\ldots, t$, the family $\{ b_{k1},\ldots, b_{ku_k}\}$ is a BP-equivalence class in $\sigma$. 
Let us define the simplex $\tau$ of $\calc(S)$ by
\[\tau =\{ \alpha_1,\ldots, \alpha_s\} \cup \bigcup_{k=1}^{t}\{ \beta_{k0}, \beta_{k1}\ldots, \beta_{ku_k}\},\]
where we put $b_{kl}=\{ \beta_{k0}, \beta_{kl}\}$ with $\beta_{k0}$ the root curve. 
Without loss of generality, we may assume that for each $k=1, 2, 3$, $\delta_k$ is equal to $\beta_{kl}$ for some $l$. 
Since the union of $\delta_1$, $\delta_2$ and $\delta_3$ cuts off $P$ from $S$, for any $l_1$, $l_2$ and $l_3$, the union of $\beta_{1l_1}$, $\beta_{2l_2}$ and $\beta_{3l_3}$ also decomposes $S$ into at least two components. 
It follows that for any maximal tree $T$ in the reduction system graph $G(\tau)$, there exists $r\in \{ 1, 2, 3\}$ such that all edges corresponding to $\beta_{r0}, \beta_{r1},\ldots, \beta_{ru_r}$ are contained in $T$. 
Without loss of generality, we may assume $r=1$.
Since the number of edges of $T$ is equal to $|\chi(S)|-1=2g+p-3$, we have
\[2g+p-3\geq s+(u_1+1)+\sum_{k=2}^t u_k>s+\sum_{k=1}^t u_k=|\sigma|.\]
This is a contradiction. 
\end{proof}

Let $S$ be a surface and $\phi \colon \calc_s(S)\rightarrow \calt(S)$ a superinjective map.
To state explicitly the property that $\phi$ is $\chi$-preserving, let us introduce terminology.
Fix $\alpha \in V_s(S)$.
Let $\beta$ be an element of $V_s(S)$ disjoint and distinct from $\alpha$.
If $\phi(\alpha)\in V_s(S)$, then there exists a unique component $R$ of $S_{\phi(\alpha)}$ with $\phi(\beta)\in V_t(R)$ by Lemma \ref{lem-bp}.
If $\phi(\alpha)\in V_{bp}(S)$, then there exists a unique component $R$ of $S_{\phi(\alpha)}$ such that either
\begin{itemize}
\item we have $\phi(\beta)\in V_s(R)$;
\item we have $\phi(\beta)\in V_{bp}(R)$, and the two components of $\partial R$ corresponding to curves of $\phi(\alpha)$ are contained in a single component of $R_{\phi(\beta)}$; or
\item we have $\phi(\beta)\in V_{bp}(S)$, and $\phi(\beta)$ consists of an element of $\phi(\alpha)$ and an element of $V_s(R)$
\end{itemize}
by Lemmas \ref{lem-bp} and \ref{lem-si-bp}.
Let us denote this component $R$ by $R(\phi, \alpha; \beta)$.

For each $\beta \in V_s(S)$ disjoint and distinct from $\alpha$, putting $R=R(\phi, \alpha; \beta)$, we define an element $\phi_{\alpha}(\beta)$ of $V_t(R)$ as follows:
\begin{itemize}
\item If $\phi(\beta)\in V_t(R)$, then we set $\phi_{\alpha}(\beta)=\phi(\beta)$.
\item Otherwise $\phi(\alpha)$ and $\phi(\beta)$ are BPs in $S$ sharing a curve.
We define $\phi_{\alpha}(\beta)$ as another curve in $\phi(\beta)$, which is essential and separating in $R$. 
\end{itemize}

\begin{lem}\label{lem-side}
Let $S=S_{g, p}$ be a surface with $|\chi(S)|\geq 3$, and let $\phi \colon \calc_s(S)\rightarrow \calt(S)$ be a superinjective map. 
Fix $\alpha \in V_s(S)$.
Let $Q$ be a component of $S_{\alpha}$ with $|\chi(Q)|\geq 2$.
Then there exists a component $R$ of $S_{\phi(\alpha)}$ such that for any $\beta \in V_s(Q)$, the equality $R=R(\phi, \alpha; \beta)$ holds.
\end{lem}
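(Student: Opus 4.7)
The plan is to fix a vertex $\beta_0 \in V_s(Q)$, which exists because $|\chi(Q)|\geq 2$ forces $V_s(Q)\neq \emptyset$, to set $R:=R(\phi,\alpha;\beta_0)$, and to show $R(\phi,\alpha;\beta)=R$ for every $\beta \in V_s(Q)$.

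The key step is the following \emph{intersection principle}: if $\beta_1,\beta_2 \in V_s(Q)$ satisfy $i(\beta_1,\beta_2)\neq 0$, then $R(\phi,\alpha;\beta_1)=R(\phi,\alpha;\beta_2)$. To prove it, suppose the two $R$-values are distinct; they must then be the two components $R_1,R_2$ of $S_{\phi(\alpha)}$. Running through the three clauses in the definition of $R(\phi,\alpha;\cdot)$, each $\phi(\beta_i)$ has a representative contained in $R_i$, with the sole exception that in the third clause one curve of $\phi(\beta_i)$ is a component of $\phi(\alpha)\subset \partial R_i$. In every combination of these cases, representatives of $\phi(\beta_1)$ and $\phi(\beta_2)$ can be realized disjointly: an essential curve or BP inside $R_i\setminus \partial R_i$ is disjoint from anything supported in $R_{3-i}$, and a boundary curve of $\phi(\alpha)$ lying in $\partial R_i$ is disjoint from any essential curve of $R_{3-i}$. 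This gives $i(\phi(\beta_1),\phi(\beta_2))=0$, contradicting the superinjectivity of $\phi$ applied to the intersecting pair $\beta_1,\beta_2$.

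It then remains to link any $\beta \in V_s(Q)$ to $\beta_0$ through a chain in $V_s(Q)$ whose consecutive members intersect. When $|\chi(Q)|=2$, so that $Q$ is $S_{0,4}$ or $S_{1,2}$, any two distinct vertices of $V_s(Q)$ already intersect (in $S_{0,4}$ because any two distinct separating curves meet, in $S_{1,2}$ because the only separating curves cut off a handle and two such curves cannot be disjoint), and the intersection principle finishes the argument. When $|\chi(Q)|\geq 3$, I reduce to the topological claim that for any disjoint pair $\beta_1,\beta_2 \in V_s(Q)$ there exists a \emph{bridge} $\gamma \in V_s(Q)$ with $i(\gamma,\beta_1)\neq 0$ and $i(\gamma,\beta_2)\neq 0$; two applications of the intersection principle then yield $R(\phi,\alpha;\beta)=R(\phi,\alpha;\gamma)=R(\phi,\alpha;\beta_0)$.

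To produce $\gamma$, I examine the three components of $Q_{\beta_1\cup \beta_2}$. In the generic configuration, each of the two outer components carries at least one component of $\partial Q$; then $\gamma$ is the p-curve in $Q$ defined by a simple arc crossing $\beta_1$ and $\beta_2$ once each and joining two such boundary components through the middle region. In the remaining configurations, where an outer component has the form $S_{g,1}$ with $g\geq 1$, a non-separating curve in that region together with an arc pushed across the adjacent $\beta_i$ produces a separating curve in $Q$ meeting both $\beta_1$ and $\beta_2$. The hypothesis $|\chi(Q)|\geq 3$ guarantees that at least one of these configurations applies. The main obstacle in the whole argument is precisely this case analysis for $\gamma$; once $\gamma$ is in hand, the intersection principle closes the proof transitively.
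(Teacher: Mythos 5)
Your argument is essentially the paper's: the ``intersection principle'' is exactly the observation that if $R(\phi,\alpha;\beta_1)\neq R(\phi,\alpha;\beta_2)$ then the curves $\phi_\alpha(\beta_1)$ and $\phi_\alpha(\beta_2)$ live in different components of $S_{\phi(\alpha)}$ and hence $\phi(\beta_1)$, $\phi(\beta_2)$ are disjoint, and the paper likewise handles an arbitrary pair $\beta_1,\beta_2$ by choosing a third curve $\gamma\in V_s(Q)$ meeting both (whose existence it simply asserts). One caution on the part you add: in your ``remaining configuration'' the arc must terminate at something essential in the far region (a component of $\partial Q$ or a handle there), since a neighborhood of a handle together with an arc ending at a generic point is isotopic back to the original handle and the resulting curve is then disjoint from both $\beta_i$; moreover when the far region is also a once-holed positive-genus piece the band-sum of two handles can be boundary-parallel (e.g.\ $Q=S_{2,1}$ with two disjoint h-curves), so that subcase needs a different curve, such as an h-curve sharing a non-separating curve with one handle and crossing into the other.
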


\begin{proof}
It is enough to show that for any $\beta_1, \beta_2\in V_s(Q)$, the equality $R(\phi, \alpha; \beta_1)=R(\phi, \alpha; \beta_2)$ holds.
We put $R_j=R(\phi, \alpha; \beta_j)$ for each $j=1, 2$.
Choose $\gamma \in V_s(Q)$ intersecting both $\beta_1$ and $\beta_2$.
Superinjectivity of $\phi$ implies that $\phi(\gamma)$ intersects both $\phi(\beta_1)$ and $\phi(\beta_2)$.
We put $R=R(\phi, \alpha; \gamma)$.
If $R_1$ and $R_2$ were distinct, then we would have $k\in \{ 1, 2\}$ with $R_k\neq R$.
Since $\phi_{\alpha}(\beta_k)$ and $\phi_{\alpha}(\gamma)$ are then disjoint, $\phi(\beta_k)$ and $\phi(\gamma)$ are also disjoint.
This is a contradiction.
\end{proof}

In the notation in Lemma \ref{lem-side}, we denote the component $R$ by $\phi_{\alpha}(Q)$.
Pick a separating curve $\alpha$ in $S$.
We first assume that $\alpha$ is an hp-curve in $S$.
Let us say that $\phi$ is {\it $\chi$-preserving at $\alpha$} if $\phi(\alpha)$ is either an hp-curve in $S$ or a p-BP in $S$.
We next assume that $\alpha$ is not an hp-curve in $S$.
Let $Q_1$ and $Q_2$ denote the two components of $S_{\alpha}$, which satisfy $|\chi(Q_1)|\geq 2$ and $|\chi(Q_2)|\geq 2$.
We say that $\phi$ is {\it $\chi$-preserving at $\alpha$} if we have $\phi_{\alpha}(Q_1)\neq \phi_{\alpha}(Q_2)$ and $\chi(\phi_{\alpha}(Q_j))=\chi(Q_j)$ for each $j=1, 2$.
If $\phi$ is $\chi$-preserving at $\gamma$ for any $\gamma \in V_s(S)$, then $\phi$ is said to be {\it $\chi$-preserving}.
To prove that $\phi$ is $\chi$-preserving, we need the following:

\begin{lem}\label{lem-chi}
Let $S=S_{g, p}$ be a surface with $|\chi(S)|\geq 4$, and let $\phi \colon \calc_s(S)\rightarrow \calt(S)$ be a superinjective map.
Let $\alpha$ be a separating curve in $S$ which is not an hp-curve in $S$.
If $\phi(\alpha)$ is neither an hp-curve in $S$ nor a p-BP in $S$, then $\phi$ is $\chi$-preserving at $\alpha$.
\end{lem}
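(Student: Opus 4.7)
\medskip
\noindent\textbf{Proof proposal for Lemma \ref{lem-chi}.}
The plan is to enlarge $\alpha$ to a maximal simplex of $\calc_s(S)$, push it forward by $\phi$, and compare the handle/pants decompositions on both sides via Lemma \ref{lem-w-rooted}. Let $Q_1, Q_2$ be the two components of $S_{\alpha}$; by hypothesis $|\chi(Q_i)|\geq 2$ for each $i$. Write $R_1, R_2$ for the two components of $S_{\phi(\alpha)}$; since $\phi(\alpha)$ is neither an hp-curve nor a p-BP in $S$, we have $|\chi(R_i)|\geq 2$ for each $i$, and (separating case or BP case alike) $|\chi(R_1)|+|\chi(R_2)|=|\chi(S)|=|\chi(Q_1)|+|\chi(Q_2)|$.

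For each $i=1,2$ choose a maximal simplex $\sigma_i$ of $\calc_s(Q_i)$, so that $|\sigma_i|=|\chi(Q_i)|-1$, and let $\tau=\sigma_1\cup\{\alpha\}\cup\sigma_2$, a simplex of $\calc_s(S)$ with $|\tau|=|\chi(S)|-1$. I would first verify that $\phi(\tau)$ satisfies the hypothesis of Lemma \ref{lem-w-rooted}(ii): for each $a\in\phi(\tau)$, take $a'\in\tau$ with $\phi(a')=a$; because $\tau$ is a maximal simplex of $\calc_s(S)$, there exists $b'\in V_s(S)$ with $i(a',b')\neq 0$ and $i(c',b')=0$ for every $c'\in\tau\setminus\{a'\}$, and then $b=\phi(b')$ satisfies the corresponding conditions by superinjectivity of $\phi$. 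Lemma \ref{lem-w-rooted}(ii) then yields that $\phi(\tau)$ is weakly rooted and every component of $S_{\phi(\tau)}$ is either a handle or a pair of pants.

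Next, by Lemma \ref{lem-side}, the component $\phi_{\alpha}(Q_i)\in\{R_1,R_2\}$ is well-defined, and $\phi_{\alpha}(\sigma_i)$ is a simplex in $\calt(\phi_{\alpha}(Q_i))$ of cardinality $|\sigma_i|$ (injectivity of $\phi$ survives the $\phi_{\alpha}$ modification because any identification of images would force $\phi(\beta_1)=\phi(\beta_2)$). I claim $\phi_{\alpha}(Q_1)\neq\phi_{\alpha}(Q_2)$: otherwise both equal some $R_k$, so in $S_{\phi(\tau)}$ no curve of $\phi(\tau)\setminus\{\phi(\alpha)\}$ lives in $R_{3-k}$, forcing $R_{3-k}$ itself to appear as a component of $S_{\phi(\tau)}$, contradicting $|\chi(R_{3-k})|\geq 2$ together with the handle-or-pants conclusion of the previous paragraph. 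Relabelling, we may assume $\phi_{\alpha}(Q_i)=R_i$, and then $S_{\phi(\tau)}=(R_1)_{\phi_{\alpha}(\sigma_1)}\sqcup(R_2)_{\phi_{\alpha}(\sigma_2)}$, so each component of $(R_i)_{\phi_{\alpha}(\sigma_i)}$ is a handle or pair of pants.

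To finish, I would check that $\phi_{\alpha}(\sigma_i)$ is a weakly rooted simplex of $\calt(R_i)$: a BP $b\in\phi_{\alpha}(\sigma_i)$ comes from some $\phi(\beta)\in V_{bp}(R_i)$ with both curves in $R_i$, and any BP $b'\in\phi_{\alpha}(\sigma_i)$ BP-equivalent to $b$ in $R_i$ is also BP-equivalent to $b$ in $S$ inside $\phi(\tau)$ (using Lemma \ref{lem-bp} to track which side of $\phi(\alpha)$ each curve lies on), so the root curve provided by the weak-rootedness of $\phi(\tau)$ in $\calt(S)$ still serves. Lemma \ref{lem-w-rooted}(i) applied in $R_i$ then gives $|\phi_{\alpha}(\sigma_i)|\leq|\chi(R_i)|-1$, i.e.\ $|\chi(Q_i)|\leq|\chi(R_i)|$. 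Summing over $i=1,2$ and using $|\chi(Q_1)|+|\chi(Q_2)|=|\chi(R_1)|+|\chi(R_2)|$ forces equality in each term, so $\chi(\phi_{\alpha}(Q_i))=\chi(Q_i)$, which is the definition of $\chi$-preservation at $\alpha$. The main technical obstacle I expect is the weak-rootedness verification for $\phi_{\alpha}(\sigma_i)$ in the last paragraph, because the transition between BP-equivalence in $S$ and in $R_i$ requires a careful case analysis when $\phi(\alpha)$ is itself a BP sharing curves with other BPs in $\phi(\tau)$.
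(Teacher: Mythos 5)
Your proof is correct and follows essentially the same route as the paper: both establish $|\chi(Q_i)|\leq|\chi(R_i)|$ by counting weakly rooted simplices via Lemma \ref{lem-w-rooted} and the map $\phi_{\alpha}$, and then force equality by summing Euler characteristics over the two sides. The only (harmless) divergence is in excluding $\phi_{\alpha}(Q_1)=\phi_{\alpha}(Q_2)$: the paper compares the cardinality $|\chi(S)|-2$ of the combined weakly rooted simplex against the bound $|\chi(R)|-1\leq|\chi(S)|-3$ from Lemma \ref{lem-w-rooted}(i), whereas you invoke the handle-or-pants conclusion of Lemma \ref{lem-w-rooted}(ii) to see that $R_{3-k}$ would survive uncut in $S_{\phi(\tau)}$.
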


\begin{proof}
Let $Q_1$ and $Q_2$ denote the two components of $S_{\alpha}$.
For each $j=1, 2$, we put $R_j=\phi_{\alpha}(Q_j)$.
We note that for each $j=1, 2$ and each simplex $\sigma$ of $\calc_s(Q_j)$, the family $\{\, \phi_{\alpha}(\beta)\mid \beta \in \sigma \,\}$ is a weakly rooted simplex of $\calt(R_j)$ with its cardinality $|\sigma|$.
Since for a surface $X$, any weakly rooted simplex of $\calt(X)$ consists of at most $|\chi(X)|-1$ vertices, we have $|\chi(R_j)|\geq |\chi(Q_j)|$.

If $R_1\neq R_2$, then we have $|\chi(R_j)|=|\chi(Q_j)|$ for each $j=1, 2$ because
\[|\chi(S)|=|\chi(R_1)|+|\chi(R_2)|\geq |\chi(Q_1)|+|\chi(Q_2)|=|\chi(S)|.\]
We assume $R_1=R_2$ and denote it by $R$.
For each $j=1, 2$, pick a simplex $\sigma_j$ of $\calc_s(Q_j)$ of maximal dimension.
The family $\tau =\{\, \phi_{\alpha}(\beta)\mid \beta \in \sigma_1\cup \sigma_2\,\}$ is then a weakly rooted simplex of $\calt(R)$ with
\[|\tau|=|\sigma_1|+|\sigma_2|=|\chi(Q_1)|+|\chi(Q_2)|-2=|\chi(S)|-2.\]
On the other hand, since $\phi(\alpha)$ is neither an hp-curve in $S$ nor a p-BP in $S$, we have $|\chi(R)|\leq |\chi(S)|-2$.
We thus have $|\tau|\leq |\chi(R)|-1\leq |\chi(S)|-3$.
This is a contradiction.
\end{proof}

\begin{prop}\label{prop-chi-pre}
Let $S=S_{g, p}$ be a surface with $|\chi(S)|\geq 3$.
Then any superinjective map $\phi \colon \calc_s(S)\rightarrow \calt(S)$ is $\chi$-preserving.
\end{prop}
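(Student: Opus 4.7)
My plan is to argue by cases on whether $\alpha$ is an hp-curve of $S$. \textbf{If $\alpha$ is an hp-curve}, let $Q$ denote the component of $S_\alpha$ with $|\chi(Q)|=|\chi(S)|-1\geq 2$; by Lemma~\ref{lem-side} the component $R:=\phi_\alpha(Q)$ of $S_{\phi(\alpha)}$ is well-defined. For any maximal simplex $\sigma$ of $\calc_s(Q)$, the set $\phi_\alpha(\sigma)$ is a weakly rooted simplex of $\calt(R)$ of cardinality $|\chi(Q)|-1$, so Lemma~\ref{lem-w-rooted}(i) yields $|\chi(R)|\geq |\chi(Q)|=|\chi(S)|-1$. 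Since the components of $S_{\phi(\alpha)}$ have Euler characteristics summing in absolute value to $|\chi(S)|$ and every essential subsurface satisfies $|\chi|\geq 1$, the complement of $R$ in $S_{\phi(\alpha)}$ is a single handle or pair of pants; hence $\phi(\alpha)$ is an hp-curve or a p-BP.

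\textbf{If $\alpha$ is not an hp-curve}, write $Q_1,Q_2$ for the two components of $S_\alpha$, so $|\chi(Q_j)|\geq 2$. Lemma~\ref{lem-chi} already establishes $\chi$-preservation at $\alpha$ unless $\phi(\alpha)$ is itself an hp-curve or a p-BP, so the remaining task is to rule out that scenario. Assume it holds. Applying the argument of the previous case to each $Q_j$ gives $|\chi(\phi_\alpha(Q_j))|\geq |\chi(Q_j)|\geq 2$; but $S_{\phi(\alpha)}$ has exactly one component $R$ with $|\chi|\geq 2$, so we are forced into $\phi_\alpha(Q_1)=\phi_\alpha(Q_2)=R$ with $|\chi(R)|=|\chi(S)|-1$. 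Choose maximal simplices $\sigma_j$ of $\calc_s(Q_j)$; then $\tau:=\phi_\alpha(\sigma_1\cup\sigma_2)$ is a maximal weakly rooted simplex of $\calt(R)$ of size $|\chi(R)|-1$, and Lemma~\ref{lem-w-rooted}(ii) says that $R_\tau$ consists only of handles and pairs of pants.

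\textbf{The main obstacle} is to derive a contradiction from this collapse. My approach is twofold. First, every curve of $\sigma_1\cup\sigma_2$ that is an hp-curve of $S$ has, by the hp-curve case already settled, $\phi$-image an hp-curve or a p-BP of $S$; if this forces the count of hp-curves and p-BPs in the maximal weakly rooted simplex $\phi(\{\alpha\}\cup\sigma_1\cup\sigma_2)$ to exceed the bound $g+\lfloor p/2\rfloor$ of Lemma~\ref{lem-hp-number}, we conclude immediately. Second, in the residual low-complexity cases where that count does not exceed the bound, I would apply Lemma~\ref{lem-pants-sep} to pants components of $R_\tau$ whose three boundaries all lie in $\tau$ in order to locate a curve of $\tau$ that separates $\phi_\alpha(\sigma_1)$ from $\phi_\alpha(\sigma_2)$ inside $R$; cutting $R$ along such a curve would produce subsurfaces of Euler characteristics at least $|\chi(Q_1)|$ and $|\chi(Q_2)|$, summing to $|\chi(S)|>|\chi(R)|$, which is absurd. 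The delicate step will be handling simplices whose BP-equivalence classes obstruct a clean partition of $R_\tau$ into a $\sigma_1$-side and a $\sigma_2$-side, since the root curves of such classes may be incident to pieces stemming from both.
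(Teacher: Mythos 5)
Your treatment of the hp-curve case and your use of Lemma \ref{lem-chi} to reduce the remaining case to ruling out that $\phi(\alpha)$ is an hp-curve or a p-BP both match the paper, and your ``first'' strategy (counting hp-curves and p-BPs against the bound of Lemma \ref{lem-hp-number}) is exactly how the paper disposes of the case where some component of $S_{\alpha}$ contains an even number of components of $\partial S$: one then finds a simplex consisting of $\alpha$ together with $g+\lfloor p/2\rfloor$ hp-curves of $S$, each of which maps to an hp-curve or p-BP, so Lemma \ref{lem-hp-number} forbids $\phi(\alpha)$ from being one as well. But when both components of $S_{\alpha}$ contain an odd number of components of $\partial S$, the maximal number of disjoint hp-curves of $S$ one can place next to $\alpha$ is only $g+\lfloor p/2\rfloor-1$, so adding $\phi(\alpha)$ as a p-BP reaches exactly the bound and Lemma \ref{lem-hp-number} gives nothing. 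This is the genuinely hard case, and your ``second'' strategy does not close it.

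Concretely, your plan to apply Lemma \ref{lem-pants-sep} in order to ``locate a curve of $\tau$ that separates $\phi_{\alpha}(\sigma_1)$ from $\phi_{\alpha}(\sigma_2)$ inside $R$'' presupposes the existence of such a curve, which is precisely what can fail: the two families $\phi_{\alpha}(\sigma_1)$ and $\phi_{\alpha}(\sigma_2)$ may be interleaved through BP-equivalence classes sharing a root curve, and Lemma \ref{lem-pants-sep} only asserts that a pants component with all three boundaries in $\tau$ has at least one separating boundary --- it does not produce a curve realizing the $Q_1$/$Q_2$ partition. You flag this obstruction in your last sentence but do not resolve it. The paper's actual argument (Lemma \ref{lem-odd-chi-pre}, via Claims \ref{claim-not-hp} and \ref{claim-not-p-bp}) proceeds quite differently: it fixes the specific curves $\beta_1$, $\beta_2$, $\gamma_j$, $\epsilon_j$ of Figure \ref{fig-chipants} with prescribed intersection and disjointness patterns, analyzes the pants component of $S_{\phi(\sigma)}$ adjacent to $\phi(\alpha)$, pins down which of its boundary curves must be parts of $\phi(\beta_1)$ and $\phi(\beta_2)$, and derives a contradiction by forcing $\phi(\gamma_1)$ and $\phi(\gamma_2)$ to intersect there. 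That extended case analysis is the core of the proposition and is missing from your proposal.
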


If $|\chi(S)|=3$, then any separating curve in $S$ is an hp-curve in $S$, and any BP in $S$ is a p-BP in $S$.
The proposition thus holds.
The proposition for $S$ with $|\chi(S)|\geq 4$ is obtained by combining Lemmas \ref{lem-even-chi-pre} and \ref{lem-odd-chi-pre} below.

\begin{lem}\label{lem-even-chi-pre}
In the notation of Proposition \ref{prop-chi-pre}, we assume $|\chi(S)|\geq 4$.
Let $\alpha$ be a separating curve in $S$.
If at least one component of $S_{\alpha}$ contains an even number of components of $\partial S$, then $\phi$ is $\chi$-preserving at $\alpha$.
\end{lem}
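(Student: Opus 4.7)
The plan is to separately handle the case where $\alpha$ is an hp-curve in $S$ (where the parity condition is automatic, since a handle contains $0$ components of $\partial S$ and a pair of pants contains exactly $2$) and the case where $\alpha$ is not an hp-curve. In the hp case I aim to show directly that $\phi(\alpha)$ is either an hp-curve or a p-BP. In the non-hp case I will invoke Lemma \ref{lem-chi}, which reduces the problem to showing that $\phi(\alpha)$ is neither an hp-curve nor a p-BP; for this I exploit the parity hypothesis via a counting argument built on Lemma \ref{lem-hp-number}.

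For the first case, suppose $\alpha$ is an hp-curve and let $Q$ be the component of $S_\alpha$ with $|\chi(Q)|=|\chi(S)|-1\geq 3$. Pick a maximal simplex $\tau$ of $\calc_s(Q)$, so that $\{\alpha\}\cup\tau$ is a maximal simplex of $\calc_s(S)$ of size $|\chi(S)|-1$. By Lemma \ref{lem-si-bp} the image $\phi(\{\alpha\}\cup\tau)$ is a maximal weakly rooted simplex of $\calt(S)$, so by Lemma \ref{lem-w-rooted}(ii) every component of $S_{\phi(\{\alpha\}\cup\tau)}$ is either a handle or a pair of pants. Let $R'$ denote the component of $S_{\phi(\alpha)}$ other than $R=\phi_\alpha(Q)$. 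Since $R(\phi,\alpha;\beta)=R$ for every $\beta\in V_s(Q)$ by Lemma \ref{lem-side}, inspection of the three possibilities in the definition of $R(\phi,\alpha;\beta)$ shows that no curve of $\phi(\beta)$ lies in the interior of $R'$. Hence $R'$ itself is a component of $S_{\phi(\{\alpha\}\cup\tau)}$ and therefore a handle or a pair of pants. A short case analysis on whether $\phi(\alpha)$ is a single curve or a BP, together with a count of the boundary components of $R'$ contributed by $\phi(\alpha)$ and by $\partial S$, then yields that $\phi(\alpha)$ is an hp-curve or a p-BP.

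For the second case, suppose $\alpha$ is non-hp and let $p_j$ be the number of components of $\partial S$ contained in $Q_j$, with $p_1$ even by hypothesis. Assume for contradiction that $\phi(\alpha)$ is an hp-curve or a p-BP. I would choose for each $j$ a maximal simplex $\sigma_j$ of $\calc_s(Q_j)$ in which the number of hp-curves of $S$ equals $g_j+\lfloor p_j/2\rfloor$. Such a $\sigma_j$ is constructed concretely by pairing $\lfloor p_j/2\rfloor$ components of $\partial S\cap Q_j$ using p-curves of $S$, cutting off $g_j$ handles by h-curves, and completing to a maximal simplex. Since $p_1$ is even we have $\lfloor p_1/2\rfloor+\lfloor p_2/2\rfloor=\lfloor p/2\rfloor$, so $\sigma_1\cup\sigma_2$ contains $g+\lfloor p/2\rfloor$ hp-curves of $S$. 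Applying the first case to each such hp-curve $\beta$ gives that $\phi(\beta)$ is an hp-curve or a p-BP; combined with $\phi(\alpha)$, the weakly rooted maximal simplex $\phi(\{\alpha\}\cup\sigma_1\cup\sigma_2)$ contains at least $g+\lfloor p/2\rfloor+1$ hp-curves and p-BPs, contradicting Lemma \ref{lem-hp-number}. Therefore $\phi(\alpha)$ is neither an hp-curve nor a p-BP, and Lemma \ref{lem-chi} completes the proof.

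The main obstacle is verifying that inside a single maximal simplex of $\calc_s(Q_j)$ one may simultaneously realize the maximum $g_j+\lfloor p_j/2\rfloor$ of hp-curves of $S$: one has to carefully distinguish between p-curves of $Q_j$ whose bounding pair of pants has both non-cutting boundary components in $\partial S$ (which are p-curves of $S$) and those where one such component is $\alpha$ (which are generally not hp-curves of $S$), and the parity condition on $p_1$ is what forces the two one-sided maxima to add exactly to the bound in Lemma \ref{lem-hp-number}, leaving no slack.
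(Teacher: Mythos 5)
Your proposal is correct and follows essentially the same route as the paper: handle hp-curves first via a maximal simplex and Lemmas \ref{lem-side}, \ref{lem-si-bp} and \ref{lem-w-rooted} (the paper phrases this as the Euler characteristic bound $|\chi(R)|\geq|\chi(S)|-1$ rather than your equivalent analysis of the complementary component $R'$), then use the parity hypothesis to build a simplex containing $\alpha$ together with $g+\lfloor p/2\rfloor$ hp-curves, and conclude with Lemma \ref{lem-hp-number} and Lemma \ref{lem-chi}. The details you flag (the additivity $\lfloor p_1/2\rfloor+\lfloor p_2/2\rfloor=\lfloor p/2\rfloor$ forced by evenness, and extending to a maximal simplex before invoking Lemma \ref{lem-hp-number}) are exactly the points the paper relies on.
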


\begin{proof}
We first prove that if $\alpha$ is an hp-curve in $S$, then $\phi(\alpha)$ is either an hp-curve in $S$ or a p-BP in $S$. 
Let $\sigma$ be a simplex of $\calc_s(S)$ of maximal dimension containing $\alpha$.
By Lemma \ref{lem-side}, we have a component $R$ of $S_{\phi(\alpha)}$ with $\phi_{\alpha}(\beta)\in V_t(R)$ for any $\beta \in \sigma \setminus \{ \alpha \}$.
The family $\{\, \phi_{\alpha}(\beta)\mid \beta \in \sigma \setminus \{ \alpha \}\,\}$ is then a weakly rooted simplex of $\calt(R)$ with its cardinality $|\sigma|-1$.
We thus have $|\chi(R)|-1\geq |\sigma|-1=|\chi(S)|-2$, and $|\chi(R)|\geq |\chi(S)|-1$.
It turns out that $\phi(\alpha)$ is either an hp-curve in $S$ or a p-BP in $S$.

We next assume that $\alpha$ is not an hp-curve in $S$.
Since at least one component of $S_{\alpha}$ contains an even number of components of $\partial S$, there exists a simplex $\tau$ of $\calc_s(S)$ consisting of $\alpha$ and $g+\lfloor p/2\rfloor$ hp-curves in $S$.
As shown in the previous paragraph, for any $\beta \in \tau \setminus \{ \alpha \}$, $\phi(\beta)$ is either an hp-curve in $S$ or a p-BP in $S$.
By Lemma \ref{lem-hp-number}, $\phi(\alpha)$ is neither an hp-curve in $S$ nor a p-BP in $S$.
It follows from Lemma \ref{lem-chi} that $\phi$ is $\chi$-preserving at $\alpha$.
\end{proof}

\begin{lem}\label{lem-odd-chi-pre}
In the notation of Proposition \ref{prop-chi-pre}, we assume $|\chi(S)|\geq 4$.
Let $\alpha$ be a separating curve in $S$.
If any component of $S_{\alpha}$ contains an odd number of components of $\partial S$, then $\phi$ is $\chi$-preserving at $\alpha$.
\end{lem}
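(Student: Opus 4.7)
The plan is to mirror the proof of Lemma~\ref{lem-even-chi-pre}. First observe that since every component of $S_\alpha$ contains an odd number of components of $\partial S$, one has $p=p_1+p_2$ even and $\alpha$ cannot be an hp-curve of $S$ (an hp-curve would leave $0$ or $2$ components of $\partial S$ on one side, both even). Hence $|\chi(Q_j)|\geq 2$ for $Q_j=S_{g_j,p_j+1}$, $j=1,2$, with $p_j$ odd. By Lemma~\ref{lem-chi} it suffices to prove that $\phi(\alpha)$ is neither hp nor a p-BP in $S$, so the plan is to argue by contradiction. Supposing the contrary, the small component of $S_{\phi(\alpha)}$ has $|\chi|=1$ and cannot contain $\phi_\alpha(\beta)$ for any $\beta$ in a maximal simplex of $\calc_s(Q_j)$; Lemma~\ref{lem-side} together with the argument of Lemma~\ref{lem-chi} will then give $R_1=R_2=R$, with $|\chi(R)|=|\chi(S)|-1$.

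I would next build a maximal simplex $\tau=\{\alpha\}\cup\sigma_1\cup\sigma_2$ of $\calc_s(S)$ realizing the largest possible number of hp-curves of $S$ in each $Q_j$: since $p_j$ is odd, one pairs $p_j-1$ of the $p_j$ components of $\partial S$ in $Q_j$ into $(p_j-1)/2$ p-curves of $S$, adds $g_j$ h-curves of $S$, and completes to a maximal simplex $\sigma_j$ of $\calc_s(Q_j)$. This places $g+\lfloor p/2\rfloor-1$ hp-curves of $S$ in $\tau$ besides $\alpha$. By the first paragraph of the proof of Lemma~\ref{lem-even-chi-pre}, each such hp-curve has image an hp-curve or p-BP in $S$; together with $\phi(\alpha)$ this yields $g+\lfloor p/2\rfloor$ hp-or-p-BP elements of the weakly-rooted maximal simplex $\phi(\tau)$, exactly matching the bound of Lemma~\ref{lem-hp-number}.

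The contradiction will be produced by introducing an auxiliary hp-curve. Since $p\geq 2$ is even and each $p_j\geq 1$, I pick $\partial_a\in\partial S\cap Q_1$ and $\partial_b\in\partial S\cap Q_2$, namely the unpaired $\partial S$-components in $Q_1$ and $Q_2$ from the preceding construction, and let $\gamma$ be a p-curve of $S$ whose pants has boundary $\{\gamma,\partial_a,\partial_b\}$, chosen disjoint from $\sigma_1\cup\sigma_2$. Then $\gamma$ is an hp-curve of $S$ that crosses $\alpha$ (its pants spans both $Q_j$), and by the first paragraph of Lemma~\ref{lem-even-chi-pre} the image $\phi(\gamma)$ is hp or p-BP in $S$, disjoint from every element of $\phi(\sigma_1\cup\sigma_2)$, and intersects $\phi(\alpha)$ by superinjectivity. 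The hard part is then the final case analysis: the pants cut off by $\phi(\alpha)$, $\phi(\gamma)$, and by the p-curves or p-BPs among $\phi(\sigma_1\cup\sigma_2)$ must be pairwise disjoint and consume distinct components of $\partial S$ (since two disjoint pants subsurfaces of $S$ cannot share a boundary component of $S$), while the handles cut off by h-curve images must be pairwise disjoint and respect the genus bound of $S$. Combining these counts across the three sub-cases for $\phi(\alpha)$ (h-curve, p-curve, or p-BP) will eliminate every admissible configuration for $\phi(\gamma)$, yielding the desired contradiction.
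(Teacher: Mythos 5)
Your reduction is the same as the paper's: since every component of $S_{\alpha}$ contains an odd number of components of $\partial S$, $\alpha$ is not an hp-curve, and by Lemma \ref{lem-chi} it suffices to rule out that $\phi(\alpha)$ is an hp-curve or a p-BP in $S$. Your choice of a maximal simplex $\tau$ containing $\alpha$ and $g+\lfloor p/2\rfloor-1$ hp-curves, and of an extra p-curve $\gamma$ crossing $\alpha$ but disjoint from $\sigma_1\cup\sigma_2$, is also in the spirit of the paper's setup. The problem is that the step you label ``the hard part'' is the entire content of the proof, and the mechanism you propose for it --- a count of boundary components consumed by pairwise disjoint pants and of genus consumed by pairwise disjoint handles --- cannot produce the contradiction. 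First, a local error: $\phi(\alpha)$ and $\phi(\gamma)$ intersect, so their cut-off pants (or handles) need not be disjoint and need not consume distinct components of $\partial S$ or distinct genus; e.g.\ two h-curves forming a sharing pair have handles whose union still has genus one. Second, and more seriously, once you account for this the inequalities are all satisfiable: writing $h_0,q_0,r_0$ for the numbers of h-curves, p-curves and p-BPs in $\phi(\sigma_1\cup\sigma_2)$, every configuration you need to exclude is consistent with $h_0+1\leq g$ and $2q_0+r_0\leq p$ together with $h_0+q_0+r_0=g+\lfloor p/2\rfloor-1$, so counting alone eliminates nothing.

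The paper's actual argument in Claims \ref{claim-not-hp} and \ref{claim-not-p-bp} is not a counting argument. It fixes two specific members $\beta_1,\beta_2$ of $\sigma$ adjacent to $\alpha$ and introduces auxiliary separating curves $\gamma_1,\gamma_2,\epsilon_1,\epsilon_2$ prescribed by which elements of $\sigma$ they do and do not intersect. Assuming $\phi(\alpha)$ is an hp-curve (resp.\ a p-BP), one examines the pair-of-pants component $P$ of $S_{\phi(\sigma)}$ on the far side of $\phi(\alpha)$, uses $\epsilon_1,\epsilon_2$ together with Lemmas \ref{lem-bp} and \ref{lem-si-bp} to identify its other two boundary curves $\delta_1,\delta_2$ with parts of $\phi(\beta_1)$ and $\phi(\beta_2)$, and then concludes that $\phi(\gamma_1)$ and $\phi(\gamma_2)$ each enter $P$ through a different boundary curve while avoiding $\phi(\alpha)$ and the remaining boundary curve, so they are forced to intersect inside $P$, contradicting $i(\gamma_1,\gamma_2)=0$. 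Nothing in your proposal supplies this forced-intersection mechanism or a substitute for it, so the proof is incomplete at its decisive step.
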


\begin{proof}
By Lemma \ref{lem-chi}, either $\phi(\alpha)$ is an hp-curve in $S$; $\phi(\alpha)$ is a p-BP in $S$; or $\phi$ is $\chi$-preserving at $\alpha$. 
We eliminate the first two cases in Claims \ref{claim-not-hp} and \ref{claim-not-p-bp} below.
Let $\sigma$ be a simplex of $\calc_s(S)$ of maximal dimension containing $\alpha$ and the two curves $\beta_1$, $\beta_2$ described in Figure \ref{fig-chipants} (a).
\begin{figure}
\begin{center}
\includegraphics[width=9cm]{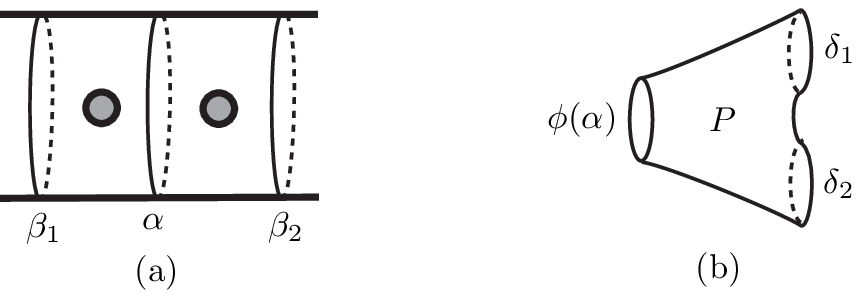}
\caption{}\label{fig-chipants}
\end{center}
\end{figure}
For each $j=1, 2$, choose $\gamma_j, \epsilon_j\in V_s(S)$ with
\[i(\gamma_j, \beta_j)\neq 0,\quad i(\gamma_j, \sigma \setminus \{ \beta_j\})=0,\]
\[i(\epsilon_j, \alpha)\neq 0,\quad i(\epsilon_j, \beta_j)\neq 0,\quad i(\epsilon_j, \sigma \setminus \{ \alpha, \beta_j\})=0.\]
In the proof of Claims \ref{claim-not-hp} and \ref{claim-not-p-bp} below, we use the following terminology and notation.
Let $v$ be a vertex of $\calt(S)$.
If $v\in V_s(S)$, then we mean by a {\it part} of $v$ the element $v$ itself.
If $v\in V_{bp}(S)$, then we mean by a {\it part} of $v$ an element of $v$.
For each $\gamma \in V(S)$, if $\gamma$ is a part of $v$, then we write $\gamma \in v$.
Otherwise, we write $\gamma \not\in v$.

\begin{claim}\label{claim-not-hp}
$\phi(\alpha)$ is not an hp-curve in $S$.
\end{claim}

\begin{proof}
Assuming that $\phi(\alpha)$ is an hp-curve in $S$, we deduce a contradiction. 
Let $P$ denote the component of $S_{\phi(\sigma)}$ containing $\phi(\alpha)$ as its boundary component and distinct from the one cut off by $\phi(\alpha)$ from $S$. 
The component $P$ is a pair of pants because otherwise $P$ would be a handle by Lemma \ref{lem-w-rooted} (ii), and the equality $|\chi(S)|=2$ would follow.

We show that no component of $\partial P$ is a component of $\partial S$.
If it were not true, then exactly one component of $\partial P$ would be a component of $\partial S$.
Let $\delta_0\in V(S)$ denote the other component of $\partial P$ distinct from $\phi(\alpha)$.
The curve $\delta_0$ is separating in $S$.
If we had $\delta_0\neq \phi(\beta_1)$, then $\phi(\epsilon_1)$ would be disjoint from $\delta_0$.
This contradicts Lemma \ref{lem-bp} because $\phi(\epsilon_1)$ intersects both $\phi(\alpha)$ and $\phi(\beta_1)$, which lie in distinct components of $S_{\delta_0}$.
We thus have $\delta_0=\phi(\beta_1)$.
We can however show the equality $\delta_0=\phi(\beta_2)$ similarly.
This is a contradiction.

Let $\delta_1, \delta_2\in V(S)$ denote the curves in $S$ corresponding to the two components of $\partial P$ other than $\phi(\alpha)$ (see Figure \ref{fig-chipants} (b)).
Since we assume $|\chi(S)|\geq 4$, we have $\delta_1\neq \delta_2$.
We put $\delta =\{ \delta_1, \delta_2\}$.
Note that $S_{\delta}$ consists of at least two components.
Either $\delta$ is a BP in $S$ or both $\delta_1$ and $\delta_2$ are separating curves in $S$.
Let $R$ denote the component of $S_{\delta}$ containing $\phi(\alpha)$.
We set $R'=S_{\delta}\setminus R$.
We aim to show that for each $j$ mod $2$, $\phi(\gamma_j)$ intersects $\delta_j$ and is disjoint from $\delta_{j+1}$ after exchanging indices if necessary. 
Once this is shown, $\phi(\gamma_1)$ and $\phi(\gamma_2)$ intersect in $P$ because they are disjoint from $\phi(\alpha)$. 
This contradicts $i(\gamma_1, \gamma_2)=0$.

We first assume that neither $\delta_1$ nor $\delta_2$ is a part of $\phi(\beta_1)$.
Any part of $\phi(\beta_1)$ is then an essential curve in a component of $R'$.
The assumption also implies that $\phi(\epsilon_1)$ is disjoint from $\delta$.
Since $\phi(\epsilon_1)$ intersects $\phi(\alpha)$ and $\phi(\beta_1)$, $\phi(\epsilon_1)$ is a BP in $S$ with one curve of $\phi(\epsilon_1)$ essential in $R$ and with another essential in a component of $R'$.
By Lemma \ref{lem-bp}, $\phi(\epsilon_1)$ and $\delta$ are BPs in $S$ and BP-equivalent.
If $\delta \in \phi(\sigma)$, then we obtain a contradiction by Lemma \ref{lem-si-bp}.
We suppose $\delta \not\in \phi(\sigma)$.
Let $\gamma_3$ and $\gamma_4$ be the elements of $\sigma$ with $\delta_1\in \phi(\gamma_3)$ and $\delta_2\in \phi(\gamma_4)$.
Each of $\gamma_3$ and $\gamma_4$ is distinct from $\alpha$ and $\beta_1$.
Since $\{ \epsilon_1, \gamma_3, \gamma_4\}$ is a simplex of $\calc_s(S)$, $\{ \phi(\epsilon_1), \phi(\gamma_3), \phi(\gamma_4)\}$ is a weakly rooted simplex of $\calt(S)$ by Lemma \ref{lem-si-bp}.
Since the BPs $\phi(\epsilon_1)$, $\phi(\gamma_3)$ and $\phi(\gamma_4)$ are mutually BP-equivalent, they share a curve in $S$.
This is a contradiction because each curve in $\phi(\epsilon_1)$ intersects either $\phi(\alpha)$ or $\phi(\beta_1)$, but each of $\phi(\gamma_3)$ and $\phi(\gamma_4)$ is disjoint from $\phi(\alpha)$ and $\phi(\beta_1)$.
We thus proved that either $\delta_1\in \phi(\beta_1)$ or $\delta_2\in \phi(\beta_1)$ holds. 
Similarly, we can show that either $\delta_1\in \phi(\beta_2)$ or $\delta_2\in \phi(\beta_2)$ holds.

Let us assume $\delta_1\in \phi(\beta_1)$. 
The following argument can be applied as well if we assume $\delta_2\in \phi(\beta_1)$. 
Assuming $\delta_1\in \phi(\beta_2)$, we first deduce a contradiction. 
The last assumption implies that $\phi(\beta_1)$ and $\phi(\beta_2)$ are BPs in $S$ and that $\phi(\epsilon_1)$ is disjoint from $\delta_1$.
Since $\phi(\sigma)$ is weakly rooted, $\delta_1$ is the root curve for the BP-equivalence class in $\phi(\sigma)$ containing $\phi(\beta_1)$ and $\phi(\beta_2)$.
We thus have $\delta \in \phi(\sigma)$.
If we had $\delta_2\not\in \phi(\beta_1)$, then $\phi(\epsilon_1)$ would be disjoint from $\delta_2$, and $\phi(\beta_1)$ would consist of $\delta_1$ and a curve in $R'$.
Since $\phi(\epsilon_1)$ intersects $\phi(\beta_1)$, no part of $\phi(\epsilon_1)$ is an essential curve in $R$ by Lemma \ref{lem-si-bp}.
This is a contradiction because $\phi(\epsilon_1)$ intersects $\phi(\alpha)$.
We thus have $\delta_2\in \phi(\beta_1)$ and $\phi(\beta_1)=\{ \delta_1, \delta_2\}$. 
However, $\phi(\epsilon_2)$ is then disjoint from $\delta_1$ and $\delta_2$ because we have $i(\epsilon_2, \beta_1)=0$.
The BP $\phi(\beta_2)$ consists of $\delta_1$ and a curve in $R'$.
Since $\phi(\epsilon_2)$ intersects $\phi(\beta_2)$, no part of $\phi(\epsilon_2)$ is an essential curve in $R$ by Lemma \ref{lem-si-bp}.
This is a contradiction because $\phi(\epsilon_2)$ intersects $\phi(\alpha)$.
We have proved $\delta_1\not\in \phi(\beta_2)$ and thus $\delta_2\in \phi(\beta_2)$.

Similarly, we can prove $\delta_2\not\in \phi(\beta_1)$ by using the condition $\delta_2\in \phi(\beta_2)$. 
We now show that for each $j$ mod $2$, $\phi(\gamma_j)$ intersects $\delta_j$ and is disjoint from $\delta_{j+1}$. 
This is true if both $\delta_1$ and $\delta_2$ are separating curves in $S$, for we then have $\delta_1=\phi(\beta_1)$ and $\delta_2=\phi(\beta_2)$. 
We assume that $\delta =\{ \delta_1, \delta_2\}$ is a BP in $S$. 
It then follows that $\phi(\beta_1)$ and $\phi(\beta_2)$ are BPs in $S$ and are BP-equivalent. 
The root curve for $\{ \phi(\beta_1), \phi(\beta_2)\}$ is distinct from $\delta_1$ and $\delta_2$ because we have $\delta_1\not\in \phi(\beta_2)$ and $\delta_2\not\in \phi(\beta_1)$. 
It turns out that for each $j$ mod $2$, $\phi(\gamma_j)$ intersects $\delta_j$ and is disjoint from $\delta_{j+1}$.

As indicated in the third paragraph of the proof, $\phi(\gamma_1)$ and $\phi(\gamma_2)$ intersects in $P$, and this contradicts $i(\gamma_1, \gamma_2)=0$.
\end{proof}

\begin{claim}\label{claim-not-p-bp}
$\phi(\alpha)$ is not a p-BP in $S$.
\end{claim}

\begin{proof}
Assuming that $\phi(\alpha)$ is a p-BP in $S$, we deduce a contradiction.

We first suppose that the set $\{ \phi(\alpha)\}$ is a BP-equivalence class in $\phi(\sigma)$.
Since any separating curve or BP in $\phi(\sigma)\setminus \{ \phi(\alpha)\}$ cuts off a surface containing the two curves of $\phi(\alpha)$ from $S$, there exist exactly two components of $S_{\phi(\sigma)}$ homeomorphic to a pair of pants and containing the two curves in $\phi(\alpha)$ as their boundary components.
Let $P$ denote the one of them distinct from the one cut off by $\phi(\alpha)$ from $S$.
Since we assume $|\chi(S)|\geq 4$, the component of $\partial P$ distinct from the two curves in $\phi(\alpha)$ is not a component of $\partial S$.
Let $\delta \in V(S)$ denote that component of $\partial P$ (see Figure \ref{fig-chipdec} (a)).
The curve $\delta$ is separating in $S$.
\begin{figure}
\begin{center}
\includegraphics[width=10cm]{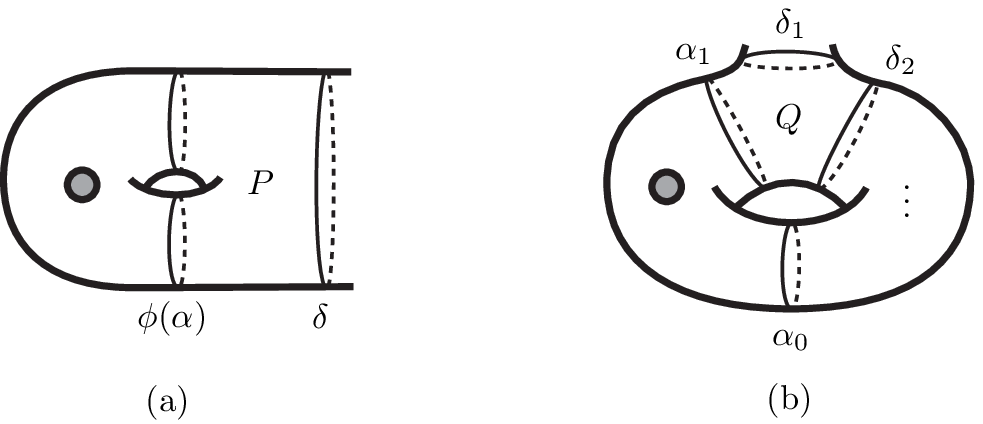}
\caption{}\label{fig-chipdec}
\end{center}
\end{figure}
If $\delta \neq \phi(\beta_1)$, then $\phi(\epsilon_1)$ would be disjoint from $\delta$.
This contradicts Lemma \ref{lem-bp} because $\phi(\epsilon_1)$ intersects both $\phi(\alpha)$ and $\phi(\beta_1)$, which lie in distinct components of $S_{\delta}$.
We thus have $\delta =\phi(\beta_1)$.
We can however show the equality $\delta =\phi(\beta_2)$ similarly.
This is a contradiction.

We next suppose that the BP-equivalence class in $\phi(\sigma)$ containing $\phi(\alpha)$ consists of at least two BPs in $S$.
Let $\alpha_0$ denote the root curve for that class.
We define a curve $\alpha_1$ in $S$ by the equality $\phi(\alpha)=\{ \alpha_0, \alpha_1\}$.
Let $Q$ denote the component of $S_{\phi(\sigma)}$ containing $\alpha_1$ as its boundary component and distinct from the pair of pants cut off by $\phi(\alpha)$ from $S$.

We show that no component of $\partial Q$ is a component of $\partial S$.
If it were not true, then exactly one component of $\partial Q$ would be a component of $\partial S$.
Let $\delta_0\in V(S)$ denote the other component of $\partial Q$ distinct from $\alpha_1$.
Since $\{ \delta_0, \alpha_1\}$ cuts off a pair of pants from $S$, the curve $\delta_0$ is non-separating in $S$ and BP-equivalent to $\alpha_1$.
Since we assume $|\chi(S)|\geq 4$, we have $\delta_0\neq \alpha_0$.
The BP $\{ \alpha_0, \delta_0\}$ thus belongs to $\phi(\sigma)$.
Let $R$ be the component of $S_{\{ \alpha_0, \delta_0\}}$ containing $\alpha_1$, and put $R'=S_{\{ \alpha_0, \delta_0\}}\setminus R$.
If we had $\{ \alpha_0, \delta_0\}\neq \phi(\beta_1)$, then $\phi(\epsilon_1)$ would be disjoint from $\{ \alpha_0, \delta_0\}$, and any part of $\phi(\beta_1)$ would be either $\alpha_0$ or an essential curve in $R'$.
Since $\phi(\epsilon_1)$ intersects $\phi(\beta_1)$, no part of $\phi(\epsilon_1)$ is an essential curve in $R$ by Lemma \ref{lem-si-bp}.
This is a contradiction because $\phi(\epsilon_1)$ intersects $\alpha_1$.
We thus have $\{ \alpha_0, \delta_0\}=\phi(\beta_1)$.
We can however show the equality $\{ \alpha_0, \delta_0\}=\phi(\beta_2)$ similarly.
This is a contradiction.

Let $\delta_1, \delta_2\in V(S)$ denote the curves in $S$ corresponding to the two components of $\partial Q$ distinct from $\alpha_1$.
The two curves $\delta_1$ and $\delta_2$ are distinct as elements of $V(S)$ because otherwise $\alpha_1$ would be an h-curve in $S$.

If both $\delta_1$ and $\delta_2$ were separating in $S$, then $\alpha_1$ would be separating in $S$.
This is a contradiction.
Without loss of generality, we may assume that $\delta_2$ is non-separating in $S$.
By Lemma \ref{lem-pants-sep}, $\delta_1$ is separating in $S$, and thus $\alpha_1$ and $\delta_2$ form a BP in $S$ (see Figure \ref{fig-chipdec} (b)).
Note that $\delta_2\neq \alpha_0$ because otherwise $\{ \phi(\alpha)\}$ would be a BP-equivalence class in $\phi(\sigma)$.
We thus have $\{ \alpha_0, \delta_2\}\in \phi(\sigma)$.

Let $T$ denote the component of $S_{\{ \alpha_0, \delta_1, \delta_2\}}$ containing $\alpha_1$ and put $T'=S_{\{ \alpha_0, \delta_1, \delta_2\}}\setminus T$.
If $\phi(\beta_1)$ were equal to neither $\delta_1$ nor $\{ \alpha_0, \delta_2\}$, then $\phi(\epsilon_1)$ would be disjoint from $\alpha_0$, $\delta_1$ and $\delta_2$, and any part of $\phi(\beta_1)$ would be either $\alpha_0$ or an essential curve in a component of $T'$.
Since $\phi(\epsilon_1)$ intersects $\phi(\beta_1)$, no part of $\phi(\epsilon_1)$ is an essential curve in $T$ by Lemmas \ref{lem-bp} and \ref{lem-si-bp}.
This is a contradiction because $\phi(\epsilon_1)$ intersects $\alpha_1$.
It follows that $\phi(\beta_1)$ is equal to either $\delta_1$ or $\{ \alpha_0, \delta_2\}$.
Similarly, we can show that $\phi(\beta_2)$ is also equal to either $\delta_1$ or $\{ \alpha_0, \delta_2\}$.

Without loss of generality, we may put $\phi(\beta_1)=\delta_1$ and $\phi(\beta_2)=\{ \alpha_0, \delta_2\}$.
We then have
\[i(\phi(\gamma_1), \delta_1)\neq 0,\quad i(\phi(\gamma_2), \delta_2)\neq 0,\]
\[i(\phi(\gamma_1), \alpha_1)=i(\phi(\gamma_1), \delta_2)=i(\phi(\gamma_2), \alpha_1)=i(\phi(\gamma_2), \delta_1)=0.\]
It follows that $\phi(\gamma_1)$ and $\phi(\gamma_2)$ intersect in $Q$.
This contradicts $i(\gamma_1, \gamma_2)=0$.
\end{proof}

Claims \ref{claim-not-hp} and \ref{claim-not-p-bp} complete the proof of Lemma \ref{lem-odd-chi-pre}.
\end{proof}


\section{$S_{g, p}$ with $g\geq 3$ and $p\leq 1$, and $S_{2, 2}$}\label{sec-p1}

Let $S=S_{g, p}$ be a surface, and let $\phi \colon \calc_s(S)\rightarrow \calt(S)$ be a superinjective map. 
When $p=0$, the inclusion $\phi(\calc_s(S))\subset \calc_s(S)$ is readily shown in Section \ref{subsec-p0}. 
When $p\geq 1$, we prove the inclusion by induction on $p$ throughout Sections \ref{sec-p1}--\ref{sec-other}. 
Sections \ref{subsec-p1}--\ref{subsec-conc} are devoted to the first step of the induction for surfaces of genus at least two. 
Namely, we deal with $S_{g, 1}$ with $g\geq 3$ and $S_{2, 2}$. 
In Section \ref{sec-14}, the inclusion is proved for $S_{1, 4}$, and this is the first step of the induction for surfaces of genus one. 
In Section \ref{sec-other}, the induction is completed.

\subsection{The case $p=0$}\label{subsec-p0}

If $S=S_{g, 0}$ is a closed surface with $g\geq 3$, then there exists neither p-curve in $S$ nor p-BP in $S$. 
This is why the proof of the inclusion $\phi(\calc_s(S))\subset \calc_s(S)$ for a closed surface is much easier than the other cases.

\begin{prop}
Let $S=S_{g, 0}$ be a closed surface with $g\geq 3$, and let $\phi \colon \calc_s(S)\rightarrow \calt(S)$ be a superinjective map. 
Then the inclusion $\phi(\calc_s(S))\subset \calc_s(S)$ holds.
\end{prop}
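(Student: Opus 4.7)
The idea is to exploit the absence of boundary in $S$ to rule out the existence of p-curves and p-BPs, and then combine this with the $\chi$-preserving property from Proposition~\ref{prop-chi-pre}. First I would verify these non-existence statements directly from the definitions. Any p-curve would cut off a pair of pants $P$, and two of the three boundary components of $P$ would need to be components of $\partial S$, contradicting $\partial S = \emptyset$. For a p-BP $\{a_1, a_2\}$ cutting off a pair of pants $P$, a short case analysis on how the three boundary circles of $P$ distribute among the sides of $a_1$ and $a_2$ (only the distributions $(2,1)$ or $(1,2)$ are realisable) shows that in each case the remaining BP curve becomes separating in $S$ after gluing back, contradicting the definition of a BP. Hence $V(S)$ contains no p-curves and $V_{bp}(S)$ no p-BPs.

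Next, Proposition~\ref{prop-chi-pre} gives that $\phi$ is $\chi$-preserving. Given $\alpha \in V_s(S)$, I split into two cases. If $\alpha$ is an h-curve, then by $\chi$-preservation at $\alpha$ the image $\phi(\alpha)$ is either an hp-curve or a p-BP in $S$; by the previous paragraph it must actually be an h-curve, hence lies in $V_s(S)$. If $\alpha$ is not an h-curve (which only arises when $g \geq 4$), write $S_\alpha = Q_1 \sqcup Q_2$ with $Q_j = S_{g_j, 1}$ and $g_j \geq 2$, so that $\chi(Q_j) = 1 - 2g_j$ is odd. Suppose for contradiction that $\phi(\alpha) \in V_{bp}(S)$. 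The same boundary-distribution analysis used to exclude p-BPs (together with the non-separating property of each BP curve) forces each component of $S_{\phi(\alpha)}$ to have exactly two boundary components, hence to be of the form $S_{h, 2}$ with Euler characteristic $-2h$ even. This contradicts the equality $\chi(\phi_\alpha(Q_j)) = \chi(Q_j)$ required by $\chi$-preservation at $\alpha$. Therefore $\phi(\alpha) \in V_s(S)$.

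The main subtlety is the combinatorial bookkeeping pinning down the shape of $S_{\phi(\alpha)}$ when $\phi(\alpha)$ is a BP in a closed surface; once that is established, the parity mismatch between the odd $\chi(Q_j)$ and the even Euler characteristic of any $S_{h, 2}$ delivers the contradiction at once, and the inclusion $\phi(\calc_s(S)) \subset \calc_s(S)$ follows.
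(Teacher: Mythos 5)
Your proposal is correct. The first half (no p-curves or p-BPs exist in a closed surface, so $\chi$-preservation forces h-curves to go to h-curves) is exactly the paper's argument. For a separating curve $\alpha$ that is not an h-curve, however, you take a genuinely different route: the paper picks $g$ disjoint h-curves forming a maximal simplex with $\alpha$, observes their images are $g$ disjoint h-curves disjoint from $\phi(\alpha)$, and concludes that $\phi(\alpha)$ cannot be a BP (implicitly, because both curves of a BP would then have to live in the $g$ handles or the complementary planar piece, neither of which is possible). You instead argue by parity: in a closed surface a BP cuts the surface into exactly two pieces, each with exactly two boundary circles and hence even Euler characteristic, whereas each side $Q_j=S_{g_j,1}$ of $\alpha$ has odd Euler characteristic $1-2g_j$, contradicting $\chi(\phi_\alpha(Q_j))=\chi(Q_j)$. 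Both arguments work; yours extracts the contradiction directly from Proposition~\ref{prop-chi-pre} without needing the h-curve step at all and is in the same spirit as the parity argument the paper itself uses later (in the proof of Lemma~\ref{lem-sep-pre-top}), while the paper's version is shorter once one accepts the final implicit step. The only soft spot is your appeal to "the same boundary-distribution analysis" to see that each side of the BP has exactly two boundary circles; this does need the small observation that if both circles coming from $c_1$ lay on one side, the other side would be bounded by $c_2$ alone, making $c_2$ separating — but that is a one-line check, so the gap is cosmetic.
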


\begin{proof}
By Proposition \ref{prop-chi-pre}, $\phi$ preserves h-curves in $S$. 
Let $\alpha$ be a separating curve in $S$ which is not an h-curve in $S$. 
We can choose $g$ h-curves $\beta_1,\ldots, \beta_g$ in $S$ forming a $g$-simplex of $\calc_s(S)$ together with $\alpha$.
For each $j=1,\ldots, g$, $\phi(\beta_j)$ is an h-curve in $S$ disjoint from $\phi(\alpha)$.
It is thus impossible that $\phi(\alpha)$ is a BP in $S$.
\end{proof}


\subsection{The case $g\geq 3$ and $p=1$}\label{subsec-p1}

Let $S=S_{g, 1}$ be a surface with $g\geq 3$, and let $\phi \colon \calc_s(S)\rightarrow \calt(S)$ be a superinjective map. 
We fix an h-curve $a$ in $S$ and assume that $\phi(a)$ is a p-BP in $S$. 
We then deduce a contradiction throughout this subsection and Section \ref{subsec-share}. 
In Section \ref{subsec-share}, we deal with $S_{g, 1}$ with $g\geq 3$ and $S_{2, 2}$ together.

\begin{lem}\label{lem-sep-bp-eq}
If $z$ is a separating curve in $S$ such that
\begin{itemize}
\item $i(z, a)=0$ and $z\neq a$; and
\item the component of $S_z$ containing $a$ does not contain $\partial S$,
\end{itemize}
then $\phi(z)$ is a BP in $S$ which is BP-equivalent to $\phi(a)$. 
\end{lem}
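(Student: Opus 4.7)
The plan is to combine Proposition \ref{prop-chi-pre} ($\chi$-preservation) with Lemma \ref{lem-bp} and a parity count on Euler characteristics to constrain $\phi(z)$. Write $\phi(a)=\{\alpha_1,\alpha_2\}$, let $P$ be the pair of pants it bounds with $\partial P=\{\alpha_1,\alpha_2,\partial S\}$, and set $Y=\overline{S\setminus P}\cong S_{g-1,2}$. Denote by $Q_1,Q_2$ the components of $S_z$, with $a\in Q_1$ and $\partial S\in Q_2$. A short analysis, using that $a$ is an h-curve and $z$ is essential with $z\neq a$, yields $Q_1\cong S_{g_1,1}$ with $g_1\ge 2$ and $Q_2\cong S_{g_2,2}$ with $g_2\ge 1$, so in particular $z$ is not an hp-curve. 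Proposition \ref{prop-chi-pre} therefore gives distinct components $\phi_z(Q_1),\phi_z(Q_2)$ of $S_{\phi(z)}$ with $\chi(\phi_z(Q_j))=\chi(Q_j)$. Since $P$ contains no essential curve, $P$ lies in a single component of $S_{\phi(z)}$; combined with Lemma \ref{lem-bp} (in the cases treated below) placing $\alpha_1,\alpha_2$ on one side, this forces $\partial S\subset\phi_z(Q_1)$.

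First I would rule out $\phi(z)\in V_s(S)$. If $\phi(z)=\zeta$ is a separating curve, then $\phi_z(Q_1)$ has boundary $\{\zeta,\partial S\}$, so $\chi(\phi_z(Q_1))$ is even; but $\chi(Q_1)=1-2g_1$ is odd, a contradiction. Hence $\phi(z)=\{\beta_1,\beta_2\}$ is a BP.

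To show BP-equivalence with $\phi(a)$ I would argue by contradiction. Lemma \ref{lem-bp} still places $\alpha_1,\alpha_2$ in $\phi_z(Q_1)$, and a boundary count of $S_{\phi(z)}$ shows $\phi_z(Q_1)$ has boundary $\{\beta_1,\beta_2,\partial S\}$ (three circles, $\chi$ odd) while $\phi_z(Q_2)$ has boundary $\{\beta_1,\beta_2\}$ (two circles, $\chi$ even), compatibly with $\chi$-preservation. I would then split on how many curves $\phi(z)$ shares with $\phi(a)$. Two shared is ruled out by injectivity of $\phi$. If exactly one is shared, say $\beta_1=\alpha_1$, a direct computation in $S=P\cup Y$ shows that $\{\alpha_1,\beta_2\}$ being a BP forces $\beta_2$ to separate $Y$ with $\alpha_1,\alpha_2$ on opposite sides; from this one verifies that $\{\alpha_2,\beta_2\}$ is automatically a BP as well, so $\phi(z)$ is in fact BP-equivalent to $\phi(a)$, contradicting the assumption.

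The main obstacle is the remaining subcase, where $\beta_1,\beta_2$ both lie in the interior of $Y$ and form a BP there with $\alpha_1,\alpha_2$ on the same side (so each $\beta_i$ is non-separating in $Y$; here one checks directly that $\{\alpha_1,\beta_1\}$ fails to be a BP, so $\phi(z)$ really is not BP-equivalent to $\phi(a)$, and one must exclude this configuration). To rule it out, I would enlarge the simplex $\{a,z\}$ by an auxiliary separating curve---for instance an h-curve $a'\neq a$ in $Q_1$ or a p-curve $\eta$ in $Q_2$ cutting off the pair of pants with $\{z,\partial S\}$---and exploit $\chi$-preservation at $a'$ or $\eta$ together with the weakly-rooted property of Lemma \ref{lem-si-bp} applied to the enlarged simplex to derive an incompatibility with $\phi(z)$ lying entirely in the interior of $Y$. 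The heart of the difficulty is matching the topological rigidity of the handle cut off by $a$ against the pair of pants cut off by $\phi(a)$, parallel to the $S_{2,1}$-versus-$S_{1,3}$ comparison highlighted in Section \ref{sec-str}.
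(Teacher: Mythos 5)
The first half of your argument is sound and coincides with the paper's: the parity count $\chi(Q_1)=1-2g_1$ odd versus $\chi(\phi_z(Q_1))$ even (boundary $\{\zeta,\partial S\}$) correctly rules out $\phi(z)\in V_s(S)$, and your observations that sharing both curves contradicts injectivity while sharing exactly one curve already forces BP-equivalence (by transitivity of homologousness of disjoint non-separating curves) are fine. The problem is that the case you label ``the main obstacle'' --- $\phi(z)$ a BP disjoint from $\phi(a)$, sharing no curve with it and not BP-equivalent to it --- is the actual content of the lemma, and for it you offer only a plan, not a proof. As stated, the plan does not work in general: in that configuration $\phi_z(Q_1)\cong S_{g_1-1,3}$ with boundary $\{\beta_1,\beta_2,\partial S\}$, and removing the pair of pants $P$ leaves a subsurface homeomorphic to $S_{g_1-2,4}$ with boundary $\{\alpha_1,\alpha_2,\beta_1,\beta_2\}$. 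When $g_1\geq 3$ this subsurface has positive genus and carries plenty of h-curves and p-BPs of $S$, so enlarging $\{a,z\}$ by a single auxiliary curve and invoking $\chi$-preservation plus Lemma \ref{lem-si-bp} produces no incompatibility.

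The paper closes this gap with two ingredients you are missing. First, an induction on the genus $g_1$ of $Q_1$: for $g_1>2$ one interpolates a separating curve $z_1$ whose $a$-side is $S_{g_1-1,1}$ and whose other side contains $z$, applies the induction hypothesis to conclude that $\phi(z_1)$ is a BP BP-equivalent to $\phi(a)$, and then reruns the base-case argument with $z_1$ in place of $a$. Second, the base case $g_1=2$ is where the rigidity actually bites: there $|\chi(\phi_z(Q_1))|=3$, so the complement of $P$ in $\phi_z(Q_1)$ is forced to be an $S_{0,4}$ whose four boundary curves are exactly $\alpha_1,\alpha_2,\beta_1,\beta_2$, and a second h-curve $b\subset Q_1$ disjoint and distinct from $a$ then has no admissible image: Proposition \ref{prop-chi-pre} requires $\phi(b)$ to be an hp-curve or a p-BP, but $\phi(b)$ must be supported in that $S_{0,4}$ (being disjoint from $\phi(a)$ and $\phi(z)$), where no such vertex of $\calt(S)$ exists. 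You would need to supply both the induction and this explicit base-case mechanism; the auxiliary-curve idea you sketch is not a substitute for either.
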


\begin{proof}
Let $Q$ be the component of $S_z$ containing $a$.
Let $R$ be the component of $S_{\phi(z)}$ containing at least one curve of $\phi(a)$ as an essential one. 
We prove the lemma by induction on the genus of $Q$.
The genus of $Q$ is at least two because $Q$ contains $a$ as an essential curve.

Suppose that the genus of $Q$ is equal to two.
The surface $Q$ is homeomorphic to $S_{2, 1}$. 
By Proposition \ref{prop-chi-pre}, we have $|\chi(R)|=|\chi(Q)|=3$.
Assuming that $\phi(z)$ is either a separating curve in $S$ or a BP in $S$ which is not BP-equivalent to $\phi(a)$, we deduce a contradiction. 
If $\phi(z)$ were a separating curve in $S$, then $|\chi(R)|$ would be even because the number of components of $\partial R$ is equal to two. 
This is a contradiction. 
We next assume that $\phi(z)$ is a BP in $S$ which is not BP-equivalent to $\phi(a)$. Let $R'$ denote the component of $R_{\phi(a)}$ distinct from the pair of pants cut off by $\phi(a)$ from $S$. 
Since we have $|\chi(R')|=2$, $R'$ is homeomorphic to $S_{0, 4}$, and $\partial R'$ consists of the four curves in $\phi(a)\cup \phi(z)$. 
Choose an h-curve $b$ in $Q$ disjoint and distinct from $a$. 
By Proposition \ref{prop-chi-pre}, $\phi(b)$ is either an hp-curve in $S$ or a p-BP in $S$.
This is impossible because $\phi(b)$ is disjoint from $\phi(a)$ and $\phi(z)$.  
It follows that $\phi(z)$ is a BP in $S$ which is BP-equivalent to $\phi(a)$.

When the genus of $Q$, denoted by $g_1$, is more than two, choose a separating curve $z_1$ in $S$ such that
\begin{itemize}
\item $i(z_1, a)=i(z_1, z)=0$, $z_1\neq a$ and $z_1\neq z$; and 
\item one component of $S_{z_1}$ contains $a$ and is homeomorphic to $S_{g_1-1, 1}$, and another component contains $z$.
\end{itemize}
The hypothesis of the induction implies that $\phi(z_1)$ is a BP in $S$ which is BP-equivalent to $\phi(a)$. 
Note that $\phi(a)$ and $\phi(z_1)$ share a curve by Lemma \ref{lem-si-bp}. 
The same argument as in the previous paragraph shows that $\phi(z)$ is a BP in $S$ which is BP-equivalent to $\phi(z_1)$. 
The induction is completed.
\end{proof}

\begin{lem}\label{lem-sep-pre-top}
If $z$ is a separating curve in $S$ such that
\begin{itemize}
\item $i(z, a)=0$ and $z\neq a$; and
\item the component of $S_z$ containing $a$ contains $\partial S$,
\end{itemize}
then $\phi(z)$ is a separating curve in $S$.
Moreover, $\phi$ preserves the topological type of $z$. 
Namely, if we denote by $Q_1$ and $Q_2$ the two components of $S_z$, then for each $j=1, 2$, $Q_j$ and $\phi_z(Q_j)$ are homeomorphic, where we use the notation introduced right after the proof of Lemma \ref{lem-side}.
\end{lem}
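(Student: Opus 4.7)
The plan is to first show that $\phi(z)$ is separating by assuming it is a BP and deriving a contradiction, then deduce topological type preservation from $\chi$-preservation together with a parity argument. Set $Q_1\cong S_{h,2}$ to be the component of $S_z$ containing $a$ and $\partial S$, and $Q_2\cong S_{g-h,1}$ the other; since $a$ is an h-curve its handle lies in $Q_1$. Observe that $S_{g,1}$ contains no p-curves, so every hp-curve in $S$ is an h-curve. Assume $\phi(z)\in V_{bp}(S)$. If $z$ is not an h-curve, Proposition \ref{prop-chi-pre} gives $\chi$-preservation at $z$, so $\phi_z(Q_1)\neq \phi_z(Q_2)$ with matching Euler characteristics. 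In $S_{g,1}$ the component of $S_{\phi(z)}$ containing $\partial S$ has odd $|\chi|$ (equal to $1$ if $\phi(z)$ is a p-BP, else $2g_1+1$) while the other has even $|\chi|$; meanwhile $|\chi(Q_1)|=2h$ is even and $|\chi(Q_2)|=2(g-h)-1$ is odd. If $\phi(z)$ were a p-BP, matching $\{1,2g-2\}$ with $\{2h,2(g-h)-1\}$ forces $h=g-1$ and $Q_2\cong S_{1,1}$, contradicting $z$ not being an h-curve. If $\phi(z)$ is a general BP, the parity match forces $\phi_z(Q_1)$ to be the component not containing $\partial S$; but boundary-parallel rigidity inside the pair of pants $P_a$ cut off by $\phi(a)$ shows the curves of $\phi(z)$ cannot lie in $P_a$, so $P_a\cap \phi(z)=\emptyset$, and being connected with $\partial S\in P_a$, $P_a$ lies entirely in the $\partial S$-component. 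Hence $\alpha_1,\alpha_2$ lie there too, forcing $R(\phi,z;a)=\phi_z(Q_1)$ to equal the $\partial S$-component, contradicting the parity match. The same contradiction is reached whether or not $\phi(a),\phi(z)$ share a curve.

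If $z$ is an h-curve, then $Q_2\cong S_{1,1}$ and $Q_1\cong S_{g-1,2}$. Proposition \ref{prop-chi-pre} permits $\phi(z)$ to be an h-curve or a p-BP; suppose the latter. Both $\phi(a)$ and $\phi(z)$ then cut off pairs of pants $P_a,P_z$ containing $\partial S$. If they are not BP-equivalent, boundary-parallel rigidity places $\zeta_1,\zeta_2\in S\setminus P_a$, so $P_a\cap \phi(z)=\emptyset$; connectedness together with $\partial S\in P_a\cap P_z$ forces $P_a\subset P_z$, making $\alpha_1,\alpha_2$ boundary-parallel in $P_z$ and hence isotopic to $\zeta_j$ or $\partial S$, a contradiction. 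If they are BP-equivalent with $\alpha_1=\zeta_1$, cut $S$ along $\alpha_1$ to obtain $S_{g-1,3}$ with new boundaries $\alpha_1^{\pm}$; then $P_a$ has boundary $\{\partial S,\alpha_1^+,\alpha_2\}$ and $P_z$ has boundary $\{\partial S,\alpha_1^?,\zeta_2\}$ for some $?\in\{+,-\}$. If $?=+$, uniqueness (up to isotopy) of the curve in $S_{g-1,3}$ cutting off a pair of pants containing $\partial S$ and $\alpha_1^+$ forces $\alpha_2\sim\zeta_2$, contradicting distinctness. If $?=-$, the connectedness of $P_z$ together with the positions of $\partial S$ and $\alpha_1^-$ on opposite sides of the separating curve $\alpha_2$ in $S_{g-1,3}$ forces $P_z$ to cross $\alpha_2$, but $\alpha_2$ cannot lie in $\mathrm{int}(P_z)$ since boundary-parallel rigidity would require $\alpha_2$ isotopic in $S_{g-1,3}$ to $\partial S$, $\alpha_1^-$, or $\zeta_2$, each ruled out by essentiality or distinctness.

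Having shown $\phi(z)\in V_s(S)$, I would cut $S$ along $\phi(z)$ into $Y_1\cong S_{g',2}$ (containing $\partial S$) and $Y_2\cong S_{g-g',1}$. For $z$ not an h-curve, the parity argument combined with $\chi$-preservation matches $|\chi(\phi_z(Q_1))|=2h$ with the even component, giving $g'=h$ and $Y_j\cong Q_j$. For $z$ an h-curve, Step 1 rules out $\phi(z)$ being a p-BP, so $\phi(z)$ is an h-curve, and its handle side matches $Q_2=S_{1,1}$ while the other matches $Q_1=S_{g-1,2}$. I expect the main obstacle to be the BP-equivalent p-BP subcase with $z$ an h-curve, where the two pairs of pants $P_a,P_z$ overlap nontrivially in a collar of $\partial S$ and their shared curve $\alpha_1=\zeta_1$ creates a delicate configuration that can only be ruled out by cutting along the shared curve and applying uniqueness of the pair-of-pants-cutting curve in $S_{g-1,3}$.
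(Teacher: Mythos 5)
Your argument is correct and runs on the same engine as the paper's: because $\phi(a)$ is a p-BP (the standing assumption of this subsection), the pair of pants it cuts off contains $\partial S$, so the component $R(\phi,z;a)=\phi_z(Q_1)$ of $S_{\phi(z)}$ must contain $\partial S$; if $\phi(z)$ were a BP this component would have three boundary components while $Q_1$ has two, and the resulting parity mismatch in $|\chi|$ contradicts Proposition \ref{prop-chi-pre}. The second assertion then falls out of $\chi$-preservation together with $p=1$, exactly as you say. The paper compresses all of this into a few lines; to its credit, your separate treatment of the case where $z$ is an h-curve addresses a point the paper glosses over, since for an hp-curve Proposition \ref{prop-chi-pre} only asserts that $\phi(z)$ is an h-curve or a p-BP rather than an equality of Euler characteristics. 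That said, your Case B is far longer than it needs to be: the sub-case B1 mechanism ($P_a\subset P_z$ forces the curves of $\phi(a)$ to be boundary-parallel in the pair of pants $P_z$) applies verbatim in B2, because the non-shared curve $\alpha_2$ of $\phi(a)$ still ends up as an essential curve in the interior of $P_z$ and hence isotopic to $\zeta_2$, $\zeta_1$ or $\partial S$, each impossible. One step is wrong as literally stated: in sub-case B2 with $?=+$ you invoke ``uniqueness (up to isotopy) of the curve in $S_{g-1,3}$ cutting off a pair of pants containing $\partial S$ and $\alpha_1^+$,'' but there are infinitely many such curves up to isotopy (they differ by twists). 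What saves the step is that $\alpha_2$ and $\zeta_2$ are disjoint, and two \emph{disjoint} curves each cutting off a pair of pants containing the same two boundary components are isotopic, since one must be boundary-parallel in the pair of pants cut off by the other. With that repair the proof is complete.
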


\begin{proof}
We assume that $\phi(z)$ is a BP in $S$. 
The component of $S_z$, denoted by $Q$, that contains $a$ has exactly two boundary component.
We put $R=\phi_z(Q)$, which is the component of $S_{\phi(z)}$ containing at least one curve of $\phi(a)$ as an essential one.
By our assumption, $R$ has exactly three boundary components. 
It follows that $Q$ and $R$ cannot have the same Euler characteristic.
This contradicts Proposition \ref{prop-chi-pre}. 
We proved that $\phi(z)$ is a separating curve in $S$. 

The latter assertion in the lemma follows because $\phi$ is $\chi$-preserving and $S$ has exactly one boundary component.
\end{proof}


\subsection{The case $g=2$ and $p=2$}

This subsection is preliminary to Sections \ref{subsec-share} and \ref{subsec-conc}, where we prove the inclusion $\phi(\calc_s(S))\subset \calc_s(S)$ for $S=S_{g, 1}$ with $g\geq 3$ and $S=S_{2, 2}$ together.

\begin{lem}\label{lem-22-pre}
Let $S=S_{2, 2}$ be a surface and $\phi \colon \calc_s(S)\rightarrow \calt(S)$ a superinjective map.
Choose hp-curves $\alpha$, $\beta$ and $\gamma$ in $S$ which are mutually disjoint and distinct. 
We assume that at least one of $\phi(\alpha)$, $\phi(\beta)$ and $\phi(\gamma)$ is a BP in $S$. 
Then the following assertions hold:
\begin{figure}
\begin{center}
\includegraphics[width=11cm]{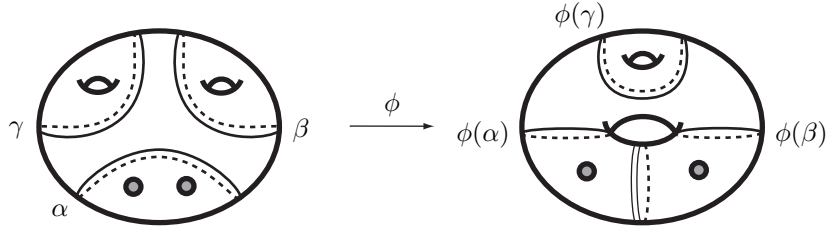}
\caption{The double line is the root curve for $\{ \phi(\alpha), \phi(\beta)\}$.}\label{fig-22}
\end{center}
\end{figure}
\begin{enumerate}
\item Exactly two of $\phi(\alpha)$, $\phi(\beta)$ and $\phi(\gamma)$ are BPs in $S$ which are BP-equivalent, and another is an h-curve in $S$.
\item If $\alpha$ is a p-curve in $S$, then $\phi(\alpha)$ is a BP in $S$ (see Figure \ref{fig-22}).
\end{enumerate}  
\end{lem}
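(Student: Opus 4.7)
The plan is to combine the topological restrictions on three pairwise disjoint hp-curves in $S_{2,2}$ with the structural results on weakly rooted simplices from Section \ref{sec-chi}. First I determine the combinatorial type of the simplex $\sigma=\{\alpha,\beta,\gamma\}$: since $S_{2,2}$ has only two boundary components, any p-curve in $S$ cuts off a pair of pants containing both of them, so two disjoint p-curves cannot coexist; and three disjoint h-curves cannot coexist either, since after cutting off two disjoint handles the remainder is $S_{0,4}$, which contains no h-curve. Hence $\{\alpha,\beta,\gamma\}$ consists of exactly one p-curve and two h-curves, $|\sigma|=3=2g+p-3$ is maximal, and $S_\sigma$ decomposes into two handles, a pair of pants containing both components of $\partial S$, and a central pair of pants whose three boundary circles are $\alpha,\beta,\gamma$.

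Next I read off the possible image structure. By Proposition \ref{prop-chi-pre} each of $\phi(\alpha),\phi(\beta),\phi(\gamma)$ is either an hp-curve in $S$ or a p-BP in $S$; by Lemma \ref{lem-si-bp} the set $\phi(\sigma)$ is a weakly rooted maximal simplex of $\calt(S)$, so Lemma \ref{lem-w-rooted}(ii) forces every component of $S_{\phi(\sigma)}$ to be a handle or a pair of pants. Writing $s$, $t$, $k$ respectively for the numbers of separating vertices, BPs, and BP-equivalence classes in $\phi(\sigma)$, a direct Euler-characteristic and boundary-circle count yields that $S_{\phi(\sigma)}$ has $s+t+k-1$ pants and $5-s-t-k$ handles, with $s+t=3$ and $t\geq 1$. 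Combined with Lemma \ref{lem-pants-sep}, the three-BP configurations $(s,t,k)\in\{(0,3,1),(0,3,2),(0,3,3)\}$ are ruled out at once, since with $s=0$ and at least three pants components, the pigeonhole principle (against $|\partial S|=2$) forces some pants component to have all three boundaries in $\phi(\sigma)$ with no separating vertex available.

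It remains to exclude $(s,t,k)=(2,1,1)$ and $(1,2,2)$, leaving only the target $(1,2,1)$ for assertion (i). For $(2,1,1)$, the single p-BP cuts $S$ into a pair of pants carrying one component of $\partial S$ and an $S_{1,3}$ carrying the other; the two hp-curve images in $\phi(\sigma)$ must both lie in $S_{1,3}$. But $S_{1,3}$ admits at most one h-curve of $S$ by a genus count, and contains no p-curve of $S$ (since the two components of $\partial S$ lie on opposite sides of the p-BP and no curve disjoint from the BP can cut off a pants of $S$ containing both), so the two hp-curve images cannot coexist, a contradiction. The case $(1,2,2)$ is handled similarly: the two non-BP-equivalent p-BPs cut $S$ into two pants (one for each component of $\partial S$) and a central $S_{0,4}$, inside which the lone separating vertex of $\phi(\sigma)$ must lie; but every essential curve in that $S_{0,4}$ which is separating in $S$ separates $S$ into two copies of $S_{1,2}$, and is therefore not an hp-curve, contradicting Proposition \ref{prop-chi-pre}. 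This establishes assertion (i).

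For assertion (ii), I would work inside the target configuration $(1,2,1)$: writing $\phi(\sigma)=\{c,b_1,b_2\}$ with $c$ an hp-curve and $b_1,b_2$ BP-equivalent p-BPs, suppose for contradiction that $\alpha$ is the p-curve of $\sigma$ and $c=\phi(\alpha)$. I would first eliminate the possibility that $c$ is a p-curve of $S$ (in that case both components of $\partial S$ would lie on the pants side of $c$, leaving no room for two disjoint p-BPs of $S$ on its far side), forcing $c$ to be an h-curve of $S$. Then $b_1$ and $b_2$ both lie in the $S_{1,3}$ complement of the handle cut off by $c$; because they share a root curve, the two pants of $S$ that they cut off must carry different components of $\partial S$, and an auxiliary separating curve intersecting $\alpha$ but disjoint from $\beta,\gamma$, combined with Lemma \ref{lem-bp}, would yield a contradiction by forcing an incompatible intersection pattern between its image and $b_1$. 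The main obstacle of the whole proof is the exclusion of $(1,2,2)$ and this final step of assertion (ii), since in both cases the configurations are not ruled out by Euler-characteristic counts alone and require auxiliary intersection arguments in the spirit of Claims \ref{claim-not-hp} and \ref{claim-not-p-bp}.
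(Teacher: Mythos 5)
Your treatment of assertion (i) is correct and, once unpacked, rests on the same facts as the paper's proof: $\{\alpha,\beta,\gamma\}$ must consist of two h-curves and one p-curve, three p-BP images are impossible, a single p-BP image cannot coexist with two disjoint hp-curve images, and two non-BP-equivalent p-BP images would force the third image to be a separating curve splitting $S$ into two copies of $S_{1,2}$, contradicting Proposition \ref{prop-chi-pre}. Your $(s,t,k)$ bookkeeping via Lemmas \ref{lem-w-rooted} and \ref{lem-pants-sep} is a more systematic packaging of the paper's direct case analysis; both are fine. One small item you still owe in the surviving case $(1,2,1)$: assertion (i) claims the non-BP image is an \emph{h-curve}, so you must rule out its being a p-curve. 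This follows from an observation already implicit in your $(2,1,1)$ discussion (a p-curve of $S_{2,2}$ cannot be disjoint from a p-BP, since its pair of pants contains both components of $\partial S$ while the p-BP's pair of pants contains exactly one), but it should be said.

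The genuine gap is in assertion (ii). Your reduction to the configuration ``$\phi(\alpha)$ an h-curve, $\phi(\beta),\phi(\gamma)$ BP-equivalent p-BPs'' is right, but the contradiction you announce cannot occur as described: since $i(\delta,\beta)=0$, superinjectivity forces $i(\phi(\delta),\phi(\beta))=0$, so there is no ``incompatible intersection pattern between $\phi(\delta)$ and $b_1$'' to exploit. Moreover you leave the topological type of $\delta$ unspecified, and it is essential. The paper chooses $\delta$ with $i(\delta,\alpha)\neq 0$, $i(\delta,\beta)=i(\delta,\gamma)=0$ and \emph{both components of $S_\delta$ homeomorphic to $S_{1,2}$}. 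The contradiction then comes from $\chi$-preservation, not from intersections with $b_1$: every vertex of $V_t(S)$ disjoint from the two BP-equivalent p-BPs $\phi(\beta),\phi(\gamma)$ is either the third BP of their equivalence class or an h-curve in $S$; since $\phi(\delta)$ intersects $\phi(\alpha)$, which is an h-curve disjoint from that third BP, $\phi(\delta)$ must be an h-curve. But $\delta$ is not an hp-curve (it splits $S$ into two pieces of Euler characteristic $-2$), so Proposition \ref{prop-chi-pre} is violated. Your sketch is missing both the classification of vertices disjoint from $\{\phi(\beta),\phi(\gamma)\}$ and this $\chi$-preservation endgame, so as written assertion (ii) is not proved.
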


\begin{proof}
By Proposition \ref{prop-chi-pre}, each of $\phi(\alpha)$, $\phi(\beta)$ and $\phi(\gamma)$ is either an hp-curve in $S$ or a p-BP in $S$.
To prove assertion (i), let us assume that $\phi(\alpha)$ is a p-BP in $S$.
It is impossible that both $\phi(\beta)$ and $\phi(\gamma)$ are hp-curves in $S$ and that both $\phi(\beta)$ and $\phi(\gamma)$ are p-BPs in $S$.
Exactly one of $\phi(\beta)$ and $\phi(\gamma)$ is thus a p-BP in $S$.

Let us assume that $\phi(\beta)$ is a p-BP in $S$. 
If $\phi(\alpha)$ and $\phi(\beta)$ were not BP-equivalent, then $\phi(\gamma)$ could not be an hp-curve in $S$ because any vertex of $V_t(S)$ forming a 2-simplex of $\calt(S)$ with $\phi(\alpha)$ and $\phi(\beta)$ corresponds to a separating curve in $S$ which decomposes $S$ into two surfaces homeomorphic to $S_{1, 2}$.
This is a contradiction.
The BPs $\phi(\alpha)$ and $\phi(\beta)$ are thus BP-equivalent.
It follows that $\phi(\gamma)$ is an h-curve in $S$.
Assertion (i) is proved.

To prove assertion (ii), we next assume that $\alpha$ is a p-curve in $S$. 
If $\phi(\alpha)$ were not a p-BP in $S$, then assertion (i) would imply that $\phi(\alpha)$ is an h-curve in $S$ and that both $\phi(\beta)$ and $\phi(\gamma)$ are p-BPs in $S$ which are BP-equivalent. 
Choose a separating curve $\delta$ in $S$ such that
\begin{itemize}
\item $i(\delta, \alpha)\neq 0$ and $i(\delta, \beta)=i(\delta, \gamma)=0$; and
\item both components of $S_{\delta}$ are homeomorphic to $S_{1, 2}$.
\end{itemize}
Any vertex of $V_t(S)$ forming a 2-simplex of $\calt(S)$ with $\phi(\beta)$ and $\phi(\gamma)$ corresponds to either the BP in $S$ consisting of the curves in $\phi(\beta)\cup \phi(\gamma)$ distinct from the root curve for $\{ \phi(\beta), \phi(\gamma)\}$ or an h-curve in $S$.
It follows that $\phi(\delta)$ is an h-curve in $S$ because $\phi(\delta)$ intersects $\phi(\alpha)$. 
This is a contradiction because $\phi$ is $\chi$-preserving. 
\end{proof}


\subsection{Construction of injective simplicial maps}\label{subsec-share}

Let $S=S_{g, p}$ be a surface with either $g\geq 3$ and $p=1$ or $g=2$ and $p=2$. 
Let $\phi \colon \calc_s(S)\rightarrow \calt(S)$ be a superinjective map. 
Fix an h-curve $a$ in $S$. 
The aim of this subsection is to deduce a contradiction on the assumption that $\phi(a)$ is a p-BP in $S$.

Let $b$, $x$, $y$, $z$ and $w$ denote the curves in $S$ described in Figure \ref{fig-s-pair-cha}.
\begin{figure}
\begin{center}
\includegraphics[width=12cm]{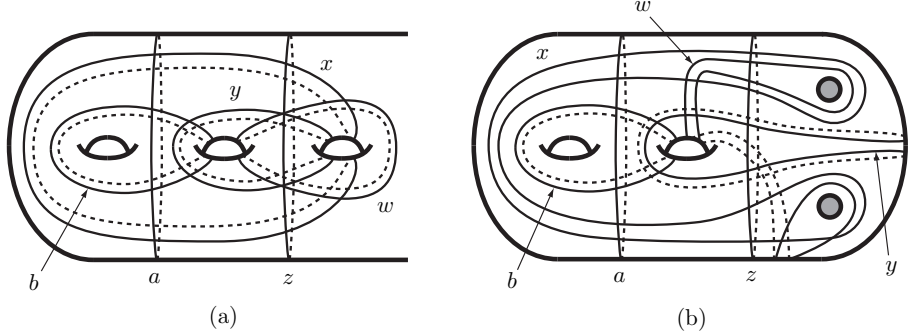}
\caption{(a) $S=S_{g, 1}$ with $g\geq 3$; (b) $S=S_{2, 2}$}\label{fig-s-pair-cha}
\end{center}
\end{figure}
The choices of curves in (a) and (b) of Figure \ref{fig-s-pair-cha} are introduced in Figure 2 of \cite{bm-add} and Figure 10 of \cite{kida}, respectively, to characterize sharing pairs in terms of disjointness and non-disjointness. 
We note that
\begin{itemize}
\item $a$ and $b$ are h-curves in $S$ forming a sharing pair in $S$, and thus $i(a, b)\neq 0$;
\item $z$ cuts off a surface homeomorphic to $S_{2, 1}$ and containing $a$ and $b$;
\item $x$ cuts off a surface homeomorphic to $S_{2, 1}$ and containing $b$;
\item $y$ is an h-curve in $S$;
\item $w$ is an h-curve in $S$ if $g\geq 3$, and is a p-curve in $S$ if $S=S_{2, 2}$;
\item $i(w, a)=0$, $i(w, b)=0$ and $i(w, z)\neq 0$;
\item $i(x, a)\neq 0$, $i(x, b)=0$ and $i(x, z)\neq 0$;
\item $i(y, a)=0$, $i(y, b)\neq 0$ and $i(y, z)\neq 0$; and
\item $i(x, y)=0$.
\end{itemize}
Lemmas \ref{lem-sep-bp-eq} and \ref{lem-sep-pre-top} imply that if $g\geq 3$, then
\begin{itemize}
\item $\phi(z)$ is a BP in $S$ which is BP-equivalent to $\phi(a)$; and
\item both $\phi(y)$ and $\phi(w)$ are h-curves in $S$.
\end{itemize}
Lemma \ref{lem-22-pre} implies that if $S=S_{2, 2}$, then
\begin{itemize}
\item $\phi(z)$ is a BP in $S$ which is BP-equivalent to $\phi(a)$;
\item $\phi(y)$ is an h-curve in $S$; and
\item $\phi(w)$ is a BP in $S$ which is BP-equivalent to $\phi(a)$, and the three BPs $\phi(a)$, $\phi(z)$ and $\phi(w)$ share a non-separating curve in $S$.
\end{itemize}
The last property is verified as follows: If the three BPs $\phi(a)$, $\phi(z)$ and $\phi(w)$ did not share a non-separating curve in $S$, then the inclusion $\phi(a)\subset \phi(z)\cup \phi(w)$ would hold because $\{ \phi(a), \phi(z)\}$ and $\{ \phi(a), \phi(w)\}$ are rooted. 
This is a contradiction because $\phi(b)$ intersects $\phi(a)$, but is disjoint from $\phi(z)$ and $\phi(w)$.

We introduce several symbols employed in the rest of this subsection. 
Figure \ref{fig-sc} will be helpful.
\begin{figure}
\begin{center}
\includegraphics[width=12cm]{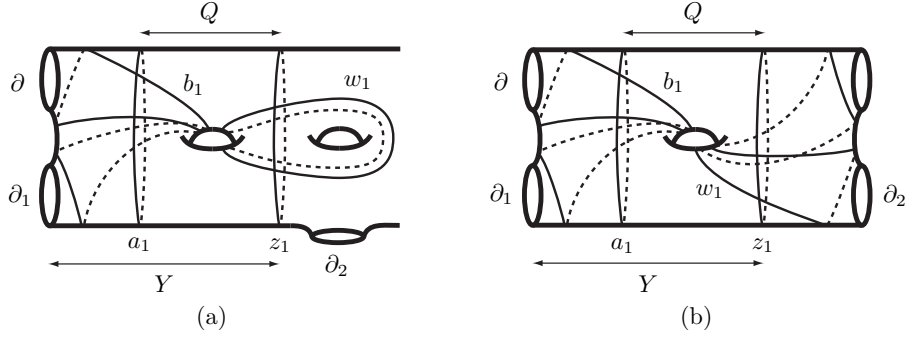}
\caption{(a) $S=S_{g, 1}$ with $g\geq 3$; (b) $S=S_{2, 2}$}\label{fig-sc}
\end{center}
\end{figure}
Let $c$ be the root curve for $\{ \phi(a), \phi(z)\}$. 
We define two curves $a_1$ and $z_1$ in $S$ by the equalities $\phi(a)=\{ c, a_1\}$ and $\phi(z)=\{ c, z_1\}$.
We put $y_1=\phi(y)$.
We put $w_1=\phi(w)$ if $g\geq 3$, and define a curve $w_1$ in $S$ by the equality $\phi(w)=\{ c, w_1\}$ if $S=S_{2, 2}$. 
Let $\partial$ denote the component of $\partial S$ contained in the pair of pants cut off by $\phi(a)$ from $S$. 
Let $\partial_1$ and $\partial_2$ denote the two boundary components of $S_c$ corresponding to $c$, where $\partial_1$ is chosen so that $a_1$ cuts off a pair of pants containing $\partial$ and $\partial_1$ from $S_c$. 
Let $Q$ denote the component of $S_{\phi(a)\cup \phi(z)}$ that contains neither $\partial_1$ nor $\partial_2$.
Let $Y$ denote the component of $S_{\phi(z)}$ containing $\partial$.

Fixing representatives of the isotopy classes of curves chosen above so that any two of them intersect minimally, we denote them by the same symbols as their isotopy classes.
This ambiguity will be of no importance in the sequel.

\begin{lem}\label{lem-phib-not-h}
$\phi(b)$ is not an h-curve in $S$.
\end{lem}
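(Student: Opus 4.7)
Suppose for contradiction that $\phi(b)$ is an h-curve in $S$, with handle $H_b$ cut off by $\phi(b)$.  The first step is to pin down the position of $\phi(b)$ with respect to the BPs $\phi(a)=\{c,a_1\}$ and $\phi(z)=\{c,z_1\}$.  Since $i(b,z)=0$, superinjectivity gives $i(\phi(b),\phi(z))=0$, so Lemma \ref{lem-bp} applied to the BP $\phi(z)$ and the separating curve $\phi(b)$ forces $c$ and $z_1$ to lie in a single component of $S_{\phi(b)}$.  Because two disjoint non-separating curves in a handle $S_{1,1}$ are necessarily isotopic, while $c$ and $z_1$ form a BP and are non-isotopic, both $c$ and $z_1$ must lie in the non-handle component $S\setminus H_b$.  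Combining $i(\phi(b),\phi(a))\neq 0$ with $i(\phi(b),c)=0$ yields $i(\phi(b),a_1)\neq 0$, so $a_1$ has arcs on both sides of $\phi(b)$.

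Next, I would bring in the h-curves $\phi(y)$ (in both cases) and $\phi(w)$ (also an h-curve when $g\ge 3$), together with $\phi(x)$.  Applying the same BP-vs-separating-curve analysis to $\phi(y)$ gives $\phi(y)$ disjoint from $c$ and $a_1$ with $c,a_1\in S\setminus H_y$, and the condition $i(\phi(y),\phi(z))\neq 0$ combined with $i(\phi(y),c)=0$ forces $i(\phi(y),z_1)\neq 0$.  The crucial interaction is that $\phi(y)$ must cross $\phi(b)$, while each of $\phi(x),\phi(w)$ is disjoint from $\phi(b)$ and from $\phi(y)$; this pins the handles $H_b$, $H_y$, $H_w$ into a rigid relative position inside $S$.

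For $S=S_{g,1}$ with $g\geq 3$, I would combine the above with the information $\phi(w)$ is an h-curve disjoint from both $\phi(a)$ and $\phi(b)$ but crossing $z_1$, so $H_w\subset S\setminus H_b$ and $H_w\cap H_y=\emptyset$ (since h-curves cutting off the same handle from the same side are isotopic, while $\phi(y)\neq\phi(w)$ by superinjectivity and $i(y,w)=0$).  Tracking the arcs of $a_1$, $z_1$, and $\phi(b)$ across $\phi(y)$ and $\phi(x)$, one obtains an intersection that cannot be realized on $S$ without forcing either $i(\phi(x),\phi(y))\neq 0$ or $i(\phi(b),c)\neq 0$, a contradiction.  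For $S=S_{2,2}$, the analogous argument uses instead that $\phi(w)=\{c,w_1\}$ is a p-BP sharing $c$ with $\phi(a)$ and $\phi(z)$, so the three curves $a_1,z_1,w_1$ are pairwise disjoint non-separating partners of $c$; the constraint $c,z_1,w_1\in S\setminus H_b\cong S_{1,3}$ together with $a_1$ crossing $\phi(b)$ is incompatible with the existence of the h-curve $\phi(y)$ intersecting $z_1$ and $\phi(b)$ while avoiding $c,a_1,w_1$.

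The main obstacle is the combinatorial bookkeeping in Step~3: extracting the correct contradiction requires an efficient way to track how the arcs of $a_1$ (and, in the $S_{2,2}$ case, of $\phi(b)$ itself) sit relative to the two handles $H_b$ and $H_y$, and to use the disjointness of $\phi(x)$ from both of these separating curves to eliminate the remaining configurations.  The $S_{2,2}$ case is the more delicate of the two because $\phi(w)$ is no longer a separating curve, so one must argue directly with the subsurface $S\setminus H_b\cong S_{1,3}$.
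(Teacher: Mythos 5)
Your first paragraph is fine and matches the setup the paper uses: disjointness from $\phi(z)=\{c,z_1\}$ plus Lemma \ref{lem-bp} puts $c$ and $z_1$ outside the putative handle, and $i(\phi(b),\phi(a))\neq 0$ with $i(\phi(b),c)=0$ forces $i(\phi(b),a_1)\neq 0$. But from that point on the proposal is a plan, not a proof: the actual contradiction is asserted (``one obtains an intersection that cannot be realized on $S$ without forcing either $i(\phi(x),\phi(y))\neq 0$ or $i(\phi(b),c)\neq 0$'') rather than derived, and you yourself flag the ``combinatorial bookkeeping'' as the unresolved obstacle. That bookkeeping is exactly the content of the lemma, so the gap is essential. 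You also lean on facts not in evidence, e.g.\ that $H_w\cap H_y=\emptyset$ via $i(y,w)=0$, which is not among the listed properties of the configuration, and you invoke $\phi(x)$ and $\phi(y)$, which the paper does not need for this lemma at all (they enter only later, in Lemma \ref{lem-i4}).

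The mechanism you are missing is the following, and it uses only $w$. Since $\phi(b)$ is disjoint from $\phi(z)$ and intersects $a_1$, it is an h-curve in the component $Y\cong S_{1,3}$ of $S_{\phi(z)}$. The arcs of $w_1\cap Y$ are essential, have both endpoints on the $z_1$-boundary of $Y$, and avoid the handle cut off by $\phi(b)$ from $Y$; hence they live in a planar subsurface and are \emph{separating} in $Y$, and (being disjoint from $a_1$, $c$ and $\partial$) they coincide with $w_1\cap Q$ and are separating in $Q\cong S_{1,2}$. A separating essential arc in $Q$ with endpoints on the $z_1$-boundary necessarily cuts off an annulus containing the $a_1$-boundary of $Q$, and then the essential arcs of $\phi(b)\cap Q$ (nonempty, with endpoints on the $a_1$-boundary, disjoint from $w_1$) are trapped in that annulus, where every such arc is inessential --- the contradiction. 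Without identifying this (or some equally concrete) topological obstruction, the configuration you describe in paragraphs two and three has not been shown to be impossible.
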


\begin{proof}
Assume that $\phi(b)$ is an h-curve in $S$. 
The curve $\phi(b)$ is an h-curve in $Y$ because $\phi(b)$ is disjoint from $\phi(z)$. 
Note that $w_1$ intersects $z_1$ and is disjoint from $a_1$ and $\phi(b)$.
The intersection $w_1\cap Y$ consists of essential simple arcs in $Y$ because $w_1$ and $z_1$ intersect minimally.
Since $\phi(b)$ is an h-curve in $Y$, any component of $w_1\cap Y$ is separating in $Y$.
Since we have $w_1\cap Y=w_1\cap Q$, any component of $w_1\cap Q$ is separating in $Q$.
This is impossible because $\phi(b)$ is an h-curve in $Y$ disjoint from $w_1$ and $z_1$, but intersecting $a_1$.
\end{proof}

\begin{lem}\label{lem-phi-b-c}
$\phi(b)$ is a p-BP in $S$ containing $c$.
\end{lem}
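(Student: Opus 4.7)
The plan is to combine Proposition~\ref{prop-chi-pre}, Lemma~\ref{lem-phib-not-h}, and Lemma~\ref{lem-bp} with the intersection pattern of the curves $x, y, w, z$ to force $\phi(b)$ into the asserted form. By Proposition~\ref{prop-chi-pre} and Lemma~\ref{lem-phib-not-h}, $\phi(b)$ is either a p-curve or a p-BP in $S$. Superinjectivity applied to $i(b,z)=0$ and $i(a,b)\neq 0$ then gives that $\phi(b)$ is disjoint from $\phi(z)=\{c,z_1\}$ (hence from both $c$ and $z_1$) while meeting $\phi(a)=\{c,a_1\}$, so meeting $a_1$.

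First I would rule out $\phi(b)$ being a p-curve. By Lemma~\ref{lem-bp} applied to the BP $\phi(z)$ and the disjoint separating curve $\phi(b)$, the curves $c$ and $z_1$ lie in a single component $R$ of $S_{\phi(b)}$, while the other component is the pair of pants $P$ cut off by $\phi(b)$, whose three boundary circles are $\phi(b)$ together with two components of $\partial S$. For $S=S_{g,1}$ this is immediate since $\partial S$ has only one component. For $S=S_{2,2}$, both components of $\partial S$ must then lie in $P$, forcing $R=S_{1,1}$; but in $S_{1,1}$ any two disjoint essential simple closed curves are isotopic, contradicting that $c$ and $z_1$ form a bounding pair in $S$.

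So $\phi(b)=\{b_1',b_2'\}$ is a p-BP. To show $c\in\phi(b)$, I would rule out the other configurations. If $z_1\in\phi(b)$, say $\phi(b)=\{z_1,b_1'\}$, then disjointness of $\phi(b)$ from $\phi(w)$ would force $i(\phi(w),z_1)=0$. However, for $g\geq 3$ Lemma~\ref{lem-sep-pre-top} gives that $w_1=\phi(w)$ is an h-curve with $i(w_1,c)=0$, so $i(w,z)\neq 0$ forces $i(w_1,z_1)\neq 0$; and for $S=S_{2,2}$ Lemma~\ref{lem-22-pre} gives $\phi(w)=\{c,w_1\}$, so $i(\phi(w),\phi(z))$ reduces to the same $i(w_1,z_1)\neq 0$. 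Both give a contradiction. If $\phi(b)$ contained neither $c$ nor $z_1$, then Lemma~\ref{lem-bp} again places $c,z_1$ inside a single component of $S_{\phi(b)}$; I would combine this with the constraints that the h-curve $y_1=\phi(y)$ is disjoint from $c,a_1$ but meets $z_1$ and $\phi(b)$, and that $\phi(w)$ is disjoint from $\phi(b)$ but meets $z_1$, to force the pair of pants cut off by $\phi(b)$ into a position incompatible with the p-BP structure of $\phi(a),\phi(z)$.

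The main obstacle is the last elimination, the case where $\phi(b)$ shares no curve with $\phi(z)$. Keeping track of which sides of $\phi(b)$ contain $c,z_1,a_1,y_1$ and (a part of) $\phi(w)$ is delicate; a systematic approach is to extend $\{b,z\}$ to a maximal simplex $\sigma$ of $\calc_s(S)$ and apply Lemma~\ref{lem-w-rooted}(ii) and Lemma~\ref{lem-pants-sep} to the pants-and-handles decomposition of $S_{\phi(\sigma)}$, using that the weakly rooted structure of $\phi(\sigma)$ together with the intersection constraints imposed by $\phi(y)$ and $\phi(w)$ forces the root of the BP-equivalence class of $\phi(a)$ and $\phi(z)$ to be shared by $\phi(b)$, namely $c$.
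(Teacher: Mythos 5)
Your skeleton (first show $\phi(b)$ is a p-BP, then decide which curve of $\phi(z)$ it shares) is reasonable, and your use of $w$ to exclude $z_1\in\phi(b)$ is exactly the role $w$ plays in the paper's proof. But there are two genuine gaps.

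First, your elimination of the p-curve case for $S=S_{2,2}$ is based on a miscomputation. A p-curve in $S_{2,2}$ cuts off a pair of pants containing \emph{both} components of $\partial S$, so the other component $R$ of $S_{\phi(b)}$ is $S_{2,1}$, not $S_{1,1}$ (check: $\chi(R)=-3$ with one boundary component gives genus $2$). There is ample room in $S_{2,1}$ for the BP $\phi(z)$, so no contradiction arises. The paper instead rules this out with Lemma \ref{lem-22-pre}(i): choosing a suitable triple of disjoint hp-curves containing $b$ and $z$, exactly two of the images are BP-equivalent BPs and the third is an h-curve; since $\phi(z)$ is a BP and $\phi(b)$ is not an h-curve by Lemma \ref{lem-phib-not-h}, $\phi(b)$ must be a p-BP BP-equivalent to $\phi(z)$.

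Second, the case you correctly identify as the main obstacle --- $\phi(b)$ a p-BP sharing no curve with $\phi(z)$ --- is not actually handled by your sketch. You cannot extend $\{b,z\}$ to a simplex that sees $a$ or $y$, since $i(a,b)\neq 0$ and $i(y,b)\neq 0$; consequently the weakly rooted structure of $\phi(\sigma)$ carries no information about the root curve $c$ of $\{\phi(a),\phi(z)\}$, which is the thing you need to force into $\phi(b)$. The paper's route is different and essential: it proves $\phi(b)$ and $\phi(z)$ are BP-equivalent \emph{directly}, by applying Lemma \ref{lem-sep-bp-eq} with $b$ in place of $a$ when $g\geq 3$ (an induction on the genus of the cut-off subsurface using $\chi$-preservation and a parity-of-Euler-characteristic argument), and Lemma \ref{lem-22-pre} when $S=S_{2,2}$. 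By Lemma \ref{lem-si-bp} the pair $\{\phi(b),\phi(z)\}$ is then rooted, so $\phi(b)$ contains $c$ or $z_1$, and $w$ finishes the argument as you describe (the paper phrases this as: if $\phi(a)$, $\phi(b)$, $\phi(z)$ shared no common curve, then $\phi(z)\subset\phi(a)\cup\phi(b)$, contradicting that $\phi(w)$ is disjoint from $\phi(a)$ and $\phi(b)$ but meets $\phi(z)$).
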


\begin{proof}
When $g\geq 3$, Lemma \ref{lem-phib-not-h} implies that $\phi(b)$ is a p-BP in $S$ because there is no p-curve in $S$. 
By Lemma \ref{lem-sep-bp-eq}, $\phi(b)$ and $\phi(z)$ are BP-equivalent. 
When $S=S_{2, 2}$, Lemmas \ref{lem-22-pre} and \ref{lem-phib-not-h} imply that $\phi(b)$ is a p-BP in $S$ which is BP-equivalent to $\phi(z)$. 
It follows that in both cases, $\{ \phi(b), \phi(z)\}$ is rooted. 
If the three BPs $\phi(a)$, $\phi(b)$ and $\phi(z)$ did not share a non-separating curve in $S$, then we would have the inclusion $\phi(z)\subset \phi(a)\cup \phi(b)$. 
This is a contradiction because $\phi(w)$ is disjoint from $\phi(a)$ and $\phi(b)$, but intersects $\phi(z)$.
\end{proof}

We define a curve $b_1$ in $S$ by the equality $\phi(b)=\{ c, b_1\}$.

\begin{lem}
$\phi(x)$ is a BP in $S$ which is BP-equivalent to $\phi(b)$ and contains $c$.
\end{lem}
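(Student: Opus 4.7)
The plan is a two-step argument. First I would establish that $\phi(x)$ is a BP in $S$ which is BP-equivalent to $\phi(b)$, and then I would rule out the possibility that the curve shared between $\phi(x)$ and $\phi(b)$ is $b_1$ rather than $c$.

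For the first step, I would invoke the analogue of Lemma \ref{lem-sep-bp-eq} with $b$ playing the role of $a$. This substitution is legitimate because $b$ is itself an h-curve in $S$ and Lemma \ref{lem-phi-b-c} has established that $\phi(b)$ is a p-BP in $S$; the proof of Lemma \ref{lem-sep-bp-eq} only exploits these two features of $a$. The curve $x$ meets the hypotheses with $b$ in place of $a$: from the enumerated properties of Figure \ref{fig-s-pair-cha}, $x$ is separating in $S$, satisfies $i(x,b)=0$ and $x\neq b$, and cuts off from $S$ a copy of $S_{2,1}$ containing $b$. This copy has $x$ as its sole boundary component, and in particular does not contain $\partial S$ in either of the cases $S=S_{g,1}$ or $S=S_{2,2}$. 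Thus the analogue of Lemma \ref{lem-sep-bp-eq} yields that $\phi(x)$ is a BP in $S$ which is BP-equivalent to $\phi(b)=\{c, b_1\}$.

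For the second step, since $\phi(x)$ is BP-equivalent to $\phi(b)$ and distinct from $\phi(b)$ by injectivity of $\phi$, it must share exactly one curve with $\phi(b)$, and this shared curve is either $c$ or $b_1$. Suppose for contradiction that $\phi(x)=\{b_1, x_1\}$ for some curve $x_1$. The relation $i(y, a)=0$ gives that $\phi(y)=y_1$ is disjoint from $\phi(a)=\{c, a_1\}$, and in particular from $c$; combined with $i(y, b)\neq 0$, this forces $y_1$ to intersect the other curve $b_1$ of $\phi(b)$. On the other hand, $i(x, y)=0$ gives that $y_1$ is disjoint from $\phi(x)=\{b_1, x_1\}$, so $y_1$ must be disjoint from $b_1$. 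This is a contradiction, and therefore $\phi(x)$ must contain $c$, as required.

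The main obstacle I anticipate is making the invocation of the analogue of Lemma \ref{lem-sep-bp-eq} rigorous: one should verify that its argument, in both the base case of genus two and the inductive step, uses only that $a$ is an h-curve with p-BP image rather than any property specific to the curve fixed at the start of Section \ref{subsec-p1}. Given the symmetric role $b$ plays here, together with the conclusion of Lemma \ref{lem-phi-b-c}, this verification should be a routine reading of the earlier proof.
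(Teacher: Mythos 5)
Your second step is the paper's own argument: if $\phi(x)=\{b_1,x_1\}$, then $\phi(y)$, being disjoint from $\phi(a)\ni c$ and from $\phi(x)\ni b_1$, would be disjoint from $\phi(b)=\{c,b_1\}$, contradicting $i(y,b)\neq 0$. One inaccuracy there: BP-equivalence of two distinct BPs does \emph{not} by itself force them to share a curve (one can have four pairwise BP-equivalent curves, hence two disjoint BP-equivalent BPs with no common curve, already in these surfaces). The fact that $\phi(x)$ and $\phi(b)$ share a curve comes from Lemma \ref{lem-si-bp}: the image of the edge $\{b,x\}$ of $\calc_s(S)$ is a weakly rooted simplex, so the pair $\{\phi(b),\phi(x)\}$ is rooted. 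You should cite that rather than ``BP-equivalent and distinct''.

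The substantive gap is in your first step for $S=S_{2,2}$. Lemma \ref{lem-sep-bp-eq} is stated and proved only for $S_{g,1}$ with $g\geq 3$, and while its proof does transfer verbatim with $b$ in place of $a$ for those surfaces (this is exactly how the paper handles $g\geq 3$), the transfer fails for $S_{2,2}$. In the base case of that proof, the possibility that the image is a separating curve is excluded by a parity argument: the component $R$ of the cut surface containing the curves of the p-BP must contain $\partial S$, hence has exactly two boundary components and even $|\chi(R)|$, contradicting $|\chi(R)|=3$. This uses that $S$ has a single boundary component. In $S_{2,2}$ the curve $x$ is a p-curve, so a priori $\phi(x)$ could be an h-curve; the relevant component $R$ would then be an $S_{1,3}$ containing both components of $\partial S$, with three boundary components and $|\chi(R)|=3$ --- no contradiction. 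And a p-BP can indeed be disjoint from an h-curve in $S_{2,2}$, so no purely topological count rules this out. This is precisely why the paper invokes Lemma \ref{lem-22-pre} for $S_{2,2}$: applying it to a triple of mutually disjoint hp-curves containing $b$ and $x$, part (ii) gives that $\phi(x)$ is a BP (since $x$ is a p-curve), and part (i) gives that $\phi(x)$ and $\phi(b)$ are the two BP-equivalent BPs of the triple. The verification you defer as ``routine'' is routine only for $g\geq 3$; for $S_{2,2}$ you need this different argument.
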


\begin{proof}
Since $\phi(b)$ is a p-BP in $S$ disjoint from $\phi(x)$, Lemmas \ref{lem-sep-bp-eq} and \ref{lem-22-pre} imply that $\phi(x)$ is a BP in $S$ which is BP-equivalent to $\phi(b)$.
By Lemma \ref{lem-si-bp}, $\{ \phi(b), \phi(x)\}$ is rooted.
If $c$ were not contained in $\phi(x)$, then $b_1$ would be contained in $\phi(x)$. 
The inclusion $\phi(b)\subset \phi(a)\cup \phi(x)$ thus holds.
This is a contradiction because $\phi(y)$ is disjoint from $\phi(a)$ and $\phi(x)$, but intersects $\phi(b)$.
\end{proof}

We define a curve $x_1$ in $S$ by the equality $\phi(x)=\{ c, x_1\}$.

\begin{lem}\label{lem-i4}
The two p-curves $a_1$, $b_1$ in $Y$ form a sharing pair in $Y$.
Namely, there exist essential simple arcs $l_a$, $l_b$ in $Y$ satisfying the following conditions:
\begin{itemize}
\item each of $l_a$ and $l_b$ connects $\partial$ and $\partial_1$;
\item $l_a$ is disjoint from $a_1$; $l_b$ is disjoint from $b_1$;
\item $l_a$ and $l_b$ are disjoint and non-isotopic; and
\item the surface obtained by cutting $Y$ along $l_a\cup l_b$ is connected.
\end{itemize}
\end{lem}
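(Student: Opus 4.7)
The plan is to verify the four bullet conditions for arcs $l_a,l_b$ in $Y$ by first identifying the topology of $Y$ and the positions of $a_1, b_1$, and then constructing the arcs.

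First, I would confirm that $Y\cong S_{1,3}$ with boundary $\partial_1$, a copy of $z_1$, and $\partial$. By Proposition \ref{prop-chi-pre} applied at $z$ together with the fact that the component of $S_z$ containing $a$ is $S_{2,1}$, we have $\chi(Y)=-3$; an explicit count of boundary components then identifies the topological type. In the $S_{2,2}$ case the other component $Y'$ of $S_{\phi(z)}$ has $\chi=-1$ with three boundary components, hence $Y'\cong S_{0,3}$, a pair of pants admitting no essential simple closed curves. The curve $a_1$ is in $Y$ by construction. For $b_1$: it is disjoint from $z_1$ (since $\phi(b)$ and $\phi(z)$ are disjoint BPs sharing $c$), and the pair of pants cut off by $\phi(b)=\{c,b_1\}$ in $S$ contains $\partial\in Y$, forcing this pair of pants, and hence $b_1$, to lie in $Y$. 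The same pair of pants has boundary $\partial_1,b_1,\partial$ in $Y$, so $b_1$ is a p-curve in $Y$ cutting off $\partial_1$ and $\partial$, just as $a_1$ does.

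Next, I would record the basic geometric facts about $a_1,b_1$ in $Y$: they are distinct in $V(Y)$ (otherwise $\phi(a)=\phi(b)$, contradicting superinjectivity of $\phi$), and they intersect, since $i(a_1,b_1)=i(\phi(a),\phi(b))\neq 0$ by superinjectivity of $\phi$ applied to $i(a,b)\neq 0$ (using that $c$ is disjoint from both $a_1$ and $b_1$). Thus $a_1$ and $b_1$ are distinct, intersecting p-curves in $Y$ cutting off the same pair of boundaries. The defining arcs $l_a,l_b$ exist uniquely up to isotopy, as essential simple arcs from $\partial_1$ to $\partial$ lying inside the respective pairs of pants $P_a,P_b$ in $Y$; they are non-isotopic because $a_1\neq b_1$.

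The main step is producing disjoint representatives of $l_a,l_b$ with $Y\setminus(l_a\cup l_b)$ connected. The idea is to transport the sharing-pair structure of the h-curve pair $\{a,b\}$ in $S$ through $\phi$, using the auxiliary curves $\phi(y)=y_1$ and $\phi(x)=\{c,x_1\}$, whose intersection patterns with $a_1,b_1,c,z_1$ are known from the discussion preceding the lemma. After putting everything in minimal position, $y_1\cap Y$ consists of essential simple arcs in $Y$ with endpoints on the $z_1$-boundary, each disjoint from $a_1$, with at least one meeting $b_1$; symmetrically, $x_1\cap Y$ provides arcs disjoint from $b_1$ and meeting $a_1$. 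Restricting the $y_1$-arcs to $C_a:=Y\setminus P_a\cong S_{1,2}$, they witness that $b_1$ is non-separating in $C_a$. A surgery then produces a defining arc $l_a\subset P_a$ and a defining arc $l_b$ that crosses $a_1$ along the pattern dictated by the witness arcs, yielding disjoint defining arcs whose complement $Y\setminus(l_a\cup l_b)$ is connected (a single $S_{1,1}$ piece).

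The hard part is executing the third step rigorously: extracting the explicit disjoint arcs from the intersection data of $y_1\cap Y$ (and verifying that the arcs so extracted are indeed defining arcs of $a_1$ and $b_1$, not arcs of different p-curves), and confirming connectedness of the cut surface. This reduces to a careful minimal-intersection analysis in $Y\cong S_{1,3}$. An additional subtlety, for $S=S_{2,2}$, is that $\phi(w)=\{c,w_1\}$ is a p-BP (rather than an h-curve as in the $S_{g,1}$ case) and contributes extra constraints that must be checked consistent with the $y_1$-based construction.
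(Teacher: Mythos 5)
Your setup is correct and matches the paper's: you correctly identify $Y\cong S_{1,3}$ with boundary $\partial$, $\partial_1$ and a copy of $z_1$, you correctly place $a_1$ and $b_1$ as distinct, intersecting p-curves in $Y$ cutting off $\partial$ and $\partial_1$, and you correctly see that the sharing-pair structure must be extracted from the intersection patterns of $\phi(x)$, $\phi(y)$, $\phi(w)$ with $Y$. But the third step, which you yourself flag as ``the hard part'' and leave as a reduction to ``a careful minimal-intersection analysis,'' is the entire content of the lemma; as written, the proposal has a genuine gap there. The paper's proof is precisely that analysis, and it is not routine: it first shows (Claims \ref{claim-lw} and \ref{claim-ycapr}) that the arcs of $w_1\cap Q$, $y_1\cap Y$ and $x_1\cap Y$ are essential and non-separating and that one can choose components $l_y$ of $y_1\cap Y$ and $l_x$ of $x_1\cap Y$ \emph{not isotopic to} $l_w$ (this uses $i(\phi(b),\phi(y))\neq 0$ together with the disjointness of $b_1$ from $w_1$, and symmetrically for $x$); it then cuts $Y$ along $l_w$ to obtain $R\cong S_{0,4}$ and shows that the arc systems $L_y$, $L_x$ obtained from $r_y$, $r_x$ consist of mutually isotopic arcs joining the two new boundary components $\partial_3$, $\partial_4$, so that $a_1$ and $b_1$ are the boundaries of regular neighborhoods of $s_y\cup\partial_3\cup\partial_4$ and $s_x\cup\partial_3\cup\partial_4$ with $s_y$, $s_x$ disjoint and non-isotopic; the desired $l_a$, $l_b$ are then read off in $R$.

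Two concrete consequences of the gap. First, your sketch gives $w_1$ no structural role (you mention it only as a ``subtlety'' in the $S_{2,2}$ case), whereas in the paper the arc $l_w$ is load-bearing in both cases: it is the arc along which $Y$ is cut down to a four-holed sphere, and the non-isotopy of $l_x$, $l_y$ to $l_w$ is what guarantees that the extracted arcs genuinely detect $a_1$ and $b_1$ and end up disjoint and non-isotopic. Without this (or an equivalent device) your ``surgery'' step has no mechanism for proving the third and fourth bullets of the lemma, which are the only nontrivial ones. Second, your parenthetical that the cut surface $Y\setminus(l_a\cup l_b)$ is ``a single $S_{1,1}$ piece'' is an unsupported (and unnecessary) refinement; the definition of a sharing pair of p-curves only requires connectedness, and that connectedness is exactly what must be deduced from the disjointness and non-isotopy of $s_x$ and $s_y$ in $R$. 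So the proposal is a correct plan with the correct auxiliary data, but the proof of the statement is not actually given.
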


\begin{proof}
The following argument is inspired by the proof of Lemma 4.1 of \cite{bm}.

\begin{claim}\label{claim-lw}
The intersection $w_1\cap Q$ consists of mutually isotopic, essential simple arcs in $Q$ which are non-separating in $Q$.
\end{claim}

\begin{proof}
Each component of $b_1\cap Q$ is an essential simple arc in $Q$ which is non-separating in $Q$.
For if there were a component of $b_1\cap Q$ which is separating in $Q$, then there would be no room for $w_1\cap Q$ because $w_1$ is disjoint from $a_1$ and $b_1$.
Cutting $Q$ along a component of $b_1\cap Q$, we obtain a pair of pants.
The claim follows because each component of $w_1\cap Q$ is an essential simple arc in this pair of pants.
\end{proof}

Let $l_w$ be a component of $w_1\cap Q$.

\begin{claim}\label{claim-ycapr}
Each component of $y_1\cap Y$ is an essential simple arc in $Y$ which is non-separating in $Y$.
At least one component of $y_1\cap Y$ is not isotopic to $l_w$.
\end{claim}

\begin{proof}
We first assume that there were a component $l$ of $y_1\cap Y$ which is separating in $Y$.
The arc $l$ is disjoint from $a_1$ and is thus an essential simple arc in $Q$ which is separating in $Q$.
Since $x_1$ is disjoint from $y_1$ and intersects $a_1$, there exists a component of $x_1\cap Y$ cutting off an annulus from $Y$.
The annulus contains exactly one of $\partial$ and $\partial_1$.
This contradicts the existence of the p-curve $b_1$ in $Y$ cutting off $\partial$ and $\partial_1$ and disjoint from $x_1$. 
It follows that each component of $y_1\cap Y$ is non-separating in $Y$.

If any component of $y_1\cap Y$ were isotopic to $l_w$, then $b_1$ would be disjoint from $y_1$ because $b_1$ is disjoint from $w_1$. 
This contradicts $i(\phi(b), \phi(y))\neq 0$. 
\end{proof}

The following claim is verified by a verbatim argument after exchanging $a_1$ and $y_1$ with $b_1$ and $x_1$, respectively.

\begin{claim}
Each component of $x_1\cap Y$ is an essential simple arc in $Y$ which is non-separating in $Y$.
At least one component of $x_1\cap Y$ is not isotopic to $l_w$.
\end{claim}

We choose a component of $y_1\cap Y$ non-isotopic to $l_w$, denoted by $l_y$. 
Similarly, we choose a component of $x_1\cap Y$ non-isotopic to $l_w$, denoted by $l_x$. 
Let $R$ denote the surface obtained by cutting $Y$ along $l_w$. 
The surface $R$ is homeomorphic to $S_{0, 4}$ and contains $\partial$ and $\partial_1$.
We denote by $\partial_3$ and $\partial_4$ the two boundary components of $R$ distinct from $\partial$ and $\partial_1$.

Let us say that two essential simple arcs $l_1$, $l_2$ in $Y$ {\it intersect minimally} if the cardinality of $l_1\cap l_2$ is equal to the minimal cardinality of $r_1\cap r_2$ among essential simple arcs $r_1$ and $r_2$ in $Y$ isotopic to $l_1$ and $l_2$, respectively.
Choose essential simple arcs $r_x$, $r_y$ in $Y$ such that
\begin{itemize}
\item $r_x$ is isotopic to $l_x$; $r_y$ is isotopic to $l_y$; and
\item any two of $r_x$, $r_y$ and $l_w$ intersect minimally.
\end{itemize}
We can find a curve $\bar{b}_1$ in $Y$ isotopic to $b_1$ and disjoint from $r_x$ and $l_w$.
Similarly, we can find a curve $\bar{a}_1$ in $Y$ isotopic to $a_1$ and disjoint from $r_y$ and $l_w$.
Since $l_x$ and $l_y$ are disjoint, $r_x$ and $r_y$ are disjoint.
When we cut $Y$ along $l_w$ and obtain $R$, the arc $r_y$ is decomposed into finitely many, essential simple arcs in $R$ each of which connects two distinct points of $\partial_3\cup \partial_4$. 
Let $L_y$ be the set of those essential simple arcs in $R$. 
In the same manner, we define the set $L_x$ after replacing $r_y$ with $r_x$.

We claim that each arc in $L_y$ connects $\partial_3$ and $\partial_4$ and that all arcs in $L_y$ are mutually isotopic as essential simple arcs in $R$. 
If there were an arc $s$ in $L_y$ connecting two points of $\partial_3$, then $s$ would be separating in $R$ and cut off a pair of pants containing $\partial$ and $\partial_1$ from $R$ because $s$ is disjoint from the curve $\bar{a}_1$ in $R$ cutting off a pair of pants containing $\partial$ and $\partial_1$ from $R$.
Let $u$ be an arc in $L_x$.
Since $r_x$ and $r_y$ are disjoint, either $u$ cuts off an annulus containing exactly one of $\partial$ and $\partial_1$ from $R$; $u$ is isotopic to $s$; or $u$ connects $\partial_3$ and $\partial_4$.
The first case is impossible because $r_x$ is disjoint from the curve $\bar{b}_1$ in $R$ cutting off a pair of pants containing $\partial$ and $\partial_1$ from $R$.
In the second and third cases, $\bar{a}_1$ and $\bar{b}_1$ has to be isotopic because $\bar{b}_1$ is disjoint from $u$. 
This is a contradiction.
A verbatim argument shows that $L_y$ contains no arc connecting two points of $\partial_4$. 
Any arc in $L_y$ thus connects $\partial_3$ and $\partial_4$. 
Since $r_y$ is disjoint from $\bar{a}_1$, all arcs in $L_y$ are mutually isotopic.

Similarly, we can show that any arc in $L_x$ connects $\partial_3$ and $\partial_4$, and all arcs in $L_x$ are mutually isotopic.
Pick $s_y\in L_y$ and $s_x\in L_x$.
The curve $a_1$ is isotopic to a boundary component of a regular neighborhood of $s_y\cup \partial_3\cup \partial_4$ in $R$.
Similarly, $b_1$ is isotopic to a boundary component of a regular neighborhood of $s_x\cup \partial_3\cup \partial_4$ in $R$.
The two arcs $s_y$ and $s_x$ are disjoint and non-isotopic because $a_1$ and $b_1$ are non-isotopic.
We can therefore find essential simple arcs $l_a$, $l_b$ in $R$ satisfying the conditions in Lemma \ref{lem-i4}.
\end{proof}

We define $b'$ and $w'$ as the two curves in $S$ described in Figure \ref{fig-bw}.
\begin{figure}
\begin{center}
\includegraphics[width=12cm]{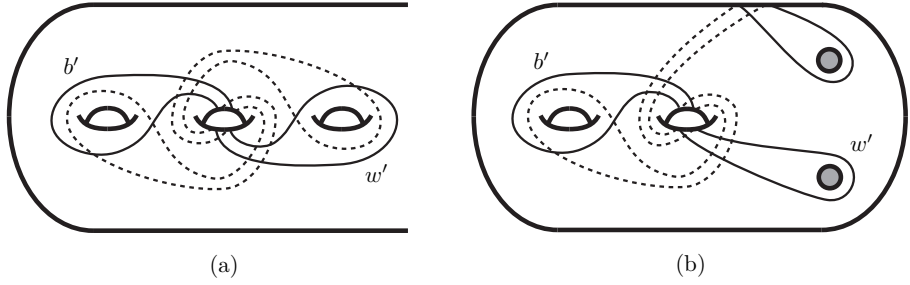}
\caption{(a) $S=S_{g, 1}$ with $g\geq 3$; (b) $S=S_{2, 2}$}\label{fig-bw}
\end{center}
\end{figure}
The argument so far can also be applied to the family of curves, $a$, $b'$, $x$, $y$, $z$ and $w'$, in place of $a$, $b$, $x$, $y$, $z$ and $w$. 
In particular, the following assertions hold:
\begin{itemize}
\item $\phi(b')$ is a p-BP in $S$ containing $c$.
We define a curve $b_1'$ in $S$ by the equality $\phi(b')=\{ c, b_1'\}$.
The curve $b_1'$ is a p-curve in $Y$ cutting off $\partial$ and $\partial_1$.
\item Since each of $\{ a, b'\}$ and $\{ b, b'\}$ is a sharing pair in $S$, there exists an essential simple arc $l_{b'}$ in $Y$ satisfying the following three conditions: $l_{b'}$ is disjoint from $b_1'$ and connects $\partial$ and $\partial_1$; $l_a$, $l_b$ and $l_{b'}$ are mutually disjoint and non-isotopic; and the surface obtained by cutting $Y$ along any two of $l_a$, $l_b$ and $l_{b'}$ is connected.
\item $\phi(w')$ is an h-curve in $S$ if $g\geq 3$, and is a p-BP in $S$ containing $c$ if $S=S_{2, 2}$. 
We put $w_1'=\phi(w')$ if $g\geq 3$, and define a curve $w_1'$ in $S$ by the equality $\phi(w')=\{ c, w_1'\}$ if $S=S_{2, 2}$.
\item The intersection $w_1'\cap Q$ consists of mutually isotopic, essential simple arcs in $Q$ which are non-separating in $Q$. 
Let $l_{w'}$ be a component of $w_1'\cap Q$.
\end{itemize}

\begin{lem}\label{lem-lwlwprime}
There exist essential simple arcs $r_w$, $r_{w'}$ in $Q$ such that $r_w$ is isotopic to $l_w$; $r_{w'}$ is isotopic to $l_{w'}$; $r_w$ and $r_{w'}$ are disjoint and non-isotopic; and the end points of $r_w$ and $r_{w'}$ appear alternately along the component of $\partial Q$ corresponding to $z_1$.
\end{lem}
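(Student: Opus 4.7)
The plan is to analyze the arcs $l_w$ and $l_{w'}$ in $Q\cong S_{1,2}$ and show that, after isotopy if necessary, they can be realized by arcs $r_w$, $r_{w'}$ satisfying the three stated properties. Throughout, I would work with representatives of all relevant curves in pairwise minimal position, so that $l_w$ is a fixed component of $w_1\cap Q$ and $l_{w'}$ is a fixed component of $w_1'\cap Q$. By Claim \ref{claim-lw} applied to both $w$ and $w'$, these are non-separating essential simple arcs in $Q$ with endpoints on the component of $\partial Q$ corresponding to $z_1$.

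For disjointness, the idea is to exploit the choice of $w$ and $w'$ in Figure \ref{fig-bw}: these curves can be taken to be disjoint in $S$. Since $\phi$ is simplicial, the vertices $\phi(w)$ and $\phi(w')$ are disjoint in $\calt(S)$, hence $w_1$ and $w_1'$ are disjoint curves in $S$. Consequently $l_w$ and $l_{w'}$ may be chosen disjoint in $Q$, and we set $r_w=l_w$, $r_{w'}=l_{w'}$.

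For non-isotopy, I would argue by contradiction. If $r_w$ and $r_{w'}$ were isotopic in $Q$, then together with a subarc of the $z_1$-boundary they would cobound a disk in $Q$. Hence $w_1$ and $w_1'$ would occupy the same topological position relative to $a_1$, $z_1$, and the sharing pair structures $\{a_1,b_1\}$ and $\{a_1,b_1'\}$ in $Y$. Using the non-isotopy of $l_b$ and $l_{b'}$ established in Lemma \ref{lem-i4} (and its analog for the pair $\{a,b'\}$), together with the fact that $\phi(b)\neq\phi(b')$ by superinjectivity, one should derive a contradiction from the distinct roles that $w$ and $w'$ play for the two sharing pairs.

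The alternation of endpoints is the main obstacle. Here I would cut $Q$ along $r_w$ to obtain a pair of pants $Q_0$ (so that the $z_1$-boundary of $Q$ is split into two arcs $s_1$, $s_2$ lying on distinct boundary components of $Q_0$, cf. Remark \ref{rem-arc}), and examine where the endpoints of $r_{w'}$ lie. Non-alternation would place both endpoints of $r_{w'}$ on the same $s_i$, which would force $r_{w'}$ to be either boundary-parallel or separating in $Q_0$; one then uses that $r_{w'}=l_{w'}$ is non-separating in $Q$, and that $w_1'$ intersects $z_1$ in a pattern determined by the disjointness relations $i(w',a)=i(w',b')=0$ and $i(w',z)\neq 0$, to obtain a contradiction. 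The delicate part is ruling out the non-alternating configuration without appealing to Figure \ref{fig-bw} a second time; I expect this to require a careful analysis tracking $l_{b'}$ and $l_w$ simultaneously inside $Y$, using the non-isotopy of the three arcs $l_a,l_b,l_{b'}$ produced by applying Lemma \ref{lem-i4} to both sharing pairs $\{a,b\}$ and $\{a,b'\}$.
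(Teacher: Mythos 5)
There is a genuine gap, and it sits exactly where you place your own caveat: the alternation of endpoints, which is the real content of the lemma. Your proposed reduction --- cut $Q$ along $r_w$ and argue that a non-alternating $r_{w'}$ would be boundary-parallel or separating --- does not close. An arc with both endpoints on the same piece $s_i$ is indeed separating in the pair of pants $Q_{r_w}$, but it can still be essential and non-separating in $Q$ once the two copies of $r_w$ are reglued. In fact, two disjoint, non-isotopic, non-separating arcs in $S_{1,2}$ with endpoints on a common boundary component need \emph{not} have alternating endpoints; that is precisely why alternation is listed as a separate condition in the definition of $\mathcal{A}(Q,\partial)$ and why connectedness of the cut surface is a separate condition in Definition \ref{defn-share-p}. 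So no argument using only disjointness and the non-separating property of $l_w$, $l_{w'}$ can succeed. Your other two steps are also fragile: the non-isotopy argument is only a declaration that a contradiction ``should'' follow, and the disjointness step rests on $i(w,w')=0$, which is not among the stated intersection relations and sits uneasily with the lemma's phrasing ($r_w$ is required only to be \emph{isotopic} to $l_w$, and in $X$ the arcs $s_w$, $s_{w'}$ are likewise only isotopic to components of $w\cap X$ and $w'\cap X$; if everything were already disjoint, no isotopy would be needed).

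The idea you are missing is a transfer of the alternation property from the arcs $l_b$, $l_{b'}$ to the arcs $l_w$, $l_{w'}$. The paper cuts $Y$ along $l_a$ to get $T_1$ and first proves that the endpoints of $l_b$ and $l_{b'}$ alternate along the new boundary component $\partial_0$: since $T_1$ cut along $l_b$ is a pair of pants, a non-alternating $l_{b'}$ can be normalized into one of two positions, and in each of them the surface obtained by cutting $Y$ along $l_a\cup l_{b'}$ or along $l_b\cup l_{b'}$ is disconnected, contradicting the sharing-pair conditions for $\{a,b'\}$ and $\{b,b'\}$ furnished by the analogue of Lemma \ref{lem-i4}. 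This pins down the configuration of $l_a$, $l_b$, $l_{b'}$ up to homeomorphism (Figure \ref{fig-pants} (d)). The second key observation is that $l_w$ is determined up to isotopy by $l_b$ alone, as the unique essential arc in $T_1$ disjoint from $l_b$ with endpoints on the $z_1$-boundary (again because the complement of $l_b$ in $T_1$ is a pair of pants), and likewise $l_{w'}$ is determined by $l_{b'}$. All three conclusions --- disjointness, non-isotopy, and alternation of $r_w$ and $r_{w'}$ --- are then read off simultaneously from the normalized picture, with no appeal to disjointness of $w$ and $w'$ in $S$.
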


\begin{proof}
We define $T_1$ as the surface obtained by cutting $Y$ along $l_a$.
Let $\partial_0$ denote the boundary component of $T_1$ consisting of four arcs corresponding to $\partial$, $\partial_1$ and two copies of $l_a$.
The arcs $l_b$ and $l_{b'}$ are essential simple arcs in $T_1$ connecting two points of $\partial_0$ and non-separating in $T_1$.

We claim that the end points of $l_b$ and $l_{b'}$ appear alternately along $\partial_0$.
Let $T_2$ denote the surface obtained by cutting $T_1$ along $l_b$, which is a pair of pants.
We have two components of $\partial T_2$ each of which consists of four arcs corresponding to $\partial$, $\partial_1$, $l_a$ and $l_b$ (see Figure \ref{fig-pants} (a)).
\begin{figure}
\begin{center}
\includegraphics[width=12cm]{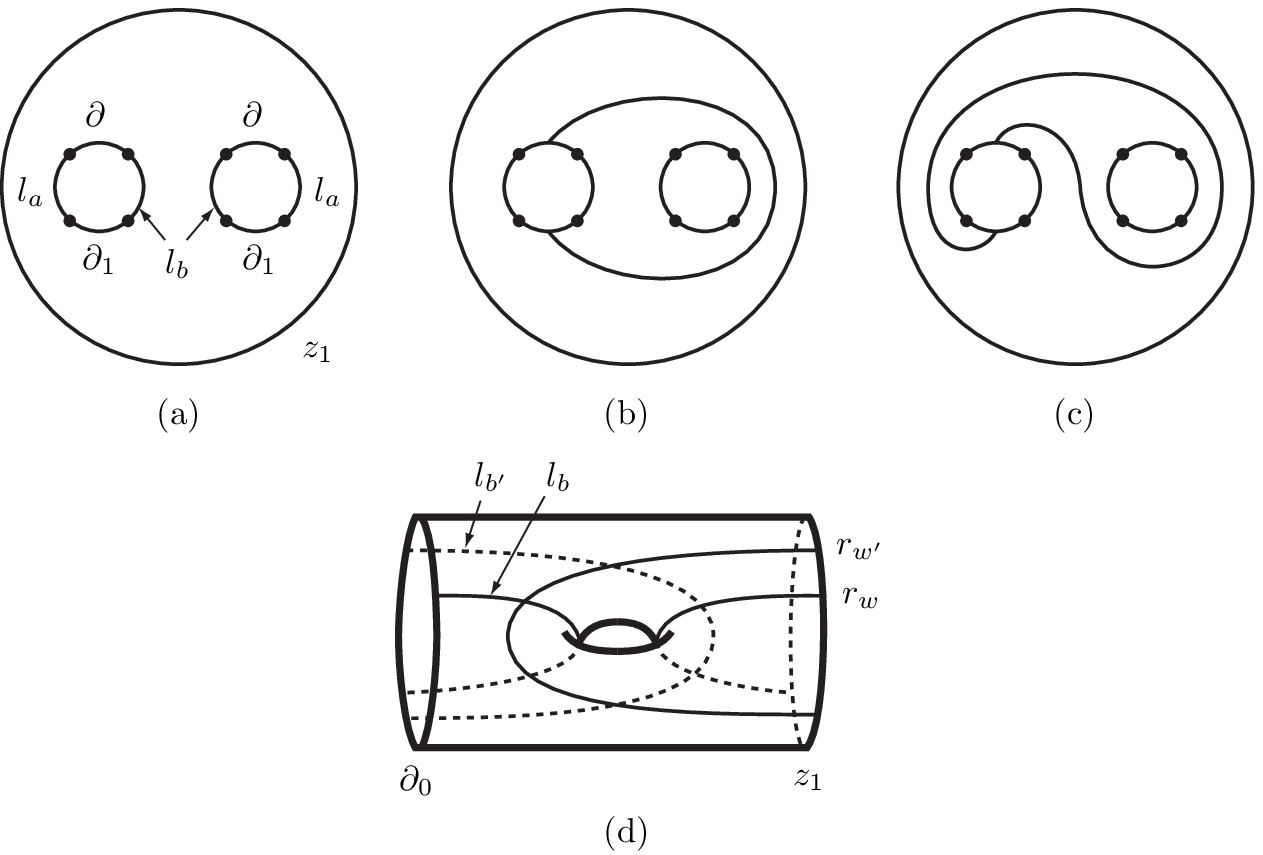}
\caption{}\label{fig-pants}
\end{center}
\end{figure}
If the claim were not true, then up to a homeomorphism of $Y$ fixing each of $\partial$, $\partial_1$, $l_a$ and $l_b$ as a set, we would have the two possibilities for $l_{b'}$ indicated in Figure \ref{fig-pants} (b) and (c).
In case (b), the surface obtained by cutting $Y$ along $l_a\cup l_{b'}$ is not connected.
In case (c), the surface obtained by cutting $Y$ along $l_b\cup l_{b'}$ is not connected.
We thus obtain a contradiction in both cases.

The claim shown in the last paragraph and Remark \ref{rem-arc} imply that $l_b$ and $l_{b'}$ are described as in Figure \ref{fig-pants} (d) up to a homeomorphism of $T_1$.
Since $T_2$ is a pair of pants, the arc $l_w$ is uniquely determined, up to isotopy, as an essential simple arc in $T_1$ which is disjoint from $l_b$ and connects two points of the boundary component of $T_1$ corresponding to $z_1$.
Similarly, up to isotopy, $l_{w'}$ is uniquely determined by $l_{b'}$.
We can thus find essential simple arcs $r_w$, $r_{w'}$ in $T_1$ isotopic to $l_w$, $l_{w'}$, respectively, and described as in Figure \ref{fig-pants} (d).
The lemma is proved.
\end{proof}

Let $X$ denote the component of $S_z$ containing $a$, which is homeomorphic to $S_{2, 1}$.
We note that each of $w\cap X$ and $w'\cap X$ consists of two isotopic, essential simple arcs in $X$ which are non-separating in $X$. 
We can find essential simple arcs $s_w$, $s_{w'}$ in $X$ such that $s_w$ is isotopic to a component of $w\cap X$; $s_{w'}$ is isotopic to a component of $w'\cap X$; $s_w$ and $s_{w'}$ are disjoint and non-isotopic; and the end points of $s_w$ and $s_{w'}$ appear alternately along the component of $\partial X$ corresponding to $z$.
Let $\cal{B}=\cal{B}(X, a; s_w, s_{w'})$ be the simplicial graph defined in Section \ref{subsec-b}.
Recall that the set of vertices of $\cal{B}$ is the union $B(s_w)\cup B(s_{w'})$.
The set $B(s_w)$ consists of all elements $\beta$ of $V(X)$ such that $\beta$ is an h-curve in $X$; $a$ and $\beta$ form a sharing pair in $X$; and a representative of $\beta$ is disjoint from $s_{w}$.
The last condition is equivalent to the condition that $\beta$ is disjoint from $w$ as a curve in $S$.
The set $B(s_{w'})$ is defined similarly after exchanging $s_w$ with $s_{w'}$.

Let $\cal{G}=\cal{G}(Y, a_1; r_w, r_{w'})$ be the simplicial graph defined in Section \ref{subsec-g}.
The set of vertices of $\cal{G}$ is the union $\Gamma(r_w)\cup \Gamma(r_{w'})$.
The set $\Gamma(r_w)$ consists of all elements $\gamma$ of $V(Y)$ such that $\gamma$ is a p-curve in $Y$ cutting off $\partial$ and $\partial_1$; $a_1$ and $\gamma$ form a sharing pair in $Y$; and a representative of $\gamma$ and $r_w$ are disjoint.
The last condition is equivalent to the condition that $\gamma$ is disjoint from $w_1$ as a curve in $S$ by Claim \ref{claim-lw}.
The set $\Gamma(r_{w'})$ is defined similarly after exchanging $r_w$ with $r_{w'}$.

We are now ready to deduce a contradiction on the assumption that $\phi(a)$ is a p-BP in $S$. 
Since for each $\beta \in B(s_w)$, $\phi(\beta)$ is a p-BP in $S$ consisting of $c$ and an element of $\Gamma(r_w)$ by Lemmas \ref{lem-phi-b-c} and \ref{lem-i4}, we have the map from $B(s_w)$ to $\Gamma(r_w)$ associated with $\phi$. 
Similarly, we have the map from $B(s_{w'})$ to $\Gamma(r_{w'})$ associated with $\phi$. 
By Lemma \ref{lem-i4}, $\phi$ induces a simplicial map from $\cal{B}$ to $\cal{G}$, which is injective because $\phi$ is injective.
This contradicts Lemma \ref{lem-b-g}.
We thus proved the following:

\begin{lem}\label{lem-h-not-pbp}
If $a$ is an h-curve in $S$, then $\phi(a)$ is not a p-BP in $S$.
\end{lem}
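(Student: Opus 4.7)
The plan is to argue by contradiction: assume $\phi(a)$ is a p-BP in $S$, and exhibit an injective simplicial map from $\cal{B}(X, a; s_w, s_{w'})$ to $\cal{G}(Y, a_1; r_w, r_{w'})$, where $X\cong S_{2,1}$ is the subsurface of $S$ cut off by $z$ that contains $a$, and $Y\cong S_{1,3}$ is the component of $S_{\phi(z)}$ containing $\partial$. The contradiction with Lemma \ref{lem-b-g} will then complete the proof.

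First I would assemble everything proved so far into a clean picture of the configuration $a,b,x,y,z,w$ of Figure \ref{fig-s-pair-cha} and its image under $\phi$: Lemmas \ref{lem-sep-bp-eq}, \ref{lem-sep-pre-top}, \ref{lem-22-pre} pin down that $\phi(z)$ is a BP equivalent to $\phi(a)$ (sharing the root curve $c$, so that $\phi(a)=\{c,a_1\}$, $\phi(z)=\{c,z_1\}$), that $\phi(y)$ is an h-curve, and that $\phi(w),\phi(w')$ are either h-curves (if $g\ge3$) or p-BPs sharing $c$ with $\phi(a)$ and $\phi(z)$ (if $S=S_{2,2}$). Lemmas \ref{lem-phib-not-h} and \ref{lem-phi-b-c} then force $\phi(b)$ to be a p-BP containing $c$, and the analogous statement holds for $\phi(b')$. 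Thus $a_1$, $b_1$, $b_1'$ become p-curves in $Y$ cutting off $\partial$ and $\partial_1$, and Lemma \ref{lem-i4} (applied to both $b$ and $b'$) identifies disjoint non-isotopic arcs $l_a,l_b,l_{b'}$ in $Y$ connecting $\partial$ to $\partial_1$ and witnessing that each of $\{a_1,b_1\}$, $\{a_1,b_1'\}$, $\{b_1,b_1'\}$ is a sharing pair of p-curves in $Y$.

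Next, using Lemma \ref{lem-lwlwprime}, I would produce the pair $(r_w,r_{w'})\in \cal{A}(Q,\partial Q|_{z_1})$ with $r_w$ isotopic to a component of $w_1\cap Q$ and $r_{w'}$ isotopic to a component of $w_1'\cap Q$, and in parallel pick $(s_w,s_{w'})\in \cal{A}(X,\partial X|_z)$ with $s_w$ (resp.\ $s_{w'}$) isotopic to a component of $w\cap X$ (resp.\ $w'\cap X$). The definition of $\cal{B}$ and $\cal{G}$ then fits the setup precisely: a vertex of $\cal{B}$ is an h-curve $\beta$ in $X$ with $\{a,\beta\}$ a sharing pair such that $\beta$ is disjoint from $w$ or from $w'$; a vertex of $\cal{G}$ is a p-curve $\gamma$ in $Y$ with $\{a_1,\gamma\}$ a sharing pair such that $\gamma$ is disjoint from $w_1$ or from $w_1'$. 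For each vertex $\beta$ of $\cal{B}$, re-running with $\beta$ in place of $b$ the arguments of Lemmas \ref{lem-phi-b-c} and \ref{lem-i4} yields that $\phi(\beta)=\{c,\gamma\}$ where $\gamma$ is a p-curve in $Y$ forming a sharing pair with $a_1$ and, via Claim \ref{claim-lw}, disjoint from $w_1$ (resp.\ $w_1'$). This defines a map $V(\cal{B})\to V(\cal{G})$, which is injective because $\phi$ is, and simplicial because adjacency on both sides means ``forming a sharing pair'' and is preserved under $\phi$ again by Lemma \ref{lem-i4}.

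The main obstacle will be the last paragraph of bookkeeping: making sure that replacing $b$ by an arbitrary vertex $\beta$ of $\cal{B}$ in the previous analysis indeed identifies $\phi(\beta)$ with a vertex of $\cal{G}$ living on the correct side $(r_w$ or $r_{w'})$ and disjoint from the appropriate arc. The delicate point is the disjointness condition with $w_1$ or $w_1'$, which is controlled by the uniqueness statement in Claim \ref{claim-lw} (all components of $w_1\cap Q$ are mutually isotopic non-separating arcs) together with Lemma \ref{lem-lwlwprime} to ensure that the two ``sides'' of $\cal{B}$ are correctly matched with the two ``sides'' of $\cal{G}$. Once this is secured, the injective simplicial map $\cal{B}\to\cal{G}$ contradicts Lemma \ref{lem-b-g}, so the initial assumption that $\phi(a)$ is a p-BP must fail.
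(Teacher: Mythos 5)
Your proposal is correct and follows essentially the same route as the paper: Section \ref{subsec-share} is precisely this argument, assembling Lemmas \ref{lem-sep-bp-eq}--\ref{lem-22-pre} to pin down $\phi(z)$, $\phi(y)$, $\phi(w)$, $\phi(w')$, then Lemmas \ref{lem-phib-not-h}, \ref{lem-phi-b-c}, \ref{lem-i4} and \ref{lem-lwlwprime} to produce the data $(s_w,s_{w'})$ and $(r_w,r_{w'})$ and the induced injective simplicial map $\cal{B}(X,a;s_w,s_{w'})\rightarrow \cal{G}(Y,a_1;r_w,r_{w'})$, contradicting Lemma \ref{lem-b-g}. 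You also correctly identify the one delicate bookkeeping point (matching the two sides of $\cal{B}$ with the two sides of $\cal{G}$ via Claim \ref{claim-lw}), which the paper resolves in exactly the way you describe.
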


\subsection{Conclusion}\label{subsec-conc}

We first conclude the following:

\begin{thm}\label{thm-31}
Let $S=S_{g, 1}$ be a surface with $g\geq 3$ and $\phi \colon \calc_s(S)\rightarrow \calt(S)$ a superinjective map. 
Then the inclusion $\phi(\calc_s(S))\subset \calc_s(S)$ holds.
\end{thm}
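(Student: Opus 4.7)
The plan is to combine Lemma \ref{lem-h-not-pbp}, just established, with a handle-counting argument specific to $S_{g,1}$. By Proposition \ref{prop-chi-pre}, the map $\phi$ is $\chi$-preserving, so for every h-curve $a$ in $S$ the image $\phi(a)$ is either an h-curve or a p-BP (recall that $S_{g,1}$ contains no p-curve, so ``hp-curve'' means ``h-curve'' here); Lemma \ref{lem-h-not-pbp} eliminates the p-BP option, and therefore $\phi$ sends h-curves to h-curves. The remaining task is to verify that $\phi(\alpha)\in V_s(S)$ for every separating curve $\alpha$ that is not an h-curve.

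First I would fix such an $\alpha$ and write $S_{\alpha}=Q_1\sqcup Q_2$ with $Q_1=S_{h_1,2}$ (containing $\partial S$) and $Q_2=S_{h_2,1}$, where $h_1+h_2=g$, $h_1\geq 1$, and $h_2\geq 2$ (the last since $\alpha$ is not an h-curve). Inside each $Q_j$ one can cut off $h_j$ pairwise disjoint handles; the resulting $g$ curves $\beta_1,\dots,\beta_g$ are h-curves in $S$, mutually disjoint and disjoint from $\alpha$. Hence $\{\alpha,\beta_1,\dots,\beta_g\}$ is a simplex of $\calc_s(S)$ of size $g+1$, well within the maximum dimension $2g-2=|\chi(S)|-1$ since $g\geq 3$. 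Combining the first step with the injectivity and simpliciality of $\phi$, the images $\phi(\beta_1),\dots,\phi(\beta_g)$ are $g$ pairwise disjoint, pairwise distinct h-curves in $S$, all disjoint from and distinct from $\phi(\alpha)$.

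The conclusion would then follow by a case analysis on $\phi(\alpha)$. If $\phi(\alpha)$ were an h-curve, the simplex above would yield $g+1$ pairwise disjoint h-curves in $S_{g,1}$, hence $g+1$ disjoint handles, forcing genus $\geq g+1$---impossible. If $\phi(\alpha)$ were a p-BP, then $S_{\phi(\alpha)}\cong S_{0,3}\sqcup S_{g-1,2}$; since a pair of pants contains no h-curve, all $g$ curves $\phi(\beta_i)$ would lie in $S_{g-1,2}$, whose maximum number of pairwise disjoint h-curves is $g-1$---a contradiction. If $\phi(\alpha)$ were a BP that is not a p-BP, then $S_{\phi(\alpha)}\cong S_{h_1',3}\sqcup S_{h_2',2}$ with $h_1'+h_2'=g-1$ and $h_j'\geq 1$, so across both components at most $g-1$ disjoint h-curves fit---again a contradiction. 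Hence $\phi(\alpha)\in V_s(S)$, yielding the inclusion $\phi(\calc_s(S))\subset \calc_s(S)$. The substantive content of the theorem sits entirely in Lemma \ref{lem-h-not-pbp}; what remains here is a routine genus-counting wrap-up, and no further obstacles are expected.
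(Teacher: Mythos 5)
Your argument is correct and takes essentially the same route as the paper: both first deduce from Lemma \ref{lem-h-not-pbp}, together with $\chi$-preservation and the absence of p-curves in $S_{g,1}$, that $\phi$ preserves h-curves, and then rule out $\phi(\alpha)$ being a BP by choosing $g$ pairwise disjoint h-curves forming a simplex with $\alpha$. The paper leaves the final genus count implicit where you spell it out; your extra case excluding $\phi(\alpha)$ from being an h-curve is redundant (only non-BP-ness is needed for the inclusion) but harmless.
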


\begin{proof}
Lemma \ref{lem-h-not-pbp} implies that $\phi$ sends each h-curve in $S$ to an h-curve in $S$ because there is no p-curve in $S$.
Let $\alpha$ be a separating curve in $S$ which is not an hp-curve in $S$.
Choose $g$ h-curves in $S$ forming a $g$-simplex of $\calc_s(S)$ together with $\alpha$.
The map $\phi$ sends each of those $g$ h-curves in $S$ to an h-curve in $S$.
It follows that $\phi(\alpha)$ is not a BP in $S$.
\end{proof}

We next conclude the following:

\begin{thm}\label{thm-22}
Let $S=S_{2, 2}$ be a surface and $\phi \colon \calc_s(S)\rightarrow \calt(S)$ a superinjective map. 
Then the inclusion $\phi(\calc_s(S))\subset \calc_s(S)$ holds.
\end{thm}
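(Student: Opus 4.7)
The plan is to combine the preceding results---Proposition~\ref{prop-chi-pre} ($\chi$-preservation), Lemma~\ref{lem-h-not-pbp} (h-curves do not map to p-BPs), and Lemma~\ref{lem-22-pre}(i) (the constrained BP-structure of the image of a triple of mutually disjoint hp-curves)---so as to conclude that every separating curve in $S=S_{2,2}$ is sent by $\phi$ to a separating curve rather than a BP.

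First I will observe that every h-curve $a$ of $S$ satisfies $\phi(a)\in V_s(S)$: by Proposition~\ref{prop-chi-pre}, $\phi(a)$ is either an hp-curve or a p-BP, and Lemma~\ref{lem-h-not-pbp} rules out the latter. I will then handle the remaining separating curves in $S_{2,2}$, which fall into two topological types: p-curves (those cutting off a pair of pants containing both components of $\partial S$, leaving $S_{2,1}$ on the other side) and ``middle'' curves (those cutting $S$ into two copies of $S_{1,2}$, each carrying one component of $\partial S$). For any such $\alpha$, my goal is to exhibit two mutually disjoint h-curves $\beta,\gamma$ of $S$, both disjoint from $\alpha$, forming a $2$-simplex of $\calc_s(S)$ together with $\alpha$.

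For the middle case I will take one h-curve inside each of the two $S_{1,2}$ components of $S_\alpha$, each cutting off the unique handle of that subsurface; a short check confirms that such a curve, viewed in $S$, separates $S$ into $S_{1,1}$ and $S_{1,3}$, hence is an h-curve of $S$. For the p-curve case I will take two h-curves inside the $S_{2,1}$ component of $S_\alpha$, cutting off its two distinct handles; again a routine check confirms that these are h-curves of $S$. Now suppose for contradiction that $\phi(\alpha)$ is a BP. Since $\{\alpha,\beta,\gamma\}$ is a triple of mutually disjoint distinct hp-curves and at least one of $\phi(\alpha),\phi(\beta),\phi(\gamma)$ is a BP, Lemma~\ref{lem-22-pre}(i) forces exactly two of these three images to be BPs. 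But by the first step $\phi(\beta)$ and $\phi(\gamma)$ are hp-curves in $S$, hence separating curves and not BPs, so at most one of the three images is a BP---a contradiction. Therefore $\phi(\alpha)\in V_s(S)$, which finishes the proof.

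I do not expect a substantive obstacle here: essentially all the ingredients have already been assembled in earlier sections, so the proof amounts to a short bookkeeping argument. The only minor subtlety is confirming the existence of the $2$-simplex of h-curves disjoint from $\alpha$ in both topological cases, which reduces to elementary surface topology.
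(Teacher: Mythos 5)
There is a genuine gap, concentrated in the ``middle'' case where $\alpha$ cuts $S_{2,2}$ into two copies of $S_{1,2}$. First, a technical point: Lemma \ref{lem-22-pre} is stated only for a triple of mutually disjoint \emph{hp-curves}, and a middle curve is not an hp-curve, so the triple $\{\alpha,\beta,\gamma\}$ you form in that case is not eligible for that lemma. More importantly, the obstruction cannot be repaired from what you have established. Your first step gives only that $\phi(\beta)$ and $\phi(\gamma)$ are hp-curves, i.e.\ h-curves \emph{or} p-curves, and a BP in $S_{2,2}$ can perfectly well be disjoint from a disjoint pair consisting of one h-curve and one p-curve (take a BP inside the complementary $S_{1,2}$). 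In fact the bad configuration is exactly what you would have to rule out: if $\phi(\alpha)$ is a BP, then $\chi$-preservation forces $S_{\phi(\alpha)}\cong S_{0,4}\sqcup S_{1,2}$ with both components of $\partial S$ in the planar piece; by Lemma \ref{lem-side} the curves $\phi(\beta)$ and $\phi(\gamma)$ are essential separating curves lying in \emph{different} components, and the one lying in the $S_{0,4}$ piece is necessarily a p-curve of $S$. So the contradiction only appears once you know that $\phi$ sends h-curves to h-curves, not merely to hp-curves.

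That statement --- no h-curve of $S_{2,2}$ is sent to a p-curve --- is where the paper spends essentially all of its effort in the proof of Theorem \ref{thm-22}: it reruns the sharing-pair machinery of Section \ref{subsec-share} with the curves of Figures \ref{fig-s-pair-cha}(b) and \ref{fig-bw}(b) (the analogues of Lemmas \ref{lem-phib-not-h}--\ref{lem-lwlwprime}) to produce an injective simplicial map from $\cal{B}(X,a;s_w,s_{w'})$ into $\cal{G}(Y,\phi(a);r_w,r_{w'})$, contradicting Lemma \ref{lem-b-g}. Once that is in hand, your middle case closes exactly as in Theorem \ref{thm-31}: two disjoint non-isotopic h-curves of $S_{2,2}$ cut off disjoint handles, and no BP of $S$ is disjoint from both. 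Your handling of the p-curve case via Lemma \ref{lem-22-pre}(i) is correct, but as written the proposal omits the essential step.
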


\begin{proof}
Lemmas \ref{lem-22-pre} and \ref{lem-h-not-pbp} imply that $\phi$ preserves hp-curves in $S$.
Once $\phi$ is shown to preserve h-curves in $S$, the theorem is verified along argument in the proof of Theorem \ref{thm-31}.
Assuming that there exists an h-curve $a$ in $S$ with $\phi(a)$ a p-curve in $S$, we deduce a contradiction.
The proof consists of verbatim arguments of the proof of Lemma \ref{lem-h-not-pbp}.
We thus give only a sketch of it.

We choose the curves $b$, $x$, $y$, $z$ and $w$ in $S$ described in Figure \ref{fig-s-pair-cha} (b).
Since $\phi$ preserves hp-curves in $S$ and since $\phi(a)$ is a p-curve in $S$, each of $\phi(y)$, $\phi(z)$ and $\phi(w)$ is an h-curve in $S$.
We denote by $\partial_1$ and $\partial_2$ the two components of $\partial S$.
Let $Q$ denote the component of $S_{\{ \phi(a), \phi(z)\}}$ homeomorphic to $S_{1, 2}$.
\begin{figure}
\begin{center}
\includegraphics[width=6cm]{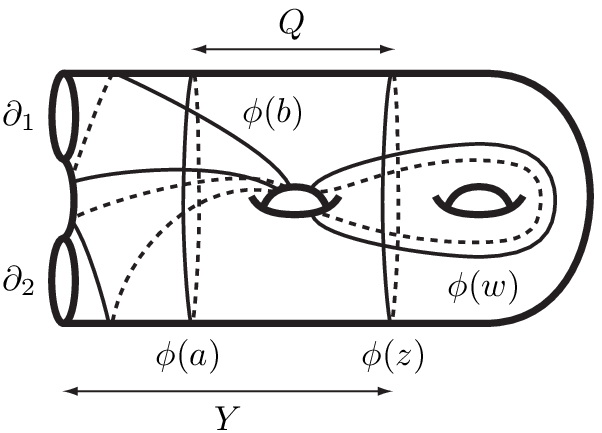}
\caption{}\label{fig-22-thm}
\end{center}
\end{figure}
Let $Y$ denote the component of $S_{\phi(z)}$ that is not a handle (see Figure \ref{fig-22-thm}).

Following the proof of Lemma \ref{lem-phib-not-h}, we can show that $\phi(b)$ is not an h-curve in $S$ and is thus a p-curve in $S$.
It turns out that $\phi(x)$ is an h-curve in $S$ because $\phi(x)$ is an hp-curve in $S$ disjoint from $\phi(b)$.
Following the proof of Lemma \ref{lem-i4}, we can show that
\begin{itemize}
\item $\phi(a)$ and $\phi(b)$ form a sharing pair in $Y$; and
\item $\phi(w)\cap Q$ consists of mutually isotopic, essential simple arcs in $Q$ which are non-separating in $Q$.
\end{itemize}
Choose the curves $b'$, $w'$ in $S$ described in Figure \ref{fig-bw} (b).
We can then show that
\begin{itemize}
\item $\phi(b')$ is a p-curve in $Y$ cutting off $\partial_1$ and $\partial_2$;
\item any two of $\phi(a)$, $\phi(b)$ and $\phi(b')$ form a sharing pair in $Y$;
\item $\phi(w')$ is an h-curve in $S$; and
\item $\phi(w')\cap Q$ consists of mutually isotopic, essential simple arcs in $Q$ which are non-separating in $Q$.
\end{itemize}
Following the proof of Lemma \ref{lem-lwlwprime}, we can find essential simple arcs $r_w$, $r_{w'}$ in $Q$ such that $r_w$ is isotopic to a component of $\phi(w)\cap Q$; $r_{w'}$ is isotopic to a component of $\phi(w')\cap Q$; $r_w$ and $r_{w'}$ are disjoint and non-isotopic; and the end points of $r_w$ and $r_{w'}$ appear alternately along the component of $\partial Q$ corresponding to $\phi(z)$.
Let $\cal{G}=\cal{G}(Y, \phi(a); r_w, r_{w'})$ be the graph defined in Section \ref{subsec-g}.

Let $X$ denote the component of $S_z$ containing $a$, which is homeomorphic to $S_{2, 1}$.
As discussed in the paragraph right after the proof of Lemma \ref{lem-lwlwprime}, we pick essential simple arcs $s_w$, $s_{w'}$ in $X$ and define the graph $\cal{B}=\cal{B}(X, a; s_w, s_{w'})$.
We then obtain the injective simplicial map from $\cal{B}$ into $\cal{G}$ associated with $\phi$.
This contradicts Lemma \ref{lem-b-g}.
\end{proof}


\section{$S_{1, 4}$}\label{sec-14}

We start with the following brief observation on hp-curves in $S_{1, 4}$ and their image via a superinjective map into the Torelli complex. 

\begin{lem}\label{lem-14-hp}
Let $S=S_{1, 4}$ be a surface and $\phi \colon \calc_s(S)\rightarrow \calt(S)$ a superinjective map. 
Choose hp-curves $\alpha$, $\beta$ and $\gamma$ in $S$ which are mutually disjoint and distinct. 
We assume that at least one of $\phi(\alpha)$, $\phi(\beta)$ and $\phi(\gamma)$ is a BP in $S$. 
Then exactly one of $\phi(\alpha)$, $\phi(\beta)$ and $\phi(\gamma)$ is a p-curve in $S$, and the others are p-BPs in $S$.
\end{lem}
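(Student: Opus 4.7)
The plan is to exploit the rigidity of maximal weakly rooted simplices in $\calt(S_{1,4})$. Since $|\chi(S)|=4$, the simplex $\sigma=\{\alpha,\beta,\gamma\}$ is of maximal dimension in $\calc_s(S)$, and by Proposition \ref{prop-chi-pre} each of $\phi(\alpha),\phi(\beta),\phi(\gamma)$ is either an hp-curve in $S$ or a p-BP in $S$. To apply Lemma \ref{lem-w-rooted}(ii) to $\phi(\sigma)$, I will verify its hypothesis by constructing, for each $\delta\in\sigma$, a separating curve in $S$ that intersects $\delta$ but is disjoint from the other two vertices of $\sigma$ (such curves are readily found inside the $S_{1,2}$- or $S_{0,4}$-pieces of $S_\sigma$), then transporting via superinjectivity. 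By Lemma \ref{lem-si-bp}, $\phi(\sigma)$ is weakly rooted, and Lemma \ref{lem-w-rooted}(ii) then says every component of $S_{\phi(\sigma)}$ is a handle or a pair of pants. Letting $n$ be the number of BP-equivalence classes in $\phi(\sigma)$, the equations $h'+3p'=10+2n$ and $h'+p'=4$ give $p'=3+n$ and $h'=1-n$; combined with $n\geq 1$ (at least one BP is present) and $h'\geq 0$, this pins down $n=1$, $h'=0$, $p'=4$.

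Let $k\in\{1,2,3\}$ count the p-BPs in $\phi(\sigma)$, all sharing a common root curve $c$. The case $k=3$ is ruled out immediately: the three p-BPs $\{c,x_i\}$ would each cut off a pants $P_i$ of the form $\{c,x_i,\partial_i\}$, all three appearing as pants components of $S_{\phi(\sigma)}$ and all adjacent to the curve $c$, but $c$ has only two sides in $S$.

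The next step is to exclude any h-curve in $\phi(\sigma)$. Suppose $\phi(\delta_0)$ is an h-curve with handle $H\cong S_{1,1}$, and let $\{c,c_1\}$ be any p-BP in $\phi(\sigma)$. The pants cut off by $\{c,c_1\}$ is a connected subsurface of $S$ with boundary in $\{c,c_1\}\cup\partial S$; since $\phi(\delta_0)$ is separating in $S$ and therefore not isotopic to any of $c$, $c_1$, or a component of $\partial S$, it cannot lie as an essential interior curve of this pants, forcing $c$ and $c_1$ to the same side of $\phi(\delta_0)$. If both lie in $H$ they are disjoint essential curves in $S_{1,1}$ and hence isotopic, contradicting the BP; if both lie in $S\setminus H\cong S_{0,5}$, then $c$ is separating in the planar surface $S_{0,5}$, and since the handle $H$ attaches to only one side of $c$ in $S_{0,5}$, $c$ would separate $S$, again contradicting the BP.

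It remains to rule out $k=1$, which is the main obstacle. In this case two image vertices, say $\phi(\beta)$ and $\phi(\gamma)$, are p-curves and the third, $\phi(\alpha)=\{c,c_1\}$, is a p-BP. The two p-curves cut off disjoint pairs of pants exhausting all four boundaries of $S$, so their common complement is an $S_{1,2}$ containing $c$ and $c_1$. A quick check using $c,c_1$ non-separating in $S$ shows $\{c,c_1\}$ is also a BP in this $S_{1,2}$, so cutting $S_{1,2}$ along $c\cup c_1$ yields two pants with boundaries $\{c,c_1,\phi(\beta)\}$ and $\{c,c_1,\phi(\gamma)\}$; gluing back the two exterior pants along $\phi(\beta)$ and $\phi(\gamma)$ produces $S\setminus(c\cup c_1)=S_{0,4}\sqcup S_{0,4}$. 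Since neither component is a pair of pants, $\{c,c_1\}$ is not a p-BP, contradicting $\chi$-preservation at $\alpha$. Therefore $k=2$, and combined with the previous paragraph the one non-BP vertex of $\phi(\sigma)$ must be a p-curve, yielding the conclusion.
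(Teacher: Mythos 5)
Your proof is correct, and its logical skeleton coincides with the paper's: reduce via $\chi$-preservation and weak rootedness to a case analysis on how many images are p-BPs, then exclude ``three p-BPs'' by the two-sidedness of the common root curve and exclude ``one p-BP plus two p-curves.'' The difference is that the paper compresses three of these steps into one-line assertions (``by our assumption each image is a p-curve or a p-BP,'' ``it is impossible that both $\phi(\beta)$ and $\phi(\gamma)$ are p-curves,'' ``they would be mutually BP-equivalent because $S$ is of genus one''), whereas you supply genuine arguments for each: the count $h'+p'=4$, $h'+3p'=10+2n$ on the components of $S_{\phi(\sigma)}$ forces a single BP-equivalence class (replacing the paper's genus-one assertion), your handle/$S_{0,5}$ dichotomy rules out h-curve images, and the computation $S_{\{c,c_1\}}=S_{0,4}\sqcup S_{0,4}$ disposes of the $k=1$ case, which is exactly the content behind the paper's unexplained ``impossible.'' These fill-ins are all sound; the one cosmetic slip is the parenthetical locating the test curves in pieces of $S_\sigma$ (whose components are $S_{1,1}$ and three pairs of pants) rather than of $S_{\sigma\setminus\{\delta\}}$, where the $S_{1,2}$ and $S_{0,4}$ pieces actually live; the existence of the required curves is unaffected.
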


\begin{proof}
By our assumption, each of $\phi(\alpha)$, $\phi(\beta)$ and $\phi(\gamma)$ is either a p-curve in $S$ or a p-BP in $S$. 
Suppose that $\phi(\alpha)$ is a p-BP in $S$. 
Since it is impossible that both $\phi(\beta)$ and $\phi(\gamma)$ are p-curves in $S$, one of them, say $\phi(\beta)$, is a p-BP in $S$. 
If $\phi(\gamma)$ were also a p-BP in $S$, then $\phi(\alpha)$, $\phi(\beta)$ and $\phi(\gamma)$ would be mutually BP-equivalent because $S$ is of genus one. 
This contradicts the fact that each BP-equivalence class in a weakly rooted simplex of $\calt(S)$ contains at most two p-BPs in $S$.
\end{proof}

Let $\alpha$, $\beta$ and $\gamma$ be hp-curves in $S$ which are mutually disjoint and distinct. 
Throughout Sections \ref{subsec-alpha-p} and \ref{subsec-beta-p}, we deduce a contradiction on the assumption that at least one of $\phi(\alpha)$, $\phi(\beta)$ and $\phi(\gamma)$ is a BP in $S$. 
We may assume that $\alpha$ is an h-curve in $S$ and both $\beta$ and $\gamma$ are p-curves in $S$. 
In Section \ref{subsec-alpha-p}, assuming that $\phi(\alpha)$ is a p-curve in $S$, we deduce a contradiction. 
In Section \ref{subsec-beta-p}, assuming that $\phi(\beta)$ is a p-curve in $S$, we deduce a contradiction. 
Once these are proved, we see that each of $\phi(\alpha)$, $\phi(\beta)$ and $\phi(\gamma)$ is an hp-curve in $S$ by Lemma \ref{lem-14-hp}. 
In Section \ref{subsec-conclusion}, we conclude that $\phi$ preserves separating curves in $S$.

Throughout the rest of this section, we put $S=S_{1, 4}$ and fix a superinjective map $\phi \colon \calc_s(S)\rightarrow \calt(S)$.

\subsection{The case where $\phi(\alpha)$ is a p-curve in $S$}\label{subsec-alpha-p}

Let $\alpha$ be an h-curve in $S$ and $\beta$, $\gamma$ p-curves in $S$ such that $\{ \alpha, \beta, \gamma \}$ is a 2-simplex of $\calc_s(S)$.
Assuming that $\phi(\alpha)$ is a p-curve in $S$ and that both $\phi(\beta)$ and $\phi(\gamma)$ are p-BPs in $S$, we deduce a contradiction.
\begin{figure}
\begin{center}
\includegraphics[width=11cm]{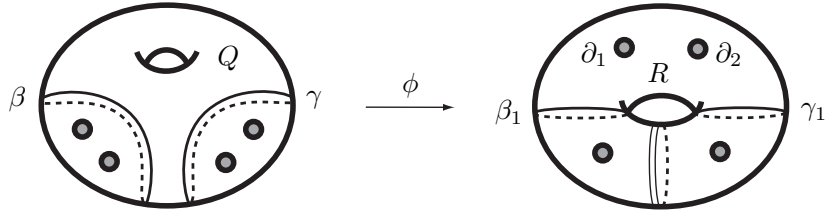}
\caption{The double line is the root curve for $\{ \phi(\beta), \phi(\gamma)\}$.}\label{fig-14-htop}
\end{center}
\end{figure}
Let $Q$ denote the component of $S_{\{ \beta, \gamma \}}$ homeomorphic to $S_{1, 2}$.
We denote by $\partial_1$ and $\partial_2$ the two components of $\partial S$ that are not contained in the pairs of pants cut off by $\phi(\beta)$ and $\phi(\gamma)$ from $S$. 
We define $R$ to be the component of $S_{\{ \phi(\beta), \phi(\gamma)\}}$ containing $\partial_1$ and $\partial_2$.
Let $\beta_1$ and $\gamma_1$ denote the curves of $\phi(\beta)$ and $\phi(\gamma)$, respectively, distinct from the root curve for $\{ \phi(\beta), \phi(\gamma)\}$ (see Figure \ref{fig-14-htop}).

\begin{lem}\label{lem-pre-h4-p4}
In the above notation, the following assertions hold:
\begin{enumerate}
\item For each h-curve $a$ in $Q$, $\phi(a)$ is a curve in $R$ cutting off a pair of pants containing $\partial_1$ and $\partial_2$ from $R$.
\item If $\alpha_1$ and $\alpha_2$ are h-curves in $Q$ with $i(\alpha_1, \alpha_2)=4$, then $i(\phi(\alpha_1), \phi(\alpha_2))=4$.
\end{enumerate}
\end{lem}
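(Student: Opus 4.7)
The plan is to handle assertions (i) and (ii) separately, with (i) providing the ingredients needed for (ii).

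For (i), my first step is to identify the type of $\phi(a)$. Since $a$ is an h-curve in $Q\subset S$, it is an h-curve in $S$, so $\{a,\beta,\gamma\}$ is a $2$-simplex of hp-curves in $\calc_s(S)$. By the hypothesis of this subsection, $\phi(\beta)$ and $\phi(\gamma)$ are p-BPs in $S$, so Lemma \ref{lem-14-hp} forces $\phi(a)$ to be a p-curve in $S$. My next step is to locate $\phi(a)$: because $\phi(a)$ is disjoint from both $\phi(\beta)$ and $\phi(\gamma)$, it lives as an essential curve in one component of $S_{\{\phi(\beta),\phi(\gamma)\}}$; the two pants components carry no essential curves, so $\phi(a)$ must lie in $R=S_{0,4}$.

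To pin down the separation pattern in $R$, I will spell out the reassembly of $S$ from $R$ and the two pants along $c$, $\beta_1$, and $\gamma_1$. The crucial point is that the root curve $c$ appears on the boundary of both pants components, so in $S$ the two pants are joined to one another through $c$. Consequently, if a curve in $R$ partitioned $\partial R=\{\partial_1,\partial_2,\beta_1',\gamma_1'\}$ in any way that placed $\beta_1'$ and $\gamma_1'$ on opposite sides, then the two sides in $R$ would reconnect in $S$ via $c$, making that curve non-separating in $S$. Since $\phi(a)$ is separating (indeed a p-curve), the only admissible partition of $\partial R$ is $\{\partial_1,\partial_2\}$ versus $\{\beta_1',\gamma_1'\}$, so $\phi(a)$ cuts off a pair of pants in $R$ containing $\partial_1$ and $\partial_2$, as claimed.

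For (ii), assertion (i) places both $\phi(\alpha_1)$ and $\phi(\alpha_2)$ in $V(\cal{E}(R;\partial_1,\partial_2))$, and superinjectivity together with $i(\alpha_1,\alpha_2)=4\neq 0$ makes them distinct. By the Farey combinatorics of $R=S_{0,4}$ (via Proposition \ref{prop-diag}), any two distinct vertices of $\cal{E}$ have intersection number a positive multiple of $4$, yielding $i(\phi(\alpha_1),\phi(\alpha_2))\geq 4$ for free. The hard part is to rule out strictly larger values. My plan is to produce explicit defining arcs $l_{\alpha_1}$, $l_{\alpha_2}$ in $R$ for the p-curves $\phi(\alpha_1)$ and $\phi(\alpha_2)$ that satisfy condition (b) of Proposition \ref{prop-diag}, which will force $i=4$ by the equivalence established there. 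To construct these arcs, I will exploit the fact that all pairs of h-curves in $Q=S_{1,2}$ with $i=4$ lie in a single $\pmod(Q)$-orbit, so one may fix a standard model in $Q$ and transport the corresponding arcs through $\phi$ by applying (i) together with Lemma \ref{lem-14-hp} to auxiliary hp-curves in $S$ that witness the disjointness and non-isotopy of the transported arcs. The main obstacle will be verifying the two arc conditions (disjoint, non-isotopic, with the correct endpoint pattern on $\partial_1\cup\partial_2$) in $R$ from the geometric data in $Q$, keeping control of the combinatorial position of $\phi(\alpha_1)$ and $\phi(\alpha_2)$ in the Farey graph of $R$.
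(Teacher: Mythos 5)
Your treatment of assertion (i) is fine and matches the paper's (the paper simply cites Lemma \ref{lem-14-hp}; your extra argument that the partition of $\partial R$ must be $\{\partial_1,\partial_2\}$ versus $\{\beta_1,\gamma_1\}$ because the two pants components are glued to each other along the root curve $c$ is a correct fleshing-out). You have also correctly identified the target for (ii): produce two disjoint, non-isotopic defining arcs and invoke Proposition \ref{prop-diag}.

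However, for (ii) the essential step is exactly the one you defer, and your proposed route to it does not work as stated. Transitivity of $\pmod(Q)$ on pairs of h-curves with $i=4$ lets you normalize the configuration in the \emph{source}, but says nothing about where $\phi$ sends it; and $\phi$ acts on curves, not arcs, so there is nothing to ``transport through $\phi$.'' What the paper actually does is choose two specific auxiliary p-curves $\delta_1,\delta_2$ in $S$ (Figure \ref{fig-14}) with $i(\delta_1,\alpha_2)=i(\delta_2,\alpha_1)=i(\delta_1,\delta_2)=0$ and $i(\delta_j,\alpha_j), i(\delta_j,\beta), i(\delta_j,\gamma)\neq 0$, and then extracts the needed arcs as components of $\phi(\delta_1)\cap R$ and $\phi(\delta_2)\cap R$. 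Even granting that choice, one must still argue (a) that $\phi(\delta_j)\cap R$ contains an essential arc meeting $\beta_1\cup\gamma_1$ — the paper does this by analyzing $\phi(\alpha_2)\cap P$ for $P$ the pants component of $R_{\phi(\alpha_1)}$ bounded by $\beta_1$ and $\gamma_1$ — and (b) that these arcs actually run from $\beta_1$ to $\gamma_1$ rather than returning to the same boundary component, which requires ruling out the two ``annulus'' cases; disjointness of $l_1,l_2$ then comes from $i(\delta_1,\delta_2)=0$ and non-isotopy from $i(\phi(\alpha_1),\phi(\alpha_2))\neq 0$. None of this is in your proposal, and the paper's Remark \ref{rem-13} emphasizes that the choice of these auxiliary curves is the crucial (and for $S_{1,3}$ unresolved) point, so it cannot be waved through. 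A small additional inaccuracy: the arcs one obtains this way connect $\beta_1$ and $\gamma_1$, not $\partial_1$ and $\partial_2$ as your endpoint condition suggests, though either pair would serve for Proposition \ref{prop-diag}(b).
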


\begin{proof}
Assertion (i) follows from Lemma \ref{lem-14-hp}.
To prove assertion (ii), we choose the p-curves $\delta_1$, $\delta_2$ in $S$ described in Figure \ref{fig-14}.
We then have
\[i(\delta_1, \alpha_2)=i(\delta_2, \alpha_1)=i(\delta_1, \delta_2)=0,\]
\[i(\delta_j, \alpha_j)\neq 0,\quad i(\delta_j, \beta)\neq 0,\quad i(\delta_j, \gamma)\neq 0\quad \textrm{for\ any\ }j=1, 2.\]

Let $l$ be an essential simple arc in $R$ connecting two points of $\beta_1\cup \gamma_1$. 
We note that up to isotopy, there exists exactly one curve in $R$ disjoint from $l$.
If the curve in $R$ cuts off a pair of pants containing $\partial_1$ and $\partial_2$ from $R$, then either
\begin{enumerate} 
\item[(1)] $l$ is a separating arc in $R$ connecting two points of $\beta_1$ and cutting off from $R$ an annulus containing $\gamma_1$ as its boundary component;
\item[(2)] $l$ is a separating arc in $R$ connecting two points of $\gamma_1$ and cutting off from $R$ an annulus containing $\beta_1$ as its boundary component; or
\item[(3)] $l$ is an arc connecting a point of $\beta_1$ with a point of $\gamma_1$.
\end{enumerate}
We denote by $P$ the component of $R_{\phi(\alpha_1)}$ containing $\beta_1$ and $\gamma_1$ as its boundary components.
Since $P$ is a pair of pants, the intersection $\phi(\alpha_2)\cap P$ consists of mutually isotopic, essential simple arcs in $P$ connecting two points of the component of $\partial P$ corresponding to $\phi(\alpha_1)$.
It follows that $\phi(\delta_2)$ intersects $\phi(\alpha_2)\cap P$ because $\phi(\delta_2)$ is disjoint from $\phi(\alpha_1)$ and intersects $\phi(\alpha_2)$.
The intersection $\phi(\delta_2)\cap R$ is thus non-empty and contains an essential simple arc in $R$, denoted by $l_2$, connecting two points of $\beta_1\cup \gamma_1$.
Similarly, the same property holds for $\phi(\delta_1)\cap R$.
We pick an essential simple arc in $R$ contained in $\phi(\delta_1)\cap R$ and denote it by $l_1$.
\begin{figure}
\begin{center}
\includegraphics[width=6cm]{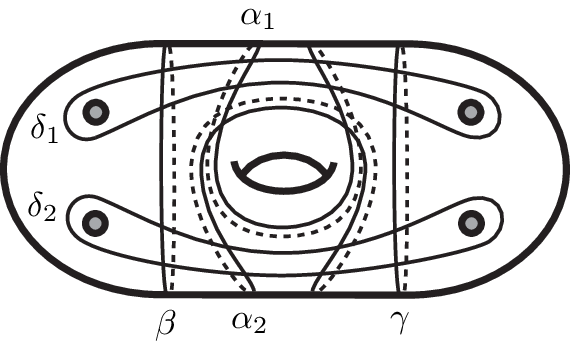}
\caption{}\label{fig-14}
\end{center}
\end{figure}
The existence of $\phi(\alpha_1)$ and $\phi(\alpha_2)$ implies that each of $l_1$ and $l_2$ satisfies one of conditions (1), (2) and (3). 
We note that if an essential simple arc $r_1$ in $R$ satisfying one of (1), (2) and (3) is disjoint from an essential simple arc $r_2$ in $R$ satisfying either (1) or (2), then the curve in $R$ disjoint from $r_1$ and the curve in $R$ disjoint from $r_2$ are isotopic. 
It follows that if there were $j\in \{ 1, 2\}$ such that $l_j$ satisfies either (1) or (2), then we would have the equality $\phi(\alpha_1)=\phi(\alpha_2)$.
This is a contradiction. 
Each of $l_1$ and $l_2$ therefore connects a point of $\beta_1$ with a point of $\gamma_1$.
Since $\phi(\delta_1)$ and $\phi(\delta_2)$ are disjoint, so are $l_1$ and $l_2$.
Since $\phi(\alpha_1)$ and $\phi(\alpha_2)$ intersect, $l_1$ and $l_2$ are not isotopic.
By Proposition \ref{prop-diag}, we have $i(\phi(\alpha_1), \phi(\alpha_2))=4$.
Assertion (ii) follows.
\end{proof}

We define $\cal{D}=\cal{D}(Q)$ and $\cal{E}=\cal{E}(R; \partial_1, \partial_2)$ to be the simplicial graphs introduced in Sections \ref{subsec-d} and \ref{subsec-farey}, respectively.
It follows from Lemma \ref{lem-pre-h4-p4} that $\phi$ induces an injective simplicial map from $\cal{D}$ into $\cal{E}$. 
This contradicts Lemma \ref{lem-d-e}.
We therefore obtain the following:

\begin{lem}\label{lem-14-hhp}
Let $S=S_{1, 4}$ be a surface and $\phi \colon \calc_s(S)\rightarrow \calt(S)$ a superinjective map. 
Choose an h-curve $\alpha$ in $S$ and p-curves $\beta$, $\gamma$ in $S$ such that $\{ \alpha, \beta, \gamma \}$ is a 2-simplex of $\calc_s(S)$. 
Then it is impossible that $\phi(\alpha)$ is a p-curve in $S$ and each of $\phi(\beta)$ and $\phi(\gamma)$ is a p-BP in $S$.
\end{lem}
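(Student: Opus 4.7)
The plan is to argue by contradiction, assuming that $\phi(\alpha)$ is a p-curve in $S$ and that both $\phi(\beta)$ and $\phi(\gamma)$ are p-BPs in $S$, and then to exploit the setup and notation introduced just before Lemma \ref{lem-pre-h4-p4}. Let $Q$ be the $S_{1,2}$ component of $S_{\{\beta,\gamma\}}$, let $\partial_1,\partial_2$ be the two components of $\partial S$ not contained in the pairs of pants cut off by $\phi(\beta)$ and $\phi(\gamma)$, and let $R$ be the component of $S_{\{\phi(\beta),\phi(\gamma)\}}$ containing $\partial_1$ and $\partial_2$. Under the assumed configuration $R$ is homeomorphic to $S_{0,4}$, so the graphs $\cal{D}=\cal{D}(Q)$ and $\cal{E}=\cal{E}(R;\partial_1,\partial_2)$ from Sections \ref{subsec-d} and \ref{subsec-farey} are both defined.

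First I would observe that $V_s(Q)$ is exactly the set of h-curves in $Q$, so the vertex set of $\cal{D}$ consists of h-curves in $Q$. By Lemma \ref{lem-pre-h4-p4}(i), for every such $a$ the image $\phi(a)$ is a curve in $R$ cutting off a pair of pants containing $\partial_1$ and $\partial_2$, i.e.\ a vertex of $\cal{E}$. This gives a map on vertex sets $V(\cal{D})\to V(\cal{E})$ induced by $\phi$, which is injective because $\phi$ is. Next I would use Lemma \ref{lem-pre-h4-p4}(ii): if $\{a_1,a_2\}$ is an edge of $\cal{D}$, meaning $i(a_1,a_2)=4$, then $i(\phi(a_1),\phi(a_2))=4$, and this is precisely the adjacency condition defining the edges of $\cal{E}$. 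Hence $\phi$ induces an injective simplicial map $\cal{D}\to\cal{E}$.

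Finally, this directly contradicts Lemma \ref{lem-d-e}, which asserts that no injective simplicial map $\cal{D}\to\cal{E}$ exists (the underlying reason being that $\cal{D}$ contains a triangle, as visible from Figure \ref{fig-tri}, while $\cal{E}$ is a tree by Proposition \ref{prop-e-tree}). So the contradiction hypothesis is untenable, proving the lemma.

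The heart of the argument has already been carried out in Lemma \ref{lem-pre-h4-p4}, so the present lemma reduces to packaging that into a simplicial map. The only place I would have to be careful is to verify that the assumed form of $\phi(\alpha),\phi(\beta),\phi(\gamma)$ is exactly what is needed to invoke Lemma \ref{lem-pre-h4-p4} — in particular to guarantee that $R$ really is an $S_{0,4}$ with the prescribed boundary labelling — but this is read off directly from Figure \ref{fig-14-htop} and Lemma \ref{lem-14-hp}. The main conceptual obstacle, namely producing enough rigidity from superinjectivity to force $i(\phi(a_1),\phi(a_2))=4$ when $i(a_1,a_2)=4$, was the content of the preceding lemma and does not need to be revisited here.
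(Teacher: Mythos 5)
Your proposal matches the paper's argument essentially verbatim: the paper proves this lemma precisely by assuming the forbidden configuration, invoking Lemma \ref{lem-pre-h4-p4} to show that $\phi$ induces an injective simplicial map from $\cal{D}(Q)$ into $\cal{E}(R;\partial_1,\partial_2)$, and contradicting Lemma \ref{lem-d-e}. The argument is correct as written.
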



\subsection{The case where $\phi(\beta)$ is a p-curve in $S$}\label{subsec-beta-p}

We fix two disjoint and distinct p-curves $\beta$, $\gamma$ in $S$. 
Throughout this subsection, we deduce a contradiction on the assumption that $\phi(\beta)$ is a p-curve in $S$ and that $\phi(\gamma)$ is a p-BP in $S$.
By Lemma \ref{lem-14-hp}, $\phi$ sends each h-curve in $S$ disjoint from $\beta$ and $\gamma$ to a p-BP in $S$ containing a curve of $\phi(\gamma)$. 
Let us introduce symbols employed in this subsection.

Let $\gamma_1$ and $\gamma_2$ denote the two curves of $\phi(\gamma)$. 
We denote by $Q$ the component of $S_{\{ \beta, \gamma \}}$ homeomorphic to $S_{1, 2}$ and denote by $R$ the component of $S_{\{ \phi(\beta), \phi(\gamma)\}}$ homeomorphic to $S_{0, 4}$. 
Let $\partial$ denote the component of $\partial S$ contained in $R$ (see Figure \ref{fig-14-ptop}).
For each h-curve $\alpha$ in $S$ disjoint from $\beta$ and $\gamma$, we define $c(\alpha)\in V(R)$ to be the curve of the p-BP $\phi(\alpha)$ that is not contained in $\phi(\gamma)$.
We define $\cal{F}$ as the simplicial graph $\cal{F}(R)$ introduced in Section \ref{subsec-farey}.

\begin{lem}\label{lem-pre-h2-diag}
In the above notation, the following assertions hold:
\begin{enumerate}
\item For each h-curve $\alpha$ in $Q$, the two components of $\partial R$ corresponding to $\gamma_1$ and $\gamma_2$ are contained in distinct components of $R_{c(\alpha)}$.
\item If $\alpha_1$ and $\alpha_2$ are h-curves in $Q$ with $i(\alpha_1, \alpha_2)=4$, then either we have $i(c(\alpha_1), c(\alpha_2))=2$ or $c(\alpha_1)$ and $c(\alpha_2)$ lie in a diagonal position of two adjacent triangles in $\cal{F}$.
\end{enumerate}
\end{lem}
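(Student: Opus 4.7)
The topology of $R = S_{0,4}$ drives both parts. Its four boundary circles are $\{\partial, \partial_\beta, \gamma_1, \gamma_2\}$, where $\partial_\beta$ is the boundary of $R$ arising from the p-curve $\phi(\beta)$.

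For part (i), the three possible topological types for $c(\alpha)$ in $R$ correspond to the three partitions of $\partial R$ into pairs: (P1) $\{\gamma_1,\gamma_2\}$ versus $\{\partial,\partial_\beta\}$; (P2) $\{\gamma_1,\partial\}$ versus $\{\gamma_2,\partial_\beta\}$; (P3) $\{\gamma_1,\partial_\beta\}$ versus $\{\gamma_2,\partial\}$. The assertion reduces to ruling out (P1). To do so, I reconstruct $S$ from the pants decomposition obtained by cutting along $\phi(\beta)\cup\phi(\gamma)\cup c(\alpha)$: the two pairs of pants composing $R_{c(\alpha)}$ must be glued along $\partial_\beta$ to the pants cut off by $\phi(\beta)$ from $S$ (which carries two components of $\partial S$) and along $\gamma_1,\gamma_2$ to the pants cut off by $\phi(\gamma)$ from $S$ (which carries the remaining component of $\partial S$). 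In type (P1), gluing the $\{c(\alpha),\partial,\partial_\beta\}$-pants to the $\phi(\beta)$-pants along $\partial_\beta$ yields a four-holed sphere, while gluing the $\{c(\alpha),\gamma_1,\gamma_2\}$-pants to the $\phi(\gamma)$-pants along $\gamma_1$ and $\gamma_2$ yields a twice-holed torus. Hence $c(\alpha)$ would be separating in $S$, contradicting that it is one component of the bounding pair $\phi(\alpha) = \{\gamma_i, c(\alpha)\}$. Only (P2) and (P3) remain, and in both $c(\alpha)$ separates $\gamma_1$ from $\gamma_2$ in $R$.

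For part (ii), I first observe $i(\phi(\alpha_1),\phi(\alpha_2)) = i(c(\alpha_1),c(\alpha_2))$: writing $\phi(\alpha_j) = \{\gamma_{i_j}, c(\alpha_j)\}$, the cross terms $i(\gamma_{i_j}, c(\alpha_k))$ vanish since $\phi(\alpha_k)$ and $\phi(\gamma)$ are disjoint simplices, and $i(\gamma_{i_1}, \gamma_{i_2}) = 0$ because the two curves of $\phi(\gamma)$ are disjoint. Superinjectivity, together with $i(\alpha_1,\alpha_2) = 4$, forces $i(c(\alpha_1),c(\alpha_2)) > 0$. By part (i) each $c(\alpha_j)$ has type (P2) or (P3). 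When the two types differ, the partitions of $\partial R$ differ, so the two curves are joined by an edge in the Farey graph $\cal{F}$ and $i(c(\alpha_1),c(\alpha_2)) = 2$. When the types agree, $c(\alpha_1) = c(\alpha_2)$ is excluded: otherwise injectivity of $\phi$ would force $i_1 \neq i_2$, giving $i(\phi(\alpha_1),\phi(\alpha_2)) = 0$ and contradicting the positivity just shown. Hence $c(\alpha_1)$ and $c(\alpha_2)$ are non-isotopic of the same type, and it remains to show they lie in a diagonal position, equivalently $i(c(\alpha_1),c(\alpha_2)) = 4$ by Proposition \ref{prop-diag}.

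This last step is where I expect the main obstacle. The plan is to mirror the arc analysis in the proof of Lemma \ref{lem-pre-h4-p4}(ii): choose p-curves $\delta_1, \delta_2$ in $S$ as in Figure \ref{fig-14} satisfying $i(\delta_1,\alpha_2) = i(\delta_2,\alpha_1) = i(\delta_1,\delta_2) = 0$ together with $i(\delta_j,\alpha_j) \neq 0$, $i(\delta_j,\beta) \neq 0$, and $i(\delta_j,\gamma) \neq 0$. For each $j$, the intersection $\phi(\delta_j) \cap R$ must contain an essential simple arc $l_j$ in $R$. Disjointness of $\phi(\delta_j)$ from $\phi(\alpha_j)$ and from the curves of $\phi(\gamma)$, combined with the requirement that $\phi(\delta_j)$ meet $\phi(\alpha_j)$, should force $l_j$ to be (up to isotopy) a defining arc of the p-curve $c(\alpha_j)$ in $R$ on the $\gamma_{i_j}$-side. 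The conditions $i(\delta_1,\delta_2) = 0$ and the non-isotopy of $c(\alpha_1), c(\alpha_2)$ then furnish disjoint, non-isotopic defining arcs of $c(\alpha_1)$ and $c(\alpha_2)$, whereupon Proposition \ref{prop-diag} delivers $i(c(\alpha_1),c(\alpha_2)) = 4$.
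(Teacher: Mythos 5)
Your part (i) is correct and is exactly the paper's (very terse) argument spelled out: the only partition of $\partial R$ under which $c(\alpha)$ fails to separate $\gamma_1$ from $\gamma_2$ is $\{\gamma_1,\gamma_2\}$ versus $\{\partial,\partial_\beta\}$, and that would make $c(\alpha)$ separating in $S$, contradicting that it belongs to the bounding pair $\phi(\alpha)$. The opening reduction of part (ii) — $i(\phi(\alpha_1),\phi(\alpha_2))=i(c(\alpha_1),c(\alpha_2))>0$, hence $c(\alpha_1)\neq c(\alpha_2)$ — is also fine.

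The gap is in your different-types case. The assertion that two curves in $R=S_{0,4}$ inducing different partitions of $\partial R$ are necessarily joined by an edge of $\cal{F}$, i.e.\ have geometric intersection number $2$, is false. Under the standard identification of $V(R)$ with $\mathbb{Q}\cup\{\infty\}$ one has $i(p/q,r/s)=2|ps-qr|$ and the topological type of a curve is the residue of its slope mod $2$; adjacent Farey vertices do have different types, but the converse fails — the slopes $0/1$ and $3/2$ have different types and intersection number $6$. So in that case you have shown only $i(c(\alpha_1),c(\alpha_2))\geq 2$, which does not prove the lemma. The paper does not split into cases by type at all: the arc analysis you reserve for the same-type case is run uniformly, and it is the \emph{disjointness} of the two arcs produced from $\phi(\delta_1)$ and $\phi(\delta_2)$ — not the difference of types — that forces $i=2$ when their endpoints away from $\phi(\beta)$ lie on distinct components of $\gamma_1\cup\gamma_2$, while Proposition \ref{prop-diag} gives the diagonal position when they lie on the same component. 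When you execute that analysis, two further points need care: the index is reversed from what you wrote (since $i(\delta_2,\alpha_1)=0$, it is the arc coming from $\phi(\delta_2)$ that determines $c(\alpha_1)$); and a component $l_j$ of $\phi(\delta_j)\cap R$ meeting $\phi(\beta)$ may have both endpoints on $\phi(\beta)$, in which case it is not a defining arc and must be replaced by a disjoint arc running into the annulus it cuts off — one then uses part (i) to rule out that this annulus contains $\partial$ rather than one of $\gamma_1,\gamma_2$.
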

 
\begin{proof}
Assertion (i) follows because for each h-curve $\alpha$ in $Q$, the curve $c(\alpha)$ is non-separating in $S$.
We prove assertion (ii).
Pick two h-curves $\alpha_1$, $\alpha_2$ in $Q$ with $i(\alpha_1, \alpha_2)=4$. 
As in the proof of Lemma \ref{lem-pre-h4-p4}, we choose the p-curves $\delta_1$, $\delta_2$ in $S$ described in Figure \ref{fig-14}.
We then have
\[i(\delta_1, \alpha_2)=i(\delta_2, \alpha_1)=i(\delta_1, \delta_2)=0,\]
\[i(\delta_j, \alpha_j)\neq 0,\quad i(\delta_j, \beta)\neq 0,\quad i(\delta_j, \gamma)\neq 0\quad \textrm{for\ any\ }j=1, 2.\]
For each $j=1, 2$, we choose a component $l_j$ of $\phi(\delta_j)\cap R$ at least one of whose end points lies in $\phi(\beta)$. 
The arc $l_j$ is simple and essential in $R$.
We list several properties of $l_1$ and $l_2$ below.
\begin{itemize}
\item Each $l_j$ connects a point of $\phi(\beta)$ with a point of $\phi(\beta)\cup \gamma_1\cup \gamma_2$. 
For each $j$ mod $2$, up to isotopy, $c(\alpha_j)$ is a unique curve in $R$ disjoint from $l_{j+1}$.
\item The arcs $l_1$ and $l_2$ are not isotopic because otherwise $c(\alpha_1)$ and $c(\alpha_2)$ would be isotopic, and this would contradict $i(\phi(\alpha_1), \phi(\alpha_2))\neq 0$.
\item The arcs $l_1$ and $l_2$ are disjoint because $\phi(\delta_1)$ and $\phi(\delta_2)$ are disjoint.
\item For each $j$ mod $2$, if both end points of $l_j$ lie in $\phi(\beta)$, then $l_j$ cuts off an annulus containing exactly one of $\gamma_1$ and $\gamma_2$. 
For otherwise $l_j$ would cut off an annulus containing $\partial$, and $c(\alpha_{j+1})$ would be separating in $S$.
\end{itemize}
For each $j=1, 2$, if $l_j$ connects a point of $\phi(\beta)$ with a point of $\gamma_1\cup \gamma_2$, then we set $r_j=l_j$. 
If $l_j$ connects two points of $\phi(\beta)$, then we define $r_j$ to be an essential simple arc in $R$ disjoint from $l_j$ and connecting a point of $\phi(\beta)$ with a point of the component of $\gamma_1\cup \gamma_2$ contained in the annulus cut off by $l_j$ from $R$. 
This arc is uniquely determined up to isotopy.
\begin{figure}
\begin{center}
\includegraphics[width=11cm]{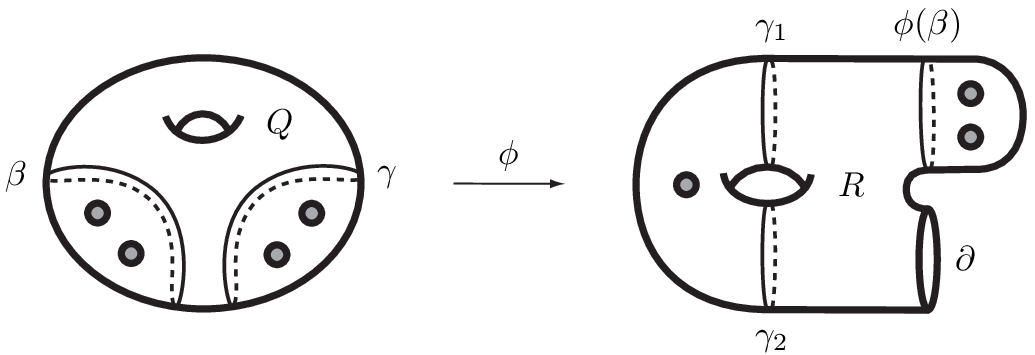}
\caption{}\label{fig-14-ptop}
\end{center}
\end{figure}

Let us say that two essential simple arcs $s_1$ and $s_2$ in $R$ are {\it disjoint} if $s_1$ and $s_2$ can be isotoped so that they are disjoint.
For each essential simple arc $s$ in $R$, if $s$ is disjoint from $l_j$, then $s$ is disjoint from $r_j$. 
It follows that $r_1$ and $r_2$ are disjoint.

If the end points of $r_1$ and $r_2$ that are not in $\phi(\beta)$ lie in distinct components of $\gamma_1\cup \gamma_2$, then the equality $i(c(\alpha_1), c(\alpha_2))=2$ follows. 
If all of those points lie in $\gamma_k$ for some $k\in \{ 1, 2\}$, then $c(\alpha_1)$ and $c(\alpha_2)$ lie in a diagonal position of two adjacent triangles in $\cal{F}$ by Proposition \ref{prop-diag}.
Assertion (ii) is proved.
\end{proof}

We now deduce a contradiction.
Let $\cal{D}=\cal{D}(Q)$ be the graph defined in Section \ref{subsec-d}.
Let $\cal{H}=\cal{H}(R; \gamma_1, \gamma_2)$ be the graph defined in Section \ref{subsec-farey}.
If $\alpha_1$ and $\alpha_2$ are distinct h-curves in $Q$, then we have $i(\alpha_1, \alpha_2)\neq 0$ and thus $i(\phi(\alpha_1), \phi(\alpha_2))\neq 0$. 
Since each of $\phi(\alpha_1)$ and $\phi(\alpha_2)$ is a p-BP in $S$ containing a curve in $\phi(\gamma)$, we have $i(c(\alpha_1), c(\alpha_2))\neq 0$.
In particular, we have $c(\alpha_1)\neq c(\alpha_2)$. 
The map $c$ from the set of h-curves in $Q$ into the set of curves in $R$ is therefore injective and induces a simplicial map from $\cal{D}$ into $\cal{H}$ by Lemma \ref{lem-pre-h2-diag}.
This contradicts Lemma \ref{lem-d-h}.
We thus obtain the following:

\begin{lem}\label{lem-14-php}
Let $S=S_{1, 4}$ be a surface and $\phi \colon \calc_s(S)\rightarrow \calt(S)$ a superinjective map. 
Choose an h-curve $\alpha$ in $S$ and p-curves $\beta$, $\gamma$ in $S$ such that $\{ \alpha, \beta, \gamma \}$ is a 2-simplex of $\calc_s(S)$. 
Then it is impossible that $\phi(\beta)$ is a p-curve in $S$ and each of $\phi(\alpha)$ and $\phi(\gamma)$ is a p-BP in $S$.
\end{lem}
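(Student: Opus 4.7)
The strategy mirrors the proof of Lemma \ref{lem-14-hhp}: build an injective simplicial map between two graphs from Section \ref{sec-simp} and invoke one of the non-existence results of Section \ref{subsec-inj}. Writing $Q$ for the $S_{1,2}$-component of $S_{\{\beta,\gamma\}}$, $R$ for the $S_{0,4}$-component of $S_{\{\phi(\beta),\phi(\gamma)\}}$, and $\phi(\gamma)=\{\gamma_1,\gamma_2\}$, I will produce an injective simplicial map $\cal{D}(Q)\to \cal{H}(R;\gamma_1,\gamma_2)$, contradicting Lemma \ref{lem-d-h}.

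The first step is the vertex assignment. For each h-curve $\alpha$ in $Q$, Lemma \ref{lem-14-hp} ensures $\phi(\alpha)$ is a p-BP containing a curve of $\phi(\gamma)$; let $c(\alpha)\in V(R)$ denote the remaining curve. Lemma \ref{lem-pre-h2-diag}(i) guarantees that $c(\alpha)$ is a vertex of $\cal{H}(R;\gamma_1,\gamma_2)$. For injectivity, if $\alpha_1\neq \alpha_2$ then $i(\alpha_1,\alpha_2)\neq 0$, and superinjectivity of $\phi$ forces $i(\phi(\alpha_1),\phi(\alpha_2))\neq 0$; since both p-BPs share a curve from $\phi(\gamma)$, the remaining curves $c(\alpha_1)$ and $c(\alpha_2)$ must intersect in $R$, hence are distinct.

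The final step is to check that edges of $\cal{D}(Q)$ are sent to edges of $\cal{H}(R;\gamma_1,\gamma_2)$: this is precisely the content of Lemma \ref{lem-pre-h2-diag}(ii), which asserts that $i(\alpha_1,\alpha_2)=4$ forces either $i(c(\alpha_1),c(\alpha_2))=2$ or diagonal position of $c(\alpha_1),c(\alpha_2)$ in $\cal{F}(R)$, exactly the two possibilities defining adjacency in $\cal{H}$. Applying Lemma \ref{lem-d-h} to the resulting map then yields the desired contradiction. The substantive geometric work is absorbed into Lemma \ref{lem-pre-h2-diag}(ii), whose proof requires a careful arc-analysis inside $R$ using the auxiliary p-curves $\delta_1,\delta_2$ of Figure \ref{fig-14}; given that ingredient, the proof of Lemma \ref{lem-14-php} is essentially a formal packaging, and the main obstacle lies upstream rather than in this final step.
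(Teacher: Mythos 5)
Your proposal is correct and follows exactly the paper's own argument: the paper likewise uses Lemma \ref{lem-14-hp} to define the assignment $\alpha\mapsto c(\alpha)$, deduces injectivity from superinjectivity plus the fact that distinct h-curves in $Q$ intersect while their image p-BPs share a curve of $\phi(\gamma)$, and then cites Lemma \ref{lem-pre-h2-diag} to get a simplicial map $\cal{D}(Q)\to\cal{H}(R;\gamma_1,\gamma_2)$ contradicting Lemma \ref{lem-d-h}. No differences of substance.
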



\subsection{Conclusion}\label{subsec-conclusion}

We conclude the following:

\begin{thm}\label{thm-14}
Let $S=S_{1, 4}$ be a surface and $\phi \colon \calc_s(S)\rightarrow \calt(S)$ a superinjective map. 
Then the inclusion $\phi(\calc_s(S))\subset \calc_s(S)$ holds.
\end{thm}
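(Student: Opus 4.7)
The plan is to exploit the three prior lemmas to first show that $\phi$ preserves hp-curves, and then eliminate the BP case for non-hp separating curves by a direct Euler-characteristic obstruction. Combined with Proposition \ref{prop-chi-pre}, which guarantees that $\phi$ is $\chi$-preserving, this will give the inclusion $\phi(\calc_s(S))\subset \calc_s(S)$.

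First I would show that $\phi$ sends each hp-curve to an hp-curve. Given any hp-curve $\alpha$, I construct a $2$-simplex $\{\alpha_1,\alpha_2,\alpha_3\}$ of $\calc_s(S)$ containing $\alpha$ and consisting of one h-curve and two p-curves whose complement in $S$ consists of one handle and three pairs of pants with dual graph a star (the handle attached as a leaf to the central pants). Such a simplex exists for every hp-curve: if $\alpha$ is a p-curve, use it to cut off a pants containing two components of $\partial S$, then cut off a pants containing the remaining two components of $\partial S$, then cut off a handle from the resulting $S_{1,2}$; if $\alpha$ is an h-curve, choose the pants decomposition of $S\setminus H\cong S_{0,5}$ whose middle pants has $\alpha$ as a boundary. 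Granted this, suppose toward a contradiction that $\phi(\alpha)$ is a p-BP. By Proposition \ref{prop-chi-pre}, each $\phi(\alpha_i)$ is either an hp-curve or a p-BP, so Lemma \ref{lem-14-hp} forces exactly one of $\phi(\alpha_1),\phi(\alpha_2),\phi(\alpha_3)$ to be a p-curve in $S$ with the other two p-BPs. If the p-curve image arises from the h-curve $\alpha_i$, this contradicts Lemma \ref{lem-14-hhp}; if it arises from one of the two p-curves $\alpha_i$, this contradicts Lemma \ref{lem-14-php}.

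Next let $\alpha$ be a separating curve that is not an hp-curve. By Euler-characteristic counting the only possibility is $S_\alpha=Q_1\sqcup Q_2$ with $Q_1\cong S_{0,4}$ and $Q_2\cong S_{1,2}$, and $\chi$-preservation forces $\phi_\alpha(Q_1)\neq\phi_\alpha(Q_2)$ with $\chi(\phi_\alpha(Q_j))=-2$ for $j=1,2$. If $\phi(\alpha)$ were an hp-curve, the components of $S_{\phi(\alpha)}$ would have characteristics $-1$ and $-3$; the only alternative to $\phi(\alpha)$ being separating is that $\phi(\alpha)$ is a BP, and matching Euler characteristics then forces it to be a non-p-BP with both components of $S_{\phi(\alpha)}$ homeomorphic to $S_{0,4}$, each carrying two boundaries from $\phi(\alpha)$ and two from $\partial S$. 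Now choose an h-curve $h$ of $S$ contained in $Q_2$ (which exists since $Q_2=S_{1,2}$ has a handle separator whose complement in $S$ is the connected $S_{0,5}$). By the previous step $\phi(h)$ is an h-curve of $S$ disjoint from $\phi(\alpha)$, hence lies in one of the two $S_{0,4}$ complementary components. A pairing analysis of the four boundaries of this $S_{0,4}$ shows that any essential simple closed curve in it is either a p-curve of $S$ (when it separates the two $\phi(\alpha)$-boundaries from the two $\partial S$-boundaries, so that the other side is a pants in $S$) or non-separating in $S$ (when it pairs them in the mixed way, in which case both sides of the curve re-connect through the other $S_{0,4}$ component); in no case is it an h-curve of $S$. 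This contradicts $\phi(h)$ being an h-curve.

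The main obstacle is the BP case of the second step, but the genus-zero nature of both components of $S_{\phi(\alpha)}$ in that case makes it impossible to host an h-curve of $S$ disjoint from $\phi(\alpha)$, which yields the contradiction immediately. The remaining work is a straightforward assembly of Proposition \ref{prop-chi-pre} together with Lemmas \ref{lem-14-hp}, \ref{lem-14-hhp} and \ref{lem-14-php}, plus the elementary Euler-characteristic bookkeeping specific to $S=S_{1,4}$.
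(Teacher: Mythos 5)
Your first step (hp-curves go to hp-curves) is correct and is exactly how the paper begins: the $2$-simplex of one h-curve and two p-curves, combined with Lemmas \ref{lem-14-hp}, \ref{lem-14-hhp} and \ref{lem-14-php}, rules out a p-BP image. But your second step contains a genuine gap. Having shown only that $\phi$ preserves the class of hp-curves, you then write ``by the previous step $\phi(h)$ is an h-curve of $S$'' for an h-curve $h$ in $Q_2$. The previous step does not give this: it leaves open the possibility that $\phi$ sends the h-curve $h$ to a \emph{p-curve} (and, correspondingly, sends some p-curve in the same maximal simplex to an h-curve). No counting or Euler-characteristic argument excludes this swap, since a maximal simplex of hp-curves in $S_{1,4}$ and its image both consist of one h-curve and two p-curves either way. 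If $\phi(h)$ is merely a p-curve, your own case analysis of the $S_{0,4}$ components of $S_{\phi(\alpha)}$ shows there is plenty of room for it, and no contradiction results.

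Ruling out ``h-curve $\mapsto$ p-curve'' is in fact the main content of the paper's proof of this theorem, and it is not elementary. Assuming an h-curve $\alpha$ has $\phi(\alpha)$ a p-curve, one completes $\alpha$ to a $2$-simplex $\{\alpha,\beta,\gamma\}$ with $\beta$, $\gamma$ p-curves, notes that then one of $\phi(\beta),\phi(\gamma)$ is an h-curve and the other a p-curve, and shows (by rerunning the argument of Lemma \ref{lem-pre-h4-p4}(ii) with the curves $\delta_1,\delta_2$ of Figure \ref{fig-14}) that $\phi$ induces an injective simplicial map from the graph $\cal{D}(Q)$ of h-curves in the $S_{1,2}$-component $Q$ of $S_{\{\beta,\gamma\}}$ into the graph $\cal{E}(R;\partial_1,\partial_2)$ associated to the $S_{0,4}$-component $R$ of $S_{\{\phi(\beta),\phi(\gamma)\}}$. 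Since $\cal{D}$ contains triangles while $\cal{E}$ is a tree (Proposition \ref{prop-e-tree}), Lemma \ref{lem-d-e} gives the contradiction. Your proposal would be complete once this step is supplied; the final reduction (a non-hp separating curve cannot map to a BP because its image is disjoint from the image of a disjoint h-curve, and in a genus-one surface no BP is disjoint from an h-curve) then goes through as you describe.
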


\begin{proof}
As discussed in the beginning of this section, Lemmas \ref{lem-14-hp}, \ref{lem-14-hhp} and \ref{lem-14-php} imply that $\phi$ preserves hp-curve in $S$. 
Once $\phi$ is shown to preserve h-curves in $S$, it turns out that for any separating curve $a$ in $S$ that is not an hp-curve in $S$, $\phi(a)$ is not a BP in $S$.
For there exists an h-curve $b$ in $S$ disjoint from $a$, and $\phi(a)$ is disjoint from the h-curve $\phi(b)$ in $S$.
The theorem thus follows.

Assuming that there exists an h-curve $\alpha$ in $S$ with $\phi(\alpha)$ a p-curve in $S$, we deduce a contradiction. 
Choose p-curves $\beta$, $\gamma$ in $S$ such that $\{ \alpha, \beta, \gamma \}$ is a 2-simplex of $\calc_s(S)$. 
We note that one of $\phi(\beta)$ and $\phi(\gamma)$ is an h-curve in $S$ and another is a p-curve in $S$. 
Let $Q$ denote the component of $S_{\{ \beta, \gamma \}}$ homeomorphic to $S_{1, 2}$.
Let $R$ denote the component of $S_{\{ \phi(\beta), \phi(\gamma)\}}$ homeomorphic to $S_{0, 4}$. 
We denote by $\partial_1$ and $\partial_2$ the two components of $\partial S$ contained in $R$.
For each h-curve $a$ in $Q$, $\phi(a)$ is a curve in $R$ cutting off a pair of pants containing $\partial_1$ and $\partial_2$ from $R$.

In the proof of Lemma \ref{lem-pre-h4-p4} (ii), after replacing $\beta_1$ and $\gamma_1$ with $\phi(\beta)$ and $\phi(\gamma)$, respectively, a verbatim argument shows that for any two h-curves $\alpha_1$, $\alpha_2$ in $Q$ with $i(\alpha_1, \alpha_2)=4$, we have $i(\phi(\alpha_1), \phi(\alpha_2))=4$.
It follows that $\phi$ induces an injective simplicial map from the graph $\cal{D}(Q)$ into the graph $\cal{E}(R; \partial_1, \partial_2)$.
These two graphs are defined in Sections \ref{subsec-d} and \ref{subsec-farey}, respectively.
Since such an injective simplicial map does not exist by Lemma \ref{lem-d-e}, we obtain a contradiction.
\end{proof}


\section{The other surfaces}\label{sec-other}

In this section, we discuss the remainder of surfaces.
Let $S$ be a surface and pick two distinct components $\partial_1$, $\partial_2$ of $\partial S$.
For each vertex $a$ of $\calt(S)$, we say that $a$ {\it separates $\partial_1$ and $\partial_2$} if $\partial_1$ and $\partial_2$ are contained in distinct components of $S_a$.

\begin{thm}\label{thm-other}
Let $S=S_{g, p}$ be a surface and assume one of the following three conditions: $g\geq 3$ and $p\geq 1$; $g=2$ and $p\geq 2$; or $g=1$ and $p\geq 4$.
If $\phi \colon \calc_s(S)\rightarrow \calt(S)$ is a superinjective map, then $\phi$ is induced by an element of $\mod^*(S)$.
\end{thm}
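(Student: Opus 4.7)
The plan is to reduce Theorem \ref{thm-other} to Theorem \ref{thm-cs} by showing the inclusion $\phi(V_s(S))\subset V_s(S)$. Granting this inclusion, $\phi$ restricts to a simplicial self-map of $\calc_s(S)$ that remains superinjective, since adjacency in $\calt(S)$ between two vertices of $V_s(S)$ coincides with adjacency in $\calc_s(S)$; Theorem \ref{thm-cs} then yields the element of $\mod^*(S)$ inducing $\phi$.

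I would establish the inclusion by induction on $p$, keeping $g$ fixed. The base cases are $(g,p)=(g,1)$ for $g\geq 3$ (Theorem \ref{thm-31}), $(2,2)$ (Theorem \ref{thm-22}), and $(1,4)$ (Theorem \ref{thm-14}); in every inductive step, $S_{g,p-1}$ still lies within the hypothesis of the theorem. Given $\alpha\in V_s(S)$, I would choose a p-curve $\delta$ in $S$, disjoint from $\alpha$, cutting off a pair of pants containing two boundary components of $S$, and with $\alpha$ essential in the complementary component $Q\cong S_{g,p-1}$ of $S_\delta$. The heart of the inductive step is to show that $\phi(\delta)$ is itself a p-curve in $S$ whose non-pair-of-pants complementary component $R=\phi_\delta(Q)$ is homeomorphic to $S_{g,p-1}$. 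Granting this, the restriction of $\phi$ to curves supported in $Q$ defines a superinjective map $\psi\colon\calc_s(Q)\to\calt(R)$; identifying $Q\cong R$, the inductive hypothesis gives $\psi(V_s(Q))\subset V_s(R)$, and hence $\phi(\alpha)=\psi(\alpha)\in V_s(R)\subset V_s(S)$.

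The principal obstacle is the verification that $\phi(\delta)$ has this required form. By Proposition \ref{prop-chi-pre}, $\phi(\delta)$ is either an hp-curve or a p-BP, and $\chi$-preservation forces $|\chi(R)|=|\chi(Q)|=2g+p-3$, leaving three cases: $\phi(\delta)$ is a p-curve with $R\cong S_{g,p-1}$ (the desired case), an h-curve with $R\cong S_{g-1,p+1}$, or a p-BP. To eliminate the p-BP case, I would apply Lemmas \ref{lem-hp-number} and \ref{lem-pants-sep} inside a maximal simplex of $\calc_s(S)$ generated by $\delta$ together with many disjoint hp-curves supported in $Q$: a pigeonhole count on weakly rooted simplices, combined with the sharing-pair analysis inherited from Sections \ref{sec-p1}--\ref{sec-14}, forces $\phi(\delta)$ to be an hp-curve. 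To eliminate the h-curve case and pin down the topological type of $R$, I would introduce an auxiliary disjoint separating curve $\delta'$ in $S$ (a second p-curve when $p$ is large enough to accommodate one, and a suitably chosen h-curve otherwise) and apply the inductive hypothesis to the relevant component of $S_{\delta'}$; the constraint on the image of $\delta$ inside the corresponding component of $S_{\phi(\delta')}$ then rules out the h-curve alternative and forces $R\cong S_{g,p-1}$. The hypothesis $|\chi(S)|\geq 4$ together with the numerical lower bounds on $p$ in the three ranges are precisely what provides the room needed for these auxiliary constructions.
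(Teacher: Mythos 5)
Your overall architecture (induction on $p$ from the base cases of Theorems \ref{thm-31}, \ref{thm-22} and \ref{thm-14}, followed by a reduction to Theorem \ref{thm-cs}) matches the paper, but the inductive step as you describe it has two genuine gaps. First, the auxiliary p-curve $\delta$ you need does not always exist: if $S=S_{g,2}$ with $g\geq 3$ and $\alpha$ separates the two boundary components of $S$ (an ``odd'' curve in the paper's terminology), then every p-curve in $S$ cuts off a pair of pants containing both boundary components and hence intersects $\alpha$; a similar failure occurs when $\alpha$ is itself a p-curve in $S_{g,2}$. Your induction therefore cannot even begin for these curves. The paper handles exactly this obstruction with a separate argument for $S_{g,2}$ (gluing two boundary components of the complement of an h-curve to produce a copy of $S_{g,1}$ and invoking Theorem \ref{thm-31}) and with the even/odd dichotomy of Lemmas \ref{lem-even} and \ref{lem-odd}, which treat general separating curves by counting disjoint hp-curves rather than by restriction to a subsurface; neither of these is a routine variant of your step.

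Second, and more seriously, the step you yourself flag as the principal obstacle --- that $\phi(\delta)$ is a p-curve whose non-pants complementary component is homeomorphic to $S_{g,p-1}$ --- is not established by the tools you cite. Lemma \ref{lem-hp-number} counts hp-curves and p-BPs together, so no pigeonhole argument on weakly rooted simplices can separate the p-curve alternative from the p-BP alternative; and the sharing-pair graphs $\cal{B}$, $\cal{G}$, $\cal{D}$, $\cal{E}$, $\cal{H}$ of Section \ref{sec-simp} are defined only for $S_{2,1}$, $S_{1,3}$, $S_{1,2}$ and $S_{0,4}$ and do not transport to larger surfaces. The paper's actual mechanism (Lemma \ref{lem-p-hp}) is different: assuming $\phi(\delta)$ is a p-BP, one attaches a disk to the boundary component it cuts off, so that the two curves of $\phi(\delta)$ become a single non-separating curve $c$ in the capped surface $R\cong S_{g,p-1}$, obtains a superinjective map $\calc_s(Q)\rightarrow \calt(R)$, and applies the inductive hypothesis; the contradiction is that the image of this map avoids $c$, which is impossible for a map induced by a homeomorphism. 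A companion case analysis (Lemma \ref{lem-hp-hp}) is then still needed to rule out the h-curve alternative for h-curves. As written, your proposal assumes the hard part rather than proving it.
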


\begin{proof}
We prove the theorem by induction on $p$. 
In the case where $S$ is either $S_{g, 1}$ with $g\geq 3$, $S_{2, 2}$ or $S_{1, 4}$, the conclusion follows from Theorems \ref{thm-cs}, \ref{thm-31}, \ref{thm-22} and \ref{thm-14}.
In what follows, we assume that $S$ is distinct from these surfaces.

\begin{lem}\label{lem-p-hp}
The map $\phi$ sends each p-curve in $S$ to an hp-curve in $S$.
\end{lem}

\begin{proof}
Assuming that there is a p-curve $\alpha$ in $S$ with $\phi(\alpha)$ a p-BP in $S$, we deduce a contradiction. 
Let $Q$ denote the component of $S_{\alpha}$ that is not a pair of pants.
Let $\partial$ denote the component of $\partial S$ cut off by $\phi(\alpha)$.
We define $R$ as the surface obtained from $S$ by attaching a disk to $\partial$.
Let $\calc^*(R)$ be the simplicial cone over $\calc(R)$ with $\ast$ the cone point.
We define $\imath \colon \calc(S)\rightarrow \calc^*(R)$ as the simplicial map associated with the inclusion of $S$ into $R$, where $\imath^{-1}(\{ \ast \})$ consists of all p-curves in $S$ cutting off $\partial$.
The map $\imath$ sends the two curves in $\phi(\alpha)$ to the same non-separating curve in $R$, denoted by $c$.

We define a superinjective map $\phi_{\alpha}\colon \calc_s(Q)\rightarrow \calt(R)$ as follows.
Pick a separating curve $\beta$ in $S$ disjoint and distinct from $\alpha$.
If $\phi(\beta)$ is a separating curve in $S$, then $\imath(\phi(\beta))$ is a separating curve in $R$, and we set $\phi_{\alpha}(\beta)=\imath(\phi(\beta))$.
If $\phi(\beta)$ is a BP in $S$ containing no curve in $\phi(\alpha)$, then $\phi(\beta)$ is not BP-equivalent to $\phi(\alpha)$ by Lemma \ref{lem-si-bp}, and thus $\imath(\phi(\beta))$ is a BP in $R$.
We set $\phi_{\alpha}(\beta)=\imath(\phi(\beta))$.
If $\phi(\beta)$ is a BP in $S$ containing a curve in $\phi(\alpha)$, then we denote by $\beta_1$ another curve in $\phi(\beta)$.
The pair $\{ c, \imath(\beta_1)\}$ is a BP in $R$, and we set $\phi_{\alpha}(\beta)=\{ c, \imath(\beta_1)\}$.
The map $\phi_{\alpha}\colon V_s(Q)\rightarrow V_t(R)$ then defines a superinjective map from $\calc_s(Q)$ into $\calt(R)$. 

Since both $Q$ and $R$ are homeomorphic to $S_{g, p-1}$, the hypothesis of the induction implies that $\phi_{\alpha}$ is induced by a homeomorphism from $Q$ onto $R$. 
For any $\gamma \in V_s(Q)$, $\phi_{\alpha}(\gamma)$ is however disjoint from $c$.
This is a contradiction.
\end{proof}

\begin{lem}\label{lem-hp-hp}
The map $\phi$ sends each hp-curve in $S$ to an hp-curve in $S$.
\end{lem}

\begin{proof}
Let $\alpha$ be an h-curve in $S$.
By Lemma \ref{lem-p-hp}, it is enough to show that $\phi(\alpha)$ is an hp-curve in $S$.
Choose the p-curve $\beta$ in $S$ and the separating curves $\beta'$, $\gamma$ in $S$ described in Figure \ref{fig-hp-pre} (a).
\begin{figure}
\begin{center}
\includegraphics[width=11cm]{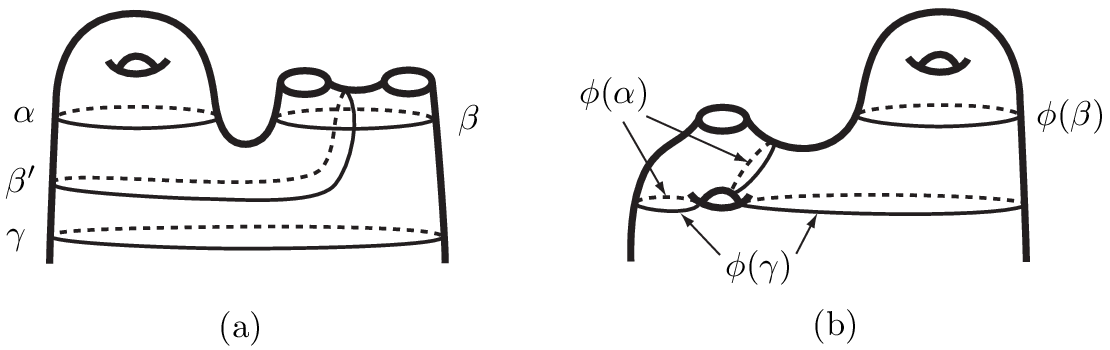}
\caption{}\label{fig-hp-pre}
\end{center}
\end{figure}

We first suppose that $\phi(\beta)$ is a p-curve in $S$.
Let $Q_1$ denote the component of $S_{\beta}$ that is not a pair of pants.
Let $Q_2$ denote the component of $S_{\phi(\beta)}$ that is not a pair of pants.
The map $\phi$ induces a superinjective map from $\calc_s(Q_1)$ into $\calt(Q_2)$. 
The hypothesis of the induction then implies that $\phi(\alpha)$ is an h-curve in $S$.

We next suppose that $\phi(\beta)$ is not a p-curve in $S$ and is thus an h-curve in $S$ by Lemma \ref{lem-p-hp}. 
If $\phi(\alpha)$ were a p-BP in $S$, then $\phi(\gamma)$ would be a BP in $S$ and BP-equivalent to $\phi(\alpha)$ because $\phi$ is $\chi$-preserving (see Figure \ref{fig-hp-pre} (b)). 
We deduce a contradiction by proving that there is no room for $\phi(\beta')$. 
Let $R$ be the component of $S_{\phi(\alpha)\cup \phi(\gamma)}$ containing $\phi(\beta)$. 
If $\phi(\beta')$ is a separating curve in $S$, then $\phi(\beta')$ has to be an h-curve in $S$ because $\phi(\beta')$ lies in $R$. 
This is a contradiction because $\phi$ is $\chi$-preserving.
If $\phi(\beta')$ is a BP in $S$ and BP-equivalent to $\phi(\alpha)$, then $\{ \phi(\alpha), \phi(\beta'), \phi(\gamma)\}$ is rooted by Lemma \ref{lem-si-bp}. 
This is a contradiction because any separating curve in $R$ is an h-curve in $S$.
If $\phi(\beta')$ is a BP in $S$ and not BP-equivalent to $\phi(\alpha)$, then $\phi(\beta')$ is a BP in $R$.
This is a contradiction because no BP in $R$ is a BP in $S$.
\end{proof}

We first prove the theorem when $g\geq 3$ and $p=2$, which is a direct consequence of Theorem \ref{thm-cs} and the following:

\begin{lem}
If we have $S=S_{g, 2}$ with $g\geq 3$, then $\phi$ sends each separating curve in $S$ to a separating curve in $S$.
\end{lem}

\begin{proof}
Once we prove that $\phi$ sends each h-curve in $S$ to an h-curve in $S$, we can deduce the lemma along argument in the proof of Theorem \ref{thm-31}.
Assume that there exists an h-curve $\alpha$ in $S$ with $\phi(\alpha)$ a p-curve in $S$.
We choose a $g$-simplex $\sigma$ of $\calc_s(S)$ consisting of hp-curves in $S$ and containing $\alpha$.
Since $\phi(\sigma)$ also consists of hp-curves in $S$, if $\beta$ denotes the p-curve in $\sigma$, then $\phi(\beta)$ is an h-curve in $S$. 
Let $Q$ denote the component of $S_{\beta}$ that is not a pair of pants.
Let $R$ denote the component of $S_{\phi(\beta)}$ that is not a handle. 

We claim that for each separating curve $\gamma$ in $Q$ which is not an h-curve in $Q$, $\phi(\gamma)$ is not a BP in $S$.
Choose a $(g-1)$-simplex $\tau$ of $\calc_s(Q)$ consisting of h-curves in $Q$ and disjoint from $\gamma$. 
By Lemma \ref{lem-hp-hp}, $\phi(\tau)$ consists of $g-1$ h-curves in $R$ and one p-curve in $R$.
It turns out that $\phi(\gamma)$ is not a BP in $S$ because $\phi(\gamma)$ is disjoint from the $g-1$ h-curves in $\phi(\tau)$ and the h-curve $\phi(\beta)$ in $S$.
The claim is proved.

The claim shown in the last paragraph implies that $\phi$ induces a superinjective map $\phi_{\beta}\colon \calc_s(Q)\rightarrow \calc_s(R)$.
Let $\partial_1$ denote the component of $\partial R$ corresponding to $\phi(\beta)$.
Let $\partial_2$ denote another component of $\partial R$, which is a component of $\partial S$.
Gluing $\partial_1$ with $\partial_2$, we obtain the surface, denoted by $R_1$, homeomorphic to $S_{g, 1}$. 
Let $c$ denote the non-separating curve in $R_1$ corresponding to $\partial_1$ and $\partial_2$.
We denote by $\imath \colon \calc(R)\rightarrow \calc(R_1)$ the simplicial map associated with the inclusion of $R$ onto the complement of a tubular neighborhood of $c$ in $R_1$.

We define a superinjective map $\bar{\imath}\colon \calc_s(R)\rightarrow \calt(R_1)$ as follows.
Pick a separating curve $\delta$ in $R$.
If $\delta$ separates $\partial_1$ and $\partial_2$, then we define $\bar{\imath}(\delta)$ to be the BP $\{ c, \imath(\delta)\}$ in $R_1$.
Otherwise, $\imath(\delta)$ is a separating curve in $R_1$, and we set $\bar{\imath}(\delta)=\imath(\delta)$.
The map $\bar{\imath}\colon V_s(R)\rightarrow V_t(R_1)$ then defines a superinjective map from $\calc_s(R)$ into $\calt(R_1)$.

Let $\phi_1\colon \calc_s(Q)\rightarrow \calt(R_1)$ be the superinjective map defined as the composition $\bar{\imath}\circ \phi_{\beta}$. 
Since both $Q$ and $R_1$ are homeomorphic to $S_{g, 1}$, we have the inclusion $\phi_1(\calc_s(Q))\subset \calc_s(R_1)$ by Theorem \ref{thm-31}. 
On the other hand, by the definition of $\phi_1$, we have $\phi_1(\alpha)=\{ c, \imath(\phi(\alpha))\}$, which is a BP in $R_1$. 
This is a contradiction.  
\end{proof}

Finally, we assume either $g\geq 2$ and $p\geq 3$ or $g=1$ and $p\geq 5$.
Let us say that a separating curve $\alpha$ in $S$ is {\it even} if at least one component of $S_{\alpha}$ contains an even number of components of $\partial S$. 
Otherwise, $\alpha$ is said to be {\it odd}.
We note that if there exists an odd separating curve in $S$, then $p$ is even.

\begin{lem}\label{lem-even}
If $\alpha$ is an even separating curve in $S$, then $\phi(\alpha)$ is a separating curve in $S$.
\end{lem}

\begin{proof}
By Lemma \ref{lem-hp-hp}, we may assume that $\alpha$ is not an hp-curve in $S$. 
Since $\alpha$ is even, there exist $g+\lfloor p/2\rfloor$ hp-curves in $S$ forming a $(g+\lfloor p/2\rfloor)$-simplex of $\calc_s(S)$ together with $\alpha$.
Since the image of those hp-curves via $\phi$ contains $g$ h-curves in $S$ by Lemma \ref{lem-hp-hp}, $\phi(\alpha)$ is not a BP in $S$.
\end{proof}

\begin{lem}\label{lem-odd}
We assume that $p$ is even and $p\geq 4$. 
If $\alpha$ is an odd separating curve in $S$, then $\phi(\alpha)$ is a separating curve in $S$.
\end{lem}

\begin{proof}
Assume that there exists an odd separating curve $\alpha$ in $S$ with $\phi(\alpha)$ a BP in $S$.
We first prove the following:

\begin{claim}\label{claim-odd}
If $\beta$ is an odd separating curve in $S$ disjoint and distinct from $\alpha$, then $\phi(\beta)$ is a BP in $S$ and BP-equivalent to $\phi(\alpha)$.
\end{claim}

\begin{proof}
Let $\beta$ be an odd separating curve in $S$ disjoint and distinct from $\alpha$. 
We choose even separating curves $\alpha_1$, $\beta_1$, $\gamma$ and $\gamma_1$ in $S$ as described in Figure \ref{fig-sep-curves} (a).
\begin{figure}
\begin{center}
\includegraphics[width=12cm]{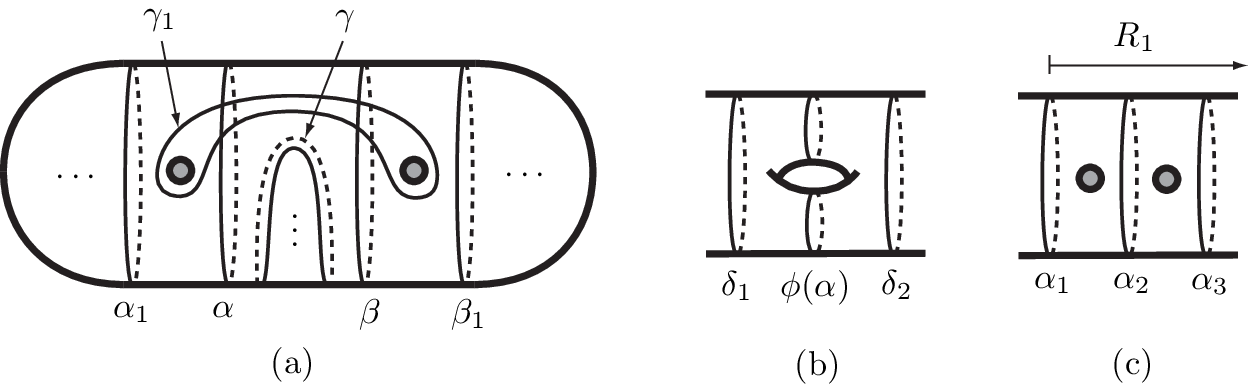}
\caption{}\label{fig-sep-curves}
\end{center}
\end{figure}
There then exists a simplex $\sigma$ of $\calc_s(S)$ of maximal dimension such that
\begin{itemize}
\item $\sigma$ contains $\alpha$, $\alpha_1$, $\beta$, $\beta_1$ and $\gamma$; and
\item any curve of $\sigma$ other than $\alpha$ and $\beta$ is even.
\end{itemize}
By Lemma \ref{lem-even}, any element of $\phi(\sigma)\setminus \{ \phi(\alpha), \phi(\beta)\}$ is a separating curve in $S$.
Each component of $S_{\phi(\sigma)}$ is either a handle or a pair of pants by Lemma \ref{lem-w-rooted} (ii).
Note that $\phi(\alpha)$ is not a p-BP in $S$ because $\alpha$ is not an hp-curve in $S$.

Assuming that $\phi(\beta)$ is a separating curve in $S$, we deduce a contradiction.
Since any element of $\phi(\sigma)$ other than $\phi(\alpha)$ is then a separating curve in $S$, there exist exactly two components of $S_{\phi(\sigma)}$ each of whose boundary components contains both curves of $\phi(\alpha)$ (see Figure \ref{fig-sep-curves} (b)). 
The number of curves $\delta$ of $\phi(\sigma)\setminus \{ \phi(\alpha)\}$ satisfying the following condition is thus equal to two: There exists a vertex $\epsilon \in V_t(S)$ with $i(\phi(\alpha), \epsilon)\neq 0$, $i(\delta, \epsilon)\neq 0$ and $i(\delta', \epsilon)=0$ for any $\delta'\in \phi(\sigma)\setminus \{ \phi(\alpha), \delta\}$. 
On the other hand, each of $\phi(\alpha_1)$, $\phi(\beta)$ and $\phi(\gamma)$ satisfies this condition because for each of $\alpha_1$, $\beta$ and $\gamma$, there exists a component of $S_{\sigma}$ containing it and $\alpha$ as boundary components. 
This is a contradiction.
It turns out that $\phi(\beta)$ is a BP in $S$.

If $\phi(\beta)$ were not BP-equivalent to $\phi(\alpha)$, then there would exist a separating curve $\delta$ in $\phi(\sigma)$ such that $\phi(\alpha)$ and $\phi(\beta)$ are contained in distinct components of $S_{\delta}$. 
On the other hand, the hp-curve $\phi(\gamma_1)$ intersects $\phi(\alpha)$ and $\phi(\beta)$ and is disjoint from the other elements of $\phi(\sigma)$. 
This is impossible.
\end{proof}

We return to the proof of Lemma \ref{lem-odd}. 
Choose a component of $S_{\alpha}$ containing at least three components of $\partial S$ and denote it by $R$. 
If $R$ is homeomorphic to $S_{0, 4}$, then pick an odd separating curve $\alpha_1$ in $S$ cutting off a surface, denoted by $R_1$, containing $R$ and homeomorphic to $S_{1, 4}$. 
If $R$ is not homeomorphic to $S_{0, 4}$, then we put $\alpha_1=\alpha$ and $R_1=R$. 
We can then choose separating curves $\alpha_2$, $\alpha_3$ in $R_1$ as described in Figure \ref{fig-sep-curves} (c). 
The curves $\alpha_1$ and $\alpha_3$ are odd, and $\alpha_2$ is even. 
By Claim \ref{claim-odd}, $\phi(\alpha_1)$ is a BP in $S$. 
Applying Claim \ref{claim-odd} to $\alpha_1$ in place of $\alpha$, we see that $\phi(\alpha_3)$ is also a BP in $S$ and BP-equivalent to $\phi(\alpha_1)$. 
By Lemma \ref{lem-even}, $\phi(\alpha_2)$ is a separating curve in $S$. 
The two BPs $\phi(\alpha_1)$ and $\phi(\alpha_3)$ are contained in the same component of $S_{\phi(\alpha_2)}$ by Lemma \ref{lem-bp}, while $\alpha_1$ and $\alpha_3$ are contained in distinct components of $S_{\alpha_2}$. 
This contradicts Proposition \ref{prop-chi-pre}.
\end{proof}

We proved that $\phi$ preserves separating curves in $S$.
By Theorem \ref{thm-cs}, $\phi$ is induced by an element of $\mod^*(S)$.
The proof of Theorem \ref{thm-other} is completed.
\end{proof}


\section{Proof of Theorems \ref{thm-comm} and \ref{thm-coh}}\label{sec-coh}

\begin{proof}[Proof of Theorem \ref{thm-comm}]
Let $S=S_{g, p}$ be a surface with $g\geq 1$ and $|\chi(S)|\geq 4$.
We put $G=\mod^*(S)$ and pick a subgroup $\Gamma$ of $\mod^*(S)$ with $[\calk(S): \Gamma \cap \calk(S)]<\infty$ and $[\Gamma :\Gamma \cap \cali(S)]<\infty$.
Note that $\Gamma$ contains a finite index subgroup of $\calk(S)$ and that there exists a finite index subgroup of $\Gamma$ contained in $\cali(S)$.
Let
\[{\bf i}\colon \comm_G(\Gamma)\rightarrow \comm(\Gamma)\]
be the natural homomorphism.
If $\gamma$ is an element of $\comm_G(\Gamma)$ with ${\bf i}(\gamma)$ neutral, then there exists a finite index subgroup of $\Gamma$ such that any element in it commutes $\gamma$.
It follows that for any $a\in V_s(S)$, there exists a non-zero integer $n$ with $\gamma t_a^n\gamma^{-1}=t_a^n$, and thus we have $\gamma a =a$.
For each non-separating curve $b$ in $S$, we can find two separating curves $a_1$, $a_2$ in $S$ such that they are disjoint from $b$ and fill $S_b$.
Since $\gamma$ fixes $a_1$ and $a_2$, it also fixes $b$.
The element $\gamma$ thus fixes any element of $V(S)$ and is neutral.
Injectivity of ${\bf i}$ follows.

To prove surjectivity of ${\bf i}$, we may assume that $\Gamma$ is contained in $\cali(S)$.
Let $\Gamma_1$ and $\Gamma_2$ be finite index subgroups of $\Gamma$, and let $f\colon \Gamma_1\rightarrow \Gamma_2$ be an isomorphism.
Since $\Gamma_1\cap \calk(S)$ is a finite index subgroup of $\calk(S)$, there exists an element $\gamma_0$ of $G$ with the equality $f(\gamma)=\gamma_0\gamma \gamma_0^{-1}$ for any $\gamma \in \Gamma_1\cap \calk(S)$ by Theorem \ref{thm-inner}.

We claim that the equality $f(\gamma)=\gamma_0\gamma \gamma_0^{-1}$ holds for any $\gamma \in \Gamma_1$.
This implies surjectivity of {\bf i}.
Pick $\gamma \in \Gamma_1$.
For any $a\in V_s(S)$, we have
\begin{align*}
f(\langle t_{\gamma a}\rangle \cap \Gamma_1)&=f(\gamma (\langle t_a\rangle \cap \Gamma_1)\gamma^{-1})<f(\gamma)(\langle \gamma_0t_a\gamma_0^{-1}\rangle \cap \Gamma_2)f(\gamma)^{-1}\\
&=\langle t_{f(\gamma)\gamma_0 a}\rangle \cap \Gamma_2,
\end{align*}
where for each $x\in G$, $\langle x\rangle$ denotes the group generated by $x$.
We thus have $\gamma_0\gamma a=f(\gamma)\gamma_0 a$.
It follows that $\gamma^{-1}\gamma_0^{-1}f(\gamma)\gamma_0$ fixes any element of $V_s(S)$.
The argument in the end of the first paragraph of the proof shows that $\gamma^{-1}\gamma_0^{-1}f(\gamma)\gamma_0$ is neutral.
\end{proof}

To prove Theorem \ref{thm-coh}, we need the following two lemmas.

\begin{lem}\label{lem-coh}
Let $N$ be a group with a finite chain of subgroups, $\{ e\}=N_0<N_1<\cdots <N_m=N$, such that for each $j=1,\ldots, m$,
\begin{itemize}
\item $N_{j-1}$ is a normal subgroup of $N_j$; and
\item $N_j/N_{j-1}$ is finitely generated and abelian.
\end{itemize}
Let $H$ be a subgroup of $N$.
If $f$ is an automorphism of $N$ with $f(H)<H$ and $f(N_j)=N_j$ for each $j=0, 1,\ldots, m$, then we have $f(H)=H$.
\end{lem}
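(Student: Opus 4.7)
The plan is to proceed by induction on $m$. The key ingredient will be the abelian case, which I will establish first: if $A$ is a finitely generated abelian group, $g$ is an automorphism of $A$, and $K \leq A$ satisfies $g(K) \leq K$, then $g(K) = K$. To prove this, I would use invertibility of $g$ to rewrite $g(K) \leq K$ as $K \leq g^{-1}(K)$, which yields an ascending chain $K \leq g^{-1}(K) \leq g^{-2}(K) \leq \cdots$ of subgroups of $A$. Since finitely generated abelian groups are Noetherian, this chain must stabilize, so $g^{-(k+1)}(K) = g^{-k}(K)$ for some $k \geq 0$; applying $g^{k+1}$ then yields $K = g(K)$.

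With the abelian case in hand, the case $m = 0$ of the lemma is trivial, and for the inductive step I would proceed as follows. Assume the lemma holds for all chains of length less than $m$. Since $f(N_{m-1}) = N_{m-1}$, the restriction of $f$ to $N_{m-1}$ is an automorphism of $N_{m-1}$ preserving each term of the shorter chain $N_0 < \cdots < N_{m-1}$, and it satisfies $f(H \cap N_{m-1}) \leq H \cap N_{m-1}$. By the induction hypothesis, $f(H \cap N_{m-1}) = H \cap N_{m-1}$. Simultaneously, since $N_{m-1}$ is normal in $N$ and $f$-invariant, $f$ descends to an automorphism $\bar f$ of the finitely generated abelian group $N/N_{m-1}$. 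Setting $\bar H = H N_{m-1}/N_{m-1}$, we have $\bar f(\bar H) \leq \bar H$, so the abelian case gives $\bar f(\bar H) = \bar H$. Lifting this equality to $N$ yields $f(H)\, N_{m-1} = H\, N_{m-1}$.

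To finish, I would take an arbitrary $h \in H$ and use the equality $f(H)\, N_{m-1} = H\, N_{m-1}$ to write $h = f(h') n$ for some $h' \in H$ and $n \in N_{m-1}$. Since $f(h') \in f(H) \leq H$, the element $n = f(h')^{-1} h$ belongs to $H \cap N_{m-1}$, and the inductive conclusion for $N_{m-1}$ then produces $k \in H \cap N_{m-1}$ with $f(k) = n$. Hence $h = f(h') f(k) = f(h'k) \in f(H)$, proving $H \leq f(H)$; combined with the hypothesis $f(H) \leq H$, this gives $f(H) = H$, closing the induction. The only non-formal step is the abelian case, where the crux is that invertibility of $g$ converts the descending chain $\{g^{i}(K)\}_{i \geq 0}$ into an ascending one on which the Noetherian property of finitely generated abelian groups can be invoked; the non-abelian interactions in the inductive step are tamed by the normality of $N_{m-1}$ in $N$.
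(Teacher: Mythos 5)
Your proof is correct and follows essentially the same route as the paper's: induct along the chain, reduce to the finitely generated abelian quotient, and lift the resulting equality back using the inductive conclusion on the smaller term. The only cosmetic difference is that you settle the abelian case via the Noetherian (ascending chain) property of finitely generated abelian groups, whereas the paper invokes the Hopfian property of the finitely generated abelian quotient $M/K$; both are standard and do the same work here.
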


\begin{proof}
We put $H_j=H\cap N_j$ for $j=0, 1,\ldots, m$.
We prove the equality $f(H_j)=H_j$ by induction on $j$.
When $j=0$, it is obvious.
We assume $j\geq 1$.
Put $M=N_j/N_{j-1}$ and let $\phi$ denote the automorphism of $M$ induced by $f$.
Let $K$ denote the subgroup $H_j/H_{j-1}$ of $M$.
Since $M$ is abelian, $K$ is a normal subgroup of $M$.
By assumption, we have $\phi(K)<K$.
It follows that $\phi$ induces a surjective homomorphism from $M/K$ onto itself, which is injective because $M/K$ is finitely generated and abelian.
The equality $\phi(K)=K$ thus holds.
On the other hand, by the hypothesis of the induction, the equality $f(H_{j-1})=H_{j-1}$ holds.
We therefore obtain the equality $f(H_j)=H_j$.
\end{proof}

\begin{lem}\label{lem-chain}
Let $S=S_{1, p}$ be a surface with $p\geq 1$.
Then there exists a chain of normal subgroups of $\pmod(S)$,
\[\calk(S)=N_0<N_1<\cdots <N_{p-2}<N_{p-1}=\cali(S),\]
such that $N_j/N_{j-1}$ is finitely generated and abelian for any $j=1,\ldots, p-1$.
\end{lem}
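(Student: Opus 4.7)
The plan is to prove the lemma by induction on $p$. The base case $p = 1$ is immediate: $S_{1,1}$ admits no essential separating simple closed curve and no bounding pair, whence $\calk(S_{1,1}) = \cali(S_{1,1}) = 1$, and the one-term chain $N_0 = 1$ satisfies the statement vacuously.

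For the inductive step with $p \geq 2$, fix a boundary component $\partial$ of $S = S_{1,p}$ and let $S' = S_{1,p-1}$ be the surface obtained by capping $\partial$ with a disk carrying a marked interior point $x$. Extending homeomorphisms of $S$ radially over this disk (with $x$ at its center, so rotations of $\partial$ in $\pmod(S)$ correspond to rotations about $x$) identifies $\pmod(S)$ with $\pmod(S',x)$, and Birman's exact sequence then reads
\[
1 \longrightarrow K \longrightarrow \pmod(S) \xrightarrow{\ \pi\ } \pmod(S') \longrightarrow 1,
\]
where $K$ is canonically identified with $\pi_1(S',x)$ via the disk-pushing (equivalently, point-pushing) construction. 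I will use three facts:
\begin{enumerate}
\item[(a)] $K \subset \cali(S)$: the disk-pushing along a simple loop in $S'$ is a bounding pair twist in $S$, hence lies in $\cali(S)$.
\item[(b)] $K \cap \calk(S) = [K,K]$: a disk-pushing lies in the Johnson kernel if and only if the corresponding loop lies in the commutator subgroup of $\pi_1(S',x)$. Consequently $K/(K \cap \calk(S)) \cong H_1(S';\mathbb{Z}) \cong \mathbb{Z}^{p}$ is finitely generated and abelian.
\item[(c)] The restrictions $\cali(S) \to \cali(S')$ and $\calk(S) \to \calk(S')$ induced by $\pi$ are both surjective, since every element of the target lifts through $\pi$ and the lift automatically lands in the appropriate subgroup.
\end{enumerate}

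Applying the inductive hypothesis to $S'$ furnishes a chain $\calk(S') = M_0 < M_1 < \cdots < M_{p-2} = \cali(S')$ of normal subgroups of $\pmod(S')$ with finitely generated abelian successive quotients. The plan is to set
\[
N_0 = \calk(S), \qquad N_1 = K \cdot \calk(S), \qquad N_j = \pi^{-1}(M_{j-1}) \quad \text{for } 2 \leq j \leq p-1.
\]
Each $N_j$ is normal in $\pmod(S)$ as a product or preimage of normal subgroups. Fact (a) together with the surjectivity in (c) gives $N_{p-1} = \pi^{-1}(\cali(S')) = \cali(S)$. The inclusion $N_0 < N_1$ is strict by (b), since $H_1(S') \neq 0$ for $p \geq 2$, while strictness at higher levels is inherited from the inductive chain via the surjectivity of $\pi|_{\cali(S)}$. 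Finally, $N_1/N_0 \cong K/(K \cap \calk(S)) \cong \mathbb{Z}^{p}$ is finitely generated abelian by (b); and for $j \geq 2$ the natural isomorphism $N_j/N_{j-1} \cong M_{j-1}/M_{j-2}$ inherits the required property from the inductive hypothesis.

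The main obstacle is fact (b), which identifies the intersection of the disk-pushing subgroup with the Johnson kernel as the commutator subgroup $[K,K]$. This is a standard instance of the interplay between the Birman exact sequence and the Johnson filtration: the disk-pushing along $\gamma$ acts on the second nilpotent quotient $\pi_1(S)/\gamma_3 \pi_1(S)$ by conjugation-type operations whose triviality is equivalent to $[\gamma] = 0 \in H_1(S')$. A secondary technical point is verifying the Birman sequence in the pure-mapping-class-group convention of the paper (where each boundary component is preserved only setwise), so that the identifications $\pmod(S) \cong \pmod(S',x)$ and $\ker \pi \cong \pi_1(S',x)$ hold cleanly and the normality and surjectivity claims used above can be read off without a separate $\mathbb{Z}$-factor obstruction from boundary twists.
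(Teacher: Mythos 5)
Your overall strategy is exactly the paper's: induct on $p$, cap off a boundary component, use the Birman exact sequence, pull the inductive chain back through the capping map, and insert the one extra term $N_1=K\cdot\calk(S)$, whose quotient by $\calk(S)$ is controlled by the point-pushing subgroup $K\cong\pi_1(S')$. The one place where you defer to an unproven claim --- your fact (b), which you yourself flag as the main obstacle --- is precisely where the paper does its only real work, and you are asking for strictly more than you need. The full equality $K\cap\calk(S)=[K,K]$, and in particular the inclusion $K\cap\calk(S)\subset[K,K]$ (the genuinely nontrivial direction, tied to the Johnson homomorphism), is irrelevant here: to conclude that $N_1/N_0$ is finitely generated abelian it suffices that $K/(K\cap\calk(S))$ be a quotient of $H_1(S')$, i.e., that $[K,K]\subset\calk(S)$, and even that reduces to a finite check on generators. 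The paper's argument is exactly this check: take generators $a,b,c_1,\ldots,c_{p-1}$ of $\pi_1(S')$ represented by simple loops, with $a,b$ non-separating and each $c_j$ encircling one boundary component of $S'$; the push along $c_j$ is a Dehn twist about a p-curve of $S$, hence lies in $\calk(S)$, and the push along $[a,b]$, a separating simple loop cutting off a handle, equals $t_\alpha t_\beta^{-1}$ for disjoint separating curves $\alpha,\beta$ in $S$, hence also lies in $\calk(S)$. Thus $N_1/N_0$ is generated by two commuting elements (the images of the pushes along $a$ and $b$) and is therefore finitely generated abelian. If you replace your (b) by this elementary computation, your proof closes up and coincides with the paper's; as written, the appeal to ``the interplay between the Birman exact sequence and the Johnson filtration'' sketches a harder statement than the lemma requires, and leaves even the direction you actually use unproved.
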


\begin{proof}
We prove the lemma by induction on $p$.
If $p=1$, then $\cali(S)$ is trivial, and the lemma follows.
Assume $p\geq 2$.
Pick a boundary component $\partial$ of $S$, and let $\bar{S}$ denote the surface obtained by attaching a disk to $\partial$.
Let
\[1\rightarrow \pi_1(\bar{S})\stackrel{\iota}{\rightarrow}\pmod(S)\stackrel{\theta}{\rightarrow}\pmod(\bar{S})\rightarrow 1\]
be the associated Birman exact sequence (see Chapter 4 in \cite{birman}).
We then have
\[\theta(\cali(S))=\cali(\bar{S}),\quad \theta(\calk(S))=\calk(\bar{S}),\quad \iota(\pi_1(\bar{S})) <\cali(S).\]
We define $H$ as the subgroup of $\cali(S)$ generated by $\calk(S)$ and $\iota(\pi_1(\bar{S}))$, which is a normal subgroup of $\pmod(S)$.
By the hypothesis of the induction, there exists a chain of normal subgroups of $\pmod(\bar{S})$,
\[\calk(\bar{S})=\bar{N}_0<\bar{N}_1<\cdots <\bar{N}_{p-2}=\cali(\bar{S}),\]
such that $\bar{N}_j/\bar{N}_{j-1}$ is finitely generated and abelian for any $j=1,\ldots, p-2$.
Setting $N_j=\theta^{-1}(\bar{N}_{j-1})$ for each $j=1,\ldots, p-1$, we obtain the chain of normal subgroups of $\pmod(S)$,
\[H=N_1<N_2<\cdots <N_{p-1}=\cali(S)\]
with $N_j/N_{j-1}$ isomorphic to $\bar{N}_{j-1}/\bar{N}_{j-2}$ for each $j=2,\ldots, p-1$.

We claim that $H/\calk(S)$ is finitely generated and abelian.
This claim completes the induction.
Since we have a homomorphism from $\pi_1(\bar{S})$ onto $H/\calk(S)$, the group $H/\calk(S)$ is finitely generated.
Choose generators $a$, $b$, $c_1,\ldots, c_{p-1}$ of $\pi_1(\bar{S})$ such that each of them is a simple loop in $\bar{S}$; any two of them intersect only at the base point; $a$ and $b$ are non-separating in $\bar{S}$; and each $c_j$ surrounds a single component of $\partial \bar{S}$.
By the definition of $\iota$, any $\iota(c_j)$ belongs to $\calk(S)$ because $\iota(c_j)$ is the Dehn twist about a p-curve in $S$.
The commutator $[a, b]$ represents a separating simple loop cutting off a handle from $\bar{S}$.
By the definition of $\iota$, $\iota([a, b])$ is written as $t_{\alpha}t_{\beta}^{-1}$ for some distinct elements $\alpha$, $\beta$ of $V_s(S)$ with $i(\alpha, \beta)=0$.
It follows that $\iota([a, b])$ belongs to $\calk(S)$.
The group $H/\calk(S)$ is therefore abelian.
\end{proof}

\begin{proof}[Proof of Theorem \ref{thm-coh}]
Let $S=S_{g, p}$ be a surface with either $g\geq 3$ and $p\leq 1$ or $g=1$ and $p\geq 4$.
Let $\Gamma$ be a subgroup of $\mod^*(S)$ with $[\calk(S): \Gamma \cap \calk(S)]<\infty$ and $[\Gamma :\Gamma \cap \cali(S)]<\infty$.
Let $f\colon \Gamma \rightarrow \Gamma$ be an injective homomorphism.
We put $\Gamma'=f^{-1}(\Gamma \cap \cali(S))\cap \calk(S)$, which is a finite index subgroup of $\calk(S)$.
Applying Theorem \ref{thm-inner} to the restriction of $f$ to $\Gamma'$, we obtain an element $\gamma$ of $\mod^*(S)$ with the equality $f(x)=\gamma x\gamma^{-1}$ for any $x\in \Gamma'$.
As in the final paragraph of the proof of Theorem \ref{thm-comm}, we can show that this equality also holds for any $x\in \Gamma$.
The inclusion $\gamma\Gamma \gamma^{-1}<\Gamma$ thus holds.
We have to show that the equality indeed holds.

For any positive interger $n$, we have $\gamma^n\Gamma \gamma^{-n}<\gamma^{n-1}\Gamma \gamma^{-n+1}<\Gamma$.
It follows that if there exists a positive integer $n$ with $\gamma^n\Gamma \gamma^{-n}=\Gamma$, then the desired equality is obtained.
We may therefore assume that $\gamma$ is an element of $\pmod(S)$.

We set
\[\Gamma_0=\Gamma \cap \calk(S),\quad \Gamma_1=\Gamma \cap \cali(S),\quad \cal{Q}=\pmod(S)/\calk(S)\]
and define $\imath \colon \pmod(S)\rightarrow \cal{Q}$ as the canonical quotient map.
For each $j=0, 1$, the inclusion $\gamma\Gamma_j\gamma^{-1}<\Gamma_j$ then holds.
Since we have
\[[\calk(S):\gamma\Gamma_0\gamma^{-1}]=[\gamma\calk(S)\gamma^{-1}:\gamma\Gamma_0\gamma^{-1}]=[\calk(S):\Gamma_0]<\infty,\]
we obtain $[\Gamma_0: \gamma\Gamma_0\gamma^{-1}]=1$ and thus $\gamma\Gamma_0\gamma^{-1}=\Gamma_0$.

We now assume $g\geq 3$ and $p\leq 1$.
It is known that $\imath(\cali(S))=\cali(S)/\calk(S)$ is a finitely generated and abelian group, due to Johnson (see Theorems 5 and 6 in \cite{johnson2}).
Since the conjugation by $\imath(\gamma)$ induces an automorphism of $\imath(\cali(S))$, the inclusion $\imath(\gamma)\imath(\Gamma_1)\imath(\gamma)^{-1}<\imath(\Gamma_1)$ implies the equality $\imath(\gamma)\imath(\Gamma_1)\imath(\gamma)^{-1}=\imath(\Gamma_1)$.
Combining the equality $\gamma \Gamma_0\gamma^{-1}=\Gamma_0$, we obtain $\gamma\Gamma_1\gamma^{-1}=\Gamma_1$.
Since we have
\[[\Gamma :\Gamma_1]=[\gamma\Gamma \gamma^{-1}: \gamma\Gamma_1\gamma^{-1}]=[\gamma\Gamma \gamma^{-1}:\Gamma_1]<\infty,\]
we obtain $[\Gamma :\gamma\Gamma \gamma^{-1}]=1$ and thus $\gamma\Gamma \gamma^{-1}=\Gamma$.

We next assume $g=1$ and $p\geq 4$.
By Lemma \ref{lem-chain}, we have a chain of normal subgroups of $\pmod(S)$,
\[\calk(S)=N_0<N_1<\cdots <N_{p-2}<N_{p-1}=\cali(S),\]
such that $N_j/N_{j-1}$ is finitely generated and abelian for any $j=1,\ldots, p-1$.
By applying Lemma \ref{lem-coh} to the chain of normal subgroups of $\cal{Q}$,
\[\{ e\} =\imath(N_0)<\imath(N_1)<\cdots <\imath(N_{p-2})<\imath(N_{p-1})=\imath(\cali(S)),\]
and to the inclusion $\imath(\gamma)\imath(\Gamma_1)\imath(\gamma)^{-1}<\imath(\Gamma_1)$, we obtain $\imath(\gamma)\imath(\Gamma_1)\imath(\gamma)^{-1}=\imath(\Gamma_1)$.
Following argument in the previous paragraph, we obtain $\gamma\Gamma \gamma^{-1}=\Gamma$.
\end{proof}


\end{document}